
\documentclass[11pt]{amsart}

\usepackage{amssymb,amsfonts,amsthm,amsmath,mathrsfs,xspace,hyperref}
\usepackage{graphicx}
\usepackage[francais,english]{babel}
\usepackage[T1]{fontenc}
\usepackage[centering]{geometry}
\usepackage{csquotes}
\usepackage{enumerate}
\usepackage{enumitem}


  \newcommand{\R}{\ensuremath{\mathbb{R}}}%
  \newcommand{\Z}{\ensuremath{\mathbb{Z}}}%
  \newcommand{\N}{\ensuremath{\mathbb{N}}}%

    \newcommand{\A}{\ensuremath{\mathcal{A}}}%

        \newcommand{\M}{\ensuremath{\mathcal{M}}}%
                \renewcommand{\P}{\ensuremath{\mathcal{P}}}%
                \newcommand{\Sbb}{\ensuremath{\mathbb{S}}}%
                \renewcommand{\S}{\ensuremath{\mathcal{S}}}%
                \newcommand{\Ccal}{\ensuremath{\mathcal{C}}}%

        \renewcommand{\H}{\ensuremath{\mathcal{H}}}%
                \newcommand{\K}{\ensuremath{\mathcal{K}}}%

  \newcommand{\supp}{\ensuremath{\textrm{Supp}}}%
	\newcommand{\fix}{\ensuremath{\textrm{Fix}}}%
    \newcommand{\acts}{\ensuremath{\curvearrowright}}%
  \newcommand{\sub}{\ensuremath{\operatorname{Sub}}}%
  \newcommand{\stab}{\ensuremath{\operatorname{Stab}}}%
  \newcommand{\homeo}{\ensuremath{\operatorname{Homeo}}}%
    \newcommand{\aut}{\ensuremath{\operatorname{Aut}}}%
    \newcommand{\st}{\ensuremath{\operatorname{St}}}%
    \newcommand{\rist}{\ensuremath{\operatorname{RiSt}}}%
    \newcommand{\cay}{\ensuremath{\operatorname{Cay}}}%

    \newcommand{\PSL}{\ensuremath{\operatorname{PSL}}}%

	\newcommand{\PwG}{\mathrm{P_W}(G)}
	\newcommand{\autT}{\mathrm{Aut}(T)}
	
	\newcommand{\Cred}{C_{red}^\ast}
	\newcommand{\conj}{\mathcal{C}}
	\newcommand{\urs}{\ensuremath{\operatorname{URS}}}

\theoremstyle{definition}
  \newtheorem{defin}{Definition}[section]

\theoremstyle{plain}
  \newtheorem{thm}[defin]{Theorem}
  \newtheorem{main thm}{Theorem}
  \newtheorem{prop}[defin]{Proposition}
    \newtheorem{prop-def}[defin]{Proposition-Definition}

  \newtheorem{cor}[defin]{Corollary}
  \newtheorem{lemma}[defin]{Lemma}

\theoremstyle{remark}
  \newtheorem{remark}[defin]{Remark}
  \newtheorem{example}[defin]{Example}
	
\date{December 20, 2016}	
\title{Subgroup dynamics and $C^\ast$-simplicity of groups of homeomorphisms}
\author{Adrien Le Boudec}
\thanks{ALB is a F.R.S.-FNRS Postdoctoral Researcher}
\address{UCLouvain, IRMP,	Chemin du Cyclotron 2, 1348 Louvain-la-Neuve, Belgium}
\email{adrien.leboudec@uclouvain.be}

\author{Nicol\'as Matte Bon}
\thanks{NMB was partially supported by Projet ANR-14-CE25-0004 GAMME}
\address{Universit\'e Paris-Sud \& DMA, \'Ecole Normale Sup\'erieure, 45 Rue d'Ulm, 75005 Paris, France}
\email{nicolas.matte.bon@ens.fr}

\makeatletter
\@namedef{subjclassname@2010}{%
  \textup{2010} Mathematics Subject Classification}
\makeatother

\begin{document}
	
\maketitle

\begin{abstract}
We study the uniformly recurrent subgroups of groups acting by homeomorphisms on a topological space. We prove a general result relating uniformly recurrent subgroups to rigid stabilizers of the action, and deduce a $C^*$-simplicity criterion based on the non-amenability of rigid stabilizers. As an application, we show that Thompson's group $V$ is $C^\ast$-simple, as well as groups of piecewise projective homeomorphisms of the real line. This provides examples of finitely presented $C^\ast$-simple groups without free subgroups. We prove that a branch group is either amenable or $C^\ast$-simple. We also prove the converse of a result of Haagerup and Olesen: if Thompson's group $F$ is non-amenable, then Thompson's group $T$ must be $C^\ast$-simple. Our results further provide sufficient conditions on a group of homeomorphisms under which uniformly recurrent subgroups can be completely classified. This applies to Thompson's groups $F$, $T$ and $V$, for which we also deduce rigidity results for their minimal actions on compact spaces.

\end{abstract}


	
\section{Introduction}

Let $G$ be a second countable locally compact group. The set $\sub(G)$ of all closed subgroups of $G$ admits a natural topology, defined by Chabauty in \cite{Chabauty-topo}. This topology turns $\sub(G)$ into a compact metrizable space, on which $G$ acts continuously by conjugation. 

The study of $G$-invariant Borel probability measures on $\sub(G)$, named \textbf{invariant random subgroups} (IRS's) after \cite{A-G-V}, is a fast-developing topic \cite{A-G-V, 7s, Glas, B-D-L}. In this paper we are interested in their topological counterparts, called \textbf{uniformly recurrent subgroups} (URS's)~\cite{Glas-Wei}. A uniformly recurrent subgroup is a closed, minimal, $G$-invariant subset $\H\subset \sub(G)$. We will denote by $\operatorname{URS}(G)$ the set of uniformly recurrent subgroups of $G$. Every normal subgroup $N\unlhd G$ gives rise to a URS of $G$, namely the singleton $\{N\}$. More interesting examples arise from minimal actions on compact spaces: if $G$ acts minimally on a compact space $X$, then the closure of all point stabilizers in $\sub(G)$ contains a unique URS, called the \textbf{stabilizer URS} of $G\acts X$ \cite{Glas-Wei}.

When $G$ has only countably many subgroups (e.g.\ if $G$ is polycyclic), every IRS of $G$ is atomic, and every URS of $G$ is finite, as follows from a standard Baire argument. Leaving aside this specific situation, there are important families of groups for which a precise description of the space $\operatorname{IRS}(G)$ has been obtained. Considerably less is known about URS's. For example if $G$ is a lattice in a higher rank simple Lie group, the normal subgroup structure of $G$ is described by Margulis' Normal Subgroups Theorem. While Stuck--Zimmer's theorem \cite{Stu-Zim} generalizes Margulis' NST to IRS's, it is an open question whether a similar result holds for URS's of higher rank lattices, even for the particular case of $\operatorname{SL}(3,\Z)$ \cite[Problem 5.4]{Glas-Wei}. 

\subsection{Micro-supported actions}

In this paper we study the space $\urs(G)$ for countable groups $G$ admitting a faithful action $G \acts X$ on a topological space $X$ such that for every non-empty open set $U\subset X$, the \textbf{rigid stabilizer} $G_U$, i.e.\ the pointwise stabilizer of $X\setminus U$ in $G$, is non-trivial. Following \cite{Capr-proceed}, such an action will be called \textbf{micro-supported}. Note that this implies in particular that $X$ has no isolated points.

The class of groups admitting a micro-supported action includes Thompson's groups $F<T<V$ and many of their generalizations, groups of piecewise projective homeomorphisms of the real line \cite{Monod-Pw}, piecewise prescribed tree automorphism groups \cite{LB-c-etoile},  branch groups (viewed as groups of homeomorphisms of the boundary of the rooted tree) \cite{B-G-S-branch}, and topological full groups acting minimally on the Cantor set. These groups have uncountably many subgroups, and many examples in this class have a rich subgroup structure.



\medskip

Our first result shows that many algebraic or analytic properties of rigid stabilizers are inherited by the uniformly recurrent subgroups of $G$. In the following theorem and everywhere in the paper, a uniformly recurrent subgroup $\H \in \urs(G)$ is said to have a group property if every $H\in \H$ has the corresponding property.

\begin{thm}[see also Theorem \ref{thm-dycothomy-mostgeneral}] \label{MT: rigid urs}
Let $G$ be a countable group of homeomorphisms of a Hausdorff space $X$. Assume that for every non-empty open set $U\subset X$, the rigid stabilizer $G_U$ is non-amenable (respectively contains free subgroups, is not elementary amenable, is not virtually solvable, is not locally finite). Then every non-trivial uniformly recurrent subgroup of $G$ has the same property.
\end{thm}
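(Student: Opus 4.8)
The plan is to separate a soft group-theoretic reduction from a dynamical core.

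\smallskip

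\emph{Reduction.} For each of the five properties in the statement, the complementary class of groups --- amenable groups, groups without non-abelian free subgroups, elementary amenable groups, virtually solvable groups, locally finite groups --- is stable under passing to subgroups and is invariant under conjugation inside $\homeo(X)$; hence if a subgroup $L\le\Gamma$ has the relevant property $\mathcal P$, so does $\Gamma$. Since $gG_Ug^{-1}=G_{gU}$ is again a rigid stabilizer, it has $\mathcal P$ by hypothesis; and for the first four properties $\Gamma$ has $\mathcal P$ if and only if its derived subgroup $[\Gamma,\Gamma]$ does (for free subgroups because $[F,F]$ is again non-abelian free whenever $F$ is; in the other three cases because if $[\Gamma,\Gamma]$ belongs to the complementary class then so does $\Gamma$, being an extension of the abelian group $\Gamma/[\Gamma,\Gamma]$ by $[\Gamma,\Gamma]$, using for virtual solvability that the solvable radical of a virtually solvable group has finite index). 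It therefore suffices to prove the following dynamical statement: \emph{if $G$ acts faithfully by homeomorphisms on a Hausdorff space $X$ with all rigid stabilizers non-trivial, and $\H\in\urs(G)$ is non-trivial, then every $H\in\H$ contains a conjugate of $[G_U,G_U]$ for some non-empty open $U$.} For the fifth property one needs instead a conjugate of a non-locally-finite subgroup of some $G_U$, obtained by essentially the same argument (cf.\ Theorem~\ref{thm-dycothomy-mostgeneral}).

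\smallskip

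\emph{Dynamical core: the commutator tool.} Since $\{1\}$ is a fixed point of the conjugation action of $G$ on $\sub(G)$, a minimal invariant set different from $\{\{1\}\}$ cannot contain $\{1\}$, so every $H\in\H$ is non-trivial. Fix $H\in\H$ and $1\ne s\in H$; as $X$ is Hausdorff and $s$ moves some point, there is a non-empty open $U_0\subset X$ with $sU_0\cap U_0=\varnothing$. Because $U_0$ and $sU_0$ are disjoint, $G_{U_0}$ and $sG_{U_0}s^{-1}=G_{sU_0}$ commute elementwise; for $h\in G_{U_0}$ the commutator $[s,h]=shs^{-1}h^{-1}$ is then supported in $U_0\sqcup sU_0$, acting as $h^{-1}$ on $U_0$ and as $shs^{-1}$ on $sU_0$, and a short computation shows that, as $h$ ranges over $G_{U_0}$, the elements $[s,h]$ generate a subgroup of $G_{U_0}\times G_{sU_0}$ containing $[G_{U_0},G_{U_0}]$. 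The crucial observation is that $[s,h]\in H$ as soon as $h\in H$. So the problem reduces to exhibiting, inside members of $\H$, a sufficiently rich supply of elements supported in a fixed small open set.

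\smallskip

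\emph{Dynamical core: using recurrence.} Here one uses the uniform recurrence of $\H$: for every finite $F\subset G$ the set of $g$ with $gHg^{-1}\cap F=H\cap F$ is syndetic. This allows conjugates of $s$ to be fed back into $H$; combining them, one produces elements of $H$ supported in translates of $U_0\sqcup sU_0$, and passing to a Chabauty limit along a suitably chosen net yields a member of $\H$ containing a conjugate of $[G_V,G_V]$ for some non-empty open $V$. Minimality of $\H$ then transports this to every member. The main obstacle is precisely this limiting step, because ``$H$ contains a prescribed infinite rigid stabilizer'' is \emph{not} a closed condition in $\sub(G)$, so one cannot pass to Chabauty limits naively. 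The resolution is to keep the subgroup one produces generated by commutators with a single fixed group element --- which is detected by any Chabauty-close member once that element lies in it --- while carefully bookkeeping how the open set $U_0$ is permitted to move along the net. This is the technical heart of the argument, carried out in detail in Theorem~\ref{thm-dycothomy-mostgeneral}.
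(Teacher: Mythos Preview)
Your reduction paragraph is fine, but the ``dynamical statement'' you set as target is too strong: in general one cannot show that every $H\in\H$ \emph{contains} a conjugate of $[G_U,G_U]$. The paper establishes containment of $[\Gamma,\Gamma]$ for a finite-index $\Gamma\le G_U$ only under the additional hypothesis that $G\acts X$ is extremely proximal (Theorem~\ref{thm-EP}); in the general setting of Theorem~\ref{MT: rigid urs} what is proved is weaker --- namely that $H$ has a finite-index subgroup of some $G_U$ as a \emph{subquotient} (Theorem~\ref{thm-dycothomy-mostgeneral}). That subquotient conclusion is what actually feeds the reduction, since each of the five complementary classes is closed under subgroups, quotients and extensions with abelian or finite kernel/quotient.

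More seriously, your ``using recurrence'' paragraph is where the proof should be, and it is not there. The syndeticity statement you quote does give you many $g$ with $g^{-1}sg\in H$, but this does not by itself produce elements of $H$ supported in a prescribed small open set, and the vague appeal to a Chabauty limit is exactly the step you yourself flag as problematic. The paper's mechanism is quite different from your commutator-with-a-single-$s$ idea: one first uses that $\{1\}\notin\overline{\Ccal(H)}$ to obtain a \emph{finite} set $P=\{g_1,\dots,g_r\}\subset G\setminus\{1\}$ such that every $G$-conjugate of $P$ meets $H$ (Lemma~\ref{L: accumulation}), then chooses pairwise disjoint open sets $U_1,\dots,U_r,g_1U_1,\dots,g_rU_r$ (Lemma~\ref{L: disjoint open}), and applies Neumann's covering lemma (Lemma~\ref{L: finite union}) to the decomposition $G_{U_1}\times\cdots\times G_{U_r}=\bigcup_i Y_i$ with $Y_i=\{\gamma:\gamma g_i\gamma^{-1}\in H\}$. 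This yields an index $\ell$ and a finite-index subgroup of $G_{U_\ell}$ realised, on $U_\ell$, by the elements $a_{\gamma,\delta}=(\gamma g_\ell^{-1}\gamma^{-1})(\delta g_\ell\delta^{-1})\in H$. Restricting these elements to $U_\ell$ gives the subquotient. None of this uses a limiting argument in $\sub(G)$, and it does not rely on first finding elements of $H$ in a rigid stabilizer --- that is precisely the step your outline leaves open.
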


This result has applications to the study of $C^\ast$-simplicity; see \textsection \ref{S: C^*-simple intro}.

\medskip

We obtain stronger conclusions on the uniformly recurrent subgroups of $G$ under additional assumptions on the action of $G$ on $X$. Recall that when $X$ is compact and $G\acts X$ is minimal, the closure of all point stabilizers in $\sub(G)$ contains a unique URS, called the \textbf{stabilizer URS} of $G\acts X$, and denoted $\S_G(X)$ \cite{Glas-Wei} (see Section \ref{sec-preliminaries} for details). The following result provides sufficient conditions under which $\S_G(X)$ turns out to be the unique URS of $G$, apart from the points $\{1\}$ and $\{G\}$ (hereafter denoted $1$ and $G$). We say that $G\acts X$ is an \textbf{extreme boundary action} if $X$ is compact and the action is minimal and extremely proximal (see \textsection\ref{subsec-notation} for the definition of an extremely proximal action).

\begin{thm}\label{thm-intro-3-urs}
Let $X$ be a compact Hausdorff space, and let $G$ be a countable group of homeomorphisms of $X$. Assume that the following conditions are satisfied: 
\begin{enumerate}[label=(\roman*)]
	\item $G\acts X$ is an extreme boundary action;
	\item there is a basis for the topology consisting of open sets $U \subset X$ such that the rigid stabilizer $G_U$ admits no non-trivial finite or abelian quotients;
	\item the point stabilizers for the action $G\acts X$ are maximal subgroups of $G$.
\end{enumerate}
Then the only uniformly recurrent subgroups of $G$ are $1$, $G$ and $\mathcal{S}_G(X)$.
\end{thm}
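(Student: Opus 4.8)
The strategy is to take an arbitrary non-trivial URS $\H \in \urs(G)$ with $\H \neq 1$, and show that either $\H = G$ or $\H = \S_G(X)$. Let me think about what tools are available. Theorem \ref{MT: rigid urs} (and its generalization Theorem \ref{thm-dycothomy-mostgeneral}) presumably says that elements of $\H$ interact non-trivially with rigid stabilizers — more precisely, the general result relating URS's to rigid stabilizers should give us: for the action $G \acts X$, and for $H \in \H$ a "generic" member, either $H$ is contained in a point stabilizer, or $H$ contains the rigid stabilizer $G_U$ of some non-empty open set $U$. This dichotomy is the heart of the matter. So the first step is to invoke that structural result to get, for each $H$ in the URS (or for a dense/generic set of them), this alternative.

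Next I would analyze the two cases. Case A: some (equivalently, by minimality of $\H$ and the conjugation action, all) $H \in \H$ is contained in a point stabilizer $G_x$. Since the point stabilizers are maximal subgroups (hypothesis (iii)) and $\H$ consists of non-trivial subgroups (as $\H \neq 1$, and $\H$ being a URS means the property of being non-trivial is either shared by all or... one needs a small argument that a URS other than $1$ consists entirely of non-trivial subgroups — this uses that the trivial subgroup is fixed by $G$, so $\{1\}$ would be a sub-URS), we get that either $H = G_x$ or $H$ is not contained in any point stabilizer. Actually the cleaner route: if $H \leq G_x$ and $H$ is normalized "enough", maximality forces $H = G_x$ (provided $H$ is not normal in $G$, but a non-trivial normal subgroup would be handled separately — note $G$ could a priori have normal subgroups, but under these hypotheses one expects $G$ to be simple or close to it; in any case a normal subgroup $N$ gives the URS $\{N\}$ and we'd need to rule out $1 < N < G$, which should follow from the hypotheses, e.g. a proper normal subgroup would have trivial intersection with some rigid stabilizer contradicting (ii) via commutator/micro-support arguments). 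Once $H = G_x$ for all relevant $H$, the URS $\H$ is contained in the closure of point stabilizers, and by the defining property of $\S_G(X)$ as the unique URS in that closure, $\H = \S_G(X)$. Here I'd use extreme proximality / minimality (hypothesis (i)) to ensure $\S_G(X)$ is genuinely the stabilizer URS and that the closure of point stabilizers behaves well.

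Case B: $H$ contains a rigid stabilizer $G_U$. Using the basis from hypothesis (ii) and conjugating by elements of $G$ (extreme proximality lets us move $U$ around and, crucially, "shrink" complements — extremely proximal actions have the feature that for any proper closed set $C$ and non-empty open $V$ there is $g$ with $g C \subset V$), I would try to show $H$ contains rigid stabilizers of a covering family of open sets, hence contains the subgroup they generate. Then I'd want to conclude $H = G$: the subgroup generated by all rigid stabilizers $G_U$ over a basis is normal in $G$, and by the micro-support + no-abelian-quotients hypothesis (ii) together with extreme proximality, this subgroup should be all of $G$ (or at least, combined with maximality of point stabilizers, forces $H = G$). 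The passage "$H \supseteq G_U$ for one $U$" $\Rightarrow$ "$H \supseteq G_V$ for all basic $V$" is where extreme proximality does the real work: given basic $V$, find $g$ with $g(X \setminus V) \subset U$ (possible by extreme proximality if $V$ is not dense, i.e. $X\setminus V$ has non-empty interior — arrange the basis elements to be "small"), so $g G_V g^{-1} = G_{gV} \supseteq G_U$... wait, I need the inclusion the other way: $g(X\setminus V)\subset U$ means $X \setminus gV \subset U$, i.e. $gV \supseteq X \setminus U$, so $G_{gV} \supseteq G_{X\setminus U}$ — that's not quite $G_U$ either. Let me instead use: there is $g$ with $gU \supseteq X \setminus V$, equivalently $g(X\setminus U) \subseteq V$ wait that needs $X \setminus U$ to be moved into $V$ — by extreme proximality applied to the closed proper set $X\setminus U$ and open set $V$, yes there is $g\in G$ with $g(X\setminus U)\subseteq V$, hence $gU \supseteq X\setminus V$, hence $G_{gU} \subseteq G_V$ — still wrong direction. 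The right statement: $A \subseteq B$ implies $G_A \leq G_B$ for the rigid stabilizer of an open set. So I want $gU \subseteq V$? No — I want to produce $G_V$ inside $H \supseteq G_{U'}$ for various $U'$, and $G_{U'} \leq G_V$ iff $U' \subseteq V$. So I need $H$ to contain $G_{U'}$ for $U'$ an arbitrarily large open set, or for a family covering $V$; since $H \supseteq G_U$ and $H \supseteq gG_Ug^{-1} = G_{gU}$ for all $g$, and extreme proximality lets $gU$ be chosen with $\overline{gU}$ avoiding any prescribed point, the sets $gU$ cover $X$, in fact any compact (hence $X$) is covered by finitely many $gU$'s each contained-or-not... this needs care, but the mechanism is: $H$ contains $\langle G_{gU} : g \in G\rangle$, a non-trivial normal subgroup of $G$, and then the normal-subgroup analysis plus maximality finishes it. The main obstacle I anticipate is precisely this Case B bookkeeping: transferring "$H$ contains one rigid stabilizer" to "$H = G$" cleanly, which requires combining extreme proximality, the basis condition (ii), and ruling out intermediate normal subgroups; I'd expect the paper to isolate a lemma (perhaps the normal subgroups of such $G$ are well-understood, or $G$ is simple) to handle this.
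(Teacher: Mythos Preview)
Your proposed dichotomy ``either $H \leq G_x$ for some $x$, or $H \supseteq G_U$ for some non-empty open $U$'' is not what the paper's structural results give, and you have not established it. The relevant result under extreme proximality (Theorem~\ref{thm-EP}) is one-sided: if $\H$ is a non-trivial URS, then \emph{every} $H \in \H$ contains $[\Gamma,\Gamma]$ for some finite index $\Gamma \leq G_U$, and condition~(ii) then upgrades this to $G_V \leq H$ for some basic $V$. There is no ``$H \leq G_x$'' branch, and your Case~A argument would not work anyway: maximality of $G_x$ in $G$ says nothing about proper subgroups \emph{of} $G_x$, so ``$H \leq G_x$ and maximality forces $H = G_x$'' is unjustified.

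The decisive gap is in your Case~B. From $G_V \leq H$ you infer $H \supseteq gG_Vg^{-1}$ for all $g$ and hence $H \supseteq \langle G_{gV} : g\in G\rangle$; but $H$ is not normal, so this is simply false. The paper's mechanism exploits closedness of the URS $\H$ rather than normality of $H$: using extreme proximality one chooses, for each neighbourhood $W$ of a fixed point $x$, an element $g_W \in G$ with $g_W(X\setminus V) \subset W$, and checks that every cluster point of the net $(g_W H g_W^{-1})$ --- which lies in $\H$ because $\H$ is closed and $G$-invariant --- contains $G_x^0$. One then moves $x$ to a continuity point $x_0$ of the stabilizer map (where $G_{x_0}^0 = G_{x_0}$) to obtain some $L \in \H$ with $G_{x_0} \leq L$. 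Only at this final step does hypothesis~(iii) enter: maximality of $G_{x_0}$ forces $L = G_{x_0}$ or $L = G$, whence $\H = \S_G(X)$ or $\H = G$. So maximality is used once at the end, not to drive a case split.
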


We actually prove a more general result, see Corollary \ref{cor-unique-urs}. Examples of groups to which this result applies are Thompson's groups $T$ and $V$, as well as examples in the family of groups acting on trees $G(F,F')$ (see \textsection \ref{subsec-intro-thompson} and \textsection \ref{subsec-intro-g(f,f')}). In particular this provides examples of finitely generated groups $G$  (with uncountably many subgroups) for which the space $\urs(G)$ is completely understood. In the case of Thompson's groups, we deduce from this lack of URS's rigidity results about their minimal actions on compact spaces (see \textsection\ref{subsec-intro-thompson}).

\subsection{Application to $C^\ast$-simplicity} \label{S: C^*-simple intro}

A group $G$ is said to be \textbf{$C^\ast$-simple} if its reduced $C^\ast$-algebra $\Cred(G)$ is simple. This property naturally arises in the study of unitary representations: $G$ is $C^\ast$-simple if and only if every unitary representation of $G$ that is weakly contained in the left-regular representation $\lambda_G$ is actually weakly equivalent to $\lambda_G$ \cite{dlH-survey}. Since amenability of a group $G$ is characterized by the fact that the trivial representation of $G$ is weakly contained in $\lambda_G$, a non-trivial amenable group is never $C^\ast$-simple.

The first historical $C^\ast$-simplicity result was Powers' proof that the reduced $C^\ast$-algebra of the free group $\mathbb{F}_2$ is simple \cite{Pow}. The methods employed by Powers have then been generalized in several different ways, and various classes of groups have been shown to be $C^\ast$-simple. We refer to \cite[Proposition 11]{dlH-survey} (see also the references given there), and to Corollary 12 therein for a list of important examples of groups to which these methods 
have been applied. 

Problems related to $C^\ast$-simplicity recently received new attention \cite{KK,BKKO,Raum,LB-c-etoile,Kenn,Ha-newlook}. A characterization of $C^\ast$-simplicity in terms of boundary actions was obtained by Kalantar and Kennedy: a countable group $G$ is $C^\ast$-simple if and only if $G$ acts topologically freely on its Furstenberg boundary; equivalently, $G$ admits some topologically free boundary action \cite{KK} (we recall the terminology in \textsection\ref{subsec-notation}). By developing a systematic approach based on this criterion, Breuillard--Kalantar--Kennedy--Ozawa provided new proofs of $C^\ast$-simplicity for many classes of groups~\cite{BKKO}. Moreover, they showed that the uniqueness of the trace on $\Cred(G)$ actually characterizes the groups $G$ having trivial amenable radical~\cite{BKKO}. While $C^\ast$-simplicity also implies triviality of the amenable radical \cite{dlH-survey}, counter-examples to the converse implication have been given in \cite{LB-c-etoile}.

Relying on the aforementioned boundary criterion from \cite{KK}, Kennedy subsequently showed that a countable group $G$ is $C^\ast$-simple if and only if $G$ admits no non-trivial amenable URS \cite{Kenn}. In view of this result, Theorem \ref{MT: rigid urs} has the following consequence.

\begin{cor}\label{cor-intro-csimple}
Let $X$ be a Hausdorff space, and let $G$ be a countable group of homeomorphisms of $X$. Assume that for every non-empty open set $U\subset X$, the rigid stabilizer $G_U$ is non-amenable. Then $G$ is $C^\ast$-simple.
\end{cor}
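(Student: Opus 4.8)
The plan is to deduce Corollary \ref{cor-intro-csimple} by combining Theorem \ref{MT: rigid urs} with Kennedy's criterion \cite{Kenn}, which states that a countable group $G$ is $C^\ast$-simple if and only if $G$ has no non-trivial amenable uniformly recurrent subgroup. So it suffices to show that, under the hypothesis that every rigid stabilizer $G_U$ (for $U \subset X$ non-empty open) is non-amenable, the group $G$ admits no non-trivial amenable URS.

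First I would record the trivial reductions. If $G$ itself were amenable, then already the rigid stabilizer $G_U$ for any non-empty open $U$ would be amenable as a subgroup of $G$, contradicting the hypothesis (here one uses that $X$ is non-empty, which is implicit since $G$ acts on $X$ and the hypothesis quantifies over non-empty open sets; if $X = \emptyset$ the statement is vacuous or the hypothesis is empty, so we may assume $X \neq \emptyset$). Hence $G$ is non-amenable, so the URS $\{G\}$ is non-amenable, and the trivial URS $\{1\}$ is of course the one excluded as ``trivial''. The content is therefore entirely about non-trivial URS's distinct from $\{1\}$.

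Next, the heart of the argument: let $\H \in \urs(G)$ be non-trivial. The ``non-amenable'' case of Theorem \ref{MT: rigid urs} applies verbatim with the present hypothesis, and yields that $\H$ is non-amenable, meaning every $H \in \H$ is non-amenable. In particular no non-trivial URS of $G$ is amenable. Combining this with the previous paragraph, the only amenable URS of $G$ is $\{1\}$, i.e.\ $G$ has no non-trivial amenable URS. By Kennedy's theorem, $G$ is $C^\ast$-simple, which is what we wanted.

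I do not expect any genuine obstacle here: the corollary is a direct juxtaposition of Theorem \ref{MT: rigid urs} (itself the substantial result, proved earlier) with the external characterization of $C^\ast$-simplicity via amenable URS's. The only point requiring the slightest care is the bookkeeping around the definition of ``trivial'' URS and the degenerate possibility that $G$ is amenable, both handled above. One should also note that Kennedy's criterion requires $G$ to be countable, which is part of the hypotheses, and does not require any hypothesis on $X$ — so the space $X$ plays a role only through Theorem \ref{MT: rigid urs}.
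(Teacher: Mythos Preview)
Your proof is correct and follows the same approach as the paper: the corollary is obtained by combining the non-amenable case of Theorem \ref{MT: rigid urs} with Kennedy's characterization of $C^\ast$-simplicity via amenable URS's (stated in the paper as Theorem \ref{thm-cstar-chab}). Your bookkeeping about $G$ itself being non-amenable is correct but in fact redundant, since $\{G\}$ is a non-trivial URS and Theorem \ref{MT: rigid urs} already forces it to be non-amenable.
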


We use this criterion to prove the $C^\ast$-simplicity of several classes of groups described later in this introduction.

\medskip

Consider now a countable group $G$ and a given boundary action $G\acts X$. By \cite{KK} if $G\acts X$ is topologically free then $G$ is $C^\ast$ simple. The converse does not hold: the fact that $G\acts X$ is not topologically free surely does not rule out the existence of another topologically free boundary action. Nevertheless, one may still ask if the $C^\ast$-simplicity of $G$ can be characterized in terms of the given action $G\acts X$ only. This is useful in practice since it often happens that groups come equipped with an explicitly given boundary action, and other boundary actions may be difficult to concretely identify. It is proven in \cite{BKKO} that if $G\acts X$ is not topologically free and has amenable stabilizers, then $G$ is not $C\ast$-simple. If however stabilizers are non-amenable, nothing can be concluded on the $C^\ast$-simplicity of $G$ (see Example \ref{exe-stab-nonamenable}).

We show that under the additional assumption that $G\acts X$ is an \emph{extreme} boundary action, the $C^\ast$-simplicity of $G$ is completely characterized by the stabilizers of $G\acts X$.
  
\begin{thm} \label{MT: extreme}
Let $G$ be a countable group, and $G\acts X$ a faithful extreme boundary action. Then $G$ is $C^\ast$-simple if and only if one of the following possibilities holds:
\begin{enumerate}[label=(\roman*)]
\item the action $G\acts X$ is topologically free;
\item the point stabilizers of the action $G\acts X$ are non-amenable.
\end{enumerate}
\end{thm}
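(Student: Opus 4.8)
The plan is to derive the equivalence by combining three characterizations of $C^\ast$-simplicity --- the boundary-action criteria of Kalantar--Kennedy \cite{KK} and Breuillard--Kalantar--Kennedy--Ozawa \cite{BKKO}, and Kennedy's uniformly recurrent subgroup criterion \cite{Kenn} --- with the analysis of $\urs(G)$ for extremely proximal actions carried out earlier in the paper.

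I would first dispatch the ``only if'' direction, which is the easy half. Recall that an extremely proximal action is strongly proximal, so a faithful extreme boundary action $G \acts X$ is in particular a faithful boundary action. If neither (i) nor (ii) holds, then $G \acts X$ is not topologically free and has amenable point stabilizers, and hence $G$ is not $C^\ast$-simple by \cite{BKKO} (as recalled in \textsection\ref{S: C^*-simple intro}). Thus $C^\ast$-simplicity forces (i) or (ii).

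For the converse, case (i) is immediate: a topologically free boundary action witnesses $C^\ast$-simplicity by \cite{KK}. The content is case (ii). Here I would invoke Kennedy's criterion \cite{Kenn}: it suffices to show that if $G \acts X$ is an extreme boundary action with non-amenable point stabilizers, then $G$ has no non-trivial amenable URS. So let $\H \in \urs(G)$ be non-trivial; I claim $\H$ is non-amenable. The key input is the structural result for extremely proximal actions behind Theorems \ref{thm-dycothomy-mostgeneral} and \ref{thm-intro-3-urs}: for an extreme boundary action the stabilizer URS $\S_G(X)$ is the smallest non-trivial URS of $G$, in the sense that every non-trivial $\H$ \emph{dominates} $\S_G(X)$ --- roughly, for each $H \in \H$ some $G$-conjugate of a member of $\S_G(X)$ is contained in $H$. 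Granting this, it remains to check that $\S_G(X)$ itself is non-amenable. For an extreme boundary action the members of $\S_G(X)$ are tightly tied to the point stabilizers --- they are the germ-stabilizer-type subgroups attached to the points of $X$, and one shows they are co-amenable in the corresponding point stabilizers --- so non-amenability of the point stabilizers propagates to the members of $\S_G(X)$. Hence every $H \in \H$ contains a non-amenable subgroup, so $\H$ is non-amenable; by \cite{Kenn}, $G$ is $C^\ast$-simple.

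I expect the main obstacle to be the domination statement invoked in case (ii): that a member $H$ of a non-trivial URS of an extreme boundary action must contain a $G$-conjugate of a member of $\S_G(X)$. This is exactly where extreme proximality --- rather than the mere existence of non-trivial rigid stabilizers --- is essential. The mechanism is that such an $H$ is a \emph{confined} subgroup, so it meets every $G$-translate of some fixed finite subset of $G \setminus \{1\}$; extreme proximality then lets one conjugate the resulting non-trivial elements so as to be supported in arbitrarily small open sets, and, taking suitable commutators, extract from $H$ enough elements to recover --- up to conjugacy --- a member of $\S_G(X)$. A secondary technical point is the co-amenability of the members of $\S_G(X)$ in the point stabilizers for extreme boundary actions, which is what channels the hypothesis on point stabilizers into the conclusion about $\S_G(X)$.
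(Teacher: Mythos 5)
Your overall architecture (Kalantar--Kennedy and BKKO for the easy implications, Kennedy's URS criterion for the main one) matches the paper's, but the key step you invoke in case (ii) is not available under the stated hypotheses, and this is a genuine gap. You claim that for \emph{any} extreme boundary action, every non-trivial $\H\in\urs(G)$ dominates $\S_G(X)$, i.e.\ $\S_G(X)\preccurlyeq\H$. In the paper this is Corollary \ref{cor-unique-urs}, and it requires an additional hypothesis beyond extreme proximality: that for every open $U$ and every finite-index subgroup $\Gamma\leq G_U$ there is an open $V\subset U$ with $G_V\leq[\Gamma,\Gamma]$. What extreme proximality alone buys (Theorem \ref{thm-EP}) is only that a confined $H$ contains $[\Gamma,\Gamma]$ for some finite-index $\Gamma\leq G_U$; upgrading this to containment of a conjugate of a member of $\S_G(X)$ (Corollary \ref{cor-EP-acc-Gx}) is exactly where the extra hypothesis is used, and Theorem \ref{MT: extreme} does not assume it. Your secondary point --- co-amenability of members of $\S_G(X)$ in point stabilizers --- is also unjustified in general, though it is not needed: $\S_G(X)$ contains the actual stabilizers $G_{x_0}$ of continuity points, so non-amenability of all point stabilizers gives non-amenability of $\S_G(X)$ directly by minimality and closedness of the set of amenable subgroups.

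The paper's route in case (ii) is both simpler and avoids the domination statement entirely. Pick $x$ with $G_x=G_x^0$ (such points are dense); since $G_x$ is non-amenable it has a non-amenable finitely generated subgroup, which fixes a neighbourhood of $x$ pointwise and hence lies in $G_C$ for a closed $C\not\ni x$. Extreme proximality then conjugates $G_C$ into every non-empty open $U$, so \emph{all} rigid stabilizers are non-amenable, and Corollary \ref{cor-mostgeneral-Q} (the general rigid-stabilizer-to-URS theorem, which needs no proximality at all) shows every non-trivial URS is non-amenable; Kennedy's criterion finishes. (If you want to stay closer to your outline, note that Theorem \ref{thm-EP} already suffices: $[\Gamma,\Gamma]$ is co-amenable in the finite-index subgroup $\Gamma$ of the non-amenable $G_U$, hence non-amenable.) Finally, in the ``only if'' direction, the negation of (ii) gives only \emph{one} amenable stabilizer, so the BKKO statement ``not topologically free with amenable stabilizers'' does not apply verbatim; the paper bridges this via Proposition \ref{prop-A_G-other-action}(i), deducing $\A_G=\S_G(X)\neq 1$ from a single amenable $G_x$.
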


In fact in Theorem \ref{thm-AG-extreme} we elucidate the precise relation between the stabilizers of any faithful extreme boundary action $G\acts X$, and the stabilizers of the action of $G$ on its Furstenberg boundary.
Examples of groups that admit a natural extreme boundary action are Thompson's groups, as well as any group admitting an action on a tree which is minimal and of general type (see \textsection \ref{subsec-trees}).

\subsection{Thompson's groups} \label{subsec-intro-thompson}

Recall that Thompson's group $F$ is the group of orientation preserving homeomorphisms of the unit interval which are piecewise linear, with only finitely many breakpoints, all at dyadic rationals, and slopes in $2^{\mathbb{Z}}$. Thompson's group $T$ admits a similar description as group of homeomorphisms of the circle, and $V$ is a group of homeomorphisms of the Cantor set. We refer the reader to the notes \cite{C-F-P} for an introduction to these groups. 

Corollary \ref{cor-intro-csimple} admits the following consequence.

\begin{thm}\label{thm-intro-V}
Thompson's group $V$ is $C^\ast$-simple.
\end{thm}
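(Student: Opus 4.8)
The plan is to derive Theorem~\ref{thm-intro-V} directly from Corollary~\ref{cor-intro-csimple}. Since $V$ acts faithfully by homeomorphisms on the Cantor set $\mathfrak{C}$, it suffices to check the hypothesis of the corollary: for every non-empty open set $U\subset\mathfrak{C}$, the rigid stabilizer $V_U$ is non-amenable. By definition of the topology on $\mathfrak{C}$ (identified with $\{0,1\}^{\mathbb{N}}$, or with the infinite rooted binary tree boundary), every non-empty open $U$ contains a non-empty \emph{clopen} cylinder subset $C\subset U$, and clearly $V_C\leq V_U$, so it is enough to show that $V_C$ is non-amenable for every non-empty clopen $C$.

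First I would recall the standard fact that for a cylinder $C$ corresponding to a finite word $w$, the rigid stabilizer $V_C$ is canonically isomorphic to a copy of Thompson's group $V$ itself: an element of $V$ supported inside $C$ is determined by a prefix-replacement rule that only rewrites extensions of $w$, and the map $v\mapsto$ (the homeomorphism of $\mathfrak{C}$ acting as $v$ on $C\cong\mathfrak{C}$ via the canonical identification and as the identity elsewhere) is an injective homomorphism $V\hookrightarrow V_C$; conversely every element of $V_C$ arises this way. Hence $V_C\cong V$. Therefore the hypothesis of Corollary~\ref{cor-intro-csimple} reduces to the single assertion that $V$ is non-amenable. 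This in turn follows because $V$ contains non-abelian free subgroups: for instance $V$ contains a copy of Thompson's group $T$, which contains $\mathrm{PSL}(2,\mathbb{Z})\cong \mathbb{Z}/2\ast\mathbb{Z}/3$ and hence a free subgroup of rank $2$; alternatively one invokes directly that $V$ (and already $V$ restricted to act on two disjoint clopen subsets in ``ping-pong'' fashion) contains $\mathbb{F}_2$. Either way, $V$ is non-amenable, so every $V_C$ is non-amenable, so every $V_U$ is non-amenable.

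Applying Corollary~\ref{cor-intro-csimple} with $X=\mathfrak{C}$ then gives that $V$ is $C^\ast$-simple, which is the claim. The only genuinely substantive input beyond the corollary is the self-similarity identification $V_C\cong V$ together with the classical non-amenability of $V$; both are well known, so there is essentially no obstacle here. The one point requiring a sentence of care is passing from an arbitrary open $U$ to a clopen cylinder inside it, using that cylinders form a basis for the topology of the Cantor set; once that is noted, the rest is immediate from the machinery already set up. (One could equally phrase the argument via the description of $V$ as a group acting on the rooted binary tree, with rigid stabilizers of subtrees being copies of $V$, which is the viewpoint matching the ``micro-supported'' framework of the paper.)
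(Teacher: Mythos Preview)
Your proof is correct and follows essentially the same route as the paper: the paper also observes that the rigid stabilizer of a cylinder for the action of $V$ on $\Ccal$ is isomorphic to $V$, hence non-amenable (since $V$ contains non-abelian free subgroups), and then applies Theorem~\ref{thm-non-amenab-rigid-stab} (the body-of-paper version of Corollary~\ref{cor-intro-csimple}). The reduction from an arbitrary open set to a cylinder inside it is the same in both arguments.
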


Similar arguments apply to various classes of \enquote{Thompson-like} groups, including the higher dimensional generalizations $nV$ introduced by Brin \cite{Brin-nV}, as well as the groups $V_G$ associated to a self-similar group $G$ constructed by Nekrashevych \cite{Nek-cuntz,Nek-fp}. See Theorem \ref{thm-V-and-relatives}.

\medskip

Recall that it is a long-standing open problem to determine whether $F$ is amenable. Haagerup and Olesen discovered in \cite{H-O} a connection between this problem and the $C^\ast$-simplicity of $T$. They proved that if the group $F$ is amenable, then the group $T$ cannot be $C^\ast$-simple. Whether the converse statement is true has been considered by Bleak and Juschenko in \cite{B-J}, and more recently in \cite{Bleak-normalish}. Breuillard--Kalantar--Kennedy--Ozawa obtained a partial result in this direction, proving that if $T$ is not $C^*$-simple, then $F$ is not $C^*$-simple either \cite{BKKO}. 

In this paper we prove the converse of the Haagerup--Olesen result, i.e.\ that the non $C^\ast$-simplicity of $T$ would imply amenability for $F$. More generally, we show that the existence of at least one overgroup of $F$ inside $\homeo(\Sbb^1)$ which is not $C^*$-simple, would imply that $F$ is amenable (see Theorem \ref{thm-F-overgroups-circle}). This applies in particular to the group $F$ itself, so that the non-amenability of $F$ would automatically imply its $C^\ast$-simplicity.

\begin{thm}[see also Theorem \ref{thm-T-set-points-stab}]
The following statements about Thompson's groups $F$ and $T$ are equivalent:
\begin{enumerate}[label=(\roman*)]
\item The group $F$ is non-amenable;
\item The group $F$ is $C^*$-simple;
\item The group $T$  is $C^*$-simple.
\end{enumerate}
\end{thm}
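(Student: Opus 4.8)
The plan is to prove the cycle of implications $(i)\Rightarrow(ii)\Rightarrow(iii)\Rightarrow(i)$, drawing on the machinery developed earlier in the paper together with the known input from \cite{H-O} and \cite{BKKO}.

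\medskip

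\noindent\emph{Proof sketch.} We first record the two implications that do not require new work. The implication $(iii)\Rightarrow(i)$ is precisely the Haagerup--Olesen theorem \cite{H-O}: if $F$ were amenable then $T$ would not be $C^\ast$-simple, so $C^\ast$-simplicity of $T$ forces $F$ to be non-amenable. The implication $(ii)\Rightarrow(iii)$ is the Breuillard--Kalantar--Kennedy--Ozawa result \cite{BKKO} that non $C^\ast$-simplicity of $T$ implies non $C^\ast$-simplicity of $F$ (contrapositive). Hence the whole content is the implication $(i)\Rightarrow(ii)$: \emph{if $F$ is non-amenable then $F$ is $C^\ast$-simple.}

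\medskip

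To prove $(i)\Rightarrow(ii)$, I would invoke Kennedy's criterion \cite{Kenn}: $F$ is $C^\ast$-simple if and only if $F$ has no non-trivial amenable URS. So assume $F$ is non-amenable, and let $\H\in\urs(F)$ be a non-trivial uniformly recurrent subgroup; the goal is to show $\H$ is non-amenable. Here I want to apply Theorem~\ref{MT: rigid urs} to the standard action $F\acts (0,1)$ by piecewise linear homeomorphisms. This action is faithful and micro-supported: for every non-empty open $U\subset(0,1)$ the rigid stabilizer $F_U$ contains a copy of $F$ itself (restrict to a dyadic subinterval contained in $U$, where the group of dyadic piecewise-linear homeomorphisms supported there is isomorphic to $F$). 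Therefore $F_U$ is non-amenable precisely because $F$ is assumed non-amenable. By Theorem~\ref{MT: rigid urs} (the non-amenability clause), every non-trivial URS of $F$ is non-amenable, and in particular $\H$ is non-amenable. By \cite{Kenn}, $F$ is $C^\ast$-simple. This closes the cycle.

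\medskip

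The main subtlety, and the step I would be most careful about, is the identification of rigid stabilizers inside $F$ with copies of $F$: one needs that every non-empty open subset of $(0,1)$ contains a dyadic interval $[a,b]$ with $a,b$ dyadic rationals, and that the subgroup of $F$ consisting of elements supported in $[a,b]$ is isomorphic to $F$ (this is a standard ``self-similarity'' feature of Thompson's group, since $F$ restricted to any dyadic interval, rescaled, is again $F$). Granting that, non-amenability of $F$ transfers verbatim to non-amenability of each $F_U$, and the hypotheses of Theorem~\ref{MT: rigid urs} and of Corollary~\ref{cor-intro-csimple} are met. (Indeed one could quote Corollary~\ref{cor-intro-csimple} directly: the micro-supported faithful action $F\acts(0,1)$ with non-amenable rigid stabilizers immediately yields $C^\ast$-simplicity of $F$.) Everything else is a bookkeeping of the three implications above, and the references \cite{H-O}, \cite{BKKO}, \cite{Kenn} supply the remaining input.
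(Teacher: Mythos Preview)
Your argument is correct and the core step $(i)\Rightarrow(ii)$ is exactly the paper's approach: rigid stabilizers of $F$ acting on the interval (equivalently on $\Sbb^1$ fixing $0$) contain copies of $F$ by \cite[Lemma 4.4]{C-F-P}, hence are non-amenable under $(i)$, and Corollary~\ref{cor-intro-csimple}/Theorem~\ref{thm-non-amenab-rigid-stab} gives $C^\ast$-simplicity.

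The only organizational difference is in how you close the cycle. You route $(ii)\Rightarrow(iii)$ through the BKKO partial converse. The paper instead proves $(i)\Rightarrow(iii)$ directly by the \emph{same} rigid-stabilizer argument applied to $T$ (since $T\le\homeo(\Sbb^1)$ contains $F$, the rigid stabilizers $T_U$ also contain copies of $F$ and are non-amenable under $(i)$); it then closes with the trivial $(ii)\Rightarrow(i)$ (a non-trivial amenable group is never $C^\ast$-simple). So the paper's proof is self-contained and does not need to cite the BKKO implication at all, whereas your cycle depends on it. Both are valid; the paper's route is slightly cleaner in that the new input $(i)\Rightarrow(ii),(iii)$ comes from a single theorem (Theorem~\ref{thm-F-overgroups-circle}) covering any overgroup of $F$ in $\homeo(\Sbb^1)$.
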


We point out that, while we prove these properties to be equivalent, we do not elucidate whether these are true or false.

\medskip

We also obtain a complete classification of the URS's of Thompson's groups. Recall that R.\ Thompson proved that the groups $[F,F]$, $T$ and $V$ are simple, and that the normal subgroups of $F$ are precisely the subgroups containing $[F,F]$ \cite{C-F-P}. In spite $T$ and $V$ do not have normal subgroups, they do admit non-trivial URS's coming from their action respectively on the circle and the Cantor set. We prove that these are the only ones, and that $F$ admits no URS other than its normal subgroups.

\begin{thm}[Classification of the URS's of Thompson's groups] \label{thm-intro-URS-Thompson}
$ $
\begin{enumerate}[label=(\roman*)]
\item The only URS's of Thompson's group $F$ are the normal subgroups. The derived subgroup $[F,F]$ has no uniformly recurrent subgroups other than $1$ and $[F,F]$.
\item The URS's of Thompson's group $T$ are $1, T$ and the stabilizer URS associated to its action on the circle.
\item The URS's of Thompson's group $V$ are $1, V$ and the stabilizer URS associated to its action on the Cantor set.
\end{enumerate}
\end{thm}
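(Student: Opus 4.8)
The plan is to derive parts (ii) and (iii) from the general uniqueness result Corollary \ref{cor-unique-urs} (of which Theorem \ref{thm-intro-3-urs} is a special case), and to derive part (i) by combining the general dichotomy Theorem \ref{thm-dycothomy-mostgeneral} with R.\ Thompson's description of the normal subgroups of $F$.

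For $T$ acting on $\Sbb^1$ and $V$ acting on the Cantor set $X$, these are faithful \emph{extreme boundary actions}: minimality is classical, and extreme proximality follows from the fact that, using the underlying dyadic combinatorics, any proper closed subset can be mapped by a single group element into an arbitrarily small basic (\enquote{dyadic}) open set. These dyadic open sets form a basis, and for such a set $U$ the rigid stabilizer is a copy of a Thompson group: $V_U \cong V$ (acting on the clopen set $U$), which is simple, so hypothesis (ii) of Theorem \ref{thm-intro-3-urs} holds verbatim; while $T_U \cong F$ (supported on the closed arc $\overline U$), whose derived subgroup $[T_U,T_U]\cong[F,F]$ is simple and non-abelian --- exactly the weakened input that Corollary \ref{cor-unique-urs} requires. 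Granting also that the point stabilizers $T_x$ and $V_x$ satisfy the relevant maximality/primitivity hypothesis, Corollary \ref{cor-unique-urs} gives that the only URS's of $G$ are $1$, $G$ and $\S_G(X)$; and $\S_G(X)\neq 1$ since these actions are not topologically free (an element of $G$ supported in a proper dyadic open set fixes points near its boundary without fixing a neighborhood of them).

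For part (i) we use the faithful micro-supported action of $F$, resp.\ of $[F,F]$, on the non-compact space $(0,1)$: for a dyadic subinterval $U$ with $\overline U\subset(0,1)$, the rigid stabilizers $F_U$ and $([F,F])_U$ are both copies of $F$, with derived subgroup $\cong[F,F]$. Let $\H$ be a non-trivial URS of $F$. Theorem \ref{thm-dycothomy-mostgeneral} implies that every $H\in\H$ contains $[F_U,F_U]$ for all $U$ in a cofinal family of such subintervals (the dichotomy supplies this for members of $\H$ whose \enquote{support} meets $U$, and minimality of $\H$ propagates it to all of $\H$); since the subgroups $[F_U,F_U]$ exhaust $[F,F]$, we obtain $[F,F]\le H$ for every $H\in\H$. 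As $F/[F,F]$ is abelian, every subgroup of $F$ containing $[F,F]$ is normal, hence fixed by conjugation, so minimality of $\H$ forces $\H=\{N\}$ for a unique $N\unlhd F$; conversely every normal subgroup yields such a URS. The same argument applied to $[F,F]\acts(0,1)$ shows that every non-trivial URS of $[F,F]$ has all its members equal to $[F,F]$ (by simplicity), so the only URS's of $[F,F]$ are $1$ and $[F,F]$.

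The steps I expect to be most delicate are: checking the hypotheses of Corollary \ref{cor-unique-urs} for $T\acts\Sbb^1$ and $V\acts X$ --- notably the maximality/primitivity of the point stabilizers and, for $T$, the reduction from a \enquote{no small quotients} condition on $G_U$ to the commutator version --- and the application of Theorem \ref{thm-dycothomy-mostgeneral} over the non-compact space $(0,1)$, where one must argue that the support data furnished by the dichotomy really forces $[F_U,F_U]\le H$ for \emph{every} $H\in\H$ and for arbitrarily large $U$; this should follow from the $F$-equivariance of that data together with minimality of $F\acts(0,1)$ and the fact that elements of $[F,F]$ are trivial near the endpoints $0$ and $1$.
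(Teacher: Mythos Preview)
Your treatment of parts (ii) and (iii) is essentially the paper's: verify the hypotheses of Corollary~\ref{cor-unique-urs} for $T\acts\Sbb^1$ and $V\acts\Ccal$ (extreme proximality, the commutator condition on rigid stabilizers via $T_U\cong F$ and $V_U\cong V$, and maximality of point stabilizers via $2$-transitivity on orbits), and read off the conclusion. That is correct.

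Part (i), however, has a genuine gap. Theorem~\ref{thm-dycothomy-mostgeneral} only tells you that $H$ admits a finite index subgroup of some $G_U$ as a \emph{subquotient}; it does \emph{not} give $[F_U,F_U]\le H$. The containment conclusion you need is the content of Theorem~\ref{thm-EP}, which requires the action to be extremely proximal. But the action of $F$ (or $[F,F]$) on $(0,1)$ is \emph{not} extremely proximal: a closed set such as $(0,1)\setminus(1/3,2/3)$ cannot be compressed, since every element of $[F,F]$ is the identity near $0$ and near $1$. The paper circumvents this by letting $F$ and $[F,F]$ act on the \emph{circle} $\Sbb^1=[0,1]/\!\sim$; there $[F,F]$ acts extremely proximally (Lemma~\ref{lem-F-ep}), so Theorem~\ref{thm-EP} applies and yields, for any $H$ in a non-trivial URS $\H$, an interval $[\alpha,\beta]$ and a finite index $\Gamma\le F_{[\alpha,\beta]}$ with $[\Gamma,\Gamma]\le H$, hence $F_{[a,b]}\le H$ for some dyadic $a<b$ (Lemma~\ref{lem-rigid-stab-F}).

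Even after this correction, the endgame is not the one you sketch. The paper does \emph{not} show that every $H\in\H$ contains $[F,F]$; instead it shows that the conjugacy class of $F_{[a,b]}$ in $[F,F]$ accumulates at $[F,F]$ (Lemma~\ref{prop-lem-acc-F'}), whence $[F,F]\in\overline{\conj(H)}$. For $G=[F,F]$ this forces $\H=\{[F,F]\}$; for $G=F$ it gives some $N\in\H$ with $[F,F]\le N$, which is then normal, so $\H=\{N\}$. Your proposed propagation (\enquote{minimality of $\H$ forces $[F_U,F_U]\le H$ for all $H$ and a cofinal family of $U$}) would need a separate argument, and it is not what the paper does.
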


Theorem \ref{thm-intro-URS-Thompson} can be compared with a result of Dudko and Medynets \cite{Dud-Med}, stating that the groups $F$, $T$ and $V$ essentially do not admit invariant random subgroups. Note that the URS's associated to $T\acts \Sbb^1$ and $V\acts \Ccal$ do not carry any invariant probability measure.

\medskip

This lack or uniformly recurrent subgroup implies a rigidity result on the possible minimal actions on compact spaces of the groups $F, T$ and $V$. Recall that if $X,Y$ are two $G$-spaces, the action $G\acts X$ is said to  \textbf{factor onto}  $G\acts Y$ if there exists a continuous $G$-equivariant map from $X$ onto $Y$.

\begin{thm}[Rigidity of minimal actions of Thompson's groups on compact spaces] \label{thm-intro-actions-Thompson}
$ $
\begin{enumerate}[label=(\roman*)]
\item Every faithful, minimal action of $F$ on a compact space is topologically free.
\item Every non-trivial minimal action of $T$ on a compact space is either topologically free or factors onto the standard action on the circle. 
\item Every non-trivial minimal action of $V$ on a compact space is either topologically free or factors onto the standard action on the Cantor set. 
\end{enumerate}
\end{thm}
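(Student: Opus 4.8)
The plan is to reduce everything to the classification of uniformly recurrent subgroups obtained in Theorem~\ref{thm-intro-URS-Thompson}, via the dictionary between minimal actions on compact spaces and their stabilizer URS's. Throughout, let $G$ be one of $F$, $T$, $V$, and let $G\acts X$ be a minimal action on a compact Hausdorff space, with stabilizer URS $\S_G(X)\in\urs(G)$. Two soft facts, both recorded in \textsection\ref{sec-preliminaries}, will be used: first, $\S_G(X)=1$ if and only if $G\acts X$ is topologically free; second, if $\S_G(X)=\{N\}$ for a normal subgroup $N\unlhd G$, then $N$ is contained in the kernel of the action — indeed, at a continuity point $x$ of $x\mapsto G_x$ one has $G_x=N$, hence $G_{gx}=gNg^{-1}=N$ for every $g$, so $N$ fixes the dense orbit $Gx$ pointwise, and therefore fixes all of $X$.

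Part (i) then follows at once: if $F\acts X$ is faithful and minimal, then $\S_F(X)$ is a uniformly recurrent subgroup of $F$, hence by Theorem~\ref{thm-intro-URS-Thompson}(i) it equals $\{N\}$ for some $N\unlhd F$; by the second fact and faithfulness, $N=1$, so $\S_F(X)=1$ and the action is topologically free. For parts (ii) and (iii), write $X_0$ for the standard $G$-space ($\Sbb^1$ if $G=T$, the Cantor set $\Ccal$ if $G=V$). Since $G$ is simple and infinite, a non-trivial action is faithful and $X$ is infinite; by minimality no point is globally fixed, so $G_x\neq G$ for every $x\in X$. By Theorem~\ref{thm-intro-URS-Thompson}, $\S_G(X)$ equals $1$, $\{G\}$, or $\S_G(X_0)$; the possibility $\{G\}$ is excluded (it would make $X$ a single point), and $\S_G(X)=1$ gives topological freeness by the first fact. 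So it remains, assuming $\S_G(X)=\S_G(X_0)$, to produce a $G$-factor map $X\to X_0$.

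I would build this map in two steps. Step one: realise a point stabilizer of $X_0$ as a point stabilizer of $X$. Fix a continuity point $x_0$ of $X_0$; by hypothesis~(iii) of Theorem~\ref{thm-intro-3-urs} (verified for these standard actions) its stabilizer $H:=G_{x_0}$ is a maximal subgroup of $G$, and $H\in\S_G(X_0)=\S_G(X)$. Writing $H$ as a limit of point stabilizers $G_{y_\alpha}$ at continuity points $y_\alpha$ of $X$, and passing to a subnet with $y_\alpha\to y\in X$, the fact that each $\fix(g)$ is closed yields $H\leq G_y$; since $H$ is maximal and $G_y\neq G$, this forces $H=G_y$. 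Step two: set $x_\ast:=y$, so $G_{x_\ast}=H=G_{x_0}$; then $g x_\ast\mapsto g x_0$ is a well-defined $G$-equivariant map from the dense orbit $Gx_\ast$ onto $Gx_0$, and I claim it extends to a continuous $G$-map $X\to X_0$, which is then automatically surjective. Composition gives the desired factor $X\to X_0$, proving (ii) and (iii).

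The crux is the extension claim in step two: that $g x_\ast\mapsto g x_0$ extends continuously from $Gx_\ast$ to all of $X$. This is precisely where the assumption that $G\acts X_0$ is an extreme boundary action must be used; for a merely minimal $X_0$ it is false in general (as one sees already for equicontinuous actions with trivial point stabilizers), so one has to exploit the contracting dynamics of extreme proximality to see that the values $g x_0$ are forced by the behaviour of $g x_\ast$ near the limit. I expect this extension lemma to be where the work lies; the remaining ingredients — the two soft facts about $\S_G(X)$ and the realisation of the maximal subgroup $H$ inside $X$ as a point stabilizer — are routine given Theorems~\ref{thm-intro-URS-Thompson} and~\ref{thm-intro-3-urs}.
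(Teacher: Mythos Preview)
Your argument for (i) is correct and matches the paper. For (ii) and (iii) there is a genuine gap: the extension claim in Step two is exactly the heart of the matter, and you do not prove it. Your suggestion that extreme proximality of $G\acts X_0$ is the missing ingredient is off target; what is actually needed is the algebraic fact that for any two distinct points $z_1,z_2\in X_0$ the subgroups $G^0_{z_1}$ and $G^0_{z_2}$ generate $G$ (this follows from $2$-transitivity of $T$ and $V$ on each orbit together with simplicity). Granting that lemma, one can complete your Step two: if $g_i x_\ast\to x$ and, after passing to a subnet, $g_i x_0\to z$, then every $h\in G^0_z$ eventually fixes $g_i x_0$, hence (via $G_{x_\ast}=G_{x_0}$) eventually fixes $g_i x_\ast$, so $h\in G_x$; thus $G^0_z\le G_x$, and the generation lemma forces $z$ to be unique, which gives the continuous extension.

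The paper takes a more direct route that avoids your Step one entirely and makes the role of the generation lemma transparent. Rather than finding a single point $x_\ast$ with the correct stabilizer and extending from its orbit, it defines $\varphi(x)$ for \emph{every} $x\in X$ at once: by Lemma~\ref{L: upper semicontinuous}(i) together with $\S_G(X)=\S_G(X_0)$, there exists $z\in X_0$ with $G^0_z\le G_x$; the generation lemma gives uniqueness of such $z$ (two distinct candidates would force $G_x=G$); one then sets $\varphi(x)=z$ and checks continuity directly. This construction simultaneously yields the characterization of the factor map stated after (iii), and shows its uniqueness. So your outline can be salvaged, but the decisive input is the generation property of the subgroups $G^0_z$, not extreme proximality of the target, and once you isolate that lemma the paper's pointwise definition of $\varphi$ is both shorter and more informative than the orbit-plus-extension strategy.
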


Part (ii) of Theorem \ref{thm-intro-actions-Thompson} may be compared to a result of Ghys and Sergiescu, stating that every non-trivial action of $T$ on the circle by $C^2$-diffeomorphisms is semi-conjugate to the standard one \cite[Th\'eor\`eme K]{Ghy-Se}. Whether this rigidity holds for actions by homeomorphisms does not seem to
have been adressed in the literature (see the remark following Theorem 3.14 in \cite{Mann-circle}). In \textsection\ref{S: T circle} we show how Theorem \ref{thm-intro-actions-Thompson} can be used to prove this. We learned from \'E.\ Ghys that this can also be proved using bounded cohomology.

\begin{cor} \label{cor-Ghys-Serg-homeos}
Every non-trivial action by homeomorphisms of Thompson's group $T$ on the circle is semi-conjugate to the standard one.
\end{cor}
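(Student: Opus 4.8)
The plan is to deduce Corollary~\ref{cor-Ghys-Serg-homeos} from part (ii) of Theorem~\ref{thm-intro-actions-Thompson} together with some elementary facts about circle dynamics. Let $\varphi\colon T\to\homeo(\Sbb^1)$ be a non-trivial action. Since $[T,T]=T$ is simple, $\ker\varphi$ is trivial, so the action is faithful; in particular it is non-trivial in the strong sense. The first step is to pass from an arbitrary action to a minimal one: recall that for any action of a group by homeomorphisms of the circle, either there is a finite orbit, or there is a unique minimal set, which is either the whole circle or a Cantor set, and in the latter case the action is semi-conjugate (by collapsing the complementary intervals) to a minimal action on the circle. So I first rule out the existence of a finite orbit. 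A finite orbit of size $n$ would give a homomorphism from $T$ to a finite cyclic group (the action on a finite invariant set, up to orientation, is by rotation), hence would be trivial since $T$ is simple; so the fixed finite set would be pointwise fixed, and then $T$ would act on each complementary arc, giving a non-trivial homomorphism $T\to\homeo(I)\cong\homeo(\R)$. But $\homeo(\R)$ is left-orderable, hence so would be its image; since $T$ is simple and non-trivial this image is isomorphic to $T$, contradicting the fact that $T$ is not left-orderable (equivalently, $T$ has no non-trivial action on $\R$ — indeed $T$ contains torsion, e.g.\ finite cyclic subgroups coming from rotations by dyadic rationals, and a non-trivial finite-order homeomorphism of $\R$ cannot exist). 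This establishes that $\varphi$ has no finite orbit.

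Having excluded finite orbits, the action has a unique minimal set $K\subset\Sbb^1$, and collapsing the connected components of $\Sbb^1\setminus K$ yields a minimal action $T\acts\Sbb^1$ together with a degree-one monotone $T$-equivariant map $\Sbb^1\to\Sbb^1$; that is, $\varphi$ is semi-conjugate to this minimal action. So it suffices to treat the case where $\varphi$ itself is minimal. Now apply Theorem~\ref{thm-intro-actions-Thompson}(ii): the minimal action $\varphi$ is either topologically free or factors onto the standard action of $T$ on the circle. In the second case we have a continuous $T$-equivariant surjection $p\colon\Sbb^1\to\Sbb^1$ intertwining $\varphi$ with the standard action; such an equivariant map between minimal circle actions is automatically monotone of degree $\pm1$ (after possibly reversing orientation), so $\varphi$ is semi-conjugate to the standard action, as desired. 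It therefore remains to show that the first alternative — $\varphi$ minimal and topologically free — cannot occur.

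The heart of the argument is thus: Thompson's group $T$ has no faithful minimal topologically free action on the circle. Here is where I would use the structure of $T$ rather than just simplicity. One clean way: a minimal topologically free action on the circle would, by the Ghys–Margulis alternative for circle actions, either be conjugate into $\operatorname{PSL}(2,\R)$ (if it preserves no probability measure and is not conjugate to a subaction of rotations), or preserve a probability measure, or be... more precisely, a minimal action on $\Sbb^1$ either is conjugate to a group of rotations, or is conjugate into $\operatorname{PSL}(2,\R)$, or is \enquote{proximal}. If it preserves a probability measure, being minimal it is conjugate to a subgroup of rotations, hence abelian, contradicting $T$ simple non-abelian. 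If it embeds into $\operatorname{PSL}(2,\R)$, then $T$ would be a subgroup of a connected Lie group and in particular would not contain, say, the commutator relations or the distortion phenomena specific to $T$ — more concretely $T$ contains $\Z\wr\Z$ or Thompson's $F$-type distorted elements, or simply $\operatorname{PSL}(2,\R)$ has no infinite simple finitely generated torsion-rich subgroup of this kind; a robust way to finish is to note that a finitely generated subgroup of $\operatorname{PSL}(2,\R)$ is residually finite, whereas $T$ is an infinite simple group and is not residually finite (it has no finite quotients at all), contradiction. The remaining case is the proximal one, which is exactly an extreme boundary action; but then the point stabilizers are non-trivial (the action is not free on the circle for a proximal action of a group that is not virtually cyclic — in fact in the proximal case point stabilizers are \enquote{large}), contradicting topological freeness unless the action is trivial. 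I expect this last dichotomy — cleanly ruling out every type of minimal topologically free circle action of $T$ — to be the main obstacle, and the cleanest route is the non-residual-finiteness / no-finite-quotients obstruction to embedding $T$ in $\operatorname{PSL}(2,\R)$, combined with the measure-preserving case giving rotations and the proximal case contradicting freeness.
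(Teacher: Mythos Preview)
Your overall architecture matches the paper's: rule out finite orbits, semi-conjugate to a minimal action, then invoke Theorem~\ref{thm-intro-actions-Thompson}(ii) and eliminate the topologically free branch. But two steps do not go through as written.

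\textbf{The main gap is in ruling out a topologically free minimal action.} The trichotomy you invoke (rotations / conjugate into $\PSL(2,\R)$ / proximal) is not a theorem; the correct statement (Ghys, after Margulis) is that a minimal circle action is either conjugate to a rotation action or is, up to a finite cyclic cover, strongly proximal. There is no separate $\PSL(2,\R)$ alternative, and more importantly your claim that a proximal action cannot be topologically free is false: the boundary action of $\PSL(2,\Z)$ on $\Sbb^1$ is minimal, strongly proximal, and topologically free. So nothing in your argument excludes a hypothetical minimal, proximal, topologically free action of $T$. The paper handles this step quite differently and cleanly: it observes that $[F,F]\le T$ contains no free subgroup (Brin--Squier), hence by Margulis' alternative any circle action of $[F,F]$ preserves a probability measure; since $[F,F]$ is simple and non-abelian this forces a global fixed point. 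Thus some point of $\Sbb^1$ has stabilizer containing $[F,F]$, and since the conjugacy class of $[F,F]$ in $\sub(T)$ does not accumulate at $\{1\}$, the stabilizer URS of the action is non-trivial, i.e.\ the action is not topologically free.

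\textbf{A secondary gap is the passage from ``factors onto'' to ``semi-conjugate''.} You assert that an equivariant continuous surjection between two minimal circle actions is automatically monotone of degree~$\pm 1$; this is not a standard fact and you give no argument. The paper does not rely on any such general principle: it uses the explicit description of the factor map $\varphi$ (namely $\varphi(x)$ is the unique $z$ with $T^0_z$ fixing $x$) together with the decomposition $T=T^0_z T^0_{z'}$ for suitable $z,z'$ to show that $\varphi$ is injective, hence a homeomorphism, which of course is monotone of degree~$\pm1$.
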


\subsection{Piecewise projective homeomorphisms of the real line}

Following \cite{Monod-Pw}, if $A$ is a subring of $\mathbb{R}$, we denote by $G(A)$ the group of homeomorphisms of the projective line $\mathbb{P}^1(\mathbb{R})$ which are piecewise $\mathrm{PSL}(2,A)$, each piece being a closed interval, with breakpoints in the set of ends of hyperbolic elements of $\mathrm{PSL}(2,A)$. Let also $H(A)$ be the stabilizer of the point $\infty$ in $G(A)$. By work of Monod, when $A$ is a dense subring of $\mathbb{R}$, the group $H(A)$ is a counter-example to the so-called von Neumann-Day problem: $H(A)$ is non-amenable and yet does not contain any non-abelian free subgroups \cite{Monod-Pw}.

Lodha and Moore \cite{Lod-Moo} have exhibited a non-amenable $3$-generated group $G_0 \le H(\R)$, and proved that $G_0$ is finitely presented. The definition of the group $G_0$ is recalled in \textsection\ref{subsec-projective}.

\medskip

de la Harpe asked in \cite{dlH-survey} whether there exist countable $C^\ast$-simple groups with no free subgroups. This was answered in the positive by Olshanskii and Osin in \cite{O-O}. Their examples are direct limits of relatively hyperbolic groups with surjective homomorphisms $G_n \twoheadrightarrow G_{n+1}$ \cite{O-O}. In particular, these groups are never finitely presented. More examples were given in \cite[Theorem 1.6]{KK} and \cite[Corollary 6.12]{BKKO}, where $C^\ast$-simplicity of the Tarski monster groups constructed by Olshanskii in \cite{Ol-Tarski1, Ol-Tarski2} has been obtained.

Here we show that groups of piecewise projective homeomorphisms provide new examples of $C^\ast$-simple groups without free subgroups. In particular, the $C^\ast$-simplicity of the group $G_0$ shows that the question of de la Harpe also has a positive answer in the realm of finitely presented groups.

\begin{thm}
Let $A$ be a countable dense subring of $\mathbb{R}$. Then both $H(A)$ and $G(A)$ are $C^\ast$-simple. Moreover the Lodha-Moore group $G_0$ is $C^\ast$-simple. 
\end{thm}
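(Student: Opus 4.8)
The plan is to derive all three statements from the $C^\ast$-simplicity criterion of Corollary~\ref{cor-intro-csimple}, using the hypotheses that $A$ and $G_0$ are countable to meet its countability assumption. We work with the defining action $G(A)\acts\mathbb{P}^1(\mathbb{R})$, with the action $H(A)\acts\mathbb{R}=\mathbb{P}^1(\mathbb{R})\setminus\{\infty\}$, and with the natural action of $G_0$ associated with the description recalled in \textsection\ref{subsec-projective}; all three are faithful actions by homeomorphisms of Hausdorff spaces, so it suffices to show that every rigid stabilizer is non-amenable. Observe first that for an open set $U\subseteq\mathbb{R}$ any element of $G(A)$ whose support is contained in $U$ automatically fixes $\infty$, so the rigid stabilizers of $U$ in $H(A)$ and in $G(A)$ coincide; hence the assertion about $H(A)$ will reduce to the one about $G(A)$.

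The first step produces in each group a non-amenable subgroup with bounded support. Sending an element of $H(A)$ (respectively of $G_0\le H(\mathbb{R})$) to its germs at $+\infty$ and $-\infty$ defines a homomorphism onto a subgroup of a solvable, hence amenable, group, with kernel exactly the subgroup of elements whose support is bounded in $\mathbb{R}$. Since an extension of an amenable group by an amenable group is amenable, the non-amenability of $H(A)$ --- Monod's theorem \cite{Monod-Pw}, which applies because $A$ is dense --- forces this kernel $H_c(A)$ to be non-amenable, and likewise the non-amenability of $G_0$ \cite{Lod-Moo} forces its subgroup of bounded-support elements to be non-amenable. Choosing finitely many generators, we obtain in each case a finitely generated non-amenable subgroup $K$ whose support is contained in some compact interval $[-M,M]$.

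The second step relocates this support inside an arbitrary non-empty open set. For $G(A)\acts\mathbb{P}^1(\mathbb{R})$ this follows from extreme proximality: since $A$ is dense, $\PSL(2,A)$ contains a hyperbolic element whose attracting fixed point lies in a prescribed non-empty open set $U$ and whose repelling fixed point avoids $[-M,M]$, and a high enough power $g$ of it satisfies $g([-M,M])\subseteq U$; then $gKg^{-1}\le G(A)_U$ is non-amenable. This also settles $H(A)$: if $U\subseteq\mathbb{R}$ is bounded then $gKg^{-1}$ already fixes a neighbourhood of $\infty$, hence lies in $H(A)$, and an arbitrary non-empty open $U\subseteq\mathbb{R}$ contains a bounded open subinterval whose (non-amenable) rigid stabilizer is contained in $H(A)_U$. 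For $G_0$ one follows the same scheme, this time relying on the concrete picture of the $G_0$-action from \textsection\ref{subsec-projective}: either for a suitable basis of the topology the rigid stabilizer contains a rescaled copy of $G_0$ provided by its self-similar structure (and is then non-amenable by \cite{Lod-Moo}), or one displaces the bounded-support subgroup $K$ into a given open set using suitable elements of $G_0$ (for instance its piecewise-linear elements, which compress intervals). In every case all rigid stabilizers are non-amenable, so Corollary~\ref{cor-intro-csimple} yields the $C^\ast$-simplicity of $G(A)$, $H(A)$ and $G_0$.

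I expect the main obstacle to be the second step for $G_0$. Unlike $G(A)$, the Lodha--Moore group is finitely generated and does not contain arbitrary hyperbolic elements, so the clean extreme-proximality argument is not available, and one must instead exploit the specific combinatorics of the generators of $G_0$ (or an explicit self-similarity statement) to move a bounded-support non-amenable subgroup into any prescribed open set of its natural action. For $G(A)$ and $H(A)$, by contrast, the argument is essentially formal once one has Monod's non-amenability theorem together with the density --- inherited from that of $A$ --- of the ends of hyperbolic elements of $\PSL(2,A)$.
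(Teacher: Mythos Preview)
Your proposal is correct and follows essentially the same strategy as the paper: both arguments use the germ-at-infinity homomorphism to an amenable (indeed solvable) group to locate a non-amenable subgroup with bounded support, then conjugate it into an arbitrary open set to conclude that all rigid stabilizers are non-amenable, and finish via Corollary~\ref{cor-intro-csimple} (i.e.\ Theorem~\ref{thm-non-amenab-rigid-stab}). For $H(A)$ and $G(A)$ your use of hyperbolic elements of $\PSL(2,A)$ is simply a concrete way of verifying what the paper packages as the abstract hypothesis of Theorem~\ref{T: projective}, namely that any bounded interval can be pushed inside any other by an element of the group; the paper stays entirely inside $H(A)$ for this, whereas you conjugate by elements of $G(A)$ and then observe the result lands back in $H(A)$ because its support is bounded --- a harmless variation.

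The gap you correctly anticipate for $G_0$ is exactly where the paper supplies the missing line, and it is completely elementary: from the explicit formula one reads off that $b$ fixes $0$ and satisfies $b(t)=t+1$ for $t\ge 1$, so $b^n([0,1])=[0,n+1]$, while $b^{-n}$ contracts any interval $[0,R]$ towards $0$. Together with the translation $a$, this already shows that the $\langle a,b\rangle$-orbit of any non-empty open interval is a basis for the topology of $\mathbb{R}$, i.e.\ the interval-compression hypothesis of Theorem~\ref{T: projective} holds for $G_0$. No self-similarity statement or extreme-proximality argument is needed; the two generators $a,b$ do the job by hand.
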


\subsection{Groups acting on trees with almost prescribed local action} \label{subsec-intro-g(f,f')}

In this paragraph we consider the family of groups $G(F,F')$ which have been proved to be non $C^\ast$-simple in \cite{LB-c-etoile} and yet do not have non-trivial amenable normal subgroups. We first briefly recall their definition.

Let $\Omega$ be a set, and $F \leq F' \leq \mathrm{Sym}(\Omega)$ two permutation groups on $\Omega$ such that $F$ acts freely transitively on $\Omega$. Here we assume $\Omega$ to be finite for simplicity, but in \textsection\ref{subsec-G(F,F')} we will allow $\Omega$ countable. We let $T$ be a $\left|\Omega\right|$-regular tree, whose edges are coloured by the elements of $\Omega$, so that neighbouring edges have different colors. The group $G(F,F')$ is by definition the group of automorphisms of $T$ whose local action is prescribed by the permutation group $F'$ for all vertices, and by the permutation group $F$ for all but finitely many vertices. We refer to \textsection \ref{subsec-G(F,F')} for a formal definition, and to \cite{LB-ae} for properties of these groups. The subgroup of index two of $G(F,F')$ that preserves the types of vertices of $T$ will be denoted $G(F,F')^\ast$.

The groups $G(F,F')$ have no non-trivial amenable normal subgroup, but do have non-trivial amenable URS's \cite{LB-c-etoile}. Indeed, the action of the group $G(F,F')$ on the tree $T$ extends to a minimal action by homeomorphisms on the set of ends $\partial T$, which has amenable stabilizers and which is not topologically free \cite{LB-c-etoile}. In terms of URS's, this exactly means that the stabilizer URS associated to the action $G(F,F') \acts \partial T$ is amenable and non-trivial. A natural problem then arises, which is to classify all amenable URS's of the groups $G(F,F')$. The following result provides, beyond the amenable case, a complete classification of all URS's of these groups under appropriate assumptions of the permutation groups.

\begin{thm}[see also Proposition \ref{prop--urs-tree-G(F,F')} and Theorem \ref{thm-all-urs-g(f,f')}]
Let $F \leq F' \leq \mathrm{Sym}(\Omega)$ such that $F$ acts freely transitively on $\Omega$, $F'$ acts $2$-transitively on $\Omega$, and point stabilizers in $F'$ are perfect. Write $G = G(F,F')^\ast$. Then the following hold:
\begin{enumerate}[label=(\roman*)]
	\item $G$ admits exactly three URS's, namely $1$, $\mathcal{S}_G(\partial T)$ and $G$; where $\mathcal{S}_G(\partial T)$ is the stabilizer URS associated to the action $G \acts \partial T$.
	\item $\mathcal{S}_G(\partial T)$ is the unique non-trivial amenable URS of $G$, and we have $\mathcal{S}_G(\partial T) = \left\{G_{\xi,0} \, : \, \xi \in \partial T\right\}$, where $G_{\xi,0}$ is the set of elliptic elements of $G$ that fix $\xi$, and is an infinite locally finite group.
\end{enumerate} 
\end{thm}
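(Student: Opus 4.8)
The plan is to combine the general machinery of the paper---specifically Theorem \ref{thm-intro-3-urs} (and its generalization Corollary \ref{cor-unique-urs})---with the specific structure of the action $G \acts \partial T$ on the boundary of the tree. The strategy splits naturally along the two parts of the statement: part (i) identifies the complete list of URS's by verifying the hypotheses of the abstract classification result, while part (ii) singles out which of these is the unique non-trivial amenable one and gives the concrete description of $\mathcal{S}_G(\partial T)$.

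For part (i), I would first check that the action $G \acts \partial T$ satisfies the three conditions of Theorem \ref{thm-intro-3-urs}. Condition (i), that $G \acts \partial T$ is an extreme boundary action, should follow from the hypothesis that the action on $T$ is minimal and of general type: any group acting on a tree minimally and of general type acts on the ends extremely proximally (this is essentially recalled in \textsection\ref{subsec-trees}), and $2$-transitivity of $F'$ guarantees that $G(F,F')^\ast$ acts with enough richness for minimality and general type. Condition (ii), the existence of a basis of open sets $U$ with $G_U$ having no non-trivial finite or abelian quotients, is where the hypotheses on $F'$ enter: the rigid stabilizer of a half-tree (the shadow of an edge) is built from copies of point stabilizers in $F'$ acting on subtrees, and the assumption that point stabilizers in $F'$ are \emph{perfect} should propagate to perfectness (or at least absence of finite/abelian quotients) of these rigid stabilizers---this is the step I expect to require the most care, since one must control the abelianization of an iterated/infinitely-generated construction and show it vanishes. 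Condition (iii), that point stabilizers $G_\xi$ (for $\xi \in \partial T$) are maximal subgroups of $G$, is a statement about the lattice of subgroups containing an end stabilizer; I would prove it by a direct argument using the action on the tree, showing any element outside $G_\xi$ together with $G_\xi$ generates all of $G$ (exploiting transitivity properties coming from $F \le F'$). Granting these three, Theorem \ref{thm-intro-3-urs} gives that the only URS's are $1$, $G$, and $\mathcal{S}_G(\partial T)$, and since $G$ is simple (or has no proper normal subgroups of interest here, by the structure of $G(F,F')^\ast$) these three are genuinely distinct; this proves (i).

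For part (ii), I would first show $\mathcal{S}_G(\partial T)$ is amenable and non-trivial: non-triviality is clear since the action is not topologically free (an end stabilizer contains nontrivial elliptic elements), and amenability follows because the stabilizer of an end $\xi$ in $G$, once one passes to the URS, is the group $G_{\xi,0}$ of \emph{elliptic} elements fixing $\xi$---this is an increasing union of finite groups (stabilizers of longer and longer initial segments of the ray to $\xi$, each finite because $F'$ is finite and the local action is constrained), hence locally finite, hence amenable; I would verify that the stabilizer URS indeed consists of these $G_{\xi,0}$ rather than the full stabilizers $G_\xi$ by the standard computation of the stabilizer URS of an extremely proximal action (the germ/neighborhood stabilizers are what survives in the minimal closure, and here those germ stabilizers coincide with $G_{\xi,0}$). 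Finally, uniqueness among non-trivial amenable URS's is immediate from part (i): the only candidates are $1$, $G$, $\mathcal{S}_G(\partial T)$; the group $G$ is non-amenable (it contains, e.g., free subgroups or is non-amenable by the general-type action on the tree), so $\mathcal{S}_G(\partial T)$ is the only non-trivial amenable one. The main obstacle, as noted, is Condition (ii)---establishing that rigid stabilizers of half-trees inherit the no-finite-no-abelian-quotient property from perfectness of point stabilizers in $F'$; everything else is either a direct tree argument or an application of the already-established abstract theorems.
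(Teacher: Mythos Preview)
Your proposal is correct and follows essentially the same route as the paper: verify the three hypotheses of Theorem~\ref{thm-intro-3-urs}/Corollary~\ref{cor-unique-urs} for $G \acts \partial T$ (extreme proximality via Proposition~\ref{prop-tree-ep}, maximality of end stabilizers via Proposition~\ref{prop-stab-end-max}, and the rigid-stabilizer condition via perfectness of point stabilizers in $F'$), then read off the amenability and explicit description of $\mathcal{S}_G(\partial T)$.

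Two small remarks on points where the paper is sharper than your outline. First, the step you flag as the main obstacle---condition~(ii)---is in fact a one-liner: the fixator of a half-tree is generated by subgroups isomorphic to point stabilizers in $F'$, and a group generated by perfect subgroups is perfect; there is no delicate iterated-abelianization bookkeeping. Second, for the identification $\mathcal{S}_G(\partial T) = \{G_\xi^0 : \xi \in \partial T\}$, the paper does not rely on a general ``germ stabilizers survive in extremely proximal actions'' principle but instead checks directly that $G \acts \partial T$ has \emph{Hausdorff germs} (Definition~\ref{D: Hausdorff germs}): for $g \in G_\xi$ hyperbolic, $\xi$ is isolated in $\fix(g)$, while for $g$ elliptic, Lemma~\ref{lem-ell-fix-vois} gives that $\fix(g)$ contains a neighbourhood of $\xi$; then Proposition~\ref{prop-cont-stab0} yields the description immediately, and the same lemma identifies $G_\xi^0$ with the set of elliptic elements fixing $\xi$.
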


In particular this provides examples of finitely generated groups with trivial amenable radical and exactly one non-trivial amenable URS.

\medskip

Although the boundary $\partial T$ is \textit{not} the Furstenberg boundary of $G(F,F')$, we are able to precisely identify the point stabilizers in $G(F,F')$ for the action on the Furstenberg boundary. In particular we characterize the elements $g \in G(F,F')$ that have fixed points in the Furstenberg boundary of $G(F,F')$. See Corollary \ref{cor-fix-G(F,F')-furst}. 

\subsection{$C^\ast$-simplicity for branch groups}

Branch groups are a class of groups acting on rooted trees that naturally appears in the classification of just-infinite groups. The class of branch groups contains many instances with interesting properties such as Grigorchuk groups $G_\omega$ of intermediate growth \cite{Grigorchuk:grigorchukgroups}, or Gupta-Sidki torsion groups \cite{Gupta-Sidki}. We refer the reader to \cite{B-G-S-branch} for a survey on branch groups. The study of amenability within the class of branch groups has been actively investigated. Several examples of well-studied branch groups are amenable (e.g.\ the groups mentioned above \cite{Grigorchuk:grigorchukgroups, B-K-N}), but Sidki and Wilson constructed finitely generated branch groups containing free subgroups \cite{S-W-branch}, therefore the branch property does not imply amenability. 

We show that the following sharp dichotomy holds in the class of branch groups.

\begin{thm}\label{thm-intro-branch}
A countable branch group is either amenable or $C^\ast$-simple.
\end{thm}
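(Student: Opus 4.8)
The plan is to combine the structure theory of branch groups with Corollary~\ref{cor-intro-csimple}. Let $G$ be a countable branch group. By definition, $G$ acts faithfully on a spherically homogeneous rooted tree $\mathcal{T}$ with the branch property: for every level $n$, the rigid stabilizer $\rist_G(n)$ (the direct product of the rigid stabilizers of the vertices at level $n$) has finite index in $G$. Viewing $G$ as a group of homeomorphisms of the boundary $\partial\mathcal{T}$ (a Cantor set), the rigid stabilizer $G_v$ of a vertex $v$ acts as the analogous group on the subtree hanging below $v$, and the cylinder sets $U_v = \{\xi \in \partial\mathcal{T} : \xi \text{ passes through } v\}$ form a basis for the topology. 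Since every non-empty open $U \subset \partial\mathcal{T}$ contains some cylinder $U_v$, and $G_{U_v} = \rist_G(v) \supseteq$ the corresponding vertex rigid stabilizer, the action $G \acts \partial\mathcal{T}$ is micro-supported, and it suffices by Corollary~\ref{cor-intro-csimple} to show: \emph{if $G$ is non-amenable, then $\rist_G(v)$ is non-amenable for every vertex $v$.}

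The key step is the following dichotomy for a single rigid vertex stabilizer. Fix a vertex $v$ at level $n$, and let $L_v = \rist_G(v)$. By the branch property $\rist_G(n) = \prod_{w} L_w$ (product over level-$n$ vertices) has finite index in $G$; moreover $G$ acts transitively on each level, so all the $L_w$ at level $n$ are conjugate in $G$, hence isomorphic, and in particular they are all amenable or all non-amenable simultaneously. Now if all $L_w$ were amenable, then $\rist_G(n)$, being a \emph{finite} direct product of amenable groups, would be amenable; and since it has finite index in $G$, the group $G$ itself would be amenable (amenability passes to finite-index overgroups). Contrapositively, if $G$ is non-amenable then $L_v = \rist_G(v)$ is non-amenable. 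This holds for every vertex $v$, so the hypothesis of Corollary~\ref{cor-intro-csimple} is satisfied and $G$ is $C^\ast$-simple.

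A few points need care in the write-up. First, one should confirm the precise definition of branch group being used (abstract branch group versus branch action on a fixed tree) and check that the branch property indeed gives $\prod_w \rist_G(w)$ of \emph{finite} index in $G$ at each level, with the $\rist_G(w)$ pairwise conjugate along a level; this is standard, see \cite{B-G-S-branch}. Second, one must verify the action on $\partial\mathcal{T}$ is faithful: this is immediate for a branch action, since a branch group acts faithfully on the tree and the tree action is determined by the boundary action (the pointwise stabilizer of $\partial\mathcal{T}$ in $\aut(\mathcal{T})$ is trivial). Third, one should note $\partial\mathcal{T}$ is compact Hausdorff, so the hypotheses of Corollary~\ref{cor-intro-csimple} (and of Theorem~\ref{MT: rigid urs}) apply directly. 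I do not anticipate a genuine obstacle here: the only mild subtlety is the bookkeeping ensuring that non-amenability of $G$ forces non-amenability of each $\rist_G(v)$ rather than merely of the full level rigid stabilizer, which the conjugacy of vertex rigid stabilizers along a level resolves cleanly. Thus the statement follows, and the dichotomy is sharp because amenable branch groups (e.g.\ the first Grigorchuk group) are of course not $C^\ast$-simple.
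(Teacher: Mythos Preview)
Your proof is correct and follows essentially the same approach as the paper: both arguments reduce to showing that non-amenability of $G$ forces non-amenability of every vertex rigid stabilizer $\rist_G(v)$, using that $\rist_G(n)$ has finite index in $G$ together with level-transitivity (so the $\rist_G(w)$ at a given level are mutually conjugate), and then invoke Corollary~\ref{cor-intro-csimple} (equivalently, Corollary~\ref{cor-mostgeneral-Q} combined with Kennedy's criterion). The only cosmetic difference is that the paper phrases the intermediate step as a statement about all non-trivial URS's of $G$ being non-amenable before appealing to Theorem~\ref{thm-cstar-chab}, whereas you apply the packaged $C^\ast$-simplicity criterion directly.
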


This will follow from a more general statement saying that many properties of a branch group are inherited by its uniformly recurrent subgroups, which will follow from Theorem \ref{MT: rigid urs} applied to the action on the boundary of the rooted tree, see \textsection\ref{subsec-branch}.

\subsection{$C^\ast$-simplicity for topological full groups}

Let $\Gamma$ be a group acting by homeomorphism on the Cantor set $X$. The \textbf{topological full group} of $\Gamma\acts X$ is the group $[[\Gamma]]$ of all homeomorphisms of $X$ that locally coincide with an element of $\Gamma$. This notion was first introduced in \cite{G-P-S}, in the case $\Gamma=\Z$, in connection with the study of orbit equivalence of Cantor minimal systems. See \cite{Matui:survey} for a recent survey (in the more general setting of \'etale groupoids). One feature of this construction from the group-theoretical point of view is that it provides many new examples of finitely generated, infinite simple groups \cite{Matui:simple, Nek:simple}.
Juschenko and Monod proved that the topological full group of any Cantor minimal $\Z$-system is amenable \cite{Ju-Mo}, providing  the first examples of finitely generated, infinite simple groups that are amenable. This result motivated the study of analytic properties of  topological full groups. Amenability of other families of topological full groups was established in \cite{J-N-S, J-M-M-S}. Recently Nekrashevych constructed \'{e}tale groupoids whose topological full groups have intermediate growth \cite{Nek:intgrowth}, giving the first examples of simple groups with this property. Other analytic properties of topological full groups that have been studied include the Haagerup property \cite{Matui:finitetype}, and the Liouville property~\cite{MB:Liouville}.

All these properties are either strong or weak forms of amenability. Here we go in the opposite direction and consider the question of determining when the topological full group of a group action is $C^\ast$-simple.

\begin{thm}
Let $\Gamma$ be a non-amenable group acting freely and minimally on the Cantor set. Then the topological full group $[[\Gamma]]$ is $C^\ast$-simple.
\end{thm}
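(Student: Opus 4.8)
**Proof proposal for the final statement (topological full groups of non-amenable minimal Cantor actions are $C^\ast$-simple).**

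The plan is to deduce this from Corollary \ref{cor-intro-csimple}, so the whole task reduces to verifying its hypothesis: that for every non-empty open subset $U \subset X$, the rigid stabilizer $[[\Gamma]]_U$ (the pointwise stabilizer of $X \setminus U$ inside the topological full group) is non-amenable. First I would recall that $[[\Gamma]]$ acts on the Cantor set $X$ by homeomorphisms, that this action is faithful, and that $X$ has no isolated points, so Corollary \ref{cor-intro-csimple} applies verbatim once the rigid-stabilizer condition is checked. The core of the argument is therefore purely about realizing a non-amenable group inside $[[\Gamma]]_U$.

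The key step is a "local embedding'' of $\Gamma$ into $[[\Gamma]]_U$. Since $\Gamma \acts X$ is minimal, a standard argument (using that $\Gamma$-orbits are dense and $X$ is compact) produces, for any non-empty open $U$, an element $\gamma \in \Gamma$ and a non-empty clopen set $W \subset U$ such that the clopen sets $W, \gamma W, \gamma^2 W, \dots$ (enough of them) are pairwise disjoint and contained in $U$; more precisely one can use minimality to find a clopen $W \subset U$ together with finitely many elements $\gamma_1,\dots,\gamma_k \in \Gamma$ such that $\{\gamma_i W\}$ are disjoint and lie in $U$. This is exactly the kind of "multisection'' or "tower'' construction familiar from the theory of topological full groups. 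Given such data, one defines a homomorphism $\Gamma \to [[\Gamma]]_U$: an element $g \in \Gamma$ is sent to the homeomorphism that acts as the conjugated copy of $g$ on the disjoint union of the translates $\gamma_i W$ (using the partial homeomorphisms $\gamma_i : W \to \gamma_i W$ to transport the $\Gamma$-action restricted to $W$), and as the identity outside. One checks this map lands in $[[\Gamma]]$ because it is locally given by elements of $\Gamma$ (composites $\gamma_j \circ (\text{elt of }\Gamma) \circ \gamma_i^{-1}$), and it is supported in $U$ by construction, so it lands in $[[\Gamma]]_U$. The only subtle point is injectivity (or at least non-amenability of the image): because $\Gamma \acts X$ is free, the restricted partial action of $\Gamma$ on a small enough clopen $W$ still "sees'' a non-amenable piece — concretely, one should choose $W$ so that $W$ meets $\gamma W$ only for $\gamma$ in a suitable set, guaranteeing that the induced partial action has a full copy of a non-amenable subgroup of $\Gamma$ acting. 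The cleanest route is: since $\Gamma$ is non-amenable and $\Gamma \acts X$ is free, pick a finitely generated non-amenable subgroup $\Lambda \le \Gamma$; by minimality and freeness, some clopen $W \subset U$ is "$\Lambda$-free'' in the sense that the cosets $\lambda W$, $\lambda \in \Lambda$, are pairwise disjoint for $\lambda$ ranging over a ball (one can take $W$ small enough, using that free actions on the Cantor set admit arbitrarily fine clopen partitions moved injectively by any fixed finite set of elements); iterating/using the groupoid structure, one obtains an honest injection $\Lambda \hookrightarrow [[\Gamma]]_U$. Hence $[[\Gamma]]_U$ contains a non-amenable subgroup, so it is non-amenable.

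With the rigid-stabilizer condition established for every non-empty open $U$, Corollary \ref{cor-intro-csimple} immediately gives that $[[\Gamma]]$ is $C^\ast$-simple, completing the proof.

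The main obstacle I anticipate is the "local embedding'' step — specifically, arranging a clopen set $W \subset U$ and group elements so that a non-amenable subgroup of $\Gamma$ embeds into $[[\Gamma]]_U$ rather than merely some quotient or some partial-action quotient that could a priori be amenable. Freeness of $\Gamma \acts X$ is exactly what rescues this: on a free action one can always shrink $W$ enough that any prescribed finite subset of $\Gamma$ moves $W$ off itself, and a compactness/exhaustion argument then upgrades this to an embedding of any fixed finitely generated (non-amenable) subgroup. If one wanted to avoid freeness entirely the statement would be false, so it is essential to use that hypothesis precisely here; everything else is a direct appeal to the already-established $C^\ast$-simplicity criterion.
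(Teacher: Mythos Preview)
Your overall strategy is correct and matches the paper: reduce to Corollary~\ref{cor-intro-csimple} by showing that every rigid stabilizer $[[\Gamma]]_U$ is non-amenable. The gap is in the core step, the construction of a non-amenable subgroup of $[[\Gamma]]_U$. Your proposed homomorphism $\Lambda \to [[\Gamma]]_U$ is not well-defined: if $g \in \Lambda$ does not preserve $W$ (and for a minimal action it almost never will), there is no ``$\Lambda$-action restricted to $W$'' to transport via the $\gamma_i$. Shrinking $W$ so that the translates $\lambda W$ are disjoint for $\lambda$ in a finite ball does not help, since a group homomorphism from an infinite group $\Lambda$ requires compatibility for \emph{all} $\lambda$, and there is no clopen $W$ whose infinitely many $\Lambda$-translates are disjoint and contained in $U$. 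The phrase ``iterating/using the groupoid structure, one obtains an honest injection $\Lambda \hookrightarrow [[\Gamma]]_U$'' is where the argument breaks: the return dynamics to $U$ give a groupoid, not a group action of $\Lambda$, and no embedding of $\Lambda$ drops out of it.

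The paper circumvents this by abandoning the attempt to embed $\Lambda$ and instead using a \emph{graphing} argument. One fixes a finite symmetric set $S \subset \Gamma$ generating a non-amenable subgroup $\Gamma_0$, chosen (via minimality) so that the $S$-translates of $U$ cover $X$. For each $t \in S^3 \setminus \{1\}$ one partitions $U \cap t^{-1}U$ into clopen pieces $V$ with $tV \cap V = \varnothing$ and $tV \subset U$, and defines the involution $\gamma_{t,V} \in [[\Gamma]]_U$ swapping $V$ and $tV$. The subgroup $H$ generated by all the $\gamma_{t,V}$ has, for each $x \in U$, its orbital Schreier graph equal to the ``return set'' $\Delta_x = \{\gamma \in \Gamma_0 : \gamma x \in U\}$ with edges between points at distance $\le 3$ in $\cay(\Gamma_0,S)$; since $\Delta_x$ is $1$-dense in $\cay(\Gamma_0,S)$, this graph is quasi-isometric to $\cay(\Gamma_0,S)$ and hence non-amenable. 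A group with a non-amenable Schreier graph is non-amenable, so $H$ (and thus $[[\Gamma]]_U$) is non-amenable. The point is that one never produces a copy of $\Gamma_0$ inside $[[\Gamma]]_U$; one produces a group whose \emph{orbit structure on $U$} mimics the Cayley graph of $\Gamma_0$, and that is enough. Freeness is used to identify $\Delta_x$ with the orbit in $U$.
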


\subsection*{Organization}

The rest of this article contains three additional sections. In Section \ref{sec-preliminaries} we set some notation and give preliminaries about uniformly recurrent subgroups. In particular we explain the existence, for every countable group $G$, of an amenable uniformly recurrent subgroup $\mathcal{A}_G$ that is larger (with respect to a natural order on the set $\urs(G)$) than any other amenable URS. We also explain the connection between $\mathcal{A}_G$ and $C^\ast$-simplicity of $G$.

The study of uniformly recurrent subgroups in groups admitting a micro-supported action is developed in Section \ref{sec-main-thms}. A key ingredient used throughout this section is Proposition \ref{prop-technique}. We point out that the reader interested in the proof of Theorem \ref{MT: rigid urs} and its applications to $C^\ast$-simplicity may skip \textsection\ref{SS: EP}, and only needs a simplified version of Proposition \ref{prop-technique} (see the remark before its proof). Theorem~\ref{thm-intro-3-urs} and Theorem~\ref{MT: extreme} are proved at the end of Section \ref{sec-main-thms}.

Section \ref{sec-applications} concerns the applications of the results of the previous sections to various classes of groups, and contains the proofs of all the other results mentioned in the introduction. Each of its subsections can be read independently from the others, after reading Sections \ref{sec-preliminaries} and \ref{sec-main-thms}.

\subsection*{Acknowledgements} We are grateful to Pierre-Emmanuel Caprace, Anna Erschler and Pierre de la Harpe for their useful comments. We thank Bertrand Deroin for pointing out Th\'eor\`eme K in \cite{Ghy-Se}, and \'Etienne Ghys for useful explanations about it.

Part of this work was developed during the program \textit{Measured group theory} in Vienna in 2016. We would like to thank the organizers, as well as the \textit{Erwin Schr\"{o}dinger International Institute for Mathematics and Physics} for its hospitality.
\medskip
	
\section{Preliminaries} \label{sec-preliminaries}

\subsection{Notation and terminology} \label{subsec-notation}

Let $X$ be a topological Hausdorff space, and $G$ a countable group acting faithfully by homeomorphisms on $X$. For $x \in X$, we will denote by $G_x$ the stabilizer of $x$ in $G$, and by $G_x^0$ the (normal) subgroup of $G_x$ consisting of elements $g \in G_x$ such that $g$ fixes pointwise a neighbourhood of $x$.

For $g \in G$, we will denote by $\fix(g)$ the set of fixed points of $g$, and by $\supp(g)$ the support of $g$, i.e.\ the closure of the set of $x \in X$ that are moved by $g$. If $H$ is a subgroup of $G$ and $Y \subset X$, we will say that $H$ is supported in $Y$ if $\supp(h) \subset Y$ for every $h \in H$. Given a subset $U \subset X$, we will denote by $G_U$ the \textbf{rigid stabilizer} of $U$, which is defined as the subgroup of $G$ consisting of elements that fix pointwise the complement of $U$.

The action of $G$ on $X$ is:

\begin{itemize}
 \item \textbf{micro-supported} if $G_U$ is non-trivial for every non-empty open $U$.
 \item \textbf{topologically free} if $\fix(g)$ has empty interior for every non-trivial $g \in G$. When $X$ is a Baire space (e.g.\ $X$ is compact), this is equivalent to saying that the set of points $x \in X$ such that $G_x$ is trivial is a $G_\delta$-dense subset of $X$.
	\item \textbf{minimal} if every orbit is dense in $X$, or equivalently if $X$ is the only non-empty closed invariant subset.
	\item \textbf{strongly proximal} if the closure of the orbit of every probability measure on $X$ contains a Dirac mass.
	\item a \textbf{boundary action} if $X$ is compact, and the action of $G$ is minimal and strongly proximal.
	\item \textbf{extremely proximal} if every closed $C \neq X$ is compressible, where $Y \subset X$ is \textbf{compressible} if there exists a point $x \in X$ such that for every open $U \subset X$ containing $x$, there exists $g \in G$ such that $g(Y) \subset U$.
	\item an \textbf{extreme boundary action} if $X$ is compact, and the action of $G$ is minimal and extremely proximal. Note that an extreme boundary action is a boundary action \cite[Theorem 2.3]{Glas-top-dyn-group}. 
\end{itemize}  

\subsection{The Chabauty space}

If $G$ is a countable group, we denote by $\sub(G)$ the space of all subgroups of $G$. When viewed as a subset of $\{0,1\}^G$, the space $\sub(G)$ is closed for the product topology. The topology induced on $\sub(G)$ by the product topology is  called the \textbf{Chabauty topology}, and makes $\sub(G)$ a compact space. A sequence $(H_n)$ converges to $H$ in $\sub(G)$ if and only if every element of $H$ eventually belongs to $H_n$, and $H$ contains $\cap_k H_{n_k}$ for every subsequence $(n_k)$. Note in particular that a sequence $(H_n)$ converges to the trivial subgroup if and only if $1$ is the only element of $G$ that belongs to $H_n$ for infinitely many $n$.

The action of the group $G$ on itself by conjugation naturally extends to an action of $G$ on $\sub(G)$ by homeomorphisms. For $H \in \sub(G)$, we will denote by $\conj(H)$ the conjugacy class of $H$ in $G$, i.e.\ $\conj(H)$ is the $G$-orbit of $H$ in $\sub(G)$.

\bigskip

The following easy lemma will be used in Section \ref{sec-main-thms}. 

\begin{lemma}\label{L: accumulation}
Let $G$ be a countable group, and $H \in \sub(G)$. The following are equivalent:
\begin{enumerate}[label=(\roman*)]
\item The closure of $\mathcal{C}(H)$ in $\sub(G)$ does not contain the trivial subgroup;
\item There exists a finite subset $P\subset G\setminus\{1\}$ all of whose conjugates intersect $H$; 
\item There exists a finite subset $P\subset G\setminus\{1\}$ such that for every $K \in \mathcal{C}(H)$, all the conjugates of $P$ intersect $K$.
\end{enumerate}
\end{lemma}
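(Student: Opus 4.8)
The plan is to prove the cycle of implications (iii) $\Rightarrow$ (ii) $\Rightarrow$ (i) $\Rightarrow$ (iii), of which the first is trivial (take $K = H$), so the real content is in the other two.

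For (ii) $\Rightarrow$ (i): suppose $P = \{g_1, \dots, g_n\} \subset G \setminus \{1\}$ is such that every conjugate of $P$ meets $H$; that is, for every $x \in G$ there is some $i$ with $x g_i x^{-1} \in H$. Equivalently, since $x g_i x^{-1} \in H \iff g_i \in x^{-1} H x$, this says: every conjugate $x^{-1} H x$ of $H$ contains at least one element of $P$. Now take any $K$ in the closure of $\conj(H)$ in $\sub(G)$; write $K = \lim K_m$ with each $K_m \in \conj(H)$. Each $K_m$ contains some $g_{i(m)} \in P$; by pigeonhole, passing to a subsequence, we may assume $i(m) = i$ is constant, so $g_i \in K_m$ for all $m$ in the subsequence. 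By the characterization of Chabauty convergence recalled just above the lemma (if $(H_n) \to H$, then $H$ contains $\bigcap_k H_{n_k}$ for every subsequence), $g_i$ is an element of $G$ belonging to infinitely many $K_m$, hence $g_i \in K$; in particular $K \neq 1$. Thus the trivial subgroup is not in the closure of $\conj(H)$.

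For (i) $\Rightarrow$ (iii): argue by contraposition. Suppose that for every finite $P \subset G \setminus \{1\}$ there exists $K_P \in \conj(H)$ such that some conjugate of $P$ is disjoint from $K_P$. We want to produce a sequence in $\conj(H)$ converging to $1$. Enumerate $G \setminus \{1\} = \{g_1, g_2, \dots\}$ and set $P_n = \{g_1, \dots, g_n\}$. By hypothesis, for each $n$ there is a conjugate $K_n$ of $H$ and an element $x_n \in G$ with $x_n P_n x_n^{-1} \cap K_n = \emptyset$; replacing $K_n$ by $x_n^{-1} K_n x_n$ (still a conjugate of $H$) we may assume $P_n \cap K_n = \emptyset$, i.e.\ $g_1, \dots, g_n \notin K_n$. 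Then for any fixed $g_i \neq 1$, we have $g_i \notin K_n$ for all $n \geq i$, so $g_i$ belongs to only finitely many of the $K_n$. By the remark after the definition of the Chabauty topology, this says exactly that $K_n \to 1$ in $\sub(G)$, so $1$ lies in the closure of $\conj(H)$, contradicting (i).

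Both nontrivial implications reduce to unwinding the elementary description of Chabauty convergence, so there is no serious obstacle here; the only point requiring mild care is the bookkeeping between "a conjugate of $P$ meets $K$" and "$P$ meets a conjugate of $K$", which is handled by conjugating $K$ rather than $P$ as above, and the pigeonhole step in (ii) $\Rightarrow$ (i) which needs finiteness of $P$ (the hypothesis the lemma is stated with).
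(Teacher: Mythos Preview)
Your proof is correct. The paper's own argument is shorter and slightly more conceptual: it simply observes that the sets $U_P=\{L\in\sub(G): L\cap P=\varnothing\}$, for $P$ ranging over finite subsets of $G\setminus\{1\}$, form a basis of neighbourhoods of the trivial subgroup in the Chabauty topology, from which (i)$\Leftrightarrow$(ii) is immediate (the trivial subgroup is outside the closure of $\conj(H)$ iff some $U_P$ misses $\conj(H)$, iff every conjugate of $H$ meets $P$, iff every conjugate of $P$ meets $H$), and (ii)$\Leftrightarrow$(iii) is declared clear. Your argument instead works with the sequential description of Chabauty convergence and does the bookkeeping explicitly via pigeonhole and an exhaustion of $G\setminus\{1\}$. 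Both approaches amount to the same elementary unwinding of the topology; the paper's is terser, while yours makes the mechanics more visible.
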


\begin{proof}
By definition of the Chabauty topology, a basis of neighbourhoods of the trivial subgroup is given by the sets
\[U_P=\{L \in \sub(G)\: :\: L \cap P=\varnothing\},\]
where $P$ ranges over finite subsets of $G\setminus \{1\}$. The equivalence between (i) and (ii) follows immediately, and the equivalence between (ii) and (iii) is clear.
\end{proof}
If $G$ acts by homeomorphisms on a space $X$, we may consider the stabilizer map $\mathrm{Stab}: X \rightarrow \sub(G)$, defined by $x \mapsto G_x$. This map need not be continuous in general. The following lemma, which characterizes its continuity points, has already been proved in \cite[Lemma 5.4]{Vo}. We include a proof for completeness.

\begin{lemma} \label{lem-cont-stab}
Let $G$ be a countable group acting by homeomorphisms on a Hausdorff space $X$. Then the map $\mathrm{Stab}: X \rightarrow \sub(G)$ is continuous at $x \in X$ if and only if $G_x = G_x^0$, or equivalently $\fix(g)$ contains a neighbourhood of $x$ for every $g \in G_x$.
\end{lemma}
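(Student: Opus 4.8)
The plan is to prove both implications, noting first that the two stated conditions on $x$ are visibly equivalent: saying $G_x = G_x^0$ means exactly that every $g \in G_x$ fixes pointwise a neighbourhood of $x$, i.e.\ $x$ lies in the interior of $\fix(g)$ for every $g \in G_x$. So it suffices to characterize continuity of $\mathrm{Stab}$ at $x$ by this latter condition.

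For the easy direction, suppose $G_x = G_x^0$ and let $(x_i)$ be a net converging to $x$; I want $G_{x_i} \to G_x$ in $\sub(G)$. Recall convergence in the Chabauty topology amounts to two conditions: (a) every $g \in \lim\sup$-type limit of the $G_{x_i}$ lies in $G_x$, and (b) every $g \in G_x$ eventually belongs to $G_{x_i}$. For (b): given $g \in G_x = G_x^0$, there is an open neighbourhood $U$ of $x$ with $U \subset \fix(g)$; since $x_i \to x$, eventually $x_i \in U$, hence $g \in G_{x_i}$. For (a): if $g \in G$ satisfies $g \in G_{x_i}$ for a cofinal set of indices, then $g(x_i) = x_i$ along that subnet, and passing to the limit using continuity of the action and Hausdorffness of $X$ gives $g(x) = x$, so $g \in G_x$. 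This establishes $G_{x_i} \to G_x$. (One should phrase this with sequences or nets according to whether $X$ is assumed metrizable; since $\sub(G)$ is metrizable for $G$ countable but $X$ need not be, it is cleanest to argue with nets, or to use the subbasic-neighbourhood description of the Chabauty topology directly: a subbasic neighbourhood of $G_x$ is determined by finitely many elements of $G$ that must be in or out of the subgroup, and the argument above controls each such element.)

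For the converse, suppose $G_x \neq G_x^0$, so there exists $g \in G_x$ with $x \notin \operatorname{int}(\fix(g))$; equivalently, $x$ lies in the closure of $X \setminus \fix(g)$, so there is a net $x_i \to x$ with $g(x_i) \neq x_i$, i.e.\ $g \notin G_{x_i}$. But $g \in G_x$, so $G_{x_i}$ does not converge to $G_x$: the subbasic neighbourhood $\{K \in \sub(G) : g \in K\}$ of $G_x$ is not eventually containing the $G_{x_i}$. Hence $\mathrm{Stab}$ is discontinuous at $x$.

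I do not expect a genuine obstacle here; the statement is elementary and the proof is essentially unwinding the definition of the Chabauty topology together with continuity of the action. The one point requiring a little care is the topological formulation: since $X$ is only assumed Hausdorff (not first-countable), I would avoid sequences and either work with nets or, better, argue directly with the subbasis of the Chabauty topology given by the sets $\{K : g \in K\}$ and $\{K : g \notin K\}$ for $g \in G$, which sidesteps any convergence subtleties. The reference to \cite[Lemma 5.4]{Vo} confirms this is a known, routine fact.
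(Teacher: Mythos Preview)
Your proof is correct and takes essentially the same approach as the paper. The paper's argument is just a more compressed version of your subbasic-neighbourhood remark: it observes that $\mathrm{Stab}$ lands in the product $\{0,1\}^G$, so continuity at $x$ is equivalent to continuity of each coordinate map $y\mapsto 1_{gy=y}$, and then checks that this coordinate map is continuous at $x$ automatically when $gx\neq x$, and exactly when $g\in G_x^0$ if $gx=x$ --- which is precisely your analysis of the subbasic sets $\{K:g\in K\}$ and $\{K:g\notin K\}$, phrased slightly differently.
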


\begin{proof}
The map $y\mapsto G_y\in \sub(G)\subset\{0,1\}^G$ is continuous if and only if its post-compositions with all the projections of the product space $\{0,1\}^G$ onto its factors are continuous. For a fixed $g\in G$, the post-composition with the corresponding projection is given by $y\mapsto 1_{gy=y}$. Now if $gx\neq x$, this map is obviously continuous at $x$, while if $gx=x$ this map is continuous if and only if $g\in G_x^0$. Hence it is continuous for every $g\in G$ if and only if $G_x=G^0_x$. \qedhere

%
\end{proof}

\begin{defin} If $G$ is a group acting by homeomorphisms on a Hausdorff space $X$, we will denote by $X_0\subset X$ the domain of continuity of the map $\mathrm{Stab}$.
\end{defin}

\begin{prop} \label{prop-dense-Gdelta}
If $X$ is a Baire space, $X_0$ is a dense $G_\delta$ subset of $X$.
\end{prop}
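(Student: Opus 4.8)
The plan is to show that $X_0$, the domain of continuity of the stabilizer map $\mathrm{Stab}: X \to \sub(G)$, can be written as a countable intersection of open dense sets, after which the Baire property finishes the argument. The natural decomposition comes directly from Lemma \ref{lem-cont-stab}: a point $x$ lies in $X_0$ if and only if for every $g \in G_x$ the set $\fix(g)$ contains a neighbourhood of $x$, equivalently, for every $g \in G$, either $gx \neq x$ or $g$ fixes a neighbourhood of $x$. Since $G$ is countable, it suffices to handle each $g \in G$ separately and intersect.

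For a fixed $g \in G$, I would introduce the set
\[
X_g = \{x \in X \,:\, gx \neq x\} \cup \mathrm{int}(\fix(g)),
\]
i.e.\ the union of the complement of $\fix(g)$ with the interior of $\fix(g)$. First I would check that $X_g$ is open: the set $\{x : gx \neq x\}$ is open because $X$ is Hausdorff and $g$ is a homeomorphism (it is the preimage of the open diagonal complement under $x \mapsto (x, gx)$), and $\mathrm{int}(\fix(g))$ is open by definition; a union of two open sets is open. Next I would check that $X_g$ is dense: its complement is $\fix(g) \setminus \mathrm{int}(\fix(g)) = \partial \fix(g)$, the topological boundary of the closed set $\fix(g)$, which has empty interior. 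Hence $X_g$ is open and dense. Finally, comparing with the pointwise criterion from Lemma \ref{lem-cont-stab}, a point $x$ lies in $X_0$ precisely when $x \in X_g$ for every $g \in G$: indeed if $x \in X_g$ for all $g$, then for each $g \in G_x$ we have $gx = x$, so necessarily $x \in \mathrm{int}(\fix(g))$, i.e.\ $\fix(g)$ is a neighbourhood of $x$, which is exactly the condition $G_x = G_x^0$; conversely this condition gives $x \in X_g$ for all $g$. Therefore $X_0 = \bigcap_{g \in G} X_g$.

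Since $G$ is countable, $X_0$ is a countable intersection of open dense subsets of $X$, hence a dense $G_\delta$ set because $X$ is a Baire space. I do not anticipate a serious obstacle here; the only point requiring a little care is the verification that $\{x : gx \neq x\}$ is open, which uses that $X$ is Hausdorff (so that the diagonal is closed), and the observation that $\partial \fix(g)$ has empty interior, which is immediate since removing the interior of a closed set leaves a set with empty interior. One should also make sure the equivalence in Lemma \ref{lem-cont-stab} is being applied in the right direction, but this is straightforward bookkeeping.
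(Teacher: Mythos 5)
Your proposal is correct and is essentially the paper's own argument: the paper observes that $X_0$ is the complement of $\bigcup_{g\in G}\partial\fix(g)$ and that each $\partial\fix(g)$ has empty interior since $\fix(g)$ is closed, which is exactly your decomposition $X_0=\bigcap_g X_g$ with $X_g$ the (open, dense) complement of $\partial\fix(g)$. You merely spell out the details more explicitly.
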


\begin{proof}
By Lemma \ref{lem-cont-stab}, $X_0$ is exactly the complement of $\cup_{g\in G} \partial \fix(g)$. Since each $\partial \fix(g)$ has empty interior (because $\fix(g)$ is always closed), the statement follows. \qedhere
\end{proof}

\subsection{Uniformly recurrent subgroups}

The notion of uniformly recurrent subgroup was introduced and investigated in \cite{Glas-Wei}. A \textbf{uniformly recurrent subgroup} (URS for short) is a closed minimal $G$-invariant subset of $\sub(G)$. The set of all URS's of $G$ will be denoted $\urs(G)$.

Examples of URS's are normal subgroups, and more generally subgroups with a finite conjugacy class. For simplicity the URS associated to a normal subgroup $N$ will still be denoted $N$ rather than $\left\{N\right\}$. The \textbf{trivial URS}, denoted $1$, is the URS that contains only the trivial subgroup. If ($\mathcal{Q})$ is a property of groups, we say that $\mathcal{H} \in \urs(G)$ has ($\mathcal{Q})$ if every $H \in \mathcal{H}$ has ($\mathcal{Q})$.


\bigskip


Following Glasner and Weiss, we recall the construction of a URS starting from a minimal action on a compact space \cite{Glas-Wei}.  

\begin{prop} \label{prop-min-action-urs}
Assume that $G$ acts minimally on a compact space $X$, and denote $X_0\subset X$ the locus of continuity points of  $\stab$. Let $\mathcal{S}_G(X)$ be the closure in $\sub(G)$ of the set $\{G_x, x\in X_0\}$. Then $\mathcal{S}_G(X)$ is a URS of $G$.

Moreover $\mathcal{S}_G(X)$ is the only closed minimal $G$-invariant subset of the closure in $\sub(G)$ of the collection of $G_{x}$, $x \in X$. 
\end{prop}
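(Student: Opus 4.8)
The plan is to prove Proposition \ref{prop-min-action-urs} in two parts: first that $\mathcal{S}_G(X)$ is a URS (closed, $G$-invariant, minimal), and then the uniqueness/minimality statement. The set $X_0$ is non-empty since $X$ is compact hence Baire (Proposition \ref{prop-dense-Gdelta}), and it is $G$-invariant because $g$ carries a continuity point of $\stab$ to a continuity point (conjugation is a homeomorphism of $\sub(G)$, and $G_{gx} = gG_xg^{-1}$). Hence the set $\{G_x : x \in X_0\}$ is $G$-invariant, and so is its closure $\mathcal{S}_G(X)$, which is closed by construction and compact since $\sub(G)$ is compact. So the only real content in the first part is minimality.

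For minimality, I would argue as follows. Let $\mathcal{H} \subset \mathcal{S}_G(X)$ be a non-empty closed $G$-invariant subset; I want $\mathcal{H} = \mathcal{S}_G(X)$. Pick $H \in \mathcal{H}$. Since $H$ lies in the closure of $\{G_x : x \in X_0\}$, there is a net (or, $G$ being countable and $\sub(G)$ metrizable, a sequence) $x_n \in X_0$ with $G_{x_n} \to H$. By compactness of $X$ we may pass to a subnet so that $x_n \to x_\infty \in X$. Now by minimality of $G \acts X$, the orbit of $x_\infty$ is dense, so for any $y \in X_0$ I can find group elements $g_k$ with $g_k x_\infty \to y$. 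The key point is a semicontinuity property of the stabilizer map: if $x_n \to x_\infty$ then any limit of $G_{x_n}$ is contained in... no — I should be careful about the direction. The correct statement to extract from Lemma \ref{lem-cont-stab} and the description of the Chabauty topology is: at a continuity point $y \in X_0$, if $z_n \to y$ then $G_{z_n} \to G_y$. Combining a diagonal argument — approximating $y \in X_0$ by points $g_k x_n$ which are themselves close to continuity points $g_k x_{\infty}$, using that $G_{x_n} \to H$ and $\mathcal{H}$ is $G$-invariant and closed so $g_k H \in \mathcal{H}$ — I should be able to show $G_y \in \mathcal{H}$ for every $y \in X_0$, whence $\mathcal{H} \supseteq \mathcal{S}_G(X)$. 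I expect this diagonal/continuity argument, getting the order of quantifiers and the choice of nets right, to be the main obstacle.

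For the uniqueness statement, let $Y \subset \sub(G)$ be the closure of $\{G_x : x \in X\}$ (all points, not just continuity points), and suppose $\mathcal{K} \subset Y$ is a closed minimal $G$-invariant subset. I want to show $\mathcal{K} = \mathcal{S}_G(X)$. Since $\mathcal{S}_G(X)$ is itself a closed minimal $G$-invariant subset of $Y$ (it sits inside $Y$ because $X_0 \subset X$), it suffices to show $\mathcal{K}$ meets $\mathcal{S}_G(X)$, or more directly that $\mathcal{K} \subset \mathcal{S}_G(X)$ and then invoke minimality of $\mathcal{S}_G(X)$ already established, or conversely. The natural route: take $K \in \mathcal{K}$; then $K$ is a limit of $G_{x_n}$ for some $x_n \in X$; pass to a convergent subnet $x_n \to x_\infty$; if $x_\infty \in X_0$ one gets $K = G_{x_\infty} \in \mathcal{S}_G(X)$ directly, but in general $x_\infty$ need not be a continuity point. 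Instead, use that $X_0$ is dense: approximate, and use the $G$-invariance and closedness of $\mathcal{K}$ together with minimality of $G \acts X$ to slide some element of $\mathcal{K}$ into the closure of $\{G_x : x \in X_0\} = \mathcal{S}_G(X)$, again by a diagonal argument analogous to the one above. Then $\mathcal{K} \cap \mathcal{S}_G(X) \neq \varnothing$, and since both are minimal $G$-invariant closed sets, they coincide.

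A cleaner way to organize both parts is to isolate the following lemma and use it twice: \emph{for any $H$ in the closure of $\{G_x : x \in X\}$ and any $y \in X_0$, one has $G_y$ in the closure of the $G$-orbit of $H$.} Granting this, minimality of $\mathcal{S}_G(X)$ follows (take $H \in \mathcal{S}_G(X)$, conclude $G_y \in \overline{\mathcal{C}(H)}$ for all $y \in X_0$, hence $\mathcal{S}_G(X) \subset \overline{\mathcal{C}(H)} \subset \mathcal{S}_G(X)$), and the uniqueness statement follows too (any minimal $\mathcal{K} \subset Y$ contains some $H$ with $\overline{\mathcal{C}(H)} = \mathcal{K}$, and the lemma forces $\mathcal{S}_G(X) \subset \mathcal{K}$, so equality by minimality of $\mathcal{S}_G(X)$). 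The proof of this lemma is exactly the diagonal argument described above: write $H = \lim G_{x_n}$; use minimality to pick $g_k$ with $g_k y' \to y$ where $y'$ is suitably chosen, exploit continuity of $\stab$ at $y \in X_0$, and interchange limits carefully using that $\sub(G)$ is a compact metric space and the $G$-action on it is by homeomorphisms. I would present the proof in this consolidated form, flagging the limit-interchange as the only delicate point.
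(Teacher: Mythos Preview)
Your approach is correct, and the consolidated lemma you isolate at the end is exactly the right statement: from it both minimality of $\mathcal{S}_G(X)$ and uniqueness follow just as you indicate. The paper itself gives no self-contained argument here; it simply refers to Proposition~1.2 of Glasner--Weiss and remarks that metrizability of $X$ can be dropped because all that is used is the density of $X_0$ (Proposition~\ref{prop-dense-Gdelta}). So you are supplying the proof the paper outsources, and your outline matches the standard one.

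One point of care in proving your lemma: the ``diagonal / limit-interchange'' is correct in spirit but delicate to execute, since $X$ need not be metrizable (nets are required on the $X$ side even though $\sub(G)$ is metrizable) and since the conjugations by the varying $g_k$ do not form an equicontinuous family on $\sub(G)$, so closeness of $G_{x_n}$ to $H$ does not transfer uniformly in $k$ to closeness of $g_k G_{x_n} g_k^{-1}$ to $g_k H g_k^{-1}$. A cleaner execution avoids the double limit altogether. Given $y \in X_0$, fix a basic clopen neighbourhood $V$ of $G_y$ in $\sub(G)$ (such sets form a basis since $\sub(G)\subset\{0,1\}^G$ is zero-dimensional); continuity of $\stab$ at $y$ yields an open $U\ni y$ in $X$ with $G_z \in V$ for all $z\in U$; minimality of $G\acts X$ gives a single $g\in G$ with $gx_\infty\in U$; then $gx_i\in U$ eventually along the original net, so $gG_{x_i}g^{-1}=G_{gx_i}\in V$ eventually, and passing to the limit for this fixed $g$ gives $gHg^{-1}\in\overline{V}=V$. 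Since $V$ was an arbitrary basic neighbourhood of $G_y$, this shows $G_y\in\overline{\mathcal{C}(H)}$ with no diagonalization needed.
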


\begin{proof}
See the proof of Proposition 1.2 in \cite{Glas-Wei}. Note that the assumption made throughout \cite{Glas-Wei} that $X$ is metrizable is not needed here: the only argument used in the proof is the density of $X_0$ in $X$, which follows from Proposition \ref{prop-dense-Gdelta}. 
\end{proof}


\begin{defin}
$\mathcal{S}_G(X)$ will be called the \textbf{stabilizer URS} of the action $G\acts X$. 
\end{defin}

The following proposition shows that this notion is consistent with the terminology of \enquote{topologically free action}.

\begin{prop} \label{prop-trivial-URS-top-free}
Let $G\acts X$ be a minimal action on a compact space. Then
$\mathcal{S}_G(X)$ is the trivial URS if and only if the action of $G$ on $X$ is topologically free.
\end{prop}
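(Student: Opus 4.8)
The plan is to prove both implications by unwinding the definition of $\mathcal{S}_G(X)$ as the closure of $\{G_x : x \in X_0\}$, where $X_0$ is the (dense $G_\delta$) set of continuity points of $\stab$ guaranteed by Proposition \ref{prop-dense-Gdelta}. First suppose the action is topologically free. Then by the remark in \textsection\ref{subsec-notation} (the Baire space characterization), the set $\{x \in X : G_x = 1\}$ is a dense $G_\delta$. On the other hand, $X_0$ is also a dense $G_\delta$ by Proposition \ref{prop-dense-Gdelta}, so their intersection is nonempty (in fact dense); pick $x$ in the intersection. Then $G_x = 1$ is a continuity point of $\stab$, and I claim this forces $\stab \equiv 1$ on a dense subset. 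Indeed, at a continuity point with $G_x = 1$, by Lemma \ref{lem-cont-stab} every $g \in G_x$ fixes a neighbourhood of $x$ — but $G_x$ is trivial, so this is vacuous; instead I use continuity directly: if $x_n \to x$ with $x_n \in X_0$, then $G_{x_n} \to G_x = 1$ in $\sub(G)$, so $1$ lies in the closure of $\{G_y : y \in X_0\} = \mathcal{S}_G(X)$. Since $\mathcal{S}_G(X)$ is $G$-invariant, minimal, and contains the fixed point $1$ of the $G$-action on $\sub(G)$, minimality forces $\mathcal{S}_G(X) = \{1\}$, i.e. the trivial URS.

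For the converse, suppose $\mathcal{S}_G(X) = \{1\}$ is the trivial URS. Then by definition the closure of $\{G_x : x \in X_0\}$ is $\{1\}$, so $G_x = 1$ for every $x \in X_0$. Since $X_0$ is dense in $X$ (Proposition \ref{prop-dense-Gdelta}), the set of points with trivial stabilizer is dense, hence contains a dense $G_\delta$ (recall $X_0$ itself is $G_\delta$), and therefore by the Baire space characterization in \textsection\ref{subsec-notation} the action $G \acts X$ is topologically free. Alternatively, and more directly: if the action were not topologically free, some nontrivial $g \in G$ would have $\fix(g)$ with nonempty interior $V$; every $x \in V$ then has $g \in G_x$, so $G_x \neq 1$, contradicting that the dense set $X_0$ consists of points with trivial stabilizer.

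The only point requiring a little care is the first implication, where one must produce a single point that is simultaneously a continuity point of $\stab$ and has trivial stabilizer; this is exactly where the two dense-$G_\delta$ statements (topological freeness $\Rightarrow$ trivial-stabilizer locus is dense $G_\delta$, and $X_0$ is dense $G_\delta$) combine via the Baire property of the compact space $X$. I expect no serious obstacle: everything reduces to the definitions of $\mathcal{S}_G(X)$, Lemma \ref{lem-cont-stab}, Proposition \ref{prop-dense-Gdelta}, and minimality of a URS containing the trivial subgroup. One should also note $\{1\}$ is genuinely a $G$-invariant subset of $\sub(G)$, so that invoking minimality of $\mathcal{S}_G(X)$ to collapse it to $\{1\}$ is legitimate.
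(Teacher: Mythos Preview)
Your proof is correct and close to the paper's. The direction $\mathcal{S}_G(X)=1 \Rightarrow$ topologically free is identical: $G_{x_0}=1$ for all $x_0\in X_0$, and $X_0$ is dense. For the other direction the paper is slightly more economical: rather than intersecting two dense $G_\delta$'s to place $1$ inside $\mathcal{S}_G(X)$ itself, it simply notes that topological freeness gives \emph{some} $x\in X$ with $G_x=1$, and then invokes the last clause of Proposition~\ref{prop-min-action-urs} (that $\mathcal{S}_G(X)$ is the \emph{unique} URS in the closure of $\{G_y : y\in X\}$), which immediately forces $\mathcal{S}_G(X)=\{1\}$. Your route via Baire and minimality of $\mathcal{S}_G(X)$ works just as well; it just re-proves a special case of that uniqueness clause.
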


\begin{proof}
If $\mathcal{S}_G(X)$ is trivial then one has $G_{x_0} =1$ for every $x_0 \in X_0$. Since $X_0$ is dense in $X$ by Proposition \ref{prop-dense-Gdelta}, this implies that the action of $G$ on $X$ is topologically free. Conversely if $G\acts X$ is topologically free, then in particular there exists $x\in X$ such that $G_x=1$. The conclusion then follows from the last affirmation in Proposition \ref{prop-min-action-urs}. \qedhere
\end{proof}

We will need the following observation.

\begin{lemma}\label{L: upper semicontinuous}
Let $G\acts X$ be a minimal action on a compact space, and let $\S_G(X)\in \urs(G)$ be the stabilizer URS. Then 
\begin{enumerate}[label=(\roman*)]
\item for every $x\in X$, there exists $H\in \S_G(X)$ (not necessarily unique) such that $H\le G_x$;
\item for every $H\in \S_G(X)$ there exists $x\in X$ such that $G_x^0\le H$.

\end{enumerate}
\end{lemma}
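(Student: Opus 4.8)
The plan is to prove both parts by taking limits of point stabilizers along nets, using that the Chabauty topology on $\sub(G)$ is the restriction of the product topology on $\{0,1\}^G$: a net $(K_i)$ converges to $K$ in $\sub(G)$ if and only if, for each fixed $g\in G$, one has $g\in K_i$ eventually whenever $g\in K$, and $g\notin K_i$ eventually whenever $g\notin K$. Since $X$ is not assumed metrizable I will work with nets on the $X$-side (the space $\sub(G)$ itself is metrizable, $G$ being countable, but this fact is not needed).

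For (i), fix $x\in X$. Since $X_0$ is dense in $X$ by Proposition \ref{prop-dense-Gdelta}, choose a net $(x_i)$ in $X_0$ with $x_i\to x$. By compactness of $\sub(G)$ we may pass to a subnet along which $G_{x_i}\to H$ for some $H\in\sub(G)$; as each $G_{x_i}$ lies in $\{G_y:y\in X_0\}$ and $\S_G(X)$ is by definition the closure of this set, we get $H\in\S_G(X)$. To see that $H\le G_x$, let $g\in H$; by the description of the topology, $g\in G_{x_i}$ for $i$ large, i.e.\ $gx_i=x_i$. Passing to the limit and using that $g$ is a homeomorphism and $X$ is Hausdorff, $gx=\lim gx_i=\lim x_i=x$, so $g\in G_x$.

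For (ii), fix $H\in\S_G(X)$. By definition of $\S_G(X)$ there is a net $(y_i)$ in $X_0$ with $G_{y_i}\to H$; by compactness of $X$ we may pass to a subnet so that moreover $y_i\to x$ for some $x\in X$, and $G_{y_i}\to H$ persists along the subnet. Given $g\in G_x^0$, by definition $g$ fixes pointwise some open neighbourhood $V$ of $x$; since $y_i\to x$ we have $y_i\in V$ eventually, hence $gy_i=y_i$, i.e.\ $g\in G_{y_i}$ eventually. On the other hand $G_{y_i}\to H$ forces the statement $g\in G_{y_i}$ to be equivalent to $g\in H$ for $i$ large. Combining the two, $g\in H$; thus $G_x^0\le H$.

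I do not anticipate a serious obstacle: the argument is a direct unwinding of the definitions together with the product-topology description of convergence in $\sub(G)$. The only points demanding some care are the bookkeeping of nets and subnets in the possibly non-metrizable space $X$, and, in (ii), the observation that $H$ must be approximated by stabilizers $G_{y_i}$ with $y_i\in X_0$ specifically — approximating by arbitrary point stabilizers would only locate $H$ in the closure of \emph{all} point stabilizers, which is in general strictly larger than $\S_G(X)$.
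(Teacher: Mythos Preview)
Your proof is correct and follows essentially the same approach as the paper's: for (i) approximate $x$ by a net in $X_0$ and extract a convergent subnet of stabilizers, and for (ii) approximate $H$ by stabilizers $G_{y_i}$ with $y_i\in X_0$ and extract a convergent subnet of the $y_i$. The paper's write-up is terser, but the underlying argument is identical.
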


\begin{proof}
To prove (i), let $x \in X$. By Proposition \ref{prop-dense-Gdelta}, we may find a net $(x_i)$ of points of $X_0$ that converges to $x$. Up to taking a sub-net, we may assume that $(G_{x_i})$ converges to a limit $H\in \sub(G)$. Note that $H \in \S_G(X)$ by Proposition \ref{prop-min-action-urs}. Moreover every $g\in H$ eventually belongs to $G_{x_i}$, so $g$ also belongs to $G_x$ since $x_i\to x$. Hence $H\le G_x$.

To prove (ii), let $H\in \S_G(X)$. By Proposition \ref{prop-min-action-urs}, we can find a net $(x_i)$ of points of $X_0$ such that $G_{x_i}$ converges to $H$. Up to taking a sub-net we may assume that $(x_i)$ also converges to a point $x\in X$. Then every element of $G^0_x$ eventually belongs to $G_{x_i}$. It follows that $G^0_x\le H$.
 \qedhere
\end{proof}

Under an additional assumption on the action $G\acts X$, it is possible to give a more explicit description of the stabilizer URS $\S_G(X)$.

\begin{defin}\label{D: Hausdorff germs}
Let $G$ be a countable group acting on a topological space $X$. The action $G\acts X$ is said to have \textbf{Hausdorff germs} if for every $x\in X$ and every $g \in G_x$, either
\begin{enumerate}[label=(\roman*)]
\item  $\fix(g)$ contains a neighbourhood of $x$; or
\item the set of interior points of $\fix(g)$ does not accumulate on $x$. 
\end{enumerate}
\end{defin}

The terminology is motivated by the fact that these conditions exactly characterize the actions $G\acts X$ whose \emph{groupoid of germs} is Hausdorff.

\begin{prop} \label{prop-cont-stab0}
Let $G\acts X$ with Hausdorff germs. Then the map $\mathrm{Stab}^0: X \rightarrow \sub(G)$, $x \mapsto G_x^0$, is continuous. If moreover $X$ is compact and $G\acts X$ is minimal, then $\mathcal{S}_G(X) = \{G_x^0\, \mid\, x\in X\}$.
\end{prop}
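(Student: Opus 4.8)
**The plan is to prove the two assertions separately, deriving the second from the first together with Proposition~\ref{prop-min-action-urs} and Lemma~\ref{L: upper semicontinuous}.**

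First I would show that $\mathrm{Stab}^0 \colon X \to \sub(G)$ is continuous under the Hausdorff germs hypothesis. As in the proof of Lemma~\ref{lem-cont-stab}, it suffices to check that for each fixed $g \in G$, the map $x \mapsto \mathbf{1}_{g \in G_x^0}$ is continuous, i.e.\ that the set $W_g := \{x \in X : g \in G_x^0\} = \{x \in X : \fix(g) \text{ contains a neighbourhood of } x\}$ is clopen in $X$. Note $W_g$ is exactly the interior of $\fix(g)$, hence open. For closedness, suppose $x_i \to x$ with $x_i \in W_g$; then the interior points of $\fix(g)$ accumulate on $x$, so by the Hausdorff germs condition we are not in case (ii), hence we are in case (i): $\fix(g)$ contains a neighbourhood of $x$, i.e.\ $x \in W_g$. (When $x \notin \fix(g)$ this is automatic since the interior of $\fix(g)$ is disjoint from a neighbourhood of $x$; the condition is only a constraint when $g \in G_x$, which is the relevant case.) Thus $W_g$ is clopen, and $\mathrm{Stab}^0$ is continuous.

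Now assume in addition that $X$ is compact and $G \acts X$ is minimal. Continuity of $\mathrm{Stab}^0$ forces its image $\{G_x^0 : x \in X\}$ to be a compact, hence closed, $G$-invariant subset of $\sub(G)$; moreover, since $X$ is minimal and $\mathrm{Stab}^0$ is continuous and $G$-equivariant, this image is itself a minimal $G$-invariant subset of $\sub(G)$, i.e.\ a URS. It remains to identify it with $\S_G(X)$. By Proposition~\ref{prop-min-action-urs}, $\S_G(X)$ is the \emph{unique} closed minimal $G$-invariant subset contained in the closure of $\{G_x : x \in X\}$; so it is enough to show $\{G_x^0 : x \in X\}$ is contained in that closure. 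For $x \in X_0$ we have $G_x^0 = G_x$ by Lemma~\ref{lem-cont-stab}, and $X_0$ is dense by Proposition~\ref{prop-dense-Gdelta}; since $\mathrm{Stab}^0$ is continuous, $\{G_x^0 : x \in X\} = \overline{\{G_x^0 : x \in X_0\}} = \overline{\{G_x : x \in X_0\}} \subseteq \overline{\{G_x : x \in X\}}$, and this closed minimal set must therefore equal $\S_G(X)$.

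The main obstacle, such as it is, lies in correctly handling the Hausdorff germs dichotomy when verifying that $W_g$ is closed: one must be careful that case (ii) of Definition~\ref{D: Hausdorff germs} ("the interior of $\fix(g)$ does not accumulate on $x$") is precisely the negation of what happens along a sequence $x_i \to x$ with $x_i$ in the interior of $\fix(g)$, so that the hypothesis genuinely forces case (i). I would also note that the argument identifying the image as a URS uses only that a continuous equivariant image of a minimal system is minimal, which is immediate; no separate appeal to the structure of $\sub(G)$ is needed beyond Proposition~\ref{prop-min-action-urs} for the uniqueness step.
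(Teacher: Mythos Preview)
Your proof is correct and follows essentially the same approach as the paper. The only cosmetic difference is in the continuity argument: you check coordinate-wise that each $W_g=\mathrm{int}(\fix(g))$ is clopen, while the paper fixes $x$ and a finite set $P\subset G$ and builds a single neighbourhood on which $G_y^0\cap P=G_x^0\cap P$; these are equivalent formulations of the same idea. For the second part, note that your chain of equalities already yields $\{G_x^0 : x\in X\}=\overline{\{G_x : x\in X_0\}}=\S_G(X)$ directly from the definition, so the final appeal to uniqueness in Proposition~\ref{prop-min-action-urs} is not actually needed.
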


\begin{proof}
Fix $x \in X$. We let $P$ be a finite subset of $G$, and we shall prove that the set of $y \in X$ such that $G_y^0 \cap P = G_x^0 \cap P$ contains a neighbourhood of $x$.

By assumption we may partition the subset $P = \left\{g_1,\ldots,g_i,g_{i+1},\ldots,g_j\right\}$, such that $g_1,\ldots,g_i$ belong to $G_x^0$; and $g_{i+1},\ldots,g_j$ are such that the second condition of Definition \ref{D: Hausdorff germs} is satisfied.

For every $k \in \left\{1,\ldots,j\right\}$, we choose a neighbourhood $U_k$ of $x$ such that:
\begin{itemize}
\item $U_k \subset \fix(g_k)$ if $k \in \left\{1,\ldots,i\right\}$;
\item $U_k \cap \fix(g_k)$ has empty interior if $k \in \left\{i+1,\ldots,j\right\}$;
\end{itemize}
and we consider $U = \cap U_k$. The set $U$ is clearly a neighbourhood of $x$, and by construction for every $y \in U$, the element $g_k$ belongs to $G_y^0$ if and only if $k \in \left\{1,\ldots,i\right\}$. This shows that the map $\mathrm{Stab}^0$ is continuous at $x$.

Since $X$ is compact, the set of subgroups $G_x^0$, $x \in X$, is therefore a compact subset of $\sub(G)$, and it is clearly invariant. Moreover it is minimal as soon as the action of $G$ on $X$ is minimal, and it contains $\mathcal{S}_G(X)$ by Lemma \ref{lem-cont-stab}. Therefore it must coincide with $\mathcal{S}_G(X)$. \qedhere
\end{proof}

\subsection{The largest amenable uniformly recurrent subgroup} \label{subsec-largest-urs}

Given a countable group $G$, the set $\urs(G)$ can be naturally endowed with a partial order, as we shall now explain. We are grateful to P-E.\ Caprace for suggesting to use this notion to formalize our results.

In this paragraph we will use letters $A,B$ for arbitrary subsets of $\sub(G)$, and $\mathcal H, \mathcal K$ for uniformly recurrent subgroups of $G$.

\begin{defin}
For $A, B \subset \sub(G)$, we  write $A \preccurlyeq B$ if there exist $H \in A$ and $K \in B$ such that $H$ is contained in $K$.
\end{defin}

The relation $\preccurlyeq$ is neither transitive nor antisymmetric on the set of all subsets of $\sub(G)$. However, we shall prove below that, when restricted to to $\urs(G)$, it becomes a partial order.

For every subset $A \subset \sub(G)$, we will denote by $\mathcal{UE}(A)$ the set of $H \in \sub(G)$ such that there exists $K\in A$ with $K \leq H$. The set $\mathcal{UE}(A)$ will be called the \textbf{upper envelope} of $A$. Similarly we will denote by $\mathcal{LE}(A)$ the set of $H \in \sub(G)$ such that there exists $K\in A$ with $H \leq K$. The set $\mathcal{UE}(A)$ will be called the \textbf{lower envelope} of $A$. Note that if $A,B \subset \sub(G)$, we have $A \preccurlyeq B$ if and only if $B$ intersects $\mathcal{UE}(A)$, if and only if $A$ intersects $\mathcal{LE}(B)$.

\medskip

The proof of the following lemma is an easy consequence of the definition of the Chabauty topology, and we omit it.


\begin{lemma} \label{lem-env-closed}
If $A$ is a closed $G$-invariant subset of $\sub(G)$, then $\mathcal{UE}(A)$ and $\mathcal{LE}(A)$ are also closed and $G$-invariant.
\end{lemma}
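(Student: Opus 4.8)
The plan is to unwind both $\mathcal{UE}(A)$ and $\mathcal{LE}(A)$ to their definitions and check the two required properties — being $G$-invariant and being closed — separately, the first being immediate and the second requiring a short argument with the Chabauty topology.

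First I would verify $G$-invariance. Suppose $H \in \mathcal{UE}(A)$, so there is $K \in A$ with $K \le H$. For $g \in G$ we then have $gKg^{-1} \le gHg^{-1}$, and $gKg^{-1} \in A$ since $A$ is $G$-invariant; hence $gHg^{-1} \in \mathcal{UE}(A)$. The argument for $\mathcal{LE}(A)$ is symmetric: if $H \le K$ with $K \in A$, then $gHg^{-1} \le gKg^{-1} \in A$. So both envelopes are $G$-invariant with no hypothesis beyond $G$-invariance of $A$.

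Next I would prove closedness, which is the only non-formal point. Take a net (or, since $\sub(G)$ is metrizable, a sequence) $(H_n)$ in $\mathcal{UE}(A)$ converging to $H \in \sub(G)$, and for each $n$ pick $K_n \in A$ with $K_n \le H_n$. By compactness of $\sub(G)$ we may pass to a subnet along which $(K_n)$ converges to some $K \in \sub(G)$; since $A$ is closed, $K \in A$. It remains to see $K \le H$: if $g \in K$, then by the description of convergence in the Chabauty topology $g$ eventually belongs to $K_n$, hence eventually belongs to $H_n$, hence $g \in H$ because $H_n \to H$. Thus $K \le H$ and $H \in \mathcal{UE}(A)$. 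For $\mathcal{LE}(A)$ one runs the dual argument: with $H_n \le K_n$, $K_n \in A$, pass to a convergent subnet $K_n \to K \in A$, and check $H \le K$. Here the relevant half of the Chabauty convergence characterization is the one controlling limit points: any $g \in H$ lies in $H_n$ for all large $n$ (as $H_n \to H$), hence in $K_n$ for all large $n$, and then $g \in K$ since $K_n \to K$ — equivalently, $H = \lim H_n \le \liminf K_n \le K$.

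I do not expect a genuine obstacle here; the only mild subtlety is being careful about which direction of the Chabauty convergence criterion (elements of the limit eventually belong to the terms, versus the limit contains $\bigcap_k H_{n_k}$) is invoked, and ensuring the two compactness extractions are set up so that the inclusions pass to the limit in the right order. Since $\sub(G)$ is compact metrizable for $G$ countable, one may phrase everything with ordinary sequences and avoid nets altogether, which is presumably why the authors found the proof routine enough to omit.
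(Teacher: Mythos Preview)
Your proof is correct and is precisely the routine verification the authors allude to: the paper omits the proof entirely, remarking only that it ``is an easy consequence of the definition of the Chabauty topology.'' Your argument---$G$-invariance by direct conjugation, closedness by extracting a convergent subsequence in the compact space $\sub(G)$ and passing inclusions to the limit via the two halves of the Chabauty convergence criterion---is exactly what they have in mind.
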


We will need the following lemma.

\begin{lemma} \label{lem-prec-unique-min}
Assume that $A,B \subset \sub(G)$ are closed $G$-invariant subsets of $\sub(G)$, and that $B$ admits a unique minimal $G$-invariant subset $C$. If $A \preccurlyeq B$, then $A \preccurlyeq C$.
\end{lemma}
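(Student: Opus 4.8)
The plan is to unwind the definition of $\preccurlyeq$ and then use the standard fact that a closed $G$-invariant subset of a compact $G$-space, whose orbit closures all meet a fixed closed invariant set, must actually be contained in any minimal invariant set when that minimal set is unique. Concretely, since $A \preccurlyeq B$, by definition there exist $H \in A$ and $K \in B$ with $H \leq K$. Fix such an $H$. The strategy is to produce, inside the orbit closure $\overline{\mathcal{C}(H)}$, some subgroup that lies below a member of $C$; then $A \preccurlyeq C$ follows since $\overline{\mathcal{C}(H)} \subseteq A$ by closedness and $G$-invariance of $A$.

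First I would record the following observation about the lower envelope: since $H \le K$ and $K \in B$, we have $H \in \mathcal{LE}(B)$, and by Lemma \ref{lem-env-closed} the set $\mathcal{LE}(B)$ is closed and $G$-invariant, hence contains the whole orbit closure $\overline{\mathcal{C}(H)}$. Next, take $C$ to be the unique minimal $G$-invariant subset of $B$; since $B$ is compact (as a closed subset of the compact space $\sub(G)$), such a minimal set exists, and by hypothesis it is unique. The key step is then to relate $\overline{\mathcal{C}(H)}$ to $C$. I would consider the subset $C' = \mathcal{UE}(C) \cap B$, i.e.\ the set of subgroups in $B$ that contain some member of $C$; by Lemma \ref{lem-env-closed} this is closed and $G$-invariant, and it is non-empty (it contains $C$ itself). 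Now I claim $\overline{\mathcal{C}(H)}$ meets $C'$: indeed, $B$ has a unique minimal invariant subset $C$, so every point of $B$ — and in particular every point of the non-empty closed invariant set $\overline{\mathcal{C}(H)} \cap B$ — has $C$ in its orbit closure; wait, that is not literally the statement. Let me instead argue as follows: take any $L \in \overline{\mathcal{C}(H)}$. Since $L \in \mathcal{LE}(B)$, there is $L' \in B$ with $L \le L'$. The orbit closure $\overline{\mathcal{C}(L')} \subseteq B$ is a non-empty closed invariant subset of $B$, hence contains the unique minimal invariant subset $C$. Pick $M \in C$ with $M \in \overline{\mathcal{C}(L')}$. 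Then approximate: there is a net $g_i$ with $g_i L' \to M$; passing to a sub-net, $g_i L \to N$ for some $N \in \overline{\mathcal{C}(L)} \subseteq \overline{\mathcal{C}(H)} \subseteq A$, and since $g_i L \le g_i L'$ for all $i$, the Chabauty limit satisfies $N \le M$ (every element of $N$ eventually lies in $g_i L$, hence in $g_i L'$, hence in $M$). Thus $N \in A$, $M \in C$, $N \le M$, so $A \preccurlyeq C$.

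The main obstacle — really the only delicate point — is the passage to the limit showing $N \le M$: one must be careful that Chabauty convergence of $g_i L'$ to $M$ together with convergence of the (sub)net $g_i L$ forces the inclusion of limits, which uses the explicit description of Chabauty convergence recalled in the preliminaries (an element of the limit eventually belongs to the terms of the net). The rest is bookkeeping with Lemma \ref{lem-env-closed} and the uniqueness of the minimal subset $C$. I should also make sure to invoke compactness of $\sub(G)$ to extract convergent sub-nets, which is available since $\sub(G)$ is compact.
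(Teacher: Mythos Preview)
Your argument is correct, but it takes a longer route than the paper's proof. The paper observes directly that $B \cap \mathcal{UE}(A)$ is non-empty (since $A \preccurlyeq B$), closed and $G$-invariant (by Lemma~\ref{lem-env-closed}); any minimal closed $G$-invariant subset of it is then a minimal closed $G$-invariant subset of $B$, hence equals $C$ by uniqueness, giving $C \subset \mathcal{UE}(A)$ and thus $A \preccurlyeq C$ in two lines. Your approach instead tracks an explicit net $g_i L' \to M \in C$ and extracts a convergent sub-net $g_i L \to N$, then checks $N \le M$ via the pointwise description of Chabauty convergence. This works, but the limit step ``$g_i L \le g_i L'$ implies $N \le M$'' is precisely the content of Lemma~\ref{lem-env-closed} (closedness of envelopes), so you are essentially reproving that lemma by hand rather than invoking it. Also note that the detour through an arbitrary $L \in \overline{\mathcal{C}(H)}$ is unnecessary: you may take $L = H$ and $L' = K$ from the start. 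The paper's approach buys brevity and makes the role of the envelope lemmas transparent; your approach has the minor advantage of being self-contained, exhibiting the witnessing pair $(N,M)$ explicitly.
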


\begin{proof}
$A \preccurlyeq B$ means that $B \cap \mathcal{UE}(A)$ is non-empty. Moreover $B \cap \mathcal{UE}(A)$ is also closed and $G$-invariant thanks to Lemma \ref{lem-env-closed}. So by Zorn's lemma we may find a minimal closed $G$-invariant subset inside $B \cap \mathcal{UE}(A)$. By uniqueness the latter has to coincide with $C$, so that $C \subset \mathcal{UE}(A)$. In particular $A \preccurlyeq C$.
\end{proof}

\begin{prop} \label{prop-order-minimal}
If $\H,\K \in \urs(G)$, then $\H \preccurlyeq \K$ if and only if every element of $\H$ is contained in some element of $\K$ and every element of $\K$ contains some element of $\H$.
\end{prop}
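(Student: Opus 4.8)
The plan is to prove the two implications separately, using the Chabauty topology together with Lemma~\ref{lem-prec-unique-min}. Recall that $\H$ and $\K$ are themselves the unique minimal $G$-invariant subsets of the closed $G$-invariant sets $\H$ and $\K$ (trivially, since they are already minimal), so Lemma~\ref{lem-prec-unique-min} will be applicable in both directions.

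First I would prove the ``only if'' direction. Assume $\H \preccurlyeq \K$, i.e.\ there exist $H_0 \in \H$ and $K_0 \in \K$ with $H_0 \le K_0$. To show every element of $\H$ is contained in some element of $\K$: fix $H \in \H$. Applying Lemma~\ref{lem-prec-unique-min} with $A = \{H\}$ (a closed set, though not $G$-invariant) is not quite right, so instead I would argue directly. The set $\mathcal{UE}(\K)$ is closed and $G$-invariant by Lemma~\ref{lem-env-closed}, and it meets $\H$ (it contains $H_0$). Since $\H$ is minimal, $\H \subset \mathcal{UE}(\K)$, which says precisely that every element of $\H$ is contained in some element of $\K$. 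Symmetrically, $\mathcal{LE}(\H)$ is closed and $G$-invariant, contains $K_0$, hence meets the minimal set $\K$, so $\K \subset \mathcal{LE}(\H)$, which says every element of $\K$ contains some element of $\H$.

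The ``if'' direction is immediate: if every element of $\H$ is contained in some element of $\K$, then picking any $H \in \H$ gives $H \le K$ for some $K \in \K$, which is exactly the defining condition $\H \preccurlyeq \K$. (The second hypothesis is not even needed for this direction.)

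I do not expect any serious obstacle here; the only point requiring a little care is the passage from ``$\mathcal{UE}(\K)$ meets $\H$'' to ``$\mathcal{UE}(\K) \supset \H$'', which uses minimality of $\H$ together with the fact (Lemma~\ref{lem-env-closed}) that $\mathcal{UE}(\K)$ is a \emph{closed} $G$-invariant set — without closedness this step would fail. In writing this up I would state it concisely, invoking Lemma~\ref{lem-env-closed} for closedness and $G$-invariance of the envelopes and minimality of the URS's for the inclusions, and noting that Lemma~\ref{lem-prec-unique-min} can alternatively be cited directly (applied to $A = \H$, $B = \K$, $C = \K$) to obtain one of the two inclusions.
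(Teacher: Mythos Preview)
Your approach is essentially identical to the paper's: use Lemma~\ref{lem-env-closed} to know the envelopes are closed and $G$-invariant, observe that one envelope meets the minimal set, and conclude containment by minimality. The ``if'' direction is indeed trivial, and the paper also only argues the forward implication.

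There is, however, a systematic swap of $\mathcal{UE}$ and $\mathcal{LE}$ in your write-up. You write that $\mathcal{UE}(\K)$ contains $H_0$ and that $\H\subset\mathcal{UE}(\K)$ means every element of $\H$ lies in some element of $\K$; both claims are wrong as stated. From $H_0\le K_0$ with $K_0\in\K$ you get $H_0\in\mathcal{LE}(\K)$, hence $\H\subset\mathcal{LE}(\K)$ by minimality, which is the statement that every $H\in\H$ is contained in some $K\in\K$. Symmetrically $K_0\in\mathcal{UE}(\H)$, so $\K\subset\mathcal{UE}(\H)$, giving the other half. This is exactly what the paper does. Once you correct the envelope labels your argument is complete and matches the paper's proof verbatim.
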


\begin{proof}
Clearly it is enough to prove the direct implication. Since $\H \preccurlyeq \K$, $\H$ intersects the lower envelope of $\K$. Since $\mathcal{LE}(\K)$ is closed and $G$-invariant according to Lemma \ref{lem-env-closed}, one must have $\H \subset \mathcal{LE}(\K)$ by minimality of $\H$. A similar argument shows that $\
\K \subset \mathcal{UE}(\H)$, hence the conclusion. \qedhere \end{proof}

\begin{cor}\label{cor-order-URS}
The relation $\preccurlyeq$ is a partial order on the set $\urs(G)$.

\end{cor}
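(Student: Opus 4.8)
The plan is to check the three defining properties of a partial order on $\urs(G)$: reflexivity, transitivity, and antisymmetry. Reflexivity is immediate, since $H \le H$ for any $H \in \mathcal{H}$. For transitivity, given $\mathcal{H} \preccurlyeq \mathcal{K}$ and $\mathcal{K} \preccurlyeq \mathcal{L}$, I would invoke Proposition \ref{prop-order-minimal}: it gives that every element of $\mathcal{H}$ is contained in some element of $\mathcal{K}$, and that every element of $\mathcal{K}$ is contained in some element of $\mathcal{L}$; chaining these for a fixed $H \in \mathcal{H}$ produces an element of $\mathcal{L}$ above $H$, so $\mathcal{H} \preccurlyeq \mathcal{L}$.

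The real content is antisymmetry. Suppose $\mathcal{H} \preccurlyeq \mathcal{K}$ and $\mathcal{K} \preccurlyeq \mathcal{H}$. By Proposition \ref{prop-order-minimal}, every element of $\mathcal{H}$ lies inside some element of $\mathcal{K}$ and vice versa. Starting from any $H_0 \in \mathcal{H}$, I would alternately apply these two facts to construct an increasing chain $H_0 \le K_0 \le H_1 \le K_1 \le \cdots$ with the $H_i$ in $\mathcal{H}$ and the $K_i$ in $\mathcal{K}$, and set $L = \bigcup_i H_i = \bigcup_i K_i$, which is a subgroup of $G$. An increasing sequence of subgroups converges in the Chabauty topology to its union — this is read off directly from the convergence criterion recalled just after the definition of the Chabauty topology — so $H_i \to L$ and $K_i \to L$. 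Since $\mathcal{H}$ and $\mathcal{K}$ are closed, $L$ belongs to both; being minimal closed $G$-invariant subsets of $\sub(G)$ sharing the point $L$, they each coincide with $\overline{\conj(L)}$, hence $\mathcal{H} = \mathcal{K}$.

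The point to be careful about is that $\preccurlyeq$ is genuinely not antisymmetric on arbitrary subsets of $\sub(G)$ — as already noted in the text — so the argument must exploit minimality in an essential way. The alternating-chain device is exactly what upgrades the two one-sided comparisons, each of which a priori only relates a single pair of subgroups, into a genuine common point of the two uniformly recurrent subgroups; everything else (Chabauty convergence of increasing chains, and disjointness of distinct minimal sets) is routine. One could alternatively try to feed Lemma \ref{lem-prec-unique-min} the sets $\mathcal{UE}(\mathcal{H})$ and $\mathcal{LE}(\mathcal{H})$, but verifying that these have the required unique minimal subset seems no easier than the direct chain argument above.
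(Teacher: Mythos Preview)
Your proof is correct, and the overall strategy matches the paper's: for antisymmetry, both arguments produce a common point of $\mathcal{H}$ and $\mathcal{K}$ using Proposition~\ref{prop-order-minimal}, the fact that increasing chains of subgroups converge in Chabauty to their union, and closedness of URS's, then conclude by minimality.

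The one noteworthy difference is tactical. The paper first applies Zorn's lemma to choose an element $H_0 \in \mathcal{H}$ that is \emph{maximal} for inclusion inside $\mathcal{H}$ (the hypothesis of Zorn being verified precisely via the Chabauty-limit-of-chains fact you also use). Then a single two-step application of Proposition~\ref{prop-order-minimal} gives $H_0 \le K \le H_1$ with $K \in \mathcal{K}$, $H_1 \in \mathcal{H}$, and maximality forces $H_0 = H_1 = K$, exhibiting the common element in finitely many steps. You instead bypass Zorn entirely by building the infinite alternating chain and taking its union directly. Your route is a touch more elementary (no appeal to the axiom of choice), while the paper's is slightly tidier once the maximal element is in hand; but the underlying ingredients are identical.
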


\begin{proof}
 Reflexivity is trivial. Transitivity follows from Proposition \ref{prop-order-minimal}. Let us prove that $\preccurlyeq$ is antisymmetric. Assume that $\H,\K \in \urs(G)$ are such that $\H \preccurlyeq \K$ and $\K \preccurlyeq \H$. By Zorn's lemma, we may find $H_0 \in \H$ that is maximal for the inclusion among elements of $\H$ (note that an increasing sequence of subgroups converges to its union in the Chabauty topology, thus an ascending union of elements of $\H$ belongs to $\H$ since $\H$ is closed). Applying Proposition \ref{prop-order-minimal} twice provides us with $K \in \K$ and $H_1 \in \H$ such that $H_0 \leq K\leq H_1$. By maximality of $H_0$ this implies $H_0=H_1$, and hence $H_0=K$. In particular $\H \cap \K$ is non-empty, and by minimality one must have $\H=\K$. 
\end{proof}

Recall that the amenable radical of $G$ is a normal amenable subgroup $R_a$ that contains any other normal amenable subgroup. By a result of Bader--Duchesne--L\'{e}cureux \cite{B-D-L}, every $\mu \in \mathrm{IRS}(G)$ supported on amenable subgroups satisfies $\mu(\sub(R_a)) = 1$.

The following result shows that every countable group $G$ admits an amenable URS that is larger than any other amenable URS. This URS is called \textbf{the Furstenberg URS} of the group $G$ (see Proposition \ref{prop-A_G-other-action} for the choice of this terminology). The Furstenberg URS can be seen as a \enquote{URS version} of the amenable radical for the study of the action $G \acts \sub(G)$, in the sense that it plays a role in the topological setting similar to the amenable radical in the measured setting.


When completing this work, we learn that  U.\ Bader and P-E.\ Caprace independently obtained a similar statement in the setting of locally compact groups.

\begin{thm} \label{thm-max-urs-moy}
Let $G$ be a countable group. Then there exists a unique amenable $\A_G\in \urs(G)$ such that $\H\preccurlyeq \A_G$ for every amenable $\H\in \urs(G)$.
\end{thm}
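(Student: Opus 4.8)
The plan is to construct $\A_G$ as a maximal element (with respect to $\preccurlyeq$) of the set of amenable URS's, and then to show that this maximal element is in fact a maximum by exhibiting, given two amenable URS's $\H$ and $\K$, a third amenable URS $\M$ with $\H \preccurlyeq \M$ and $\K \preccurlyeq \M$. Once such a "join" operation is available, a Zorn-type argument (using that $\urs(G)$ is a partial order by Corollary \ref{cor-order-URS}) upgrades the existence of maximal elements to the existence of a maximum; uniqueness is then automatic from antisymmetry of $\preccurlyeq$. So the crux is: \emph{the amenable URS's are directed upward}.

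To build the join $\M$ of two amenable URS's $\H$ and $\K$, I would proceed as follows. Pick $H \in \H$ and $K \in \K$; the subgroup generated by $H$ and $K$ need not be amenable, so a naive "product" does not work. Instead I would look at the closed $G$-invariant subset $B \subseteq \sub(G)$ consisting of all subgroups $L$ that contain a conjugate of some element of $\H$ and a conjugate of some element of $\K$ — more precisely, one wants the set of subgroups $L$ such that $L \in \mathcal{UE}(\H) \cap \mathcal{UE}(\K)$ and $L$ is amenable. The key point to verify is that this set is \emph{non-empty}: here is where I expect to use a gluing construction. Given $H \in \H$, by minimality of $\H$ the conjugates $\conj(H)$ are dense in $\H$; one can try to realize inside a single amenable subgroup a "limit along the orbit" of both $H$ and $K$. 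Concretely, I would consider an invariant mean / a sequence of conjugates $g_n K g_n^{-1}$ chosen so that $\langle H, g_n K g_n^{-1}\rangle$ converges in $\sub(G)$ to a subgroup $L$ that still contains (a copy of) $H$; amenability of $L$ should be arranged by controlling the "overlap" of the two pieces so that $L$ is an increasing union, or an extension, of amenable groups. This is the technical heart and the main obstacle: \textbf{showing that one can amalgamate a conjugate of an element of $\H$ with a conjugate of an element of $\K$ inside an amenable subgroup of $G$}, rather than merely inside some subgroup.

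Assuming this is done, the set $B$ above is a non-empty closed $G$-invariant subset of $\sub(G)$ all of whose elements are amenable (amenability passes to the Chabauty closure of a set of amenable subgroups, since a subgroup all of whose finitely generated subgroups are amenable is amenable, and Chabauty limits are "local"). By Zorn's lemma $B$ contains a minimal closed $G$-invariant subset $\M$, which is then an amenable URS. Every element of $\M$ lies in $\mathcal{UE}(\H)$, i.e. contains a conjugate of some element of $\H$; since $\mathcal{UE}(\H)$ is closed and $G$-invariant by Lemma \ref{lem-env-closed}, and $\M$ meets it, minimality of $\M$ gives $\M \subseteq \mathcal{UE}(\H)$, whence $\H \preccurlyeq \M$ by Proposition \ref{prop-order-minimal}; similarly $\K \preccurlyeq \M$. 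Thus the amenable URS's are directed.

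Finally, to pass from directedness to a genuine maximum, I would again invoke Zorn's lemma on the set $\mathcal{P}$ of amenable URS's ordered by $\preccurlyeq$: to see that chains have upper bounds I would take, for an ascending chain $(\H_i)$ in $\mathcal{P}$, the Chabauty closure of $\bigcup_i \mathcal{UE}(\H_i)$ (or of a suitable union of representatives), argue it is closed, $G$-invariant, and consists of amenable subgroups, and extract a minimal subset as before; one checks it dominates every $\H_i$. Hence $\mathcal{P}$ has a maximal element $\A_G$, and by the directedness just established $\A_G$ dominates every amenable URS, so it is the maximum. Uniqueness follows from antisymmetry of $\preccurlyeq$ (Corollary \ref{cor-order-URS}). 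The step I expect to be genuinely delicate is the amalgamation/gluing inside an amenable subgroup; the rest is soft topology plus order theory.
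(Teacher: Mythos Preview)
Your outline correctly isolates what would need to be shown, but the step you yourself flag as ``the technical heart and the main obstacle'' --- amalgamating a conjugate of an element of $\H$ and a conjugate of an element of $\K$ inside a single \emph{amenable} subgroup of $G$ --- is not carried out, and nothing in the proposal indicates how to do it. The suggestion of choosing conjugates $g_n K g_n^{-1}$ so that $\langle H, g_n K g_n^{-1}\rangle$ converges to an amenable limit has no mechanism behind it: two amenable subgroups of a non-amenable group typically generate a non-amenable group regardless of how you conjugate them, and Chabauty limits do not improve this. Without this step the whole scheme collapses, since directedness of the amenable URS's is precisely as hard as the theorem. Your chain argument has a separate problem: each $\mathcal{UE}(\H_i)$ contains $G$ itself, so neither the union nor the closure of $\bigcup_i \mathcal{UE}(\H_i)$ ``consists of amenable subgroups'' as claimed.

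The paper sidesteps the amalgamation issue entirely by constructing $\A_G$ directly from an auxiliary $G$-space on which amenability is visible as a fixed-point property. Let $\M(G)\subset\ell^\infty(G)^*$ be the (weak-$*$ compact, convex) space of means on $G$; a subgroup $H\le G$ fixes a point of $\M(G)$ if and only if $H$ is amenable. Choose by Zorn a minimal non-empty closed convex $G$-invariant $C\subset\M(G)$, let $X$ be the closure of its extreme points (the unique minimal closed $G$-invariant subset of $C$), and set $\A_G=\S_G(X)$. Given any amenable $\H\in\urs(G)$ and $H\in\H$, amenability of $H$ yields an $H$-fixed point $c\in C$; since the $G$-orbit closure of $c$ must contain $X$, a net $g_i c\to x\in X_0$ exists, and any cluster point of $g_i H g_i^{-1}$ lies in $G_x\in\A_G$, giving $\H\preccurlyeq\A_G$. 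Uniqueness is then antisymmetry of $\preccurlyeq$, as you say. The idea you are missing is that amenability admits a fixed-point characterization on a \emph{single} compact convex $G$-space, which lets one compare all amenable URS's against one object at once rather than trying to join them pairwise.
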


\begin{proof}
Let $\M(G)\subset \ell^\infty(G)^*$ be the set of all means on $G$. For the weak-* topology, $\M(G)$ is a convex compact $G$-space. The subgroups of $G$ which fix a point in $\M(G)$ are exactly the amenable subgroups of $G$. Namely any amenable subgroup of $G$ fixes a point in $\M(G)$ by the fixed point characterisation of amenability; conversely every $H\in \sub(G)$ that fixes a point in $\M(G)$ must be amenable, as its acts on $G$ with an invariant mean and trivial stabilizers.

By Zorn's lemma, we may choose a non-empty $G$-invariant convex closed subset $C\subset \M(G)$ which is minimal with respect to inclusion. Denote by $X\subset C$ the closure of the set of extreme points of $C$, which is a $G$-invariant subset. The choice of $C$ implies that $X$ is a minimal closed invariant subset, and is the unique minimal closed invariant subset of $C$, see \cite[Theorem III.2.3]{Glas-book}. Let $\A_G = \S_G(X)$ be the stabilizer URS associated to $G \acts X$. Recall that $\S_G(X)$ is the closure in $\sub(G)$ of $\{G_x \, : \, x\in X_0\}$, where $X_0$ is the domain of continuity of the stabilizer map. We claim that this URS verifies the desired conclusion. To see this, let $\H\in \urs(G)$ be another amenable URS, and let $H\in \H$. Since $H$ is amenable, $H$ fixes a point $c\in C$. Since $X$ is the only minimal closed invariant subset of $C$, the closure of the $G$-orbit of $c$ must contain $X$. In particular, there is a net $(g_i)_{i\in I}$ in $G$ such that $g_i(c)$ converges to a point $x\in X_0$. Then any cluster point of the net $(g_i H g_i^{-1})_{
i\in I}$ is contained in $G_x$. Since $G_x \in \A_G$, this shows that $\H\preccurlyeq \A_G$ as desired.

The uniqueness is clear: if $\mathcal{A}_1, \mathcal{A}_2 \in \urs(G)$ both satisfy the conclusion, then $\mathcal{A}_1 \preccurlyeq \mathcal{A}_2$ and $\mathcal{A}_2 \preccurlyeq \mathcal{A}_1$, so that $\mathcal{A}_1= \mathcal{A}_2$ by Corollary \ref{cor-order-URS}. \qedhere
\end{proof}

\begin{defin}
$\A_G$ will be called the \textbf{Furstenberg URS} of the group $G$.
\end{defin}

\begin{prop} \label{prop-properties-A_G}
Let $G$ be a countable group. Then the following hold:
\begin{enumerate}[label=(\roman*)]
\item $\mathcal{A}_{G}$ is invariant under the action of $\aut(G)$ on $\sub(G)$.
\item $\bigcap_{H \in \mathcal{A}_{G}} H = R_a$, where $R_a$ is the amenable radical of $G$.
\item $\mathcal{A}_G$ is either a singleton (in which case it is the amenable radical of $G$), or uncountable.
\item The kernel of the action $G \acts \mathcal{A}_{G}$ is a characteristic subgroup of $G$, which contains the amenable radical of $G$.
\end{enumerate}
\end{prop}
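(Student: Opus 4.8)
The plan is to extract all four items from the defining maximality property of $\A_G$ (Theorem \ref{thm-max-urs-moy}), from the fact that $\preccurlyeq$ is a partial order on $\urs(G)$ (Corollary \ref{cor-order-URS}), and from the explicit description of $\preccurlyeq$ between uniformly recurrent subgroups (Proposition \ref{prop-order-minimal}).

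For (i), any $\phi \in \aut(G)$ induces a homeomorphism of $\sub(G)$ satisfying $\phi(gHg^{-1}) = \phi(g)\,\phi(H)\,\phi(g)^{-1}$, hence it sends $G$-invariant closed subsets to $G$-invariant closed subsets and minimal ones to minimal ones; in particular it maps URS's to URS's, and it clearly preserves amenability, so $\phi(\A_G) \in \urs(G)$ is amenable. Maximality of $\A_G$ gives $\phi(\A_G) \preccurlyeq \A_G$ and also $\phi^{-1}(\A_G) \preccurlyeq \A_G$, the latter being equivalent to $\A_G \preccurlyeq \phi(\A_G)$; antisymmetry of $\preccurlyeq$ then yields $\phi(\A_G) = \A_G$. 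For (ii), put $N = \bigcap_{H \in \A_G} H$; since $\A_G$ is conjugation-invariant, $N$ is normal in $G$, and being contained in an amenable $H \in \A_G$ it is amenable, so $N \le R_a$. Conversely $\{R_a\}$ is an amenable URS, hence $\{R_a\} \preccurlyeq \A_G$ by maximality, and by Proposition \ref{prop-order-minimal} this means every $H \in \A_G$ contains $R_a$; therefore $R_a \le N$ and $N = R_a$. In particular, when $\A_G$ is a singleton it coincides with $R_a$.

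For (iii), note that $\A_G$ is a non-empty compact metrizable space (as $G$ is countable, $\sub(G) \subset \{0,1\}^G$ is metrizable) carrying a minimal $G$-action. If $\A_G$ has an isolated point, then the set of its isolated points is open, non-empty and $G$-invariant, hence all of $\A_G$ by minimality, so $\A_G$ is finite; otherwise $\A_G$ is perfect, hence uncountable. It remains to prove that a finite $\A_G$ is a singleton. Minimality forces $\A_G$ to be a single $G$-orbit, say $\A_G = \conj(H)$ with $H$ having finitely many conjugates $H_1 = H, H_2, \dots, H_n$. The normal core $M_0 = \bigcap_i N_G(H_i)$ of $N_G(H)$ has finite index in $G$ and normalizes every $H_i$, so $P = H_1 H_2 \cdots H_n$ is a subgroup of $M_0$, normal in $M_0$; it is an iterated extension of amenable groups, hence amenable, and it is normalized by $G$ since $G$ permutes the $H_i$. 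Thus $H \le P \le R_a$, so $\A_G \preccurlyeq \{R_a\}$; together with $\{R_a\} \preccurlyeq \A_G$ and antisymmetry this gives $\A_G = \{R_a\}$, as desired.

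Finally, for (iv), the kernel of $G \acts \A_G$ is $K = \bigcap_{H \in \A_G} N_G(H)$. By (ii) every $H \in \A_G$ contains $R_a$, so $R_a$ normalizes each such $H$ and $R_a \le K$. And $K$ is characteristic: if $\phi \in \aut(G)$ and $g \in K$, then, using that $\phi$ permutes $\A_G$ by (i), any $H \in \A_G$ can be written $H = \phi(H')$ with $H' \in \A_G$, and $\phi(g)\, H\, \phi(g)^{-1} = \phi(g H' g^{-1}) = \phi(H') = H$, so $\phi(g) \in K$. The only step that is not purely formal is the reduction in (iii) that a finite $\A_G$ is a singleton, which rests on the elementary fact that the normal closure of an amenable subgroup having finitely many conjugates is amenable; all the remaining arguments are direct applications of the maximality and antisymmetry properties of $\A_G$.
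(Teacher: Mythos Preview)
Your overall strategy matches the paper's proof almost exactly: (i) uses that $\aut(G)$ acts on amenable URS's preserving $\preccurlyeq$, so the maximum is fixed; (ii) uses that $\{R_a\}\preccurlyeq\A_G$ together with Proposition~\ref{prop-order-minimal}; (iv) is deduced formally from (i) and (ii). For the countable/uncountable dichotomy in (iii) your perfect-set argument is equivalent to the paper's Baire argument.

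There is, however, a genuine gap in your reduction ``finite $\Rightarrow$ singleton'' in (iii). You claim that since $M_0=\bigcap_i N_G(H_i)$ normalizes every $H_i$, the product $P=H_1H_2\cdots H_n$ is a subgroup of $M_0$, normal in $M_0$. This does not follow: there is no reason for $H_i\subset M_0$ (equivalently, for $H_i$ to normalize $H_j$ when $i\neq j$), so the $H_i$ need not pairwise permute and the set $H_1\cdots H_n$ need not be a group at all, let alone one contained in $M_0$. For a concrete failure take $G=S_3$ and $H=\langle(12)\rangle$: then $M_0=\{1\}$, while $H_1H_2H_3=S_3$. Consequently the ``iterated extension of amenable groups'' description of $P$ is unjustified.

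The fix is minor and stays within your framework: replace each $H_i$ by $K_i:=H_i\cap M_0$. These \emph{are} normal in $M_0$ (since $M_0$ normalizes both $H_i$ and itself), so $K:=K_1\cdots K_n$ is a genuine amenable subgroup of $M_0$, and it is normal in $G$ because $G$ permutes the $K_i$. Hence $K\le R_a$, and since $[H_i:K_i]<\infty$ the images of the $H_i$ in $G/R_a$ are finite with finitely many conjugates; Dietzmann's lemma then makes their normal closure finite, hence trivial in $G/R_a$, giving $H_i\le R_a$ and $\A_G=\{R_a\}$. Alternatively you can simply invoke, as the paper does and as you yourself note in your last sentence, the standard fact that the normal closure of an amenable subgroup with finitely many conjugates is amenable, and then conclude via $\{N\}\preccurlyeq\A_G$ and $H\le N$ that $H=N$ is normal.
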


\begin{proof}
We first prove (i). The group $\aut(G)$ acts on the set $\urs_a(G)$ of amenable URS's of $G$ by preserving the order $\preccurlyeq$. Since $\mathcal{A}_G$ is the greatest element of $\urs_a(G)$, it follows that $\aut(G)$ must preserve $\mathcal{A}_G$.

For (ii), remark that $R = \cap_{H \in \mathcal{A}_{G}} H$ is a normal amenable subgroup of $G$, so that $R \leq R_a$. Conversely since the singleton $R_a$ is an amenable URS of $G$, we must have $R_a \preccurlyeq \mathcal{A}_{G}$. By Proposition \ref{prop-order-minimal} this means that every element of $\mathcal{A}_{G}$ contains $R_a$, so $R_a \leq R$. 

In order to prove (iii), we assume that $\mathcal{A}_G$ is countable, and we show that it must be a singleton. By the Baire category theorem, $\mathcal{A}_G$ must have an isolated point. Now by minimality all points must be isolated, and by compactness $\mathcal{A}_G$ has to be finite. This means that there is an amenable subgroup $H \in \sub(G)$ with finitely many conjugates. This property easily implies that the normal closure $N$ of $H$ is also amenable. Since $\mathcal{A}_G$ is larger than any other amenable URS, one must have $N \leq H$. Since the other inclusion is clear by definition, it follows that $H=N$, so $\mathcal{A}_G$ is a singleton. This proves (iii).

Finally (iv) follows immediately from (i) and (ii).
\end{proof}

\begin{remark}
There exist countable groups $G$ with an uncountable amenable URS, but whose Furstenberg URS is a singleton. For example $\A_G$ is the singleton $\{G\}$ whenever $G$ is amenable, but there are many examples of amenable groups admitting an uncountable URS.
\end{remark}

The following result establishes basic stability properties of the Furstenberg URS.

\begin{prop} \label{prop-stability-A_G}
The following hold:
\begin{enumerate}[label=(\roman*)]
\item If $G_1,G_2$ are countable groups, then the natural map $i : \sub(G_1) \times \sub(G_2) \rightarrow \sub(G_1 \times G_2)$ induces an equivariant isomorphism between $\mathcal{A}_{G_1} \times \mathcal{A}_{G_2}$ and $\mathcal{A}_{G_1 \times G_2}$.
\item If $N \unlhd G$ is a normal amenable subgroup of $G$, and if $\sub_N(G)$ is the set of subgroups of $G$ containing $N$, then the natural map $\varphi: \sub(G/N) \rightarrow \sub_N(G)$ induces an equivariant homeomorphism between $\mathcal{A}_{G/N}$ and $\mathcal{A}_{G}$.
\end{enumerate}
\end{prop}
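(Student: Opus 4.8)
The plan is to prove the two parts of Proposition \ref{prop-stability-A_G} separately, in each case by establishing that the natural map carries the relevant Chabauty dynamics into the target, identifying the image with an explicitly constructed invariant convex compact space, and then invoking the construction of $\A_G$ from the proof of Theorem \ref{thm-max-urs-moy}.

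For part (i), first I would record that the map $i$ sending $(H_1,H_2)\mapsto H_1\times H_2$ is a continuous, injective, $G_1\times G_2$-equivariant embedding $\sub(G_1)\times\sub(G_2)\hookrightarrow\sub(G_1\times G_2)$, and that it restricts to a homeomorphism onto the set of subgroups of product form. The key point is that $i(\A_{G_1}\times\A_{G_2})$ is itself a URS of $G_1\times G_2$: it is closed, $G_1\times G_2$-invariant, and minimal (a product of minimal systems under a product group action is minimal), and it consists of amenable subgroups since a product of amenable groups is amenable. To conclude it equals $\A_{G_1\times G_2}$, I would show it dominates every amenable URS of the product. Given $\H\in\urs(G_1\times G_2)$ amenable and $H\in\H$, the projections $p_j(H)$ are amenable subgroups of $G_j$, and $H\le p_1(H)\times p_2(H)$; pushing $p_j(H)$ towards an element of $\A_{G_j}$ via nets $(g_i^{(j)})$ in $G_j$ (using the defining property of $\A_{G_j}$ through $\M(G_j)$ as in the proof of Theorem \ref{thm-max-urs-moy}), the net $(g_i^{(1)},g_i^{(2)})$ conjugates $H$ into a subgroup of an element of $i(\A_{G_1}\times\A_{G_2})$, giving $\H\preccurlyeq i(\A_{G_1}\times\A_{G_2})$. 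By uniqueness in Theorem \ref{thm-max-urs-moy} the two coincide. Alternatively, and perhaps more cleanly, one realizes $\M(G_1\times G_2)$ with a minimal invariant convex compact subset contained in (the image of) $\M(G_1)\times\M(G_2)$, and matches up the stabilizer URS construction on both sides; I would mention this as the structural reason behind the identification.

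For part (ii), the map $\varphi$ is the standard order-preserving bijection between $\sub(G/N)$ and $\sub_N(G)=\{H\le G: N\le H\}$, which is a $G$-equivariant homeomorphism for the Chabauty topologies (continuity in both directions is immediate from the definition of the Chabauty topology once one fixes that $N$ is finitely generated or, in general, exhausts $N$ by finite subsets — since $N\le H$ automatically, membership of elements outside $N$ is what is being tested, and this matches the quotient). A subgroup $H\ge N$ is amenable if and only if $H/N$ is amenable, because $N$ is amenable; hence $\varphi$ carries amenable URS's of $G/N$ bijectively, and order-preservingly, onto URS's of $G$ consisting of amenable subgroups that contain $N$. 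Now $\A_{G/N}$ is the greatest amenable URS of $G/N$, so $\varphi(\A_{G/N})$ is a maximal element among amenable URS's of $G$ that are contained in $\sub_N(G)$; but by part (ii) of Proposition \ref{prop-properties-A_G} every element of $\A_G$ contains $R_a\supseteq N$, so $\A_G\subset\sub_N(G)$, and thus $\A_G=\varphi(\H)$ for a unique $\H\in\urs(G/N)$ which must be amenable and maximal, i.e.\ $\H=\A_{G/N}$.

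The main obstacle I anticipate is in part (i): namely, verifying that $i(\A_{G_1}\times\A_{G_2})$ genuinely dominates \emph{every} amenable URS of $G_1\times G_2$, rather than just the ones of product form. A general amenable subgroup $H\le G_1\times G_2$ need not be a direct product, so one must pass to $p_1(H)\times p_2(H)$, check this overgroup is still amenable (it is, being a subgroup of $p_1(H)\times p_2(H)$ with both factors amenable as quotients of $H$), and then run the conjugation-to-$\A_{G_j}$ argument on the two coordinates simultaneously; the bookkeeping with nets and the compatibility of the two limiting procedures is where care is needed. The rest is routine once the Chabauty-continuity of $i$ and $\varphi$ and the behaviour of amenability under products and extensions are in hand.
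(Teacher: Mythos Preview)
Your proposal is correct and follows essentially the same route as the paper's own proof: in both parts you show the image under the natural map is an amenable URS, then use the maximality characterization of $\A_G$ together with Proposition~\ref{prop-properties-A_G}(ii) (to get $\A_G\subset\sub_N(G)$ in part (ii)). The paper is terser, writing only ``one easily check'' for the domination step in (i), whereas you spell out the projection argument; your identified obstacle about running the two nets simultaneously is real but harmless, since you can simply perform the two conjugation limits sequentially (first push $p_1$ into $\A_{G_1}$ via $(g_i^{(1)},1)$, pass to a cluster point, then push $p_2$ via $(1,g_j^{(2)})$ --- the first projection is unaffected by the second step).
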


\begin{proof}
(i). The map $i$ is continuous, and $i(H_1,H_2)$ is amenable as soon as $H_1$ and $H_2$ are amenable. Since moreover the action of $G_1 \times G_2$ on $\mathcal{A}_{G_1} \times \mathcal{A}_{G_2}$ is minimal, this shows that $i(\mathcal{A}_{G_1} \times \mathcal{A}_{G_2})$ is an amenable URS of $G_1 \times G_2$. Moreover one easily check that any amenable $\H \in \urs(G_1 \times G_2)$ satisfies $\H \preccurlyeq i(\mathcal{A}_{G_1} \times \mathcal{A}_{G_2})$, hence the conclusion. 

(ii). It is not hard to check that $\varphi$ is an equivariant homeomorphism. Since $N$ is amenable, $\varphi$ sends the set of amenable subgroups of $G/N$ onto the set of amenable subgroups of $G$ containing $N$. Therefore $\varphi(\mathcal{A}_{G/N})$ is an amenable URS of $G$, so that $\varphi(\mathcal{A}_{G/N}) \preccurlyeq \mathcal{A}_{G}$ by definition of $\mathcal{A}_{G}$. Now since $N$ is normal and amenable, every $H \in \mathcal{A}_{G}$ contains $N$ by Proposition \ref{prop-properties-A_G}, (ii). This means that $\mathcal{A}_{G} \subset \sub_N(G)$. Therefore $\varphi^{-1}(\mathcal{A}_{G})$ is an amenable URS of $G/N$, and it follows that $\varphi^{-1}(\mathcal{A}_{G}) \preccurlyeq \mathcal{A}_{G/N}$.
\end{proof}


Recall that Furstenberg showed that for every countable (and more generally locally compact) group $G$, there exists a compact space $\partial_F G$ with a boundary action $G \acts \partial_F G$ satisfying the following universal property: for every boundary action $G\acts X$, there exists a continuous surjective $G$-equivariant map $\partial_F G \to X$ \cite[Proposition 4.6]{Furst-bound-th}. The space $\partial_F G$ is called the \textbf{Furstenberg boundary} of $G$. It is unique up to $G$-equivariant homeomorphism.

The amenable radical of $G$ is exactly the kernel of the action of $G$ on its Furstenberg boundary \cite[Proposition 7]{Furman-boundary}. The second statement of the following proposition says that the Furstenberg URS of $G$ is the stabilizer URS associated to the action of $G$ on its Furstenberg boundary (which already played an important role in \cite{Kenn}). 


\begin{prop} \label{prop-A_G-other-action}
Let $G$ be a countable group.
\begin{enumerate}[label=(\roman*)]
\item If $G \acts X$ is a boundary action, then $\mathcal{A}_G \preccurlyeq \S_G(X)$. If moreover there exists $x\in X$ such that $G_x$ is amenable, then $\mathcal{A}_G = \S_G(X)$.
\item $\mathcal{A}_G$ is the collection of point stabilizers for the action of $G$ on its Furstenberg boundary.
\item The action $G \acts \mathcal{A}_G$ is a boundary action.
\end{enumerate}
\end{prop}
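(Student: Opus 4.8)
The plan is to prove (i), (ii), (iii) in that order, using the explicit description of $\A_G$ extracted from the proof of Theorem~\ref{thm-max-urs-moy}: writing $X_\M$ for the compact space denoted $X$ there, one has $X_\M=\overline{\mathrm{ext}(C)}$ for some minimal non-empty convex compact $G$-invariant subset $C\subseteq\M(G)$ (on which $G$ acts by affine homeomorphisms), and $\A_G=\S_G(X_\M)$. The recurring tool will be the following remark, call it Lemma~A: if $\pi\colon Y\twoheadrightarrow Z$ is a continuous $G$-equivariant surjection between minimal compact $G$-spaces, then $\S_G(Y)\preccurlyeq\S_G(Z)$. Indeed, picking a continuity point $z$ of the stabilizer map of $G\acts Z$ (these are dense by Proposition~\ref{prop-dense-Gdelta}) one has $G_z\in\S_G(Z)$; for $y\in\pi^{-1}(z)$, equivariance gives $G_y\le G_z$, and Lemma~\ref{L: upper semicontinuous}(i) applied to $G\acts Y$ produces $H\in\S_G(Y)$ with $H\le G_y\le G_z$, so $\S_G(Y)\preccurlyeq\S_G(Z)$.

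\emph{Step 0: $X_\M$ is the Furstenberg boundary.} For any boundary action $G\acts Y$, fix $y_0\in Y$; the unital $G$-equivariant inclusion $C(Y)\hookrightarrow\ell^\infty(G)$, $f\mapsto(g\mapsto f(gy_0))$, is injective (by minimality of $Y$) and dualizes to a continuous affine $G$-equivariant surjection $\Phi_Y\colon\M(G)\twoheadrightarrow P(Y)$ onto the space $P(Y)$ of probability measures. Strong proximality makes $P(Y)$ minimal as a convex compact $G$-space (a closed convex $G$-invariant subset contains some Dirac mass, hence by minimality of $Y$ all of them, hence all of $P(Y)$), so $\Phi_Y(C)=P(Y)$; since the preimage of an extreme point of $P(Y)$ under $\Phi_Y$ is a closed face of $C$, it contains an extreme point of $C$, whence $\{\delta_y:y\in Y\}=\mathrm{ext}(P(Y))\subseteq\Phi_Y(\mathrm{ext}(C))\subseteq\Phi_Y(X_\M)$. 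As $\Phi_Y(X_\M)$ is a continuous image of the minimal $G$-space $X_\M$ it is minimal, hence coincides with the closed $G$-invariant subset $\{\delta_y:y\in Y\}$, which is $G$-equivariantly homeomorphic to $Y$. Thus $X_\M$ maps $G$-equivariantly onto every boundary action; since moreover $G\acts X_\M$ is itself a boundary action (this is the usual construction of the universal boundary; see \cite{Furst-bound-th,Glas-book}), uniqueness of the Furstenberg boundary gives a $G$-equivariant homeomorphism $X_\M\cong\partial_F G$. In particular $\A_G=\S_G(X_\M)=\S_G(\partial_F G)$.

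\emph{Step 1: proof of (i).} Let $G\acts X$ be a boundary action. By Step~0 there is a continuous $G$-equivariant surjection $X_\M\twoheadrightarrow X$, so $\A_G=\S_G(X_\M)\preccurlyeq\S_G(X)$ by Lemma~A; this is the first assertion. For the second, suppose $G_{x_1}$ is amenable for some $x_1\in X$, and fix $m\in\M(G)$ with $G_{x_1}m=m$. Given any $x\in X$, choose (using minimality) a net $(g_i)$ with $g_ix_1\to x$; every $h\in G_x^0$ fixes pointwise a neighbourhood of $x$, hence $g_i^{-1}hg_i\in G_{x_1}$ and so $h(g_im)=g_im$ for all large $i$, and passing to a subnet with $g_im\to m^\ast$ in the compact space $\M(G)$ yields $hm^\ast=m^\ast$; thus $G_x^0$ fixes $m^\ast$ and is amenable. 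Now take any $H\in\S_G(X)$ and a net of continuity points $x_i$ with $G_{x_i}\to H$; each $G_{x_i}$ equals $G_{x_i}^0$ (Lemma~\ref{lem-cont-stab}), hence is amenable and fixes a mean $m_i$, and along a subnet $m_i\to m'$; since each $h\in H$ eventually lies in $G_{x_i}$ we get $hm'=m'$, so $H$ is amenable. Hence $\S_G(X)$ is amenable, which forces $\S_G(X)\preccurlyeq\A_G$ by maximality of $\A_G$; together with the first assertion and antisymmetry of $\preccurlyeq$ (Corollary~\ref{cor-order-URS}), $\A_G=\S_G(X)$.

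\emph{Step 2: proof of (ii), (iii), and the main difficulty.} By Step~0, $\A_G=\S_G(\partial_F G)$; and since $\partial_F G$ is extremally disconnected its stabilizer map $\partial_F G\to\sub(G)$ is continuous (as observed in \cite{Kenn}), so $\S_G(\partial_F G)$ is precisely the set $\{G_\xi:\xi\in\partial_F G\}$ of point stabilizers, giving (ii). This same continuity makes $\xi\mapsto G_\xi$ a continuous $G$-equivariant surjection of $\partial_F G$ onto $\A_G$; as a continuous $G$-factor of a boundary action is again a boundary action (minimality is clear, and strong proximality passes to factors by lifting measures), $G\acts\A_G$ is a boundary action, giving (iii). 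The one genuinely delicate point above is the second assertion of (i): amenability is not a closed condition in the Chabauty topology, so one cannot simply pass to a Chabauty limit of the amenable stabilizers $G_{x_i}$, and the device of carrying along a mean fixed by each $G_{x_i}$ and taking a limit mean is what makes the argument go through. The remaining outside inputs are Furstenberg's theorem that $\overline{\mathrm{ext}(C)}$ is strongly proximal (not merely minimal) and the continuity of the stabilizer map on the Stonean space $\partial_F G$.
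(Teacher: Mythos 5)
Your proof is correct, but for part (i) it takes a genuinely different route from the paper's. The paper never identifies the space $X_\M=\overline{\mathrm{ext}(C)}$ from the proof of Theorem \ref{thm-max-urs-moy} with $\partial_F G$; instead it proves the stronger statement that \emph{every} amenable $\H\in\urs(G)$ satisfies $\H\preccurlyeq\S_G(X)$ for an arbitrary boundary action $G\acts X$, by a direct dynamical argument: an amenable $H\in\H$ fixes a probability measure $\mu$ on $X$, strong proximality pushes $g_i\mu\to\delta_x$, a Chabauty cluster point of $(g_iHg_i^{-1})$ lies in $\H$ and is contained in $G_x$, and Lemma \ref{lem-prec-unique-min} then upgrades ``$\H\preccurlyeq$ the closure of the stabilizers'' to $\H\preccurlyeq\S_G(X)$. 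You instead re-derive Furstenberg's universal property of $X_\M$ (your Step 0) and then apply your Lemma A, which is a clean and correct observation about factor maps of minimal systems. This buys you part (ii) essentially for free out of Step 0, whereas the paper obtains (ii) by applying (i) to $X=\partial_F G$ together with amenability of its point stabilizers; the paper's route avoids reproving the universal property and produces the more general inequality $\H\preccurlyeq\S_G(X)$ for all amenable URS's, which is what it actually reuses later. Your treatments of (ii) and (iii) (continuity of the stabilizer map on $\partial_F G$, factors of boundaries are boundaries) coincide with the paper's.

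One remark in your closing paragraph is wrong, although it does not damage the proof: for a countable discrete group, amenability \emph{is} a Chabauty-closed condition on $\sub(G)$, and the paper says so explicitly. If $H_i\to H$ with all $H_i$ amenable, then every finitely generated subgroup of $H$ is eventually contained in some $H_i$, hence amenable, and $H$ is the directed union of its finitely generated subgroups, hence amenable. This is how the paper finishes the second half of (i) in one line: Lemma \ref{L: upper semicontinuous}(i) gives some $H\in\S_G(X)$ with $H\le G_x$, so $H$ is amenable, and minimality of $\S_G(X)$ together with closedness and $G$-invariance of the set of amenable subgroups makes all of $\S_G(X)$ amenable. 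Your mean-transport argument is a correct, if unnecessary, substitute, so the only issue is the stated justification for it, not the proof itself.
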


\begin{proof}
We first prove (i). Let $G \acts X$ be a boundary action, let $\mathcal{H}$ be an arbitrary amenable URS of $G$, and let $H \in \H$. Since $H$ is amenable, there exists a probability measure $\mu$ on $X$ that is fixed by $H$. By minimality and strong proximality, there exists a net $(g_i)$ such that $\mu_i = g_i \cdot \mu$ converges to some $\delta_x$. Write $H_i = g_i H g_i^{-1}$. Up to passing to a subnet, we may assume that $(H_{i})$ converges to some $K \in \H$. Since the net $(\mu_i)$ converges to $\delta_x$, every element of $K$ must fix the point $x$, i.e.\ $K \leq G_x$. This shows that $\H$ is $\preccurlyeq$ than the closure in $\sub(G)$ of the collection of subgroups $G_y$, $y \in X$. Now this set contains $\S_G(X)$ as unique URS by Proposition \ref{prop-min-action-urs}, so by applying Lemma \ref{lem-prec-unique-min} we deduce that $\H \preccurlyeq \S_G(X)$. In particular this shows $\A_G \preccurlyeq \S_G(X)$ because by definition $\A_G$ is amenable.

The existence of $x \in X$ such that $G_x$ is amenable implies that $\S_G(X)$ is amenable, since the set of amenable subgroups is closed in $\sub(G)$ for every countable group $G$. Therefore one must have $\A_G \preccurlyeq \S_G(X)$ by definition of $\A_G$, and the equality $\S_G(X) = \A_G$ follows from the previous paragraph.

We shall now prove (ii). Recall that point stabilizers for the action of $G$ on its Furstenberg boundary are amenable (to see this it is enough to exhibit one boundary action with amenable stabilizer, and this follows by applying \cite[Theorem III.2.3]{Glas-book} to a minimal closed convex $G$-invariant subset $C \subset \M(G)$). Hence statement (i) implies $\S_G(\partial_F G) = \A_G$. This means that $\A_G$ is the closure in $\sub(G)$ of the set of $G_{x_0}$, for $x_0 \in X_0 \subset \partial_F G$, where $X_0$ is the domain of continuity of $\mathrm{Stab}: \partial_F G \rightarrow \sub(G)$. Now according to \cite[Lemma 3.3]{BKKO}, $\fix(g)$ is open in $\partial_F G$ for every $g \in G$, so it follows from Lemma \ref{lem-cont-stab} that the stabilizer map is continuous on $\partial_F G$, i.e.\ $X_0 = \partial_F G$. This shows that the map $\partial_F G \rightarrow \A_G$ is onto. Part (iii) immediately follows, since any factor of a boundary action is a boundary action.
\end{proof}

\begin{remark} \label{rem-AG}
$ $
\begin{enumerate}
\item The fact that the action of $G$ on the set of point stabilizers $G_x$, $x \in \partial_F G$, is a boundary action was observed in \cite[Remark 4.2]{Kenn}. 
\item  The boundary $\A_G$ does not coincide in general with the Furstenberg boundary $\partial_F G$. An important difference is that $\mathcal{A}_G$ is metrizable (as a subspace of $\sub(G)$), while $\partial_F G$ is extremally disconnected, and therefore is never metrizable (unless it is trivial) \cite[Remark 3.16]{KK}.
\item We have seen in the proof of Proposition \ref{prop-A_G-other-action} that the stabilizer map $\mathrm{Stab}: \partial_F G \rightarrow \sub(G)$, associated to the boundary action $G \acts \partial_F G$, is always continuous. This need not be true in general for the boundary action $G \acts \mathcal{A}_G$, as explained in Remark \ref{rmq-normalizer-not-cont}.
\end{enumerate} 
\end{remark}

Furstenberg proved that for every countable group $G$, the action $G \acts \partial_F G$ may be extended to an action of $\mathrm{Aut}(G)$ on $\partial_F G$, which is faithful whenever $G \acts \partial_F G$ is \cite[Theorem II.4.3]{Glas-book}. The following proposition, due to Breuillard--Kalantar--Kennedy--Ozawa, says that $\mathrm{Aut}(G) \acts \partial_F G$ is also free whenever $G \acts \partial_F G$ is.


\begin{prop} \label{prop-urs-aut(G)}
Let $G$ be a countable group. If $G$ acts freely on its Furstenberg boundary $\partial_F G$, then $\mathrm{Aut}(G) \acts \partial_F G$ is also free.

Equivalently, if $\A_G$ is trivial then so is $\A_{\mathrm{Aut}(G)}$.
\end{prop}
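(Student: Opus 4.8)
The plan is to establish the first assertion — that the extended action $\aut(G)\acts\partial_F G$ is free whenever $G\acts\partial_F G$ is — the statement about $\A_{\aut(G)}$ then being a formal consequence. Write $B=\partial_F G$. I would start from Furstenberg's extension of the $G$-action on $B$ to an action of $\Gamma:=G\rtimes\aut(G)$ on $B$ (\cite[Theorem II.4.3]{Glas-book}); here the relation $\alpha g\alpha^{-1}=\alpha(g)$ in $\Gamma$ translates, for $g\in G$ and $\alpha\in\aut(G)$, into the identity of homeomorphisms $\alpha\circ g\circ\alpha^{-1}=\alpha(g)$ of $B$. Since minimality and strong proximality are inherited by overgroups, $\Gamma\acts B$, and hence $\aut(G)\acts B$, is again a boundary action; and since $G\acts B$ is free, no non-trivial element of $G$ has a fixed point in $B$.

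The main step is to show that $\aut(G)\acts B$ is topologically free. I would fix $\mathrm{id}\neq\alpha\in\aut(G)$, assume for contradiction that $U$, the interior of $\fix(\alpha)$ in $B$, is non-empty, and introduce the proper subgroup $H_0:=\{g\in G:\alpha(g)=g\}$ together with $S:=\{g\in G:gU\cap U\neq\varnothing\}$. If $S\subseteq H_0$, then the $G$-translates of $V:=\bigcup_{h\in H_0}hU$ form a partition of $B$ into pairwise disjoint clopen sets: one checks $gV\cap g'V\neq\varnothing\Rightarrow g^{-1}g'\in H_0$, while $\bigcup_g gV=G\cdot U$ is a non-empty open $G$-invariant set, hence all of $B$ by minimality. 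This produces a $G$-equivariant continuous surjection $B\to G/H_0$; compactness of $B$ makes $G/H_0$ finite, and since a factor of a minimal strongly proximal action is minimal and strongly proximal while a finite such action is trivial (the uniform measure is invariant, forcing a global fixed point), we get $H_0=G$, contradicting $\alpha\neq\mathrm{id}$. Otherwise there is $g\in S$ with $\alpha(g)\neq g$; choosing $x\in U\cap gU$ we have $x\in U$ and $g^{-1}x\in U$, so, using that $\alpha$ fixes both points,
\[
\bigl(\alpha(g)g^{-1}\bigr)\cdot x=\alpha\bigl(g\bigl(\alpha^{-1}(g^{-1}x)\bigr)\bigr)=\alpha\bigl(g(g^{-1}x)\bigr)=\alpha(x)=x,
\]
so the non-trivial element $\alpha(g)g^{-1}\in G$ fixes the point $x\in B$, contradicting freeness of $G\acts B$. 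In both cases we reach a contradiction, so $\fix(\alpha)$ has empty interior for every $\alpha\neq\mathrm{id}$.

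To promote topological freeness to freeness, I would use that $\fix(\alpha)$ is clopen in $B$: the space $B=\partial_F G$ is extremally disconnected (Remark \ref{rem-AG}), and the argument of \cite[Lemma 3.3]{BKKO} — which only uses strong proximality of the action under consideration — shows that the fixed-point set of every homeomorphism arising from the $\aut(G)$-action is open. Together with the previous paragraph this gives $\fix(\alpha)=\varnothing$ for all $\alpha\neq\mathrm{id}$, i.e.\ $\aut(G)\acts B$ is free. Finally, topological freeness of $\aut(G)\acts B$ gives that $\S_{\aut(G)}(B)$ is the trivial URS (Proposition \ref{prop-trivial-URS-top-free}); as $\aut(G)\acts B$ is a boundary action, Proposition \ref{prop-A_G-other-action}(i) yields $\A_{\aut(G)}\preccurlyeq\S_{\aut(G)}(B)=1$, so $\A_{\aut(G)}$ is trivial — and combined with the equivalence ``$\A_G$ trivial $\Leftrightarrow$ $G\acts\partial_F G$ free'' (Proposition \ref{prop-A_G-other-action}) this is exactly the reformulation in the statement.

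The hard part is the topological freeness step, specifically the case $S\subseteq H_0$: one must exclude the possibility that the interior of $\fix(\alpha)$ is translated entirely off itself by every element of $G$ not already fixing $\alpha$, and this is precisely where strong proximality (not merely minimality) is needed, through the absence of non-trivial finite factors of a boundary action. A secondary point requiring care is that the clopenness argument of \cite[Lemma 3.3]{BKKO} genuinely carries over to the action $\aut(G)\acts B$, even though $B$ is not the Furstenberg boundary of $\aut(G)$.
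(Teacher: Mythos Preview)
Your argument is correct and amounts to an explicit unpacking of what the paper obtains by a one-line citation: the paper just notes that $Z(G)=1$ (since $G\acts\partial_FG$ is free) and then invokes \cite[Lemma~5.3]{BKKO} applied to $G\trianglelefteq\aut(G)$, whereas you reprove the relevant instance of that lemma by hand. Your Case~2 computation --- that if $x$ and $g^{-1}x$ both lie in $\fix(\alpha)$ then $\alpha(g)g^{-1}$ fixes $x$, forcing $\alpha(g)=g$ by freeness of $G\acts\partial_FG$ --- is exactly the mechanism behind the BKKO result, and your Case~1 (ruling out $S\subseteq H_0$ via the non-existence of non-trivial finite $G$-boundaries) neatly handles the residual possibility. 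So the two approaches agree in spirit; yours is more self-contained, the paper's is shorter.

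One point of attribution is worth sharpening. In your promotion from topological freeness to freeness you write that ``the argument of \cite[Lemma~3.3]{BKKO} --- which only uses strong proximality of the action under consideration --- shows that the fixed-point set of every homeomorphism arising from the $\aut(G)$-action is open''. In fact the clopenness of $\fix(\alpha)$ needs nothing about the action at all: it is a theorem of Frol\'{\i}k that any homeomorphism of an extremally disconnected compact Hausdorff space has clopen fixed-point set, and $\partial_FG$ is extremally disconnected regardless of which group acts. Citing Frol\'{\i}k (or simply this topological fact) is cleaner than asserting that the BKKO proof transfers, since BKKO's own argument leans on the $G$-injectivity of $C(\partial_FG)$, which is genuinely $G$-specific and would not obviously carry over to $\aut(G)$. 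With this adjustment your proof is complete; your own caveat that this step ``requires care'' is well taken, but the care needed is just to invoke the right topological input.
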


\begin{proof}
Remark that $G$ must have trivial center since $G \acts \partial_F G$ is free. Then apply \cite[Lemma 5.3]{BKKO} to $G$, viewed as a normal subgroup of $\mathrm{Aut}(G)$.
\end{proof}

It is proved in Theorem 1.4 in \cite{BKKO} that if $N$ is a normal subgroup of $G$, then $G$ acts freely on its Furstenberg boundary if and only if both $N$ and $C_N$ act freely on their Furstenberg boundaries, where $C_N$ is the centralizer of $N$ in $G$. In terms of uniformly recurrent subgroups, this means that $\A_G$ is trivial if and only if both $\A_N$ and $\A_{C_N}$ are trivial. The following proposition refines the connections between $\A_G$, $\A_N$ and $\A_{C_N}$.

\begin{prop}
Let $G$ be a countable group, and $N$ a normal subgroup of $G$.
\begin{enumerate}[label=(\roman*)]
\item If $\mathcal{A}_G$ is a singleton (respectively, is trivial), then $\mathcal{A}_N$ is a singleton (respectively, is trivial). 
\item Suppose $\mathcal{A}_N$ is trivial. Then $\mathcal{A}_G = \mathcal{A}_{C_N}$, where $C_N$ is the centralizer of $N$ in $G$.
\item Suppose $\mathcal{A}_N = R_a(N)$ is a singleton, and let $M = \left\{g \in G \, : \, [g,n] \in R_a(N) \, \, \forall n \in N \right\}$ be the preimage in $G$ of the centralizer of $N/R_a(N)$ in $G/R_a(N)$. Then $\mathcal{A}_G = \mathcal{A}_M$.
\item If $\mathcal{A}_N$ and $\mathcal{A}_{G/N}$ are singletons (respectively, are trivial), then $\mathcal{A}_G$ is a singleton (respectively, is trivial). 
\end{enumerate}
\end{prop}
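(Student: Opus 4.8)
The plan is to deduce the four parts from the dichotomy of \cite{BKKO} recalled above, together with the stability results of Proposition \ref{prop-stability-A_G} and Proposition \ref{prop-urs-aut(G)}, treating (ii) as the main new ingredient and (i), (iii), (iv) as reductions to it.

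For (i), I would first record the elementary identity $R_a(N)=N\cap R_a(G)$: the inclusion $\supseteq$ holds because $N\cap R_a(G)$ is a normal amenable subgroup of $N$, and $\subseteq$ because $R_a(N)$ is characteristic in $N$, hence a normal amenable subgroup of $G$. The triviality case of (i) is then one implication of \cite[Theorem 1.4]{BKKO}. For the singleton case, pass to $\bar G=G/R_a(G)$: Proposition \ref{prop-stability-A_G}(ii) gives $\A_{\bar G}=\A_G$, which is a singleton and hence trivial since $R_a(\bar G)=1$ by Proposition \ref{prop-properties-A_G}(ii); the image of $N$ in $\bar G$ is a normal subgroup isomorphic to $N/R_a(N)$, so the triviality case gives $\A_{N/R_a(N)}$ trivial, and Proposition \ref{prop-stability-A_G}(ii) again turns this into $\A_N$ being a singleton.

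The core is (ii). Assume $\A_N$ is trivial, so that $R_a(N)=1$, hence $Z(N)=1$ and $N\cong\mathrm{Inn}(N)$. By Proposition \ref{prop-urs-aut(G)} the group $\aut(N)$ acts freely on $\partial_F N$. The $N$-action on $\partial_F N$ extends to an $\aut(N)$-action, and composing with the conjugation homomorphism $\phi\colon G\to\aut(N)$ gives an action $G\acts\partial_F N$ whose restriction to $N$ is the original one; in particular $G\acts\partial_F N$ is minimal and strongly proximal, i.e.\ a boundary action of $G$. By the universal property of the Furstenberg boundary there is a $G$-equivariant continuous surjection $\pi\colon\partial_F G\to\partial_F N$. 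Now for any $x\in\partial_F G$ the stabilizer $G_x$ fixes $\pi(x)$, and since $\aut(N)$ acts freely on $\partial_F N$, the $G$-stabilizer of any point of $\partial_F N$ equals $\ker\phi=C_G(N)=C_N$; hence $G_x\le C_N$. By Proposition \ref{prop-A_G-other-action}(ii) every element of $\A_G$ is such a stabilizer, so $\A_G\subseteq\sub(C_N)$. On one hand $\A_{C_N}$ is invariant under $\aut(C_N)$ by Proposition \ref{prop-properties-A_G}(i), hence under conjugation by $G$, and being $C_N$-minimal it is an amenable URS of $G$ contained in $\sub(C_N)$, so $\A_{C_N}\preccurlyeq\A_G$ by Theorem \ref{thm-max-urs-moy}. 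On the other hand $\A_G$, being an amenable closed $G$-invariant subset of $\sub(C_N)$, contains a $C_N$-minimal subflow; the latter is an amenable URS of $C_N$, hence $\preccurlyeq\A_{C_N}$, and since it lies in $\A_G$ this gives $\A_G\preccurlyeq\A_{C_N}$. Antisymmetry of $\preccurlyeq$ on $\urs(G)$ (Corollary \ref{cor-order-URS}) then forces $\A_G=\A_{C_N}$.

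It remains to reduce (iii) and (iv) to (ii). For (iii), put $R=R_a(N)=\A_N$, a normal amenable subgroup of $G$ contained in $M$; in $\bar G=G/R$ the image $\bar N$ of $N$ has trivial amenable radical, so (ii) yields $\A_{\bar G}=\A_{C_{\bar G}(\bar N)}$, and since $C_{\bar G}(\bar N)=M/R$ by the very definition of $M$, applying Proposition \ref{prop-stability-A_G}(ii) to $R\unlhd G$ and to $R\unlhd M$ gives $\A_G=\A_M$. For (iv), observe that when $R_a(N)=1$ the identity $N\cap C_N=Z(N)=1$ exhibits $C_N$ as a normal subgroup of $G/N$; thus if $\A_N$ and $\A_{G/N}$ are trivial, (ii) gives $\A_G=\A_{C_N}$ while the triviality case of (i), applied to this copy of $C_N$ inside $G/N$, gives $\A_{C_N}$ trivial, so $\A_G$ is trivial. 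The singleton version of (iv) is obtained by running the same argument inside $G/R_a(N)$ and invoking the singleton case of (i). I expect the only real obstacle to be locating the argument for (ii): the BKKO dichotomy only gives the equivalence of triviality, and the crucial extra input — that every point stabilizer of $G\acts\partial_F G$ already lies in $C_N$ — comes from the freeness of $\aut(N)\acts\partial_F N$ provided by Proposition \ref{prop-urs-aut(G)}; once this is in place, comparing the two URS's is purely formal.
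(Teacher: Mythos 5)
Your proposal is correct and follows essentially the same route as the paper: part (ii) is established via the $G$-action on $\partial_F N$ and the freeness of $\aut(N)\acts\partial_F N$ from Proposition \ref{prop-urs-aut(G)}, forcing every Furstenberg-boundary stabilizer of $G$ into $C_N$, and then comparing $\A_G$ and $\A_{C_N}$ via the order $\preccurlyeq$; parts (i), (iii) and (iv) are reduced to (ii) and to the triviality case using Proposition \ref{prop-stability-A_G}, exactly as in the paper. The only cosmetic deviations are that you reach $\A_G\subseteq\sub(C_N)$ through the factor map $\partial_F G\to\partial_F N$ rather than through Proposition \ref{prop-A_G-other-action}(i), and that you cite the BKKO dichotomy for the triviality case of (i) where the paper gives the one-line direct argument via $\aut(N)$-invariance of $\A_N$.
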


\begin{proof}
In order to prove (i), we first assume that $\A_G$ is trivial. If $\mathcal{A}_N$ is not trivial, then  Proposition \ref{prop-properties-A_G}(i) implies that $\mathcal{A}_N$ is a non-trivial amenable URS of $G$, a contradiction. The case where $\A_G$ is a singleton follows according to statement (ii) of Proposition \ref{prop-stability-A_G}.

We shall now prove (ii). The map $G \rightarrow \mathrm{Aut}(N)$ induces an action of $G$ on $\partial_F N$. Since $\mathcal{A}_{N}$ is trivial, it follows from Proposition \ref{prop-urs-aut(G)} that point stabilizers in $G$ are all equal to the kernel of $G \rightarrow \mathrm{Aut}(N)$, which is exactly $C_N$. By Proposition \ref{prop-A_G-other-action}, this shows $\A_G \preccurlyeq C_N$. Therefore $\A_G$ is a closed amenable $C_N$-invariant subset of $\sub(C_N)$, and it follows that $\A_G \preccurlyeq \A_{C_N}$. On the other hand $\A_{C_N}$ is itself an amenable URS of $G$ by Proposition \ref{prop-properties-A_G}(i), hence $\A_{C_N}\preccurlyeq \A_G$, and the equality follows.

In order to prove (iii), write $G' = G/R_a(N)$ and $N' = N/R_a(N)$. By our assumption and Proposition \ref{prop-stability-A_G}(ii), we have that $\mathcal{A}_{N'}$ is trivial. So by applying statement (ii) we obtain $\mathcal{A}_{G'} = \mathcal{A}_{C_{N'}}$, and the conclusion follows by applying again Proposition \ref{prop-stability-A_G}.

It is enough to prove (iv) in the case of singletons, and we may assume that $N$ has trivial amenable radical by Proposition \ref{prop-stability-A_G}. Therefore by point (ii), we deduce that $\mathcal{A}_G = \mathcal{A}_{C_N}$. Denote by $\pi$ the canonical projection from $G$ to $G/N$. The group $\pi(C_N)$ is normal in $G/N$, so it follows from statement (i) of the proposition that $\mathcal{A}_{\pi(C_N)}$ is a singleton. Now $\pi(C_N) = C_N / N \cap C_N$ and $N \cap C_N$ is abelian, so the conclusion follows from Proposition \ref{prop-stability-A_G}.
\end{proof}

\subsection{Amenable uniformly recurrent subgroups and $C^\ast$-simplicity}

Recall that a group $G$ is said to be $C^\ast$-simple if its reduced $C^\ast$-algebra $\Cred(G)$ is simple. This is an important property, which can be characterized in terms of weak containment of representations of the group \cite{dlH-survey}. By work of Kalantar and Kennedy \cite{KK}, $C^*$-simplicity also admits the following dynamical characterization.


\begin{thm}[Kalantar-Kennedy] \label{thm-KK-furst}
A countable group $G$ is $C^\ast$-simple if and only if $G$ acts freely on its Furstenberg boundary.
\end{thm}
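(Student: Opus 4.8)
This is the theorem of Kalantar and Kennedy \cite{KK}; the plan is to reproduce the strategy of their proof, whose engine is Hamana's theory of (equivariant) injective envelopes of operator systems. The first point is to identify $C(\partial_F G)$ with the $G$-injective envelope $I_G(\mathbb{C})$ of the one-dimensional operator system equipped with the trivial $G$-action: starting from the universal property of $\partial_F G$ among boundary actions (recalled before Proposition \ref{prop-A_G-other-action}), one checks that $C(\partial_F G)$ is $G$-injective and rigid over $\mathbb{C}1$, which pins it down. From this, the key step is to produce a chain of unital inclusions
\[ \Cred(G)\ \subseteq\ C(\partial_F G)\rtimes_r G\ \subseteq\ I(\Cred(G)), \]
where $I(\Cred(G))$ is Hamana's (non-equivariant) injective envelope of the reduced $C^\ast$-algebra. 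To obtain the right-hand inclusion: by rigidity of injective envelopes the conjugation action of $G$ on $\Cred(G)$ extends uniquely to a $G$-action on $I(\Cred(G))$; the relative commutant of $\Cred(G)$ inside $I(\Cred(G))$ is a unital $G$-injective operator system, hence contains an equivariant copy of $I_G(\mathbb{C}) = C(\partial_F G)$; and one verifies that $\Cred(G)$ together with this copy generates a copy of the \emph{reduced} crossed product (identifying the canonical conditional expectation onto $C(\partial_F G)$ forces the crossed-product norm to be the reduced one).

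Granting the sandwich, one half of the theorem is short. The injective envelope $I(\Cred(G))$ is an \emph{essential} extension of $\Cred(G)$, meaning that every nonzero closed two-sided ideal of any intermediate $C^\ast$-algebra $\Cred(G) \subseteq B \subseteq I(\Cred(G))$ meets $\Cred(G)$ nontrivially; applying this to $B = C(\partial_F G)\rtimes_r G$ shows that simplicity of $\Cred(G)$ implies simplicity of the crossed product, and Kalantar--Kennedy establish the converse as well, so $C^\ast$-simplicity of $G$ amounts to simplicity of $C(\partial_F G)\rtimes_r G$. It then remains to show that the latter happens exactly when $G\acts\partial_F G$ is free. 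Since $\partial_F G$ is a boundary, the action is minimal; moreover $\fix(g)$ is open in $\partial_F G$ for every $g$ (see \cite[Lemma 3.3]{BKKO}, already used above), so on $\partial_F G$ freeness and topological freeness coincide. If the action is topologically free, then, being minimal and topologically free, the Archbold--Spielberg criterion yields that $C(\partial_F G)\rtimes_r G$ is simple. Conversely, if some $g\neq 1$ fixes a non-empty open subset of $\partial_F G$, one produces a nonzero proper ideal in $C(\partial_F G)\rtimes_r G$; equivalently and more transparently, by Proposition \ref{prop-A_G-other-action}(ii) the point stabilizers of $G\acts\partial_F G$ form the amenable URS $\mathcal{A}_G$, and here one of them, $G_x$, is nontrivial, which one shows prevents $\Cred(G)$ from being simple (for instance, the quasi-regular representation on $\ell^2(G/G_x)$ is then weakly contained in, but not weakly equivalent to, $\lambda_G$), so $G$ is not $C^\ast$-simple.

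I expect the genuine obstacle to be the construction and control of the sandwich $\Cred(G) \subseteq C(\partial_F G)\rtimes_r G \subseteq I(\Cred(G))$: this rests on Hamana's rigidity and essentiality properties for (equivariant) injective envelopes, on the identification $C(\partial_F G) = I_G(\mathbb{C})$, and on the verification that the $C^\ast$-subalgebra generated inside $I(\Cred(G))$ is really the \emph{reduced} crossed product and not some other completion. By contrast the purely dynamical step --- converting simplicity of $C(\partial_F G)\rtimes_r G$ into freeness of $G\acts\partial_F G$ --- is comparatively soft; note however that its ``freeness fails $\Rightarrow$ not $C^\ast$-simple'' half genuinely uses the boundary structure of $\partial_F G$ (non-topological-freeness of an arbitrary minimal action does not obstruct simplicity of the crossed product), and is most cleanly run through the largest amenable uniformly recurrent subgroup $\mathcal{A}_G = \mathcal{S}_G(\partial_F G)$ provided by Theorem \ref{thm-max-urs-moy} and Proposition \ref{prop-A_G-other-action}.
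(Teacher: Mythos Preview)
The paper does not prove this theorem at all: it is stated as the result of Kalantar and Kennedy with a citation to \cite{KK}, and then used (together with Proposition \ref{prop-A_G-other-action}) to deduce Kennedy's Theorem \ref{thm-cstar-chab}. So there is no ``paper's own proof'' to compare against; your sketch is an attempt to reconstruct the argument from \cite{KK} and \cite{BKKO}.

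Your outline of the injective-envelope sandwich $\Cred(G)\subseteq C(\partial_F G)\rtimes_r G\subseteq I(\Cred(G))$ and the deduction that simplicity of $\Cred(G)$ is equivalent to simplicity of the crossed product is correct in spirit and is indeed the Kalantar--Kennedy mechanism. The direction ``free $\Rightarrow$ $C^\ast$-simple'' via Archbold--Spielberg is fine.

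The gap is in your treatment of ``not free $\Rightarrow$ not $C^\ast$-simple''. You propose two routes, and neither is complete. First, the bare assertion that non-topological-freeness produces a proper ideal in the reduced crossed product is, as you yourself note, false for general minimal actions; you do not explain what special feature of $\partial_F G$ (namely $G$-injectivity of $C(\partial_F G)$, giving a $G$-equivariant conditional expectation) makes it work here. Second, the quasi-regular representation argument is unjustified: amenability of $G_x$ gives $\lambda_{G/G_x}\prec\lambda_G$, but the claim $\lambda_G\not\prec\lambda_{G/G_x}$ does \emph{not} follow merely from $G_x\neq 1$ --- for instance, if $G$ is $C^\ast$-simple and $H\le G$ is any nontrivial amenable subgroup, then $\lambda_{G/H}$ is automatically weakly equivalent to $\lambda_G$. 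So this step is essentially assuming what you want to prove. Finally, routing through $\mathcal{A}_G$ and Theorem \ref{thm-cstar-chab} is circular within the paper's logic, since Theorem \ref{thm-cstar-chab} is deduced \emph{from} Theorem \ref{thm-KK-furst}. The actual Kalantar--Kennedy argument for this direction uses the $G$-injectivity of $C(\partial_F G)$ to analyze $G$-equivariant ucp maps on $C(\partial_F G)$; this is the missing ingredient in your sketch.
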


Combining Theorem \ref{thm-KK-furst} with the description of $\A_G$ given in Proposition \ref{prop-A_G-other-action} yields the following result from \cite{Kenn}.

\begin{thm}[Kennedy] \label{thm-cstar-chab}
A countable group $G$ is $C^\ast$-simple if and only if it has no non-trivial amenable uniformly recurrent subgroup; equivalently if and only if the conjugacy class of every amenable subgroup of $G$ accumulates at the trivial subgroup in $\sub(G)$. 
\end{thm}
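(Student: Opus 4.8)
The plan is to deduce the statement from the Kalantar--Kennedy dynamical characterization (Theorem \ref{thm-KK-furst}) together with the identification, established in Proposition \ref{prop-A_G-other-action}(ii), of the Furstenberg URS $\A_G$ with the collection of point stabilizers for the action $G \acts \partial_F G$. First I would observe that, by Theorem \ref{thm-KK-furst}, $C^\ast$-simplicity of $G$ is equivalent to the freeness of $G \acts \partial_F G$, i.e.\ to $G_x = 1$ for every $x \in \partial_F G$. Since (as recalled in the proof of Proposition \ref{prop-A_G-other-action}(ii)) the stabilizer map $\mathrm{Stab}\colon \partial_F G \to \sub(G)$ is continuous on all of $\partial_F G$, the set of stabilizers $\{G_x : x \in \partial_F G\}$ equals $\A_G$ exactly. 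Hence freeness of $G \acts \partial_F G$ is equivalent to $\A_G = 1$.

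The next step is to translate ``$\A_G = 1$'' into ``$G$ has no non-trivial amenable URS''. One implication is immediate: $\A_G$ is itself an amenable URS (Theorem \ref{thm-max-urs-moy}), so if $G$ has no non-trivial amenable URS then $\A_G = 1$. For the converse, suppose $\H \in \urs(G)$ is a non-trivial amenable URS; by the defining maximality property of $\A_G$ in Theorem \ref{thm-max-urs-moy} we have $\H \preccurlyeq \A_G$, and then Proposition \ref{prop-order-minimal} forces every element of $\H$ to be contained in some element of $\A_G$. If $\A_G = 1$ this means every element of $\H$ is trivial, i.e.\ $\H = 1$, contradicting non-triviality. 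Thus $\A_G = 1$ iff $G$ has no non-trivial amenable URS, which combined with the previous paragraph gives the first equivalence in the theorem.

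Finally, for the reformulation in terms of the Chabauty topology: I would invoke Lemma \ref{L: accumulation} (or simply the description of the basic neighbourhoods of the trivial subgroup recalled in its proof). For a single amenable subgroup $H \le G$, the closure of the conjugacy class $\conj(H)$ is a closed $G$-invariant subset of $\sub(G)$ consisting of amenable subgroups (amenability being a closed condition in $\sub(G)$, as used repeatedly above); by Zorn's lemma it contains a URS $\H$, which is amenable. If $\H$ is non-trivial, $G$ has a non-trivial amenable URS; if $\H = 1$, then $1 \in \overline{\conj(H)}$, i.e.\ $\conj(H)$ accumulates at the trivial subgroup. Conversely, any non-trivial amenable URS $\H$ is contained in $\overline{\conj(H)}$ for any $H \in \H$, and by minimality and non-triviality this closure cannot contain $1$ (else $1 \in \H$). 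Hence ``no non-trivial amenable URS'' is equivalent to ``$\conj(H)$ accumulates at $1$ for every amenable $H \le G$'', completing the proof.

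I expect the only genuinely delicate point to be the continuity of the stabilizer map on $\partial_F G$ --- but this is already granted to us via \cite[Lemma 3.3]{BKKO} and Lemma \ref{lem-cont-stab} in the proof of Proposition \ref{prop-A_G-other-action}(ii), so in the present self-contained setting the argument is essentially a bookkeeping exercise chaining together Theorems \ref{thm-KK-furst} and \ref{thm-max-urs-moy}, Propositions \ref{prop-order-minimal} and \ref{prop-A_G-other-action}, and Lemma \ref{L: accumulation}.
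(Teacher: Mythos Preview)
Your proposal is correct and follows exactly the route the paper indicates: the paper does not give a standalone proof but simply states that the result follows by combining Theorem~\ref{thm-KK-furst} with the description of $\A_G$ in Proposition~\ref{prop-A_G-other-action}, which is precisely what you do. Your treatment of the ``equivalently'' clause (via Lemma~\ref{L: accumulation} and the closedness of amenability in $\sub(G)$) spells out a detail the paper leaves implicit, and your argument there is sound; note that for $H\in\H$ with $\H$ a URS one in fact has $\overline{\conj(H)}=\H$ by minimality, which is the cleanest way to see that $1\in\overline{\conj(H)}$ forces $\H=1$.
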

	
	

\section{Micro-supported actions and uniformly recurrent subgroups} \label{sec-main-thms}

In this section $X$ is a Hausdorff space, and $G\le \homeo(X)$ is a countable group of homeomorphisms of $X$. Although we make a priori no global additional assumption on $X$, most of the results of this section are relevant only when $X$ has no isolated points. Note also that if $G \acts X$ is micro-supported, then $X$ cannot have isolated points.


\subsection{Preliminaries}

In this section we collect some preliminary results that will be used in the sequel.

\begin{lemma} \label{L: disjoint open}
Let $X$ be a Hausdorff space without isolated points. Let $g_1, \ldots, g_r$ be non-trivial homeomorphisms of $X$. Then there exist non-empty open subsets $U_1, \ldots, U_r\subset X$ such that the sets $U_1, \ldots, U_r, g_1(U_1), \ldots g_r(U_r)$ are pairwise disjoint.

Moreover given any $z\in X$, we may find a neighbourhood $W$ of $z$ and open sets $U_i$ as above such that $W$ is disjoint from all the $U_i$ and all the $g_j^{-1}(U_i)$.
\end{lemma}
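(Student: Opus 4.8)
The plan is to reduce the statement to a finite separation problem, exploiting the elementary observation that a non-empty open subset of a perfect Hausdorff space is infinite. For each $i$ I would set $N_i = X \setminus \fix(g_i)$. Since $g_i$ is a homeomorphism of a Hausdorff space, $\fix(g_i)$ is closed, so $N_i$ is open; it is non-empty because $g_i$ is non-trivial, and every $y \in N_i$ satisfies $g_i(y) \neq y$. Moreover $N_i$ is infinite: a finite Hausdorff space is discrete, so a finite non-empty open subset of $X$ would consist of points isolated in $X$, contradicting our hypothesis.

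Next I would choose the points $x_i \in N_i$ greedily. At step $i$, having chosen $x_1, \dots, x_{i-1}$, I pick $x_i \in N_i$ avoiding the finite set consisting of $x_1, g_1(x_1), \dots, x_{i-1}, g_{i-1}(x_{i-1})$, their images under $g_i^{-1}$, and — for the ``moreover'' part — also the points $z, g_1(z), \dots, g_r(z)$ together with their images under $g_i^{-1}$. This is possible since $N_i$ is infinite. The outcome is a family of $2r$ pairwise distinct points $x_1, g_1(x_1), \dots, x_r, g_r(x_r)$ (distinct within each pair because $x_i \in N_i$), which in the ``moreover'' case are in addition all distinct from $z, g_1(z), \dots, g_r(z)$.

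Then, using that finitely many distinct points of a Hausdorff space can be enclosed in pairwise disjoint open sets, I would choose pairwise disjoint open sets $V_p$, one for each of the finitely many distinct points $p$ occurring among the $x_i, g_i(x_i)$ (and, in the ``moreover'' case, among $z, g_1(z), \dots, g_r(z)$), with $p \in V_p$. Set $U_i = V_{x_i} \cap g_i^{-1}(V_{g_i(x_i)})$; this is open, contains $x_i$ (hence is non-empty), and satisfies $g_i(U_i) \subset V_{g_i(x_i)}$. Since $U_i \subset V_{x_i}$, $g_i(U_i) \subset V_{g_i(x_i)}$, and the sets $V_{x_1}, V_{g_1(x_1)}, \dots, V_{x_r}, V_{g_r(x_r)}$ are pairwise disjoint, the sets $U_1, \dots, U_r, g_1(U_1), \dots, g_r(U_r)$ are pairwise disjoint. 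For the ``moreover'' part, put $W = V_z \cap \bigcap_{j=1}^r g_j^{-1}(V_{g_j(z)})$, a finite intersection of neighbourhoods of $z$, hence a neighbourhood of $z$. Then $W \subset V_z$ is disjoint from each $U_i \subset V_{x_i}$; and $g_j(W) \subset g_j\bigl(g_j^{-1}(V_{g_j(z)})\bigr) = V_{g_j(z)}$ is disjoint from $U_i \subset V_{x_i}$, which is exactly $W \cap g_j^{-1}(U_i) = \varnothing$, as required.

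I do not expect a serious obstacle: the only points requiring a little care are the observation that non-empty open sets are infinite (which is what makes the greedy selection possible) and the bookkeeping of possible coincidences among $z, g_1(z), \dots, g_r(z)$ in the ``moreover'' part, which is harmless since we only ever need the $x_i, g_i(x_i)$ to be mutually distinct and distinct from that set.
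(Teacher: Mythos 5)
Your proof is correct and follows essentially the same route as the paper's: both establish that each non-trivial $g_i$ moves infinitely many points (via the fact that non-empty open sets in a perfect Hausdorff space are infinite), greedily select points $x_i$ moved by $g_i$ while avoiding a finite list of coincidences, separate the resulting distinct points by pairwise disjoint open sets, and set $U_i = V_{x_i}\cap g_i^{-1}(V_{g_i(x_i)})$. Your treatment of the \enquote{moreover} part, with $W = V_z\cap\bigcap_j g_j^{-1}(V_{g_j(z)})$, simply makes explicit what the paper leaves as \enquote{a similar argument}.
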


\begin{proof}
First observe that every non-trivial homeomorphism of $X$ moves infinitely many points. To see this, let $f$ be a non-trivial homeomorphism, and let $x\in X$ such that $f(x)\neq x$. Since $X$ is Hausdorff, there exists a neighbourhood $U$ of $x$ such that $U\cap f(U)=\varnothing$. Since moreover $X$ has no isolated points, the open set $U$ is infinite, and every point of $U$ is moved by $f$.
 
We now prove the first sentence of the statement. Let $x_1$ be a point moved by $g_1$ and set $y_1=g_1(x_1)$. Since $g_2$ moves infinitely many points, it moves at least one point $x_2$ which is different from $x_1, y_1, g_2^{-1}(x_1), g_2^{-1}(y_1)$. Set $y_2=g_2(x_2)$. Proceeding by induction by avoiding at each step only finitely many points, we can find $x_1, \ldots x_r\in X$ such that $g_i(x_i)\neq x_i$ and the points $x_1, \ldots x_r, y_1=g_1(x_1), \ldots, y_r=g_r(x_r)$ are all distinct. Since $X$ is Hausdorff, there exist neighbourhoods $V_i$ of $x_i$ and $W_i$ of $y_i$ such that $V_1, \ldots, V_r, W_1, \ldots W_r$ are pairwise disjoint. Set $U_i=V_i\cap g^{-1}_i(W_i)$. Then $U_1, \ldots, U_r$ verify the conclusion of the lemma. 

To prove the last sentence, observe that if $z\in X$ is given, we may choose carefully the points $x_i$ so that they are all different from $z$ and from $g_j(z)$ for all $j=1, \ldots, r$. Then a similar argument gives the conclusion.\qedhere
\end{proof}

We will make use of the following group theoretic lemma, which is exactly Lemma 4.1 in \cite{BHNeumann}. We include a short proof based on random walk.

\begin{lemma}[B.H. Neumann] \label{L: finite union}
Let $\Gamma$ be a countable group that can be written as a union $\cup_{i=1}^r \Delta_i\gamma_i$ of $r$ cosets of subgroups $\Delta_i \leq \Gamma$. Then at least one of the subgroups $\Delta_\ell$ has index at most $r$.
\end{lemma}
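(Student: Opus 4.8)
The plan is to run a random walk on $\Gamma$ and compare, for each $i$, the asymptotic frequency with which it visits the coset $\Delta_i\gamma_i$ with the reciprocal of the index $[\Gamma:\Delta_i]$. Since $\Gamma$ is countable, fix a symmetric probability measure $\mu$ on $\Gamma$ whose support generates $\Gamma$ (and note $\mu(1)>0$), and let $(X_n)_{n\ge 0}$ be the associated random walk with $X_0=1$, so $X_n=s_1\cdots s_n$ with the $s_k$ independent of law $\mu$.

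The first, trivial, input is that the cover gives a pointwise lower bound: for every $n$ the value $X_n$ lies in some $\Delta_i\gamma_i$, hence $\sum_{i=1}^r\Pbb(X_n\in\Delta_i\gamma_i)\ge 1$. Averaging over $n=1,\dots,N$ and letting $N\to\infty$, the lemma reduces to the claim that for every subgroup $\Delta\le\Gamma$ and every $\gamma\in\Gamma$,
\[
\limsup_{N\to\infty}\ \frac1N\sum_{n=1}^N\Pbb(X_n\in\Delta\gamma)\ \le\ \frac{1}{[\Gamma:\Delta]},
\]
with the convention $1/\infty=0$: granting this, $\sum_{i=1}^r 1/[\Gamma:\Delta_i]\ge 1$, so $[\Gamma:\Delta_\ell]\le r$ for some $\ell$.

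To prove the claim, consider the coset space $S=\Delta\backslash\Gamma$ and the Markov chain $Y_n=\Delta X_n$ on $S$, with transition kernel $P(\Delta x,\Delta y)=\sum_{\{s\,:\,\Delta xs=\Delta y\}}\mu(s)$; note $\Pbb(X_n\in\Delta\gamma)=P^n(\Delta,\Delta\gamma)$. Since the support of $\mu$ generates $\Gamma$, $P$ is irreducible, and since $\mu$ is symmetric, $P$ is a symmetric kernel, i.e.\ a self-adjoint contraction on $\ell^2(S)$ (counting measure); moreover a one-line computation shows each column of $P$ also sums to $1$ (for fixed $s$ there is exactly one $u\in S$ with $us=v$). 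If $S$ is finite this says the uniform distribution is stationary, so the ergodic theorem for finite irreducible chains gives $\frac1N\sum_{n\le N}P^n(\Delta,\Delta\gamma)\to 1/|S|=1/[\Gamma:\Delta]$. If $S$ is infinite, then $P$ has no eigenvalue $1$ in $\ell^2(S)$ (an invariant vector is constant by irreducibility, hence $0$) and no eigenvalue $-1$ (writing $P=\mu(1)I+(1-\mu(1))P'$ with $P'$ again symmetric stochastic and $\|P'\|_{\ell^2}\le1$, the equation $Pf=-f$ forces $P'f=-\tfrac{1+\mu(1)}{1-\mu(1)}f$, impossible); by the spectral theorem each diagonal entry $P^n(v,v)=\int_{[-1,1]}\lambda^n\,d\nu_v(\lambda)$ tends to $0$, and then $P^n(\Delta,\Delta\gamma)\le P^n(\Delta,\Delta)^{1/2}P^n(\Delta\gamma,\Delta\gamma)^{1/2}\to 0$ by Cauchy–Schwarz.

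The only step that is more than bookkeeping is the infinite‑index case of the claim — that the induced chain on the infinite coset space visits a fixed coset with zero asymptotic frequency. I handle it above via self‑adjointness on $\ell^2(S)$ and the absence of $\pm1$ in the point spectrum; alternatively one may note that an irreducible chain reversible with respect to an infinite measure is transient or null recurrent, and in either case $P^n(u,v)\to 0$. Everything else is a direct consequence of the cover and of the fact that finite irreducible Markov chains equidistribute in Cesàro mean toward their (here uniform) stationary distribution.
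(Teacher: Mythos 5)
Your proof is correct and follows essentially the same route as the paper: both run a symmetric random walk on $\Gamma$, pass to the induced Markov chain on each coset space $\Delta_i\backslash\Gamma$ (for which counting measure is stationary), and deduce from the cover that some coset is visited with asymptotic frequency at least $1/r$, forcing $[\Gamma:\Delta_\ell]\le r$. The only difference is organizational --- you Ces\`aro-average and prove the required convergence $P^n(\Delta,\Delta\gamma)\to 1/[\Gamma:\Delta]$ by hand via the spectral theorem, whereas the paper pigeonholes over $n$ and cites the ergodic theorem for aperiodic Markov chains.
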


\begin{proof}
Let $\mu$ be a symmetric, non-degenerate probability measure on $\Gamma$ such that $\mu(1)>0$ and let $g_n=h_1\cdots h_n$ be the corresponding random walk.
We have 
\[1=\mu^{*n}(\Gamma)\leq \sum_{i=1}^r \mu^{*n}(\Delta_i\gamma_i),\]
from which we deduce that there exists $\ell$ such that $\mu^{*n}(\Delta_\ell\gamma_\ell)\geq\frac{1}{r}$ holds for infinitely many $n$. The latter is the probability that the Markov chain $(\Delta_\ell g_n)$ on the coset space $\Delta_\ell\backslash \Gamma$ is equal to $\Delta_\ell\gamma_\ell$ at time $n$. The assumptions on $\mu$ imply that $(\Delta_\ell g_n)$ is an irreducible, aperiodic, reversible Markov chain on  $\Delta_\ell \backslash \Gamma$. It is easy to check that the counting measure on $\Delta_\ell \backslash \Gamma$ is a stationary measure for this Markov chain (this is a general fact about Markov chain on coset spaces induced by a random walks on the group). By the ergodic theorem for aperiodic Markov chains, the probability that $ \Delta_\ell g_n$ is equal to any given coset tends to  $1/[\Gamma : \Delta_\ell]$ (where the latter is set to be 0 if $[\Gamma : \Delta_\ell]=\infty$). Since we assumed that $\mu^{*n}(\Delta_\ell \gamma_\ell)\geq 1/r$ for infinitely many $n$'s, we deduce that $[\Gamma : \Delta_\ell]\leq r$. \qedhere
\end{proof}


\begin{lemma} \label{lem-deltagamma-1}
Let $\Gamma$ be a group that can be written as a finite union $\Gamma = \cup_{i=1}^r Y_i$ of subsets. Then there exists $\ell\in\{1, \ldots, r\}$ such that the subgroup
\[\Delta_\ell=\langle\gamma\delta^{-1}\, \mid\, \gamma, \delta\in Y_\ell\rangle\]
has index at most $r$.
\end{lemma}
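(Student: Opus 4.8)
The plan is to reduce Lemma~\ref{lem-deltagamma-1} to the B.H.~Neumann lemma (Lemma~\ref{L: finite union}) by passing from a covering by arbitrary subsets $Y_i$ to a covering by cosets of the groups $\Delta_\ell$. First I would fix $\gamma\in\Gamma$ arbitrary; it lies in some $Y_\ell$, so pick one index $\ell=\ell(\gamma)$ with $\gamma\in Y_\ell$. Then for every $\delta\in Y_\ell$ we have $\gamma\delta^{-1}\in\Delta_\ell$, hence $\gamma\in\Delta_\ell\delta$, so in fact $Y_\ell\subset\Delta_\ell\gamma$ (choosing $\delta=\gamma$ shows $\gamma\in\Delta_\ell\gamma$, and more to the point every element of $Y_\ell$ lies in the single coset $\Delta_\ell\gamma$). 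Actually the cleanest way: for each $\ell$ with $Y_\ell\neq\varnothing$ pick a basepoint $y_\ell\in Y_\ell$; then $Y_\ell=\{(y_\ell'y_\ell^{-1})y_\ell : y_\ell'\in Y_\ell\}\subset\Delta_\ell y_\ell$, since every $y_\ell'y_\ell^{-1}$ is one of the generators of $\Delta_\ell$.

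Consequently $\Gamma=\bigcup_{i=1}^r Y_i\subset\bigcup_{i\,:\,Y_i\neq\varnothing}\Delta_i y_i$, so $\Gamma$ is covered by at most $r$ cosets of the subgroups $\Delta_i$ (discarding empty $Y_i$ only decreases the count). Now apply Lemma~\ref{L: finite union} directly: at least one of the $\Delta_\ell$ appearing in this covering has index at most $r$ in $\Gamma$. That $\ell$ is the index sought in the statement, and we are done. I would also remark that the subgroups $\Delta_\ell$ for which $Y_\ell=\varnothing$ are irrelevant (the statement is vacuous or one simply ignores them), and that if some $Y_\ell$ is empty we may harmlessly replace it by $\{1\}$ without affecting the covering.

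There is essentially no obstacle here: the only thing to be careful about is the bookkeeping with empty sets and the precise claim $Y_\ell\subset\Delta_\ell y_\ell$, which is immediate from the definition of $\Delta_\ell$ as generated by all differences $\gamma\delta^{-1}$ with $\gamma,\delta\in Y_\ell$. The content of the lemma is entirely in Lemma~\ref{L: finite union}; this is a purely formal repackaging that replaces ``covering by subsets'' with ``covering by cosets of difference groups.'' So the write-up will be short: state the basepoint trick, deduce the coset covering, invoke Neumann's lemma.

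\begin{proof}
We may assume each $Y_i$ is non-empty (discarding the empty ones only makes $r$ smaller, and if all are empty then $\Gamma$ is empty and there is nothing to prove). For each $i\in\{1,\ldots,r\}$ fix a basepoint $y_i\in Y_i$. By definition of $\Delta_i=\langle\gamma\delta^{-1}\,\mid\,\gamma,\delta\in Y_i\rangle$, for every $\gamma\in Y_i$ we have $\gamma y_i^{-1}\in\Delta_i$, hence $\gamma\in\Delta_i y_i$. Therefore $Y_i\subset\Delta_i y_i$ for every $i$, and so
\[
\Gamma=\bigcup_{i=1}^r Y_i\subset\bigcup_{i=1}^r \Delta_i y_i.
\]
Thus $\Gamma$ is a union of at most $r$ cosets of the subgroups $\Delta_i$. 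By Lemma~\ref{L: finite union}, at least one of these subgroups, say $\Delta_\ell$, has index at most $r$ in $\Gamma$.
\end{proof}
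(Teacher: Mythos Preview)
Your proof is correct and is essentially identical to the paper's: both pick a basepoint $\gamma_i\in Y_i$, observe that $Y_i\subset\Delta_i\gamma_i$, and then invoke Neumann's lemma (Lemma~\ref{L: finite union}) on the resulting coset covering. The only difference is that you are slightly more explicit about the empty-$Y_i$ edge case, which the paper glosses over.
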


\begin{proof}
Select one $\gamma_i\in Y_i$ for every $i$. Then we have $Y_i\subset \Delta_i\gamma_i$. Thus $\Gamma =\cup_{i=1}^r\Delta_i\gamma_i$, and at least one of the $\Delta_i$ has index at most $r$ by the previous lemma.
\end{proof}
  
We will need the following classical lemma, a proof of which can be found in  \cite[Lemma 4.1]{Nek-fp}.

\begin{lemma} \label{double-comm}
Let $X$ be a Hausdorff space, and let $G$ be a group of homeomorphisms of $X$. If $N$ is a non-trivial normal subgroup of $G$, there exists a non-empty open $U \subset X$ such that $[G_U,G_U] \leq N$. 
\end{lemma}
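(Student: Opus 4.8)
The plan is to find an open set on which two disjoint copies of rigid stabilizers can be produced, so that a single nontrivial element of $N$ conjugates one into a commuting region and reveals a nontrivial commutator supported in a controlled open set; then refine. Concretely, let $n \in N$ be a nontrivial element, and pick a point $x \in X$ with $n(x) \neq x$. Since $X$ is Hausdorff, choose disjoint open neighbourhoods $V$ of $x$ and $V'$ of $n(x)$; shrinking $V$, we may also assume $n(V) \subset V'$, so $V$ and $n(V)$ are disjoint open sets with $n V n^{-1} = n(V)$ as the support-conjugation rule. I would then consider the rigid stabilizer $G_V$: for any $g, h \in G_V$, the conjugate $n g n^{-1}$ is supported in $n(V)$, which is disjoint from $V \supset \supp(h)$, hence $n g n^{-1}$ commutes with $h$.

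The key computation is the commutator identity: for $g, h \in G_V$ one has $[\,[g,h]\,,\, n\,] = [g,h]\cdot n [h,g] n^{-1}$, and since $n[h,g]n^{-1}$ is supported in $n(V)$ while $[g,h]$ is supported in $V$, this element $[\,[g,h]\,,\,n\,]$ lies in $N$ (as $n \in N$ and $N$ is normal) and is supported in $V \cup n(V)$. More importantly, I want to arrange that the commutator $[G_V, G_V]$ is itself carried, via this conjugation trick, faithfully into $N$. The cleanest route: set $U = V$, and show directly that $[G_U, G_U] \leq N$ by proving that for every $c \in [G_U, G_U]$, the element $c \cdot (n c^{-1} n^{-1})$ equals $c$ modulo something in $N$ — but this needs $nc^{-1}n^{-1}$ to be trivial's placeholder, which it is not. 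So instead I would use the standard argument: the map $c \mapsto [c, n] = c \, (n c^{-1} n^{-1})$ restricted to $[G_V,G_V]$ is injective (because $c$ is supported in $V$ and $nc^{-1}n^{-1}$ in $n(V)$, disjoint sets, so $[c,n]=1$ forces $c = 1$), has image in $N$, and one checks it is a homomorphism on the abelian-ization-free part — actually it is not a homomorphism, so the right statement is that $[G_V, G_V]$ embeds into $N$ via $c \mapsto$ the $V$-component of a suitable element of $N$, which is not literally containment.

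Reconsidering, the correct and intended argument is: take $U$ to be an open set with $\overline{U}$, $n(\overline{U})$ disjoint (possible as above), and observe that for $g,h \in G_U$ the element $[g, n h n^{-1}] $ belongs to $N$ (since $nhn^{-1} \in N$) and, because $g$ is supported in $U$ and $n h n^{-1}$ in $n(U)$ which are disjoint, this commutator is trivial — that forces nothing useful either. The genuinely working trick, which I would carry out in full, is: let $g,h\in G_U$ and compute $[g,h]$; then $n[g,h]n^{-1} \in N$ is supported in $n(U)$, and $[g,h][g,h]$... The real point is that $[g,h] = [g, h]\cdot \big(n[h,g]n^{-1}\big) \cdot \big(n[g,h]n^{-1}\big)$, and the first two factors multiply to $[\,[g,h], n\,] \in N$ while the third, $n[g,h]n^{-1}$, is a conjugate of $[g,h]$ by $n$. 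This shows $[g,h] \equiv n[g,h]n^{-1} \pmod N$; iterating and using that $n$ has finite order on supports is not available, so instead I replace $n$ by a commutator of elements of $G_U$ with $n$: the element $m := [g', n]$ for $g' \in G_U$ lies in $N$, is supported in $U \cup n(U)$, and one can choose $g'$ so that $m$ is nontrivial and its restriction to $U$ is exactly $g'$; then applying Lemma \ref{double-comm}'s hypothesis inductively to the open set $U$ with the smaller normal subgroup $N \cap G_U$ (nontrivial since it contains $m$'s data) — the hard part will be exactly this last descent step, ensuring the normal subgroup obtained inside $G_U$ is nontrivial and that finitely many iterations terminate; I expect to resolve it by noting $G_U$ contains, for suitable $g'$, an element agreeing with $g'$ on a subregion and lying in $N$, and then invoking that $[G_U, G_U]$ is generated by such building blocks. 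I would cite \cite[Lemma 4.1]{Nek-fp} for the clean packaging of this commutator manipulation rather than reproduce every line.
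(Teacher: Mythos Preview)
Your setup is correct: pick $n\in N$ nontrivial and a non-empty open $U$ with $U\cap n(U)=\varnothing$. But none of the commutator computations you attempt actually closes the argument, and several contain errors. In your second attempt you assert $nhn^{-1}\in N$; this is false, since $h\in G_U$ need not lie in $N$. In your first and third attempts you consider $[[g,h],n]=[g,h]\cdot n[h,g]n^{-1}$, which indeed lies in $N$, but it is supported on $U\cup n(U)$ and only its restriction to $U$ equals $[g,h]$; this does \emph{not} show $[g,h]\in N$. Your proposed inductive descent is not carried out and it is unclear how it would terminate.

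The missing idea is to take the commutator in the other order. For $g,h\in G_U$, set $m=[n,h]=(nhn^{-1})h^{-1}\in N$ (this time genuinely in $N$, since $N$ is normal and $n\in N$). Now $nhn^{-1}$ is supported in $n(U)$, disjoint from $\supp(g)\subset U$, so $nhn^{-1}$ commutes with $g$. Using the identity $[g,ab]=a[g,b]a^{-1}$ when $a$ commutes with $g$, one gets
\[
[g,m]=[g,(nhn^{-1})h^{-1}]=(nhn^{-1})\,[g,h^{-1}]\,(nhn^{-1})^{-1}=[g,h^{-1}],
\]
the last equality because $[g,h^{-1}]$ is supported in $U$, disjoint from $\supp(nhn^{-1})$. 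Since $m\in N$ and $N\lhd G$, we conclude $[g,h^{-1}]\in N$ for all $g,h\in G_U$, hence $[G_U,G_U]\le N$. This is exactly the computation in \cite[Lemma 4.1]{Nek-fp}, which the paper simply cites; your instinct to invoke that reference was right, but the proof you sketched before doing so does not work as written.
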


\subsection{From rigid stabilizers to uniformly recurrent subgroups}

We still denote by $G$ a countable group of homeomorphisms of a Hausdorff space $X$. In this subsection we show that many properties of the rigid stabilizers $G_U$ are inherited by URS's of the group $G$. This is a consequence of the following result.

\begin{thm}\label{thm-dycothomy-mostgeneral}
Let $G$ be a countable group of homeomorphisms of a Hausdorff space $X$. Then for every $H \in \sub(G)$, one of the following possibilities holds:
\begin{enumerate}[label=(\roman*)]
\item the closure of the conjugacy class $\mathcal{C}(H)$ in $\sub(G)$ contains the trivial subgroup;
\item there exists a non-empty open $U\subset X$ such that $H$ admits a finite index subgroup of $G_U$ as a subquotient.
\end{enumerate}
\end{thm}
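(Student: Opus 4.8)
The plan is to dichotomize according to whether the closure of $\mathcal{C}(H)$ contains the trivial subgroup. If it does, we are in case (i) and there is nothing to prove. So assume $1 \notin \overline{\mathcal{C}(H)}$; I will show that (ii) holds. By Lemma~\ref{L: accumulation}, this assumption is equivalent to the existence of a finite subset $P = \{p_1, \ldots, p_r\} \subset G \setminus \{1\}$ such that every conjugate of $P$ intersects $H$; equivalently, for every $g \in G$, at least one $g p_i g^{-1}$ lies in $H$. The idea is to turn this combinatorial condition into an open set $U$ by exploiting the freeness built into Lemma~\ref{L: disjoint open}, together with B.H.\ Neumann's lemma (Lemma~\ref{L: finite union}) in the form of Lemma~\ref{lem-deltagamma-1}.

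Here is the key construction. Applying Lemma~\ref{L: disjoint open} to the non-trivial homeomorphisms $p_1, \ldots, p_r$, I obtain non-empty open sets $U_1, \ldots, U_r \subset X$ such that $U_1, \ldots, U_r, p_1(U_1), \ldots, p_r(U_r)$ are pairwise disjoint. Fix $i$ and consider the rigid stabilizer $G_{U_i}$. For $g \in G_{U_i}$, the commutator-type element $g p_i g^{-1} p_i^{-1}$ should be analyzed: since $g$ is supported in $U_i$, the conjugate $p_i g p_i^{-1}$ is supported in $p_i(U_i)$, which is disjoint from $U_i$; hence $g$ and $p_i g p_i^{-1}$ commute, and one can control the support of the various conjugates $g p_j g^{-1}$. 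The disjointness is designed so that for $g \in G_{U_i}$, one has $g p_i g^{-1} \in H$ only for a ``small'' set of $g$ — more precisely, I want to partition $G_{U_i}$ into finitely many pieces $Y_1, \ldots, Y_r$ where $Y_j = \{g \in G_{U_i} : g p_j g^{-1} \in H\}$ (these cover $G_{U_i}$ by the hypothesis on $P$, applied to conjugating elements $g$ ranging over $G_{U_i}$), and then apply Lemma~\ref{lem-deltagamma-1}: some $\Delta_\ell = \langle \gamma \delta^{-1} : \gamma, \delta \in Y_\ell \rangle$ has index at most $r$ in $G_{U_i}$. The point is then that for $\gamma, \delta \in Y_\ell$, both $\gamma p_\ell \gamma^{-1}$ and $\delta p_\ell \delta^{-1}$ lie in $H$, so their product $\gamma p_\ell \gamma^{-1} \cdot \delta p_\ell^{-1} \delta^{-1} \in H$; using the disjointness of supports (and that $\gamma \delta^{-1}$ is supported in $U_i$ while $p_\ell$ moves $U_i$ off itself), one rewrites this element in terms of $\gamma\delta^{-1}$ and its conjugate by $p_\ell$, exhibiting a copy of $\Delta_\ell$ (or a related group) inside $H$ as a subquotient — realized via the homomorphism $g \mapsto g \cdot (p_\ell g p_\ell^{-1})^{-1}$ restricted to $\Delta_\ell$, whose image lies in $H$ and whose kernel is controlled. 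Taking $U = U_\ell$ then gives a finite-index subgroup of $G_U$ appearing as a subquotient of $H$, which is (ii).

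The main obstacle I anticipate is the bookkeeping in the last step: making precise the sense in which ``$\Delta_\ell$ appears as a subquotient of $H$'' via the support-disjointness trick. One must choose the right map — something like $\phi\colon g \mapsto g\,(p_\ell g p_\ell^{-1})^{-1}$ on the subgroup generated by $Y_\ell$ — verify it is a homomorphism on the relevant subgroup (this uses that elements supported in $U_\ell$ commute with their $p_\ell$-conjugates, since $U_\ell \cap p_\ell(U_\ell) = \varnothing$), check that $\phi(\Delta_\ell) \subseteq H$ using that each $\gamma p_\ell \gamma^{-1} \in H$ for $\gamma \in Y_\ell$, and identify the image with a quotient of $\Delta_\ell$ by a kernel that is itself a finite-index subgroup of $G_{U_\ell}$-business — so that the image is a finite-index subquotient of $G_{U_\ell}$. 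Handling the covering hypothesis correctly (the hypothesis gives, for \emph{each} $g$, \emph{some} $i$ with $gp_ig^{-1} \in H$, but one needs this with $g$ ranging only over $G_{U_i}$ and the conjugator and the index matched up) may require first conjugating $H$ so that the relevant finite set $P$ is adapted to the $U_i$; alternatively, one runs the argument with $P$ replaced by a single well-chosen conjugate and its translates. I expect the rest — invoking Lemmas~\ref{L: accumulation}, \ref{L: disjoint open}, \ref{lem-deltagamma-1} — to be essentially mechanical.
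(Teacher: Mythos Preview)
Your overall strategy --- reformulate the failure of (i) via Lemma~\ref{L: accumulation}, choose disjoint open sets via Lemma~\ref{L: disjoint open}, cover a group by the sets $Y_j=\{g: gp_jg^{-1}\in H\}$, apply Lemma~\ref{lem-deltagamma-1}, and analyze the elements $(\gamma p_\ell^{-1}\gamma^{-1})(\delta p_\ell\delta^{-1})=\gamma(p_\ell^{-1}\gamma^{-1}\delta p_\ell)\delta^{-1}\in H$ --- is exactly the paper's. But the obstacle you flag at the end is a genuine gap, and your suggested patches do not close it.

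Concretely: you fix an index $i$, partition $G_{U_i}=\bigcup_j Y_j$, and Neumann's lemma returns some $\ell$ with $\Delta_\ell$ of finite index in $G_{U_i}$. Your support computation then needs $p_\ell(U_i)\cap U_i=\varnothing$ (so that the middle factor $p_\ell^{-1}\gamma^{-1}\delta p_\ell$, supported in $p_\ell^{-1}(U_i)$, is trivial on $U_i$). But Lemma~\ref{L: disjoint open} only gives $p_j(U_j)\cap U_i=\varnothing$; it says nothing about $p_\ell(U_i)$ when $\ell\neq i$, and there is no mechanism forcing $\ell=i$. (Separately, the map $g\mapsto g(p_\ell g p_\ell^{-1})^{-1}$ you propose is not a homomorphism even when the two factors commute, since $g\mapsto g^{-1}$ is an anti-homomorphism; the right map is restriction to the open set, as below.)

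The paper's fix is to run the covering argument not in a single $G_{U_i}$ but in the full product $L=G_{U_1}\times\cdots\times G_{U_r}$ (well-defined since the $U_i$ are pairwise disjoint). Whatever index $\ell$ Neumann's lemma produces, one then \emph{projects} to the $\ell$-th factor: $\Gamma=\pi_\ell(\Delta_\ell)$ has finite index in $G_{U_\ell}$. The support computation now only requires $p_\ell^{-1}\bigl(\bigcup_i U_i\bigr)\cap U_\ell=\varnothing$, equivalently $\bigcup_i U_i\cap p_\ell(U_\ell)=\varnothing$, and this \emph{is} guaranteed by Lemma~\ref{L: disjoint open}. One then lets $A=\langle a_{\gamma,\delta}\rangle\le H$ and observes that the restriction homomorphism $A\to\homeo(U_\ell)$ has image containing every $\pi_\ell(\gamma\delta^{-1})$, hence containing $\Gamma$; so $\Gamma$ is a subquotient of $H$, with $U=U_\ell$.
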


Recall that a group $Q$ is a \textbf{subquotient} of a group $H$ if there exists a subgroup $K \leq H$ such that $Q$ is a quotient of $K$.

\medskip

Before giving the proof of Theorem \ref{thm-dycothomy-mostgeneral}, let us single out the following consequence.


\begin{cor}\label{cor-mostgeneral-Q}
Retain the notation of Theorem \ref{thm-dycothomy-mostgeneral}. If all rigid stabilizers $G_U$, $U$ non-empty and open in $X$, are non-amenable (respectively are not elementary amenable, contain free subgroups, are not virtually solvable), then every non-trivial uniformly recurrent subgroup of $G$ has the same property.
\end{cor}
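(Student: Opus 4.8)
The plan is to derive Corollary \ref{cor-mostgeneral-Q} directly from the dichotomy in Theorem \ref{thm-dycothomy-mostgeneral}, using the fact that each of the listed properties passes to finite-index subgroups and is inherited from a subquotient in the appropriate direction. Let $\mathcal{H} \in \urs(G)$ be non-trivial. Since $\mathcal{H}$ is a minimal closed $G$-invariant subset of $\sub(G)$ and $\mathcal{H} \neq 1$, no $H \in \mathcal{H}$ has the trivial subgroup in the closure of its conjugacy class $\mathcal{C}(H)$: indeed $\overline{\mathcal{C}(H)} \supset \mathcal{H}$ by minimality, so if $1 \in \overline{\mathcal{C}(H)}$ then the minimal set $\mathcal{H}$ would have to be $\{1\}$, a contradiction. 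Hence for every $H \in \mathcal{H}$, alternative (i) of Theorem \ref{thm-dycothomy-mostgeneral} fails, so alternative (ii) holds: there is a non-empty open $U \subset X$ and a finite-index subgroup $K$ of $G_U$ which is a subquotient of $H$.

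Now I would run the four cases. Suppose each $G_U$ is non-amenable. A finite-index subgroup $K \leq G_U$ is then non-amenable (amenability passes to and from finite-index subgroups), and a non-amenable group cannot be a subquotient of an amenable group (subgroups and quotients of amenable groups are amenable); hence $H$ is non-amenable. The same structure works verbatim for the other three properties: if every $G_U$ is not elementary amenable, then no finite-index subgroup $K \leq G_U$ is elementary amenable (elementary amenability is closed under finite extensions, so $K$ elementary amenable would force $G_U$ elementary amenable), and elementary amenability passes to subgroups and quotients, so $H$ is not elementary amenable; if every $G_U$ contains a free subgroup, then so does its finite-index subgroup $K$ (e.g.\ by intersecting a free subgroup of $G_U$ with $K$, which is again free of finite index hence non-trivial, or simply because a finite-index subgroup of a group containing $\mathbb{F}_2$ contains $\mathbb{F}_2$), and if $K$ is a quotient of some $L \leq H$ then $L$ surjects onto a group containing $\mathbb{F}_2$, so $L$ (hence $H$) contains $\mathbb{F}_2$ because free groups are projective; if every $G_U$ is not virtually solvable, then no finite-index $K \leq G_U$ is virtually solvable, and being virtually solvable passes to subgroups and quotients, so $H$ is not virtually solvable. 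In every case we conclude that every $H \in \mathcal{H}$ has the property in question, which by the convention recalled in the statement means $\mathcal{H}$ has the property.

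The only mild subtlety—and thus the place that needs the most care—is checking the two-way closure of each property along the passage $G_U \rightsquigarrow K \rightsquigarrow H$: we need the property to go \emph{up} from the finite-index subgroup $K$ to $G_U$ (equivalently, the \emph{negation} to go down to $K$), and we need it to go \emph{down} from $K$ as a subquotient of $H$. For non-amenability and for ``not virtually solvable'' both directions are completely standard. For ``not elementary amenable'' one uses that the class of elementary amenable groups is closed under extensions (in particular finite extensions) and under subgroups and quotients. For ``contains a free subgroup'' the downward step uses projectivity of free groups (a subgroup of $H$ surjecting onto $K \supseteq \mathbb{F}_2$ admits a section over that $\mathbb{F}_2$). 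None of this is deep, but it is worth stating explicitly so the four parenthetical cases of the corollary are all covered uniformly. Finally, I note this also immediately yields the ``not locally finite'' case mentioned in Theorem \ref{MT: rigid urs} by the identical argument, since local finiteness passes to subgroups and quotients and a finite-index subgroup of a non-locally-finite group is non-locally-finite.
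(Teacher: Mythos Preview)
Your argument is correct and is exactly the intended derivation: the paper states this corollary without proof, treating it as an immediate consequence of the dichotomy in Theorem~\ref{thm-dycothomy-mostgeneral}, and you have simply spelled out the closure properties (under finite-index passage and subquotients) that make each of the four cases go through. The only comment is that your observation $\overline{\mathcal{C}(H)}\supset\mathcal{H}$ is in fact an equality by minimality, but this does not affect the argument.
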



In particular when combining Theorem \ref{thm-cstar-chab} together with Corollary \ref{cor-mostgeneral-Q}, we obtain the following result. Note that the second assertion follows from the fact that having non-amenable rigid stabilizers is stable under taking overgroups in $\homeo(X)$.

\begin{thm}\label{thm-non-amenab-rigid-stab}
Let $X$ be a Hausdorff space, and let $G$ be a countable group of homeomorphisms of $X$. Assume that for every non-empty open $U \subset X$, the group $G_U$ is non-amenable. Then $G$ is $C^\ast$-simple.

More generally every countable subgroup of $\homeo(X)$ containing $G$ is $C^*$-simple.
\end{thm}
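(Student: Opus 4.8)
The plan is to derive both assertions from Kennedy's characterization (Theorem \ref{thm-cstar-chab}) together with the dichotomy already established in Theorem \ref{thm-dycothomy-mostgeneral}; in fact this is precisely the combination recorded, for the non-amenable case, in Corollary \ref{cor-mostgeneral-Q}. By Theorem \ref{thm-cstar-chab}, to prove that $G$ is $C^\ast$-simple it suffices to show that $G$ has no non-trivial amenable uniformly recurrent subgroup. So I would take an arbitrary $\H \in \urs(G)$ with $\H \neq 1$ and show that $\H$ is non-amenable.

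First I would observe that every $H \in \H$ is non-trivial: the singleton $\{1\}$ is a closed $G$-invariant subset of $\sub(G)$ (the trivial subgroup is fixed by conjugation), so a minimal invariant set distinct from $1$ cannot contain the trivial subgroup. Fix a non-trivial $H \in \H$. By minimality of $\H$ we have $\H = \overline{\conj(H)}$, hence $1 \notin \overline{\conj(H)}$, so alternative (i) of Theorem \ref{thm-dycothomy-mostgeneral} is excluded. Therefore alternative (ii) holds: there is a non-empty open $U \subseteq X$ such that $H$ admits a finite-index subgroup of $G_U$ as a subquotient.

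Next I would combine the hypothesis with elementary permanence properties of amenability. Since $G_U$ is non-amenable, every finite-index subgroup of $G_U$ is non-amenable; and since subgroups and quotients of amenable groups are amenable, a group possessing a non-amenable subquotient is itself non-amenable. Hence $H$ is non-amenable, and as $H$ was an arbitrary (necessarily non-trivial) element of $\H$, the URS $\H$ is non-amenable. Thus $G$ has no non-trivial amenable URS, and $G$ is $C^\ast$-simple by Theorem \ref{thm-cstar-chab}.

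Finally, for the \enquote{more generally} clause, let $G'$ be a countable subgroup of $\homeo(X)$ with $G \le G'$. For every non-empty open $U \subseteq X$ one has $G_U \subseteq G'_U$ (an element of $G$ fixing $X \setminus U$ pointwise is in particular an element of $G'$ fixing $X \setminus U$ pointwise), so $G'_U$ contains the non-amenable group $G_U$ and is therefore non-amenable; hence $G'$ satisfies the hypothesis of the theorem and is $C^\ast$-simple by the first part. As for obstacles: once Theorems \ref{thm-cstar-chab} and \ref{thm-dycothomy-mostgeneral} are granted, there is essentially none inside this argument — the only points requiring care are the closedness and invariance of $\{1\}$ (used to rule out $1 \in \H$) and the stability of amenability under passing to finite-index subgroups, subgroups, and quotients. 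The genuine content lies upstream, in the proof of Theorem \ref{thm-dycothomy-mostgeneral}.
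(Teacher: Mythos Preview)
Your proposal is correct and follows exactly the same route as the paper: the paper simply notes that the theorem is obtained by combining Theorem~\ref{thm-cstar-chab} with Corollary~\ref{cor-mostgeneral-Q}, and that the second assertion holds because having non-amenable rigid stabilizers passes to overgroups in $\homeo(X)$. You have merely unpacked the content of Corollary~\ref{cor-mostgeneral-Q} (via Theorem~\ref{thm-dycothomy-mostgeneral}) and made the permanence properties of amenability explicit, which is fine.
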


\medskip

The end of this paragraph is dedicated to the proof of Theorem \ref{thm-dycothomy-mostgeneral}. We will actually prove the following more technical statement, which implies Theorem \ref{thm-dycothomy-mostgeneral}, and which will be used later on.

\begin{prop} \label{prop-technique}
Let $G$ be a countable group of homeomorphisms of $X$. Fix $z \in X$. Then for every $H \in \sub(G)$, one of the following possibilities holds:
\begin{enumerate}[label=(\roman*)]
\item the closure of the conjugacy class $\mathcal{C}(H)$ in $\sub(G)$ contains the trivial subgroup;
\item there exists a neighbourhood $W$ of $z$ such that for every $K \in \mathcal{C}(H)$, there exist a non-empty open $U\subset X$, a finite index subgroup $\Gamma \leq G_U$ and a subgroup $A \leq K$ with the following properties:
\begin{itemize}
\item $A$ fixes $W$ pointwise;
\item $A$ leaves  $U$ invariant, and for every $\gamma \in \Gamma $, there is $a \in A$ such that $a$ coincides with $\gamma$ on $U$.
\end{itemize}
\end{enumerate} 
\end{prop}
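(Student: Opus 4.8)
The plan is to prove the dichotomy by a direct combinatorial argument, combining Lemma \ref{L: disjoint open} (which produces many disjoint open pieces on which given nontrivial homeomorphisms are ``supported in a controlled way'') with the B.\,H.\ Neumann covering lemma in the form of Lemma \ref{lem-deltagamma-1}. Suppose we are \emph{not} in case (i). By Lemma \ref{L: accumulation} there is a finite set $P \subset G \setminus \{1\}$ all of whose conjugates meet $H$; equivalently, for every $K \in \conj(H)$ every conjugate of $P$ meets $K$. Write $P = \{g_1, \dots, g_r\}$. Apply Lemma \ref{L: disjoint open} to the homeomorphisms $g_1, \dots, g_r$ and to the chosen point $z$: this yields nonempty open sets $U_1, \dots, U_r$ and a neighbourhood $W$ of $z$ such that the $2r$ sets $U_1,\dots,U_r,g_1(U_1),\dots,g_r(U_r)$ are pairwise disjoint and $W$ is disjoint from all the $U_i$ and all the $g_j^{-1}(U_i)$. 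The idea is that this $W$ will be the neighbourhood required in case (ii); it depends only on $P$ and $z$, not on $K$.

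Next, fix $K \in \conj(H)$; write $K = sHs^{-1}$ and set $P^{s} = sPs^{-1} = \{f_1, \dots, f_r\}$ where $f_i = s g_i s^{-1}$. By hypothesis every $f_i$ lies in $K$ (each conjugate of $P$ meets $K$ in each coordinate — here I should be slightly careful and instead use the version of Lemma \ref{L: accumulation}(iii) applied coordinatewise, or just note that we may arrange $P$ so each $\{g_i\}$ is itself such a finite set, which is fine since a single nontrivial element works). Now I want to transport the disjoint-open-sets picture by $s$: the sets $s(U_1), \dots, s(U_r), f_1(s(U_1)), \dots, f_r(s(U_r))$ are pairwise disjoint, and $s(W)$ is disjoint from all $s(U_i)$ and all $f_j^{-1}(s(U_i))$. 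Rename $V_i = s(U_i)$, $W' = s(W)$. The key construction is then: for each nonempty open $V \subset X$ supported away from the relevant translates, and for $\gamma \in G_V$, I want to manufacture an element of $K$ that fixes $W'$ pointwise and agrees with $\gamma$ on $V$ (after shrinking $V$). The mechanism is a commutator/conjugation trick in the spirit of Lemma \ref{double-comm}: using the $f_i$ and the disjointness, elements of $G_{V_i}$ can be written as products of conjugates-by-$f_i$ of elements fixing $W'$, so they too fix $W'$; this is where the disjointness of $W'$ from the $f_j^{-1}(V_i)$ is used.

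The heart of the argument, and the step I expect to be the main obstacle, is extracting a \emph{finite index} subgroup of some rigid stabilizer $G_U$ as a subquotient of $K$. For this I would cover $G$ (or rather $G_{V}$ for a suitable $V$) by finitely many translates using the $f_i$: roughly, for $g \in G_V$ one shows that $g$ can be rewritten, using the finitely many $f_i \in K$ and the disjointness pattern, so as to land in finitely many ``slots''. More precisely I expect the argument to run: consider the subgroup $G_U$ for $U = V_1$ (say), and for $\gamma \in G_U$ look at the element $[\gamma, f_1] = \gamma f_1 \gamma^{-1} f_1^{-1} \in K$ (it lies in $K$ since $f_1 \in K$ and... wait, not automatically — rather $\gamma f_1 \gamma^{-1} \in K$ only if $\gamma \in K$). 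The correct move is the reverse: we do \emph{not} know $\gamma \in K$, so instead we must produce $A \le K$ abstractly. I would do this by applying Lemma \ref{lem-deltagamma-1} to $\Gamma = G_U$ written as a finite union $\bigcup Y_i$, where $Y_i$ is the set of $\gamma \in G_U$ whose behaviour relative to the pieces $V_1, \dots, V_r$ falls into the $i$-th of finitely many combinatorial ``types'' forced by the fact that $\gamma$ must be realised by an element of $K$ meeting one of finitely many cosets — then the group $\Delta_\ell = \langle \gamma \delta^{-1} : \gamma, \delta \in Y_\ell \rangle$ has finite index in $G_U$, and on $\Delta_\ell$ one builds (from differences $\gamma\delta^{-1}$, which cancel the ambient $K$-coset data) a subgroup $A \le K$ that fixes $W'$ pointwise, preserves $U$, and surjects onto $\Delta_\ell$ in the sense of the last bullet. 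Conjugating everything back by $s^{-1}$ returns the neighbourhood $W$ of $z$ independent of $K$, proving (ii). The delicate points to get right are: producing the right finite partition of $G_U$ so that Neumann's lemma applies; checking the ``fixes $W$ pointwise'' clause is preserved under all the conjugations and products (this uses the separation of $W$ from the $g_j^{-1}(U_i)$); and verifying $A \le K$ throughout rather than merely $A \le G$. I would isolate these as the technical core and grind them carefully, since Theorem \ref{thm-dycothomy-mostgeneral} (with $U$ and $\Gamma \le G_U$ of finite index, hence $H$ — equivalently $K$ — having $\Gamma$ as a subquotient via $A$) then follows immediately, and the extra data ($W$, the explicit $A$, invariance of $U$) is exactly what later applications to extremely proximal actions will need.
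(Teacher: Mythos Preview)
Your overall strategy is on the right track (Lemma~\ref{L: accumulation} for $P$, Lemma~\ref{L: disjoint open} for the open sets and $W$, then Neumann's lemma), but there are two genuine gaps that would prevent the argument from going through as written.

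First, the detour through conjugating by $s$ is both unnecessary and incorrect. You claim that with $K=sHs^{-1}$ and $f_i=sg_is^{-1}$, ``every $f_i$ lies in $K$''; this is false. Lemma~\ref{L: accumulation} only guarantees that each conjugate $gPg^{-1}$ meets $K$ in \emph{some} element, not that a fixed conjugate $sPs^{-1}$ lies entirely in $K$. Your attempted fix (``arrange $P$ so each $\{g_i\}$ is itself such a finite set'') would force $P$ to be a singleton, which Lemma~\ref{L: accumulation} does not allow. The paper avoids this entirely: it works with the original $U_i$, $g_i$, $W$ throughout, and lets the dependence on $K$ enter only through the definition of the partition (see below). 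No transport by $s$ is needed, and this is what makes $W$ independent of $K$ for free.

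Second, and more seriously, you propose to apply Lemma~\ref{lem-deltagamma-1} to a single rigid stabilizer $G_U$. This does not work: the index $\ell$ produced by Neumann's lemma is not under your control, and if $\ell$ does not match your chosen $U$ the construction collapses (the resulting elements act trivially on $U_\ell$). The paper's key move is to apply Neumann's lemma to the \emph{product} $L=G_{U_1}\times\cdots\times G_{U_r}$, partitioned as $L=\bigcup Y_i$ with
\[
Y_i=\{\gamma\in L : \gamma g_i\gamma^{-1}\in K\}.
\]
This covering holds because every conjugate of $P$ meets $K$. Whatever $\ell$ emerges, the projection $\pi_\ell(\Delta_\ell)$ has finite index in $G_{U_\ell}$, and one sets $U=U_\ell$, $\Gamma=\pi_\ell(\Delta_\ell)$. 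The subgroup $A\le K$ is then generated by the elements
\[
a_{\gamma,\delta}=(\gamma g_\ell^{-1}\gamma^{-1})(\delta g_\ell\delta^{-1})\qquad(\gamma,\delta\in Y_\ell),
\]
each a product of two elements of $K$. Rewriting $a_{\gamma,\delta}=\gamma(g_\ell^{-1}\gamma^{-1}\delta g_\ell)\delta^{-1}$ and using that $g_\ell^{-1}(\bigcup_i U_i)$ is disjoint from both $W$ and $U_\ell$ (this uses the full pairwise disjointness from Lemma~\ref{L: disjoint open}), one checks that $a_{\gamma,\delta}$ fixes $W$ pointwise and agrees with $\gamma\delta^{-1}$ on $U_\ell$. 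This explicit formula for $a_{\gamma,\delta}$ is the mechanism you were looking for when you wrote ``differences $\gamma\delta^{-1}$, which cancel the ambient $K$-coset data''; your sketch does not supply it, and without it there is no way to see that $A\le K$.
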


Before going into the proof, we shall explain why Proposition \ref{prop-technique} implies Theorem \ref{thm-dycothomy-mostgeneral}. Indeed, if $H \in \sub(G)$ does not contain $\left\{1\right\}$ in the closure of its conjugacy class, and if $U$, $\Gamma$ and $A$ are as in condition (ii) of Proposition \ref{prop-technique}, then the restriction to $U$ provides a quotient of $A$ that contains an isomorphic copy of $\Gamma$. Since the latter has finite index in $G_U$ and since $A$ is a subgroup of $H$, this proves condition (ii) of Theorem \ref{thm-dycothomy-mostgeneral}. Note that this argument is completely independent of the choice of the point $z$ and the neighbourhood $W$. However these will be important for the application of Proposition \ref{prop-technique} in the next subsection.

\begin{proof}[Proof of Proposition \ref{prop-technique}]
We assume that (i) does not hold, and we prove (ii). We may plainly assume that $X$ has no isolated points, since otherwise (ii) is obviously true.

According to Lemma \ref{L: accumulation}, there exists a finite subset $P=\{g_1, \ldots, g_r\}\subset G\setminus \{1\}$ such that all the conjugates of $P$ intersect $K$ for every $K \in \conj(H)$. Let $U_1,\ldots, U_r\subset X$ be some open sets as in the conclusion of Lemma \ref{L: disjoint open} applied to $P$, and let also $W$ be a neighbourhood of $z$ as in Lemma \ref{L: disjoint open}. For every $i=1,\ldots , r$ we will write $G_i = G_{U_i}$, the rigid stabilizer of $U_i$. 

Let $L$ be the subgroup of $G$ generated by all the $G_i$ for $i=1,\ldots r$. By construction the $G_i$ have disjoint supports, and hence pairwise commute. In particular we have a natural identification $L =G_1\times\cdots \times G_r$. Let us denote $\pi_i: L \to G_i$ the projection of $L$ to $G_i$.

Fix $K \in \mathcal{C}(H)$. By definition of $P$, every element of $G$ must conjugate at least one element of $P$ inside $K$. In particular we can write $L$ as a finite union of subsets $L=Y_{1}\cup \cdots \cup Y_{r}$, where $Y_i$ is the set of elements of $L$ that conjugate $g_i \in P$ inside $K$: \[ Y_{i} = \left\{\gamma \in L \, \mid \, \gamma g_i \gamma^{-1} \in K\right\}.\] 

 By Lemma \ref{lem-deltagamma-1}, we may find an index $\ell$ such that the subgroup
\[\Delta_\ell=\langle\gamma\delta^{-1}\, \mid \, \gamma, \delta\in Y_\ell\rangle\] 
 has finite index in $L$. We fix such a $\ell$, and for every $\gamma, \delta\in Y_{\ell}$, we consider the element 
 \[a_{\gamma, \delta} = (\gamma g_\ell^{-1} \gamma^{-1}) (\delta g_\ell\delta^{-1}).\] Being the product of two elements of $K$, $a_{\gamma, \delta}$ is an element of $K$. We denote by $A \le K$ the subgroup generated by all the elements $a_{\gamma, \delta}$ when $\gamma, \delta$ range over $Y_{\ell}$. 

We will now prove that (ii) holds with $U = U_\ell$ and $\Gamma = \pi_\ell(\Delta_\ell)\le G_U$. Note that $\Gamma$ is indeed of finite index in $G_U$ because $\Delta_\ell$ has finite index in $L$. Moreover observe that $\Gamma$ is generated by the $\pi_\ell(\gamma\delta^{-1})$ when $\gamma, \delta$ run in $Y_\ell$. By definition of the projection, this is just the element that coincides with $\gamma\delta^{-1}$ on $U$ and with the identity elsewhere. Hence proving the following lemma is enough to conclude.

\begin{lemma}\label{lem-intermediaire}
For every $\gamma, \delta \in Y_{\ell}$, the element $a_{\gamma, \delta}$ fixes pointwise $W$, leaves $U_{\ell}$ invariant and coincides with $\gamma\delta^{-1}$ in restriction to $U_{\ell}$.
\end{lemma}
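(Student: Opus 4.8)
The plan is to verify all three assertions by a direct point-chase, tracking the image of an arbitrary point under the homeomorphism $a_{\gamma, \delta} = (\gamma g_\ell^{-1} \gamma^{-1})(\delta g_\ell \delta^{-1})$. The two inputs are: first, since $\gamma, \delta \in L = G_1 \times \cdots \times G_r$, both $\gamma$ and $\delta$ are supported in $U_1 \cup \cdots \cup U_r$, so each of them fixes pointwise every point of $X$ lying outside this union; second, the disjointness clauses of Lemma \ref{L: disjoint open}, namely that $U_1, \ldots, U_r, g_1(U_1), \ldots, g_r(U_r)$ are pairwise disjoint and that $W$ is disjoint from all the $U_i$ and from all the $g_j^{-1}(U_i)$ (equivalently $g_j(W) \cap U_i = \varnothing$ for all $i,j$).

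First I would record two elementary preliminaries. (a) Each rigid stabilizer $G_i$ preserves $U_i$ setwise: an element of $G_i$ fixes $X \setminus U_i$ pointwise, and so does its inverse, hence it cannot send a point of $U_i$ out of $U_i$. (b) Writing $\gamma = \gamma_1 \cdots \gamma_r$ and $\delta = \delta_1 \cdots \delta_r$ with $\gamma_i, \delta_i \in G_i$, the factors with index $\neq \ell$ are supported in the $U_i$, $i \neq \ell$, which are disjoint from both $U_\ell$ and $W$; hence in restriction to $U_\ell$ the elements $\gamma$, $\delta$, $\gamma\delta^{-1}$ act as $\gamma_\ell$, $\delta_\ell$, $\gamma_\ell \delta_\ell^{-1}$, and these preserve $U_\ell$ by (a). In particular the desired restriction $\gamma\delta^{-1}|_{U_\ell}$ equals $(\gamma_\ell \delta_\ell^{-1})|_{U_\ell}$.

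Next, the computation on $W$: for $x \in W$, applying $\delta^{-1}$ fixes $x$ (as $x \notin \bigcup_i U_i$); then $g_\ell(x) \notin \bigcup_i U_i$ because $g_\ell(W) \cap U_i = \varnothing$ for all $i$, so $\delta$ fixes $g_\ell(x)$, giving $(\delta g_\ell \delta^{-1})(x) = g_\ell(x)$; likewise $\gamma^{-1}$ fixes $g_\ell(x)$, then $g_\ell^{-1} g_\ell(x) = x$, and finally $\gamma$ fixes $x$ (again $x \notin \bigcup_i U_i$). Hence $a_{\gamma,\delta}(x) = x$, so $a_{\gamma,\delta}$ fixes $W$ pointwise. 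The computation on $U_\ell$ is parallel: for $x \in U_\ell$, $\delta^{-1}$ acts as $\delta_\ell^{-1}$ and sends $x$ into $U_\ell$ by (a); then $g_\ell$ sends $\delta_\ell^{-1}(x)$ into $g_\ell(U_\ell)$, which is disjoint from every $U_i$, so $\delta$ fixes it and $(\delta g_\ell \delta^{-1})(x) = g_\ell \delta_\ell^{-1}(x)$; similarly $\gamma^{-1}$ fixes this point (still in $g_\ell(U_\ell)$), $g_\ell^{-1}$ brings it back to $\delta_\ell^{-1}(x) \in U_\ell$, and there $\gamma$ acts as $\gamma_\ell$. Thus $a_{\gamma,\delta}(x) = \gamma_\ell \delta_\ell^{-1}(x)$ for all $x \in U_\ell$. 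Since $\gamma_\ell \delta_\ell^{-1}$ maps $U_\ell$ bijectively onto $U_\ell$ and $a_{\gamma,\delta}$ is a homeomorphism agreeing with it on $U_\ell$, we get $a_{\gamma,\delta}(U_\ell) = U_\ell$, and the coincidence $a_{\gamma,\delta}|_{U_\ell} = (\gamma\delta^{-1})|_{U_\ell}$ follows from preliminary (b).

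I do not expect a genuine obstacle here: the lemma is pure bookkeeping about which elements fix which points. The only points requiring care are to invoke the correct disjointness clause at each step — in particular it is the hypothesis $W \cap g_\ell^{-1}(U_i) = \varnothing$ that makes the computation on $W$ go through — and to keep in mind that $\gamma$ and $\delta$ are supported in $\bigcup_i U_i$, so that all "off-diagonal" factors are harmless on $U_\ell$ and on $W$.
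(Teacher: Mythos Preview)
Your proof is correct. You do a direct point-chase through the factorization $a_{\gamma,\delta}=(\gamma g_\ell^{-1}\gamma^{-1})(\delta g_\ell\delta^{-1})$, invoking exactly the right disjointness clauses from Lemma~\ref{L: disjoint open} at each step.

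The paper's argument is essentially the same in content but organized around a regrouping: one rewrites $a_{\gamma,\delta}=\gamma\,(g_\ell^{-1}\gamma^{-1}\delta g_\ell)\,\delta^{-1}$ and observes that the middle factor has support contained in $g_\ell^{-1}(\cup_i U_i)$, which is disjoint from both $W$ and $U_\ell$; since $\gamma$ and $\delta$ themselves fix $W$ and preserve $U_\ell$, the conclusions are immediate. This regrouping avoids the explicit decomposition $\gamma=\gamma_1\cdots\gamma_r$ and the step-by-step chase, making the write-up a bit shorter, but the underlying facts used are identical to yours.
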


\begin{proof}[Proof of Lemma \ref{lem-intermediaire}]
To prove the statement, let us first rewrite $a_{\gamma, \delta} = \gamma (g_\ell^{-1} \gamma^{-1} \delta g_\ell) \delta^{-1}$. Since the elements $\gamma,\delta$ belong to $L$, they leave every $U_i$ invariant and have support contained in the union $\cup_{i=1}^r U_i$. In particular $\gamma$ and $\delta$ leave $U_\ell$ invariant and act trivially on $W$ thanks to Lemma \ref{L: disjoint open}. Now the support of  $g_\ell^{-1} \gamma^{-1} \delta g_\ell$ is contained in $g_\ell^{-1} \left(\cup_i U_i \right)$, and again by Lemma \ref{L: disjoint open} the latter is disjoint from both $W$ and $U_\ell$. Therefore $a_{\gamma, \delta}$ is trivial on $W$ and acts as $\gamma\delta^{-1}$ on $U_\ell$, and the lemma is proved. \qedhere
\end{proof}
This concludes the proof of the proposition. \qedhere
\end{proof}


\subsection{Case of an extremely proximal action}
\label{SS: EP}
Recall that we say that a subset $Y \subset X$ is \textbf{compressible} if there exists a point $x \in X$ such that for every open $U \subset X$ containing $x$, there exists $g \in G$ such that $g(Y) \subset U$. The action of $G$ on $X$ is said to be \textbf{extremely proximal} if every closed $C \neq X$ is compressible.


\begin{thm} \label{thm-EP} Let $G$ be a countable group of homeomorphisms of $X$.
Assume that the action of $G$ on $X$ is extremely proximal. Let $H \in \sub(G)$ such that the closure of the conjugacy class $\mathcal{C}(H)$ in $\sub(G)$ does not contain the trivial subgroup. Then there exist a non-empty open $U\subset X$ and a finite index subgroup $\Gamma \leq G_U$ such that $H$ contains $[\Gamma,\Gamma]$.
\end{thm}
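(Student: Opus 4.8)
The idea is to refine the conclusion of Proposition~\ref{prop-technique} by exploiting extreme proximality to compress the neighbourhood $W$. Apply Proposition~\ref{prop-technique} to $H$ and to an arbitrary chosen point $z\in X$. Since $\mathcal{C}(H)$ does not accumulate at the trivial subgroup, possibility (i) fails, so we obtain a neighbourhood $W$ of $z$ such that for the particular choice $K=H$ there are a non-empty open $U\subset X$, a finite index subgroup $\Gamma\le G_U$, and a subgroup $A\le H$ with: $A$ fixes $W$ pointwise, $A$ leaves $U$ invariant, and for every $\gamma\in\Gamma$ there is $a\in A$ coinciding with $\gamma$ on $U$. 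The restriction map $A\to\homeo(U)$ then has image containing $\Gamma$; the kernel consists of elements of $A$ supported off $U$. To turn ``$\Gamma$ is a quotient of $A$'' into ``$[\Gamma,\Gamma]\le H$'' I need the relevant elements of $A$ to be supported \emph{inside} $U$, i.e.\ I want $A$ to actually contain a copy of $[\Gamma,\Gamma]$ rather than just surject onto it.

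The mechanism for this is exactly the commutator trick of Lemma~\ref{double-comm}, combined with compressibility. Here is the key step. Because the action is extremely proximal and $X\setminus W$ is a proper closed subset (note $X$ has no isolated points, so $W$ can be taken with $X\setminus W\neq X$ — one shrinks $W$ if necessary, and if $W=X$ then the action is trivial, contradicting non-accumulation), $X\setminus W$ is compressible: there is a point $x_0$ such that for every neighbourhood $V$ of $x_0$ some $g\in G$ satisfies $g(X\setminus W)\subset V$, equivalently $g^{-1}(X\setminus V)\subset W$, so $g^{-1}(W)\supset X\setminus V$. We may choose $V$ small enough to be disjoint from $U$ after possibly moving things by an element that compresses $X\setminus U$ as well; more carefully, the plan is: conjugate the data $(U,\Gamma,A)$ by such a $g$. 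Conjugation by $g$ sends $A$ to $gAg^{-1}\le gHg^{-1}\in\mathcal C(H)$, sends $U$ to $g(U)$, and — crucially — since $A$ is supported in $X\setminus W$ (being trivial on $W$), $gAg^{-1}$ is supported in $g(X\setminus W)\subset V$, which we arrange to be contained in $U$ (or in any prescribed open set). So after this conjugation we get $K'=gHg^{-1}\in\mathcal C(H)$ containing a subgroup $A'$ which is supported \emph{inside} a small open set $U'\subset U$ and which still surjects onto a finite index subgroup $\Gamma'$ of $G_{U'}$ via restriction.

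Now comes the commutator argument. Since $A'$ is supported in $U'$ and restricts onto $\Gamma'$, and $\Gamma'\le G_{U'}$, for any two elements $\gamma_1,\gamma_2\in\Gamma'$ pick lifts $a_1,a_2\in A'$; then $[a_1,a_2]\in A'$ is supported in $U'$ and restricts to $[\gamma_1,\gamma_2]$ on $U'$, hence $[a_1,a_2]$ and $[\gamma_1,\gamma_2]$ agree on $U'$ \emph{and} both are trivial outside $U'$, so $[a_1,a_2]=[\gamma_1,\gamma_2]$ as elements of $G$. Therefore $[\Gamma',\Gamma']\le A'\le K'=gHg^{-1}$, i.e.\ $g^{-1}[\Gamma',\Gamma']g\le H$; taking $U=g^{-1}(U')$ and $\Gamma=g^{-1}\Gamma' g\le G_U$, which is of finite index since $\Gamma'$ is finite index in $G_{U'}$ and conjugation by $g$ carries $G_{U'}$ onto $G_{g^{-1}(U')}$, we get $[\Gamma,\Gamma]\le H$, as desired.

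\textbf{Main obstacle.} The delicate point is the arrangement $g(X\setminus W)\subset U$ via extreme proximality: extreme proximality gives a compression point $x_0$ for the closed set $X\setminus W$, but $x_0$ need not lie in $U$, so we cannot directly compress $X\setminus W$ into $U$. I expect the fix is to first use extreme proximality (or minimality plus extreme proximality) to move $U$ near $x_0$ — i.e.\ replace $U$ by $hU$ for a suitable $h$ and replace $(\Gamma,A)$ by their $h$-conjugates, so that $U$ becomes a neighbourhood of $x_0$ — and then compress $X\setminus W$ into it. One must check that the initial open set $U$ produced by Proposition~\ref{prop-technique} can indeed be translated to a neighbourhood of $x_0$ (this holds because $U$ is a non-empty open set and the action is minimal, which follows from extreme proximality of a nontrivial action on a space without isolated points, or can be arranged directly). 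Managing these two successive conjugations while keeping track of supports, finite index, and the restriction property is the technical heart of the argument; everything else is bookkeeping with Lemma~\ref{L: disjoint open}-type disjointness and the elementary observation that two homeomorphisms agreeing on an open set and both trivial on its complement are equal.
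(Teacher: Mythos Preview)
Your plan has a genuine gap. After one application of Proposition~\ref{prop-technique} you obtain $A\le H$ that fixes $W$ pointwise, preserves $U$, and realizes $\Gamma$ on $U$; in particular (cf.\ the proof of Proposition~\ref{prop-technique}) one has $U\subset X\setminus W$. Conjugating by any $g$ carries this nesting along: $gAg^{-1}$ is supported in $g(X\setminus W)$ and realizes $g\Gamma g^{-1}$ on $g(U)\subset g(X\setminus W)$. So no matter how you compress, the support of your subgroup strictly contains the open set on which it realizes the finite-index subgroup --- conjugation can never collapse $X\setminus W$ into its proper subset $U$. Your ``Main obstacle'' paragraph anticipates difficulty with where the compression point $x_0$ lies, but the real obstruction is structural: the two sets are nested and move together under conjugation. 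Consequently your commutator step fails: with $A'$ supported in $g(X\setminus W)$ but $\Gamma'\le G_{g(U)}$, the lifts $a_i\in A'$ may act nontrivially on $g(X\setminus W)\setminus g(U)$, so $[a_1,a_2]$ need not equal $[\gamma_1,\gamma_2]$. (Note also that if your hypothesis ``$A'$ is supported in $U'$ and restricts onto $\Gamma'\le G_{U'}$'' ever held, it would give $\Gamma'\le A'$ outright, not merely $[\Gamma',\Gamma']\le A'$: each lift $a_i$ would already equal $\gamma_i$.)

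The paper resolves this by applying Proposition~\ref{prop-technique} \emph{twice}. The first application (arbitrary $z$) produces $(U_0,\Gamma_0,A_0)$ with $A_0\le H$ a ``conjugating engine'': it preserves $U_0$ and can mimic any $\gamma\in\Gamma_0$ on $U_0$. Extreme proximality then gives a point $z_0$ towards which $X\setminus U_0$ is compressible. The \emph{second} application at $z=z_0$ yields a neighbourhood $W$ of $z_0$; choosing $g$ with $g(X\setminus U_0)\subset W$ and taking $K=gHg^{-1}$, the resulting $B\le gHg^{-1}$ is trivial on $W$, hence $A_1:=g^{-1}Bg\le H$ is genuinely supported in $U_0$. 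Now the two pieces interact: for $x\in A_2:=A_1\cap\Gamma_0$ and $\gamma\in\Gamma_0$, picking $a\in A_0$ that agrees with $\gamma$ on $U_0$, one checks $axa^{-1}=\gamma x\gamma^{-1}$ (both are supported in $U_0$ and agree there), so the normal closure of $A_2$ in $\Gamma_0$ lies in $H$. Lemma~\ref{double-comm} then produces $U\subset U_0$ with $[(\Gamma_0)_U,(\Gamma_0)_U]\le H$. The second invocation of Proposition~\ref{prop-technique} is the missing idea: it provides a nontrivial \emph{seed} subgroup of $H$ that is already supported inside $U_0$, which a single conjugation of the first output cannot achieve.
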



\begin{proof}
First note that if there is a non-empty open subset $U$ such that $G_U$ is finite, then the conclusion is trivially satisfied. Therefore in the proof we may assume that all the subgroups $G_U$ are infinite. 

We start by applying Proposition \ref{prop-technique} (with an arbitrary choice of $z$). Since $\mathcal{C}(H)$ does not accumulate at the trivial subgroup in $\sub(G)$ by assumption, we obtain the existence of a non-empty open $U_0 \subset X$, a finite index subgroup $\Gamma_0 \leq G_{U_0}$ and a subgroup $A_0 \leq H$ with the following properties: $A_0$ preserves $U_0$, and for every $\gamma \in \Gamma_0$ there exists $a \in A_0$ which coincides with $\gamma$ on $U_0$.

By extreme proximality of the action of $G$ on $X$, the closed subset $C = X \setminus U_0$ is compressible. We let $z_0 \in X$ be a point of $X$ all of whose neighbourhoods contain an element of the $G$-orbit of $C$. Now we apply Proposition \ref{prop-technique} a second time, this time by choosing $z=z_0$. We denote by $W$ a neighbourhood of $z_0$ as in condition (ii) in Proposition \ref{prop-technique}.

Fix some $g \in G$ such that $g(C) \subset W$. According to the conclusion of Proposition \ref{prop-technique} applied with $K = gHg^{-1}$, there must exist a non-empty open $U_1 \subset X$, a finite index subgroup $\Gamma_1 \leq G_{U_1}$ and a subgroup $B \leq gHg^{-1}$ such that $B$ acts trivially on $W$, and the action of any element of $\Gamma_1$ on $U_1$ can be realized by an element of $B$. Note that since $G_{U_1}$ is assumed to be infinite, the same holds for its finite index subgroup $\Gamma_1$, and a fortiori the subgroup $B$ is also infinite. 

Set $A_1 = g^{-1} B g\le H$. Since $B$ acts trivially on $W$, the subgroup $A_1$ acts trivially on $g^{-1}(W)$. Now $g^{-1}(W)$ contains $C = X \setminus U_0$, so it follows that $A_1$ is a subgroup of $G_{U_0}$. Note also that $A_1$ is a subgroup of $H$ by construction, so that we actually have $A_1 \leq H_{U_0}$. Now consider $A_2 = A_1 \cap \Gamma_0$. Since $\Gamma_0$ has finite index in $G_{U_0}$, the subgroup $A_2$ is of finite index in $A_1$. Now as noticed above, $B$ (and hence $A_1$) must be infinite, so we deduce that $A_2$ is also infinite.

We claim that $H_{U_0}$ contains $\langle\! \!\langle A_2^{\Gamma_0}\rangle \! \!\rangle$, the normal closure of $A_2$ in $\Gamma_0$. To show this, let $x \in A_2$ and $\gamma \in \Gamma_0$. According to the conclusion of Proposition \ref{prop-technique}, there is $a \in A_0$ which coincides with $\gamma$ on $U_0$. This implies that the element $axa^{-1}$ coincides with $\gamma x \gamma^{-1}$ on $U_0$. Moreover $axa^{-1}$ is trivial outside $U_0$ because $x \in A_2$ and $A_2$ acts trivially outside $U_0$. Since $\gamma x \gamma^{-1}$ is also trivial outside $U_0$ (since both $\gamma$ and $x$ are trivial outside $U_0$), it follows that the elements $axa^{-1}$ and $\gamma x \gamma^{-1}$ are actually equal. Since $A_0$ and $A_2$ are subgroups of $H$, the element $axa^{-1}$ belongs to $H$, and the claim is proved. 

In particular we have proved that $H_{U_0}$ contains a non-trivial normal subgroup of $\Gamma_0$ (since $A_2$ is non-trivial). According to Lemma \ref{double-comm} applied to $\Gamma_0$ acting on $U_0$, there exists a non-empty open $U \subset U_0$ such that the derived subgroup of $(\Gamma_0)_{U}$ is contained in $H_{U_0}$. Since moreover $\Gamma = (\Gamma_0)_{U}$ has finite index in $G_U$ because $\Gamma_0$ has finite index in $G_{U_0}$, we have proved that $\Gamma$ satisfies the desired conclusion. 
\end{proof}

\begin{cor} \label{cor-EP-acc-Gx}
Assume that the following conditions are satisfied: 
\begin{enumerate}[label=(\roman*)]
	\item $X$ is compact, and the action of $G$ on $X$ is extremely proximal;
	\item for every open $U \subset X$ and every finite index subgroup $\Gamma \leq G_U$, there exists an open $V \subset U$ such that $G_V \leq [\Gamma,\Gamma]$.
\end{enumerate}

If $H \in \sub(G)$ is such that $\mathcal{C}(H)$ does not accumulate at the trivial subgroup in $\sub(G)$, then there exists a point $x \in X$ such that $\mathcal{C}(H)$ accumulates at some overgroup of $G_x^0$.  
\end{cor}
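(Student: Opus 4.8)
The plan is to feed the conclusion of Theorem~\ref{thm-EP} into the extreme proximality of the action, exactly mirroring how the point $z_0$ was chosen in the proof of that theorem. First I would apply Theorem~\ref{thm-EP}: since $\mathcal{C}(H)$ does not accumulate at the trivial subgroup, there exist a non-empty open $U_0\subset X$ and a finite index subgroup $\Gamma_0\le G_{U_0}$ with $[\Gamma_0,\Gamma_0]\le H$. By hypothesis (ii) applied to $U_0$ and $\Gamma_0$, I get a non-empty open $V\subset U_0$ with $G_V\le[\Gamma_0,\Gamma_0]\le H$, so after renaming I may as well assume that $G_{U_0}$ itself, for some non-empty open $U_0$, is contained in $H$; in fact more is true, every conjugate $gHg^{-1}=K\in\mathcal{C}(H)$ contains $G_{gU_0}$ for the corresponding translate, but what I really want is to move the complement of $U_0$ to be very small.

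Next I would use extreme proximality of $G\acts X$: the closed set $C=X\setminus U_0\ne X$ is compressible, so there is a point $x\in X$ such that for every neighbourhood $W$ of $x$ there is $g_W\in G$ with $g_W(C)\subset W$; equivalently $g_W^{-1}(X\setminus W)\supset X\setminus U_0$, i.e.\ $X\setminus g_W^{-1}(W)\subset X\setminus U_0$, so $U_0\subset g_W^{-1}(W)$ — wait, I want the other containment — more usefully $g_W(X\setminus U_0)\subset W$ means $X\setminus W\subset g_W(U_0)$, hence $X\setminus g_W(U_0)\subset W$. Then the rigid stabilizer $G_{X\setminus W}$ contains $G_{g_W(U_0)}=g_W G_{U_0}g_W^{-1}$, so $g_W^{-1}G_{X\setminus W}g_W\supset G_{U_0}\supset G_V$; but I should instead track subgroups of $H$. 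The clean route: for each neighbourhood $W$ of $x$, pick $g_W$ with $g_W(C)\subset W$; then $g_W G_{U_0} g_W^{-1}=G_{g_W(U_0)}$ contains $G_{X\setminus W}$ (since $X\setminus W\subset g_W(C)^{c}=g_W(U_0)$ — here I use $g_W(C)\subset W$). Conjugating the containment $G_{U_0}\supset$ (a finite-index subgroup whose derived subgroup lies in $H$) by $g_W$, and using that $\Gamma_0$ has finite index in $G_{U_0}$ so $g_W[\Gamma_0,\Gamma_0]g_W^{-1}$ has derived-type control over $G_{g_W(U_0)}\supset G_{X\setminus W}$, one gets that the conjugate $K_W:=g_W^{-1}Hg_W\in\mathcal{C}(H)$ contains a finite-index-derived piece of $G_{X\setminus W}$; applying hypothesis (ii) once more shrinks this to $G_{W'}\le K_W$ for some open $W'$. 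Taking a limit of the $K_W$ as $W$ shrinks to $x$ along a net, and using compactness of $\sub(G)$, any cluster point $K_\infty$ of $(K_W)$ lies in the closure of $\mathcal{C}(H)$ and, because every element of $G_x^0$ fixes a whole neighbourhood of $x$ hence lies in $G_{X\setminus W}$ for $W$ small, one gets $G_x^0\le K_\infty$. Thus $\mathcal{C}(H)$ accumulates at an overgroup of $G_x^0$, as desired.

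The main obstacle I anticipate is bookkeeping the conjugations and finite-index reductions so that the subgroup one finds inside each $K_W$ really does \emph{contain} a rigid stabilizer of the form $G_{(\text{open set})}$ rather than merely its derived subgroup or a finite-index subgroup; this is precisely why hypothesis (ii) is stated the way it is, and it should be invoked after each conjugation to flatten finite-index/derived-subgroup passages back to honest rigid stabilizers. A secondary point of care is the limiting argument: one must choose a net $W\to x$ (a decreasing neighbourhood basis if $X$ is first countable, otherwise the directed set of all neighbourhoods of $x$), pass to a sub-net so that $K_W\to K_\infty$ in $\sub(G)$ by compactness, and then verify the Chabauty convergence criterion — every $h\in G_x^0$ eventually lies in $K_W$ because $h$ fixes pointwise a fixed neighbourhood $W_h$ of $x$, hence $h\in G_{X\setminus W}\le K_W$ for all $W\subset W_h$. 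Since the whole construction stays within $\mathcal{C}(H)$, the cluster point $K_\infty$ is an accumulation point of $\mathcal{C}(H)$, completing the proof.
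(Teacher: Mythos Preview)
Your proposal is correct and follows essentially the same route as the paper: apply Theorem~\ref{thm-EP} together with hypothesis~(ii) to obtain a non-empty open $V$ with $G_V\le H$, use extreme proximality to find $x\in X$ so that each neighbourhood $W$ of $x$ contains some $g_W(X\setminus V)$, and then check that any cluster point of $(g_W H g_W^{-1})$ in $\sub(G)$ contains $G_x^0$ via the Chabauty criterion. Two small clean-ups: once you have renamed so that $G_{U_0}\le H$, the chain $G_{X\setminus W}\le G_{g_W(U_0)}=g_W G_{U_0}g_W^{-1}\le g_W H g_W^{-1}$ is immediate and no second application of hypothesis~(ii) is needed; and your $K_W$ should be $g_W H g_W^{-1}$ rather than $g_W^{-1}Hg_W$ to match the inclusion you derived (this sign slip is harmless since both are elements of $\mathcal{C}(H)$, but it muddles the write-up).
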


\begin{proof}
Theorem \ref{thm-EP} shows that there is $U \subset X$ and a finite index subgroup $\Gamma \leq G_U$ such that $[\Gamma,\Gamma]$ is contained in $H$. Combining with assumption (ii), we obtain a non-empty open $V \subset X$ such that $K = G_V$ is contained in $H$. 

Since the action of $G$ on $X$ is extremely proximal, there exists $x \in X$ such that any neighbourhood of $x$ contains a $G$-translate of the complement of $V$. Let $\mathcal{W}$ be the collection of all neighbourhoods of $x$ in $X$. For every $W\in \mathcal{W}$, we let $g_W \in G$ such that $g_W(X \setminus V) \subset W$, and we write $K_W = g_W K g_W^{-1}$. Then $(K_W)_{W\in \mathcal{W}}$ is a net taking values in $\sub(G)$, where the index set $\mathcal{W}$ is partially ordered by reversed inclusion. We claim that every cluster point of this net must be an overgroup of $G_x^0$.  

Let $g \in G_x^0$. Since $g$ fixes pointwise some neighbourhood $W_0$ of $x$, it  acts trivially on $W$ for all $W\subset W_0$. In particular $g$ is trivial on  $g_W(X \setminus V)$, which exactly means that $g \in K_W$ for all $W \subset W_0$. This shows that any cluster point of the net $(K_W)$ must contain $g$, hence must contain $G_x^0$ since $g\in G_x^0$ was arbitrary. Since $K$ is contained in $H$, the conclusion then follows by considering any cluster point of the net $(g_W H g_W^{-1})\subset \Ccal(H)$.
\end{proof}

Recall (Proposition \ref{prop-min-action-urs}) that any minimal action of a countable group $G$ on a compact space $X$ gives rise to a URS of $G$, denoted $\mathcal{S}_G(X) \in \urs(G)$. More precisely, $\mathcal{S}_G(X)$ is the closure in $\sub(G)$ of the set of $G_{x_0}$, for $x_0 \in X_0$, where $X_0$ is the domain of continuity of the stabilizer map $\mathrm{Stab}: X \rightarrow \sub(G)$. 

The following result says that when the action is moreover assumed to be extremely proximal, under suitable assumptions, $\mathcal{S}_G(X)$ turns out to be smaller (with respect to the relation $\preccurlyeq$ defined in \textsection\ref{subsec-largest-urs}) than any other non-trivial URS. Recall that a minimal and extremely proximal action is called an extreme boundary action.

\begin{cor} \label{cor-unique-urs}
Assume that $X$ is compact, and that the following conditions are satisfied: 
\begin{enumerate}[label=(\roman*)]
	\item $G \acts X$ is an extreme boundary action;
	\item for every open $U \subset X$ and every finite index subgroup $\Gamma \leq G_U$, there exists an open $V \subset U$ such that $G_V \leq [\Gamma,\Gamma]$.
\end{enumerate}
Then $\mathcal{S}_G(X) \preccurlyeq \H$ for every non-trivial $\H \in \urs(G)$. 

In particular if there is $H \in \mathcal{S}_G(X)$ which is a maximal subgroup of $G$, then the only uniformly recurrent subgroups of $G$ are $1$, $\mathcal{S}_G(X)$, and $G$.
\end{cor}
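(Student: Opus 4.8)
The plan is to deduce the first statement directly from Corollary \ref{cor-EP-acc-Gx} together with Lemma \ref{L: upper semicontinuous}, and then to use maximality of a stabilizer to pin down all URS's. First I would take a non-trivial $\H \in \urs(G)$ and pick any $H \in \H$. Since $\H$ is non-trivial and minimal, the trivial subgroup is not in $\H$, so in particular the closure of $\mathcal{C}(H)$ does not contain $\{1\}$ (otherwise $\{1\}$ would lie in the closed invariant set $\H$). Hypotheses (i) and (ii) are exactly the hypotheses of Corollary \ref{cor-EP-acc-Gx}, so that corollary produces a point $x \in X$ such that $\mathcal{C}(H)$ accumulates at some subgroup $L$ with $G_x^0 \le L$. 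Since $\H$ is closed and $G$-invariant and contains $\mathcal{C}(H)$, we get $L \in \H$; thus $\H$ contains an element containing $G_x^0$. On the other hand, by Lemma \ref{L: upper semicontinuous}(i) applied to the minimal action $G \acts X$, the stabilizer URS $\mathcal{S}_G(X)$ contains some $H' \le G_x$; and by Lemma \ref{L: upper semicontinuous}(ii) it also contains, for the \emph{same} point $x$... — more carefully, I would argue as follows: $\mathcal{S}_G(X)$ is the closure of $\{G_{x_0} : x_0 \in X_0\}$, and I need an element of $\mathcal{S}_G(X)$ contained in $L$. For this, note $G_x^0 \le L$; and by the description of $\mathcal{S}_G(X)$ (via nets of continuity points converging to $x$, as in the proof of Lemma \ref{L: upper semicontinuous}) there is an element $K$ of $\mathcal{S}_G(X)$ with $K \le G_x$, but I actually want $K \le G_x^0$ or at least $K \le L$. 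The cleanest route: Corollary \ref{cor-EP-acc-Gx} gives that $\mathcal{C}(H)$ accumulates at an overgroup of $G_x^0$; meanwhile Lemma \ref{L: upper semicontinuous}(ii) gives, for every element of $\mathcal{S}_G(X)$, a point whose $G_y^0$ it contains — this is the wrong direction. Instead I would use Lemma \ref{L: upper semicontinuous}(i): there exists $H' \in \mathcal{S}_G(X)$ with $H' \le G_x$, and I claim one can even get $H' \le G_x^0 \le L$: indeed the proof of \ref{L: upper semicontinuous}(i) builds $H'$ as a limit of $G_{x_i}$ with $x_i \to x$ and $x_i \in X_0$, so every $g \in H'$ lies in $G_{x_i}$ for $i$ large, hence fixes a neighbourhood of $x_i$; by Hausdorffness and passing to a subnet one may arrange these neighbourhoods to accumulate on $x$, which would force $g$ to fix a neighbourhood of $x$ only under an extra germ-Hausdorff hypothesis — not available here. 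So the honest statement is merely $H' \le G_x \le L'$ where $L'$ is not literally $L$.

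Let me restate the argument in the form I actually expect to work. The key point is symmetry of the relation $\preccurlyeq$ between $\mathcal{S}_G(X)$ and $\H$. From Corollary \ref{cor-EP-acc-Gx} we obtained $L \in \H$ with $G_x^0 \le L$. Now $\mathcal{S}_G(X)$ is, by Proposition \ref{prop-min-action-urs}, the unique URS inside the closure $\overline{\{G_y : y \in X\}}$; and by Lemma \ref{L: upper semicontinuous}(ii) every element of $\mathcal{S}_G(X)$ is of the form $\ge G_y^0$ for some $y$. Applying Corollary \ref{cor-EP-acc-Gx} to $L$ itself is circular, so instead I run the net construction directly: from $G_x^0 \le L$ and density of $X_0$, choosing $x_i \in X_0$ with $x_i \to x$, a subnet of $(G_{x_i})$ converges to some $M \in \mathcal{S}_G(X)$, and every element of $M$ lies eventually in $G_{x_i}$ hence in $G_x$. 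This gives $M \le G_x$; but I need $M \le L$, and in general $L \not\le G_x$ and $G_x \not\le L$. The resolution, which I believe is the intended one, is that one does not compare $M$ and $L$ directly: rather, $\mathcal{C}(H)$ accumulates at $L$, an overgroup of $G_x^0$, so $\mathcal{S}_G(X) \ni M$ for $M$ built from the same $x$ satisfies $G_x^0 \le L$ and $M \le G_x$, and by running the argument of Corollary \ref{cor-EP-acc-Gx} one actually arranges that $\mathcal{C}(H)$ accumulates at a subgroup containing a conjugate of $G_V$ which in turn contains a conjugate of a cluster of $\mathcal{S}_G(X)$... Given the scope of a plan, I will instead simply invoke: the first conclusion $\mathcal{S}_G(X) \preccurlyeq \H$ follows by combining Corollary \ref{cor-EP-acc-Gx} (applied to any $H \in \H$) with the fact that $G_x^0$ lies below the element of $\mathcal{S}_G(X)$ attached to $x$ via Lemma \ref{L: upper semicontinuous}(ii), after checking the accumulation points land in the closed invariant sets $\H$ and $\overline{\{G_y\}} \supset \mathcal{S}_G(X)$.

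For the second statement, suppose $H_0 \in \mathcal{S}_G(X)$ is a maximal subgroup of $G$, and let $\H \in \urs(G)$ be non-trivial. By the first part, $\mathcal{S}_G(X) \preccurlyeq \H$, so by Proposition \ref{prop-order-minimal} every element of $\mathcal{S}_G(X)$ is contained in some element of $\H$; in particular $H_0 \le K$ for some $K \in \H$. By maximality of $H_0$, either $K = H_0$ or $K = G$. In the first case $H_0 \in \H \cap \mathcal{S}_G(X)$, and since both are minimal $G$-invariant sets this forces $\H = \mathcal{S}_G(X)$. In the second case $G \in \H$, and since $\{G\}$ is already a URS, minimality gives $\H = G$. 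Hence the only non-trivial URS's are $\mathcal{S}_G(X)$ and $G$, and together with the trivial URS $1$ this is the asserted list.

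\textbf{Main obstacle.} The delicate point is the first statement: turning the conclusion of Corollary \ref{cor-EP-acc-Gx} (that $\mathcal{C}(H)$ accumulates at an \emph{overgroup} of some $G_x^0$) into the clean inequality $\mathcal{S}_G(X) \preccurlyeq \H$, i.e.\ finding an element of $\mathcal{S}_G(X)$ genuinely \emph{below} an element of $\H$. This requires matching the point $x$ produced by Corollary \ref{cor-EP-acc-Gx} with the net-of-continuity-points description of $\mathcal{S}_G(X)$, and being careful that accumulation points of conjugacy classes remain inside the relevant closed invariant subsets of $\sub(G)$; everything after that (the maximality argument) is a short formal deduction using Proposition \ref{prop-order-minimal} and minimality.
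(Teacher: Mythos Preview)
Your proposal correctly identifies the starting point (apply Corollary \ref{cor-EP-acc-Gx} to get some $K\in\H$ and $x\in X$ with $G_x^0\le K$) and handles the second statement cleanly via Proposition \ref{prop-order-minimal} and maximality. The genuine gap is exactly the one you flag in your ``Main obstacle'': you never actually bridge from $G_x^0\le K$ to an element of $\mathcal{S}_G(X)$ sitting below an element of $\H$. Your various attempts all run into the same wall: from $G_x^0\le K$ and some $M\in\mathcal{S}_G(X)$ with $M\le G_x$ (or $G_y^0\le M$), there is no way to compare $M$ and $K$, because $G_x^0$, $G_x$, $K$, and $M$ form a diamond with no usable inclusions. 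Invoking Lemma \ref{L: upper semicontinuous}(ii) does not help either: it produces, for a given $M\in\mathcal{S}_G(X)$, \emph{some} point $y$ with $G_y^0\le M$, not the point $x$ you already have.

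The missing idea, and the paper's resolution, is to \emph{move the point $x$ to a continuity point} by conjugation. Pick $x_0\in X_0$ and, by minimality of $G\acts X$, a net $(g_i)$ with $g_i(x)\to x_0$. Then $g_iG_x^0g_i^{-1}=G_{g_i(x)}^0$, and since $x_0\in X_0$ means $G_{x_0}^0=G_{x_0}$, every element of $G_{x_0}$ fixes a neighbourhood of $x_0$ and hence eventually belongs to $G_{g_i(x)}^0$; thus $g_iG_x^0g_i^{-1}\to G_{x_0}$. Now pass to a subnet so that $g_iKg_i^{-1}$ converges to some $L\in\H$; since $g_iG_x^0g_i^{-1}\le g_iKg_i^{-1}$ for all $i$, the limit satisfies $G_{x_0}\le L$. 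But $G_{x_0}$ itself lies in $\mathcal{S}_G(X)$ (by definition, since $x_0\in X_0$), so $\mathcal{S}_G(X)\preccurlyeq\H$. The point is that at a continuity point the subgroup $G_{x_0}^0$ \emph{is} an element of $\mathcal{S}_G(X)$, which collapses the troublesome diamond.
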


\begin{proof}
Let $\H$ be a non-trivial URS of $G$. According to Corollary \ref{cor-EP-acc-Gx}, there exists $K \in \H$ and $x \in X$ such that $G_x^0 \leq K$. Let $x_0 \in X_0$. By minimality of the action of $G$ on $X$, there exists a net $(g_i)_{i\in I}$ such that $g_i(x)$ converges to $x_0$. Since $G_{x_0}^0 = G_{x_0}$ because $x_0 \in X_0$, it is an easy verification to show that the net $(g_i G_x^0 g_i^{-1})_{i\in I}$ converges to $G_{x_0}$. This shows that there is $L \in \H$ such that  $G_{x_0} \le L$, and the first statement is proved. The second statement immediately follows.
\end{proof}

\subsection{Extreme boundaries and amenable URS's}
 
Recall from Theorem \ref{thm-max-urs-moy} that every countable group $G$ admits a largest amenable URS, denoted $\A_G$, and that $\A_G$ coincides with the set of point stabilizers for the action of $G$ on its Furstenberg boundary (Proposition \ref{prop-A_G-other-action}). The following result shows that $\A_G$ can be explicitly identified as soon as one is given a faithful extreme boundary action of $G$.
 

\begin{thm}\label{thm-AG-extreme}
Let $G$ be a countable group, and $G\acts X$ be a faithful extreme boundary action. Then:
\begin{enumerate}[label=(\roman*)]
\item if $G_x$ is amenable for some $x\in X$, then $\A_G=\S_G(X)$;
\item if $G_x$ is non-amenable for all $x\in X$, then $\A_G$ is trivial.
\end{enumerate}
\end{thm}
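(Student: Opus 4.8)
The plan is to combine the structural dichotomy for subgroups coming from an extremely proximal action (Theorem \ref{thm-EP}) with the characterisation of $\A_G$ as the largest amenable URS and with Kennedy's criterion for $C^\ast$-simplicity (Theorem \ref{thm-cstar-chab}). Throughout, $G\acts X$ is a faithful extreme boundary action, so in particular $G\acts X$ is minimal and extremely proximal, and $X$ is a compact Hausdorff space without isolated points.

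First I would dispose of the degenerate case where some rigid stabilizer $G_U$ is finite. Since the action is minimal, every $G$-translate of $U$ is also a basic open set with finite rigid stabilizer, and finitely many such translates cover $X$ by compactness; together with extreme proximality this forces $G$ to act with a very restricted local structure. In fact, using Lemma \ref{double-comm} one sees that if $G_U$ is finite for some nonempty open $U$ then $G$ has a nontrivial finite normal subgroup, contradicting faithfulness of an extreme boundary action (such an action has trivial amenable radical because an extreme boundary action is a boundary action, hence its kernel is the amenable radical, which is trivial here by faithfulness; a finite normal subgroup would lie in it). So from now on all $G_U$ are infinite.

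For part (i), suppose $G_x$ is amenable for some $x\in X$. Then $\S_G(X)$ is an amenable URS: indeed by Lemma \ref{L: upper semicontinuous}(ii) every $H\in \S_G(X)$ satisfies $G_y^0\le H$ for some $y$, but more to the point every $H\in\S_G(X)$ is a limit of point stabilizers $G_{x_i}$ with $x_i\in X_0$, and by minimality and the fact that the set of amenable subgroups is closed in $\sub(G)$, amenability of one $G_x$ propagates to the whole orbit closure and hence to $\S_G(X)$. (Concretely: pick $x_i\to x$ along a net with $G_{x_i}$ converging; by minimality the orbit of $x$ is dense, so every point of $X_0$ is a limit of translates of $x$, and the corresponding stabilizers are conjugates of $G_x$, hence amenable; amenability passes to the Chabauty-limit.) Thus $\S_G(X)\preccurlyeq\A_G$ by maximality of $\A_G$. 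For the reverse, since $\A_G$ is amenable it is in particular nontrivial unless it is the point $1$; if $\A_G$ is nontrivial, Corollary \ref{cor-unique-urs} — whose hypothesis (ii) holds because we may take $\Gamma=G_U$ and, iterating, use Theorem \ref{thm-EP}; more directly, Corollary \ref{cor-EP-acc-Gx} gives that $\mathcal C(H)$ accumulates at an overgroup of some $G_x^0$ — shows $\S_G(X)\preccurlyeq\A_G$ and in fact forces $\A_G$ to sit above $\S_G(X)$, while the amenability argument above shows $\A_G\preccurlyeq\S_G(X)$; antisymmetry of $\preccurlyeq$ on $\urs(G)$ (Corollary \ref{cor-order-URS}) then gives $\A_G=\S_G(X)$. (If $\A_G=1$, then $\S_G(X)$ is a nontrivial amenable URS only if the action is not topologically free, in which case $\S_G(X)\preccurlyeq\A_G=1$ forces $\S_G(X)=1$, i.e. the action is topologically free by Proposition \ref{prop-trivial-URS-top-free}, and then $\S_G(X)=1=\A_G$ as well.)

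For part (ii), suppose $G_x$ is non-amenable for every $x\in X$; I claim $\A_G$ is trivial, equivalently (by Theorem \ref{thm-cstar-chab}) that $G$ has no nontrivial amenable URS. Let $\H$ be a nontrivial amenable URS and pick $H\in\H$ with $H$ amenable and $\mathcal C(H)$ not accumulating at $1$ (this holds since $\H$ is nontrivial and minimal, so $1\notin\H$ and $\H=\overline{\mathcal C(H)}$). Apply Theorem \ref{thm-EP}: there is a nonempty open $U$ and a finite-index $\Gamma\le G_U$ with $[\Gamma,\Gamma]\le H$. Since $G_U$ is infinite and the action is extremely proximal, I would argue that $[\Gamma,\Gamma]$ is non-amenable — this is the heart of the matter and the main obstacle. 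The point is that one then runs the argument of Corollary \ref{cor-EP-acc-Gx}: extreme proximality lets us compress $X\setminus V$ (for a small $V\subset U$ with $G_V\le[\Gamma,\Gamma]$, obtained via Lemma \ref{double-comm}) into arbitrarily small neighbourhoods of some point $x$, so $\mathcal C(H)$ accumulates at an overgroup of $G_x^0$; but in fact, running this compression together with minimality as in Corollary \ref{cor-unique-urs}, we find $L\in\H$ with $G_x\le L$ (not merely $G_x^0$ — here one uses that conjugating $G_V$ by the compressing elements produces, in the limit, all of $G_x$, since $X\setminus V$ eventually engulfs the complement of any neighbourhood of $x$ and $G_V$ contains the rigid stabilizer of that complement). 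Then $L\in\H$ is amenable and contains the non-amenable group $G_x$, a contradiction. Hence no nontrivial amenable URS exists and $\A_G$ is trivial. The delicate verification I expect to have to do carefully is precisely this last accumulation claim — that the net of conjugates $g_W G_V g_W^{-1}$ converges to (an overgroup of) the full stabilizer $G_x$, not just its germ-stabilizer $G_x^0$ — together with checking that $[\Gamma,\Gamma]$ inherits non-amenability, which should follow because $\Gamma$ has finite index in $G_U$ and $G_U$, containing all $G_V$ for $V\subset U$, is "large" in a sense forced by extreme proximality (alternatively: if $[\Gamma,\Gamma]$ were amenable, so would be a finite-index-related overgroup, contradicting that some conjugate of it dominates the non-amenable $G_x$).
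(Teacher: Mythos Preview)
Your handling of part (i) is unnecessarily tangled. You correctly observe that amenability of one $G_x$ makes $\S_G(X)$ an amenable URS (closedness of amenable subgroups in $\sub(G)$), giving $\S_G(X)\preccurlyeq\A_G$. But for the reverse you reach for Corollary~\ref{cor-unique-urs}, whose hypothesis (ii) you do not verify, and which is overkill here. The paper simply cites Proposition~\ref{prop-A_G-other-action}(i): for \emph{any} boundary action $\A_G\preccurlyeq\S_G(X)$, and equality follows once some stabilizer is amenable. No extremely-proximal machinery is needed for (i).

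Part (ii) has a genuine gap, and you correctly flag it yourself: everything you write reduces to knowing that the rigid stabilizer $G_U$ (equivalently $[\Gamma,\Gamma]$, since an amenable commutator subgroup forces the group to be amenable) is non-amenable, but you never establish this from the hypothesis that all $G_x$ are non-amenable. The paper's key observation, which you are missing, is short and decisive: choose $x\in X_0$, so that $G_x=G_x^0$. Then $G_x^0$ is the increasing union, over closed sets $C\subset X$ not containing $x$, of the rigid stabilizers $G_C$. Since $G_x$ is non-amenable and amenability passes to direct limits, some $G_C$ is already non-amenable. Now extreme proximality and minimality let you conjugate $C$ into any prescribed nonempty open $U$, so every $G_U$ contains a conjugate of this non-amenable $G_C$. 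With all rigid stabilizers non-amenable, Corollary~\ref{cor-mostgeneral-Q} (or Theorem~\ref{thm-dycothomy-mostgeneral}) immediately kills every nontrivial amenable URS; you do not need Theorem~\ref{thm-EP} at all.

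Your alternative route---compressing to get some $L\in\H$ with $G_x\le L$---does not work as stated. The compression argument in Corollary~\ref{cor-EP-acc-Gx} only produces $G_x^0\le L$, because an element of $G_x\setminus G_x^0$ moves points arbitrarily close to $x$ and therefore need not lie in the conjugates $g_W G_V g_W^{-1}$ for large $W$. Since the hypothesis concerns $G_x$, not $G_x^0$, this route stalls unless you first arrange $G_x=G_x^0$, which is again the paper's observation.
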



\begin{proof}
Part (i) was already proven in Proposition~\ref{prop-A_G-other-action}, so we only have to prove (ii). Choose $x\in X$ such that $G_x=G^0_x$. Since amenability is preserved by direct limits, $G_x$ must have a non-amenable finitely generated subgroup, and it follows that there exists a closed subset $C \subset X$ not containing $x$ such that $G_{C}$ is non-amenable. Let $U\subset X$ be an arbitrary non-empty open set. By minimality and extreme proximality, there exists $g\in G$ such that $g(C)\subset U$. This implies that $gG_{C}g^{-1}$ is a subgroup of $G_U$, which is therefore non-amenable. Since $U$ was arbitrary, by Corollary~\ref{cor-mostgeneral-Q} it follows that $G$ has no non-trivial amenable URS's. \qedhere
\end{proof}


In particular the stabilizers of any faithful extreme boundary action characterize the $C^\ast$-simplicity of $G$:

\begin{cor}
Let $G$ be a countable group, and $G\acts X$ be a faithful extreme boundary action. Then $G$ is $C\ast$-simple if and only if one of the following possibilities holds:
\begin{enumerate}[label=(\roman*)]
\item the action is topologically free;
\item the point stabilizers are non-amenable.
\end{enumerate}
\end{cor}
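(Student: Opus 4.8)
The plan is to obtain this as an immediate consequence of Theorem~\ref{thm-AG-extreme} combined with Kennedy's criterion (Theorem~\ref{thm-cstar-chab}), which asserts that $G$ is $C^\ast$-simple if and only if its largest amenable uniformly recurrent subgroup $\A_G$ is trivial. The key observation is that the two alternatives (i) and (ii) in the statement correspond exactly to the two cases distinguished in Theorem~\ref{thm-AG-extreme}, once one records that a topologically free minimal action on a compact space admits a point with trivial stabilizer (see the characterisation of topological freeness in \textsection\ref{subsec-notation}), and that triviality of $\S_G(X)$ is equivalent to topological freeness of $G\acts X$ (Proposition~\ref{prop-trivial-URS-top-free}).

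For the backward implication I would argue as follows. Suppose first that (ii) holds, so $G_x$ is non-amenable for every $x\in X$; then Theorem~\ref{thm-AG-extreme}(ii) gives that $\A_G$ is trivial, and Theorem~\ref{thm-cstar-chab} yields that $G$ is $C^\ast$-simple. Suppose instead that (i) holds. Since an extreme boundary action has $X$ compact (hence Baire), topological freeness provides a point $x\in X$ with $G_x=1$; in particular $G_x$ is amenable, so Theorem~\ref{thm-AG-extreme}(i) gives $\A_G=\S_G(X)$, which is the trivial URS by Proposition~\ref{prop-trivial-URS-top-free}. Thus $\A_G$ is trivial and $G$ is $C^\ast$-simple by Theorem~\ref{thm-cstar-chab}.

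For the forward implication, assume $G$ is $C^\ast$-simple, so that $\A_G$ is trivial by Theorem~\ref{thm-cstar-chab}, and suppose that (ii) fails, i.e.\ some point stabilizer $G_x$ is amenable. Then Theorem~\ref{thm-AG-extreme}(i) applies and gives $\S_G(X)=\A_G=1$, so Proposition~\ref{prop-trivial-URS-top-free} forces $G\acts X$ to be topologically free; that is, (i) holds. The argument is entirely formal: all the substance is contained in Theorem~\ref{thm-AG-extreme}, which is already established, so I do not expect any genuine obstacle beyond correctly invoking that dichotomy and the translation between topological freeness and triviality of $\S_G(X)$.
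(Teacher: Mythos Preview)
Your proof is correct and follows exactly the paper's approach: the paper's own proof simply says that by Theorem~\ref{thm-cstar-chab} $C^\ast$-simplicity is equivalent to triviality of $\A_G$, and then invokes Theorem~\ref{thm-AG-extreme}. You have merely unpacked this one-line argument in detail, correctly using Proposition~\ref{prop-trivial-URS-top-free} to translate between topological freeness and triviality of $\S_G(X)$.
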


\begin{proof}
By Theorem \ref{thm-cstar-chab} $G$ is $C^\ast$-simple if and only if $\A_G$ is trivial. Hence the statement follows from Theorem \ref{thm-AG-extreme}.\qedhere
\end{proof}

\begin{example} \label{exe-stab-nonamenable}
If $G\acts X$ is only required to be a boundary action, the non-amenability of point stabilizers no longer implies the $C^\ast$-simplicity of $G$. Indeed let $G_i\acts X_i$, $i=1,2$, be two faithful boundary actions which are not topologically free, and such that the point stabilizers of $G_1 \acts X_1$ are amenable (in particular, $G_1$ is not $C^\ast$-simple) but those of $G_2\acts X_2$ are not amenable. One can take for instance $G_1$ to be a group $G(F, F')$ (see \textsection\ref{subsec-G(F,F')}) acting on the boundary of the tree, and $G_2$ to be Thompson's group $V$ acting on the Cantor set. Consider $G=G_1\times G_2$, acting on $X=X_1\times X_2$ in the natural way. Then $G\acts X$ is a faithful boundary action, whose stabilizers are non-amenable since the stabilizers of $G_2\acts X_2$ already have this property. However $G$ is not $C^\ast$-simple since $G_1$ is not $C^\ast$-simple and is normal in $G$ \cite{BKKO}. Note that the action $G\acts X$ is not extremely proximal, since closed subsets of the form $X_1\times\{x\}$ are not compressible.
\end{example}

\section{Applications} \label{sec-applications}

\subsection{Thompson's groups}

Recall that Thompson's group $T$ is the group of orientation preserving homeomorphisms of $\Sbb^1=\R/\Z$ which are piecewise linear, with only finitely many breakpoints, all at dyadic rationals, and slopes in $2^{\mathbb{Z}}$. Thompson's group $F$ is the subgroup of $T$ that stabilizes the point $0 \in \Sbb^1$. 

To define Thompson's group $V$, we need the following notation: the binary Cantor set is the space $\Ccal=\{0, 1\}^\N$ of (infinite) binary sequences, endowed with the product topology. For every finite word $w$ in the alphabet $\{0, 1\}$ we denote $C_w\subset \Ccal$ the cylinder subset defined by $w$, which consists of those sequences that have $w$ as an initial prefix. Thompson's group $V$ is the group of homeomorphisms of $\Ccal$ consisting of elements $g$ for which there exist two cylinder partitions $\left\{C_{w_1},\ldots, C_{w_n}\right\}$ and $\left\{C_{z_1},\ldots, C_{z_n}\right\}$ of $\Ccal$ such that $g(w_ix)=z_i x$ for every $i$ and every binary sequence $x$.






\subsubsection{$C^*$-simplicity for Thompson's groups}

In this paragraph we completely elucidate the connections between the problems of the amenability of $F$ and the $C^*$-simplicity of $F$ and $T$ (Corollary \ref{cor-F-F-T}), and we prove the $C^*$-simplicity of the group $V$ and some of its relatives (see Theorem \ref{thm-V-and-relatives}).





\medskip

\begin{thm} \label{thm-F-overgroups-circle}
Suppose that $F$ is non-amenable. Then any countable subgroup $G \leq \homeo(\Sbb^1)$ containing $F$ must be $C^\ast$-simple. 
\end{thm}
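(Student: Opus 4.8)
The plan is to deduce the statement from Theorem~\ref{thm-non-amenab-rigid-stab} applied to the natural action of $F$ on $\Sbb^1$. Since $G$ is by assumption a countable subgroup of $\homeo(\Sbb^1)$ containing $F$, it will be enough to check that for every non-empty open subset $U\subseteq \Sbb^1$ the rigid stabilizer $F_U$ is non-amenable; the conclusion will then follow immediately from the \enquote{more generally} part of Theorem~\ref{thm-non-amenab-rigid-stab}.

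The main point is the classical fact that $F$ contains copies of itself as rigid stabilizers of standard dyadic intervals. Concretely, I would first observe that any non-empty open $U\subseteq\Sbb^1$ contains a closed standard dyadic interval $I=[m2^{-k},(m+1)2^{-k}]$ avoiding the basepoint $0\in\Sbb^1=\R/\Z$, that is, with $0<m2^{-k}$ and $(m+1)2^{-k}<1$ (one finds a small sub-arc of $U$ missing $0$ and takes $k$ large inside it). Writing $(a,b)$ for the interior of $I$, one has $F_{(a,b)}\le F_U$, so it suffices to prove that $F_{(a,b)}$ is non-amenable. For this I would show that $F_{(a,b)}$ is isomorphic to $F$ itself: an element of $F_{(a,b)}$ is the identity on the complement of $(a,b)$, hence restricts on $I$ to a piecewise linear homeomorphism fixing $a$ and $b$, with finitely many breakpoints at dyadic rationals and slopes in $2^{\Z}$; conjugation by the affine map $\phi\colon I\to[0,1]$, $\phi(x)=2^{k}x-m$, then identifies $F_{(a,b)}$ with the group of all such homeomorphisms of $[0,1]$, which is exactly $F$. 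As $F$ is non-amenable by hypothesis, so is $F_{(a,b)}$, and hence so is $F_U$.

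The only step needing care is checking that conjugation by $\phi$ is an isomorphism onto $F$: this uses that $\phi$ and $\phi^{-1}$ map dyadic rationals to dyadic rationals (because $m,k\in\Z$ and $I$ is a \emph{standard} dyadic interval) and that the slope $2^{k}$ of $\phi$ is a power of $2$, so that the conditions defining $F$ are preserved. One should also note that the rigid stabilizer of the \emph{open} interval $(a,b)$ places no restriction on its elements near $a$ and $b$ beyond fixing those two points, which is why one recovers all of $F$; alternatively, one may sidestep this by only checking that $F_{(a,b)}$ contains an isomorphic copy of $[F,F]$ and using that $[F,F]$ is non-amenable whenever $F$ is, since $F/[F,F]\cong\Z^{2}$ is amenable. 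Once the non-amenability of every $F_U$ is established, Theorem~\ref{thm-non-amenab-rigid-stab} applies to $F\acts\Sbb^1$ and yields that every countable subgroup of $\homeo(\Sbb^1)$ containing $F$ — in particular $G$ — is $C^\ast$-simple.
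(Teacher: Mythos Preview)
Your proposal is correct and follows essentially the same approach as the paper: both verify that every rigid stabilizer $F_U$ contains a copy of $F$ (hence is non-amenable), and then invoke Theorem~\ref{thm-non-amenab-rigid-stab}. The paper simply cites \cite[Lemma~4.4]{C-F-P} for this fact, whereas you spell out the argument by finding a standard dyadic subinterval of $U$ and conjugating by an affine map.
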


\begin{proof}
Any rigid stabilizer $F_U$, with $U$ a non-empty open subset of $\Sbb^1$, contains an isomorphic copy of $F$ by Lemma 4.4 from \cite{C-F-P}, and is therefore non-amenable by our assumption. Therefore me may apply Theorem \ref{thm-non-amenab-rigid-stab}, from which the conclusion follows.
\end{proof}

Recall that Haagerup and Olesen proved in \cite{H-O} that if the group $T$ is $C^*$-simple, then the group $F$ has to be non-amenable. This result also appeared in \cite{BKKO}, where a partial converse is obtained, namely that if $T$ is not $C^*$-simple, then $F$ is not $C^*$-simple either \cite{BKKO}. The question whether the exact converse of Haagerup-Olesen's result holds was considered in \cite{B-J, Bleak-normalish}. The following result answers the question positively, and also says that the non-amenability of $F$ is also equivalent to its $C^*$-simplicity.


\begin{cor} \label{cor-F-F-T}
The following statements are equivalent:
\begin{enumerate}[label=(\roman*)]
\item The group $F$ is non-amenable;
\item The group $F$ is $C^*$-simple;
\item The group $T$  is $C^*$-simple.
\end{enumerate}
\end{cor}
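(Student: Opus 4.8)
The plan is to derive Corollary \ref{cor-F-F-T} purely from the results already in hand, by assembling a cycle of implications $(i)\Rightarrow(iii)\Rightarrow(ii)\Rightarrow(i)$. Two of these are essentially free: $(iii)\Rightarrow(ii)$ is the partial converse of Haagerup--Olesen due to Breuillard--Kalantar--Kennedy--Ozawa recalled just before the statement (if $T$ is not $C^*$-simple then $F$ is not $C^*$-simple, i.e.\ contrapositively $F$ $C^*$-simple $\Rightarrow$ $T$ $C^*$-simple... actually one reads it as: $T$ $C^*$-simple $\Rightarrow$ $F$ $C^*$-simple, which is exactly $(iii)\Rightarrow(ii)$), and $(ii)\Rightarrow(i)$ is the elementary fact that a non-trivial amenable group is never $C^*$-simple, so $C^*$-simplicity of $F$ forces $F$ to be non-amenable. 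Thus the only substantive implication to prove is $(i)\Rightarrow(iii)$: assuming $F$ non-amenable, show $T$ is $C^*$-simple.

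For $(i)\Rightarrow(iii)$, I would invoke Theorem \ref{thm-F-overgroups-circle} (equivalently Theorem \ref{thm-non-amenab-rigid-stab}) directly: $T$ is a countable subgroup of $\homeo(\Sbb^1)$ containing $F$, so if $F$ is non-amenable then $T$ is $C^*$-simple. The engine behind Theorem \ref{thm-F-overgroups-circle} is that for every non-empty open $U\subset\Sbb^1$ the rigid stabilizer $F_U$ contains an isomorphic copy of $F$ (this is the standard self-similarity of Thompson's group on the interval, Lemma 4.4 of \cite{C-F-P}); hence $F_U$, and a fortiori $T_U\supseteq F_U$, is non-amenable. Then Corollary \ref{cor-intro-csimple} (itself a consequence of Theorem \ref{MT: rigid urs} combined with Kennedy's criterion Theorem \ref{thm-cstar-chab}) applies to the action $T\acts\Sbb^1$ with all rigid stabilizers non-amenable, giving $C^*$-simplicity of $T$. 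One small point to be careful about is that $\Sbb^1$ is compact Hausdorff with no isolated points and the relevant open sets generate enough rigid stabilizers — all automatic here.

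With the three arrows $(i)\Rightarrow(iii)$, $(iii)\Rightarrow(ii)$, $(ii)\Rightarrow(i)$ in place, the equivalence of $(i)$, $(ii)$, $(iii)$ follows. I would write the proof in roughly four lines: cite Theorem \ref{thm-F-overgroups-circle} for $(i)\Rightarrow(iii)$; cite \cite{BKKO} for $(iii)\Rightarrow(ii)$; observe $(ii)\Rightarrow(i)$ since amenable non-trivial groups are not $C^*$-simple (and $F$ is infinite). There is no real obstacle — the work has all been front-loaded into Theorem \ref{thm-F-overgroups-circle}, whose own proof rests on the self-similarity lemma for $F$ and on Corollary \ref{cor-intro-csimple}. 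The only thing one must not overlook is that this corollary genuinely needs \emph{all} rigid stabilizers to be non-amenable, and that this is exactly what the hypothesis "$F$ non-amenable" buys via the embedding $F\hookrightarrow F_U$; once that is noted the argument closes immediately.
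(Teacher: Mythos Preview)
Your cycle has a misattributed arrow. The BKKO result you quote says: if $T$ is not $C^\ast$-simple then $F$ is not $C^\ast$-simple. Its contrapositive is $(ii)\Rightarrow(iii)$, not $(iii)\Rightarrow(ii)$; you correctly take the contrapositive and then inexplicably reverse it in the next clause. So in your cycle $(i)\Rightarrow(iii)\Rightarrow(ii)\Rightarrow(i)$, the middle arrow is not justified by the reference you give, and in fact no cited result gives $(iii)\Rightarrow(ii)$ directly.

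The fix is immediate and the paper's own proof shows the cleanest route: it does not use BKKO at all. Theorem~\ref{thm-F-overgroups-circle} applies to \emph{any} countable overgroup of $F$ in $\homeo(\Sbb^1)$, so in particular to $F$ itself as well as to $T$. Hence $(i)$ implies both $(ii)$ and $(iii)$ in one stroke. For the reverse directions, $(ii)\Rightarrow(i)$ is the general fact you state, and $(iii)\Rightarrow(i)$ is Haagerup--Olesen \cite{H-O}. This star-shaped argument through $(i)$ avoids needing either $(ii)\Rightarrow(iii)$ or $(iii)\Rightarrow(ii)$ as an intermediate step. Alternatively, if you want a cycle, run it as $(i)\Rightarrow(ii)\Rightarrow(iii)\Rightarrow(i)$, using Theorem~\ref{thm-F-overgroups-circle} for the first arrow, BKKO for the second, and Haagerup--Olesen for the third.
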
 

\begin{proof}
That (iii) implies (i) was proved in \cite{H-O}, and the implication from (ii) to (i) follows from a general argument \cite[Proposition 3]{dlH-survey}. That (i) implies both (ii) and (iii) is consequence of Theorem \ref{thm-F-overgroups-circle}.   
\end{proof}

\medskip

We now observe that the results obtained in Section \ref{sec-main-thms} may also be applied to other interesting groups related to Thompson's group $F$. We note that there is now a multitude of \enquote{Thompson-like} groups in the literature, for which similar arguments could be applied. We certainly do not try to be exhaustive here, and only give a few examples which further illustrate the results from Section \ref{sec-main-thms}.

Consider the group $\mathrm{PL}_2(\mathbb{R})$ of homeomorphisms of the real line which are piecewise linear with a discrete set of breakpoints (all of them dyadic rationals), with slopes in $2^{\mathbb{Z}}$ and which preserve the set of dyadic rationals. Thompson's group $F$ is well known to have a faithful representation $\rho: F \rightarrow \mathrm{PL}_2(\mathbb{R})$ (which is topologically conjugate to its standard action on the open interval $(0, 1)$), whose image $\rho(F)$ consists of those elements $g \in \mathrm{PL}_2(\mathbb{R})$ for which there exist $A >0$ and $m,n \in \mathbb{Z}$ such that $g(x) = x+m$ for every $x \leq A$ and $g(x) = x+n$ for every $x \geq A$. For the sake of simplicity we will still denote by $F$ the image of $\rho$, and when talking about $F$ inside $\mathrm{PL}_2(\mathbb{R})$ we will always implicitly refer to this representation. 

The following result is another formulation of Theorem \ref{thm-F-overgroups-circle}.

\begin{thm} \label{thm-F-overgroups}
Suppose that $F$ is non-amenable. Then any countable subgroup $G \leq \homeo(\mathbb{R})$ that contains $F$ must be $C^\ast$-simple. 
\end{thm}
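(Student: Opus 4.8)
The plan is to deduce this from Theorem~\ref{thm-non-amenab-rigid-stab} exactly as in the proof of Theorem~\ref{thm-F-overgroups-circle}, the only difference being that we now work with $F$ acting on the real line via the representation $\rho$ (equivalently, on the open interval $(0,1)$, since $\rho$ is conjugate to the standard action on $(0,1)$) rather than on the circle. So I would first record that it suffices to check the hypothesis of Theorem~\ref{thm-non-amenab-rigid-stab}: namely, that for every non-empty open $U\subseteq \R$ the rigid stabilizer $F_U$ (computed inside $F$ acting on $\R$) is non-amenable under the assumption that $F$ is non-amenable. Once this is known, Theorem~\ref{thm-non-amenab-rigid-stab} applies to $F\le \homeo(\R)$ and gives that every countable subgroup $G\le\homeo(\R)$ containing $F$ is $C^\ast$-simple.

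The key step is therefore the claim that $F_U$ contains an isomorphic copy of $F$ for every non-empty open $U\subseteq\R$. Since $F$ acting on $\R$ via $\rho$ is conjugate to $F$ acting on the open interval $(0,1)$ in the standard way, it is enough to prove this for the standard action on $(0,1)$; and there it is classical, being essentially Lemma~4.4 of \cite{C-F-P}: given any non-empty open $U\subseteq(0,1)$, pick a dyadic subinterval $[a,b]\subseteq U$ with $a<b$ dyadic rationals, and then the subgroup of $F$ consisting of elements supported in $[a,b]$ is isomorphic to $F$ itself (the restriction of $F$ to a standard dyadic interval is again a copy of Thompson's group $F$). Such a subgroup is contained in $F_U$ by construction. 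Hence $F_U$ contains a copy of $F$, and therefore is non-amenable whenever $F$ is.

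Putting this together: assume $F$ is non-amenable. For every non-empty open $U\subseteq\R$, the rigid stabilizer $F_U$ contains a copy of $F$ (by the previous paragraph, transported through the conjugacy between $\rho(F)\acts\R$ and $F\acts(0,1)$), hence is non-amenable. Theorem~\ref{thm-non-amenab-rigid-stab} then yields that $F$, and more generally every countable subgroup $G\le\homeo(\R)$ containing $F$, is $C^\ast$-simple. I do not anticipate a genuine obstacle here; the only point requiring a little care is the bookkeeping around the conjugacy between the two models of the $F$-action (on $\R$ via $\rho$ versus on $(0,1)$ standardly), so that "rigid stabilizer of an open set" is transported correctly — but this is routine since the two actions are topologically conjugate and rigid stabilizers are preserved under conjugacy. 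Thus the proof is essentially a one-line reduction to Theorem~\ref{thm-F-overgroups-circle}'s argument.

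\begin{proof}
Under the identification described above, $F$ acts on $\R$ in a way which is topologically conjugate to its standard action on the open interval $(0,1)$. Let $U \subseteq \R$ be a non-empty open set; after conjugating, it suffices to show that for every non-empty open subset of $(0,1)$ the corresponding rigid stabilizer in $F$ is non-amenable. Any such open set contains a dyadic subinterval, and the subgroup of $F$ consisting of elements supported in this subinterval is isomorphic to $F$ (Lemma 4.4 in \cite{C-F-P}) and is contained in the rigid stabilizer. Hence each rigid stabilizer $F_U$ contains an isomorphic copy of $F$, and is therefore non-amenable by hypothesis. Theorem~\ref{thm-non-amenab-rigid-stab} now applies to $F \le \homeo(\R)$ and shows that every countable subgroup $G \le \homeo(\R)$ containing $F$ is $C^\ast$-simple.
\end{proof}
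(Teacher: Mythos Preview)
Your proposal is correct and matches the paper's approach exactly: the paper does not give a separate proof of this theorem, simply declaring it ``another formulation of Theorem~\ref{thm-F-overgroups-circle}'', and your argument is precisely the proof of Theorem~\ref{thm-F-overgroups-circle} transported through the conjugacy between $\rho(F)\acts\R$ and the standard action $F\acts(0,1)$.
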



We consider two countable groups of homeomorphisms of the real line that contain $F$. The first is the normalizer of $F$ in $\mathrm{PL}_2(\mathbb{R})$, which, by work of Brin \cite{Brin-cham}, turns out to be isomorphic to the group $\mathrm{Aut}(F)$ of automorphisms of $F$. A second example is the group $\mathrm{Comm}(F)$ of abstract commensurators of the group $F$, which was explicitly described inside $\mathrm{PL}_2(\mathbb{R})$ by Burillo, Cleary and R\"{o}ver \cite{Bur-Clea-Rov-1}. 


\begin{cor} The following statements are equivalent:
\begin{enumerate}[label=(\roman*)]
\item The group $F$ is non-amenable;
\item The automorphism group $\mathrm{Aut}(F)$ is $C^*$-simple;
\item The abstract commensurator group $\mathrm{Comm}(F)$ is $C^*$-simple.
\end{enumerate}
\end{cor}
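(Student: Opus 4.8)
The plan is to deduce this corollary from Theorem \ref{thm-F-overgroups} by realizing both $\mathrm{Aut}(F)$ and $\mathrm{Comm}(F)$ as countable subgroups of $\homeo(\mathbb{R})$ containing $F$. First I would recall that $F$ is countable, hence so are $\mathrm{Aut}(F)$ and $\mathrm{Comm}(F)$ (the latter because $F$ has only countably many finite-index subgroups and countably many isomorphisms between them). Then I would invoke Brin's theorem \cite{Brin-cham} that the normalizer of $F$ inside $\mathrm{PL}_2(\mathbb{R})$ is isomorphic to $\mathrm{Aut}(F)$, and the description of Burillo--Cleary--R\"over \cite{Bur-Clea-Rov-1} realizing $\mathrm{Comm}(F)$ as a concrete subgroup of $\mathrm{PL}_2(\mathbb{R})$. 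Both of these groups, by construction, contain the image of $F$ under the representation $\rho$ fixed above, and both sit inside $\homeo(\mathbb{R})$.

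Next I would organize the equivalences. The implication (i)$\Rightarrow$(ii) and (i)$\Rightarrow$(iii) follow immediately from Theorem \ref{thm-F-overgroups} applied to $G = \mathrm{Aut}(F)$ and $G = \mathrm{Comm}(F)$ respectively, once these are identified as countable subgroups of $\homeo(\mathbb{R})$ containing $F$. For the converse directions, I would use the general fact (cited from \cite[Proposition 3]{dlH-survey} as in the proof of Corollary \ref{cor-F-F-T}) that $C^*$-simplicity passes to... no, it does not pass to subgroups in general; instead I would argue that if $F$ is amenable, then both $\mathrm{Aut}(F)$ and $\mathrm{Comm}(F)$ contain $F$ as a non-trivial amenable normal (respectively, commensurated) subgroup. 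For $\mathrm{Aut}(F)$: $F$ is normal (via inner automorphisms) and amenable, so $\mathrm{Aut}(F)$ has non-trivial amenable radical and is not $C^*$-simple. For $\mathrm{Comm}(F)$ the same conclusion holds because $F$ amenable would force a commensurated amenable subgroup, hence a non-trivial amenable URS (the conjugates of $F$ accumulate, but do not accumulate at the trivial subgroup), which by Theorem \ref{thm-cstar-chab} obstructs $C^*$-simplicity.

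The main obstacle I anticipate is the converse direction for $\mathrm{Comm}(F)$: unlike $\mathrm{Aut}(F)$, the group $F$ is not normal in $\mathrm{Comm}(F)$, so one cannot directly cite the amenable-radical obstruction. One needs to check that the conjugacy class of $F$ in $\mathrm{Comm}(F)$ does not accumulate at the trivial subgroup in $\sub(\mathrm{Comm}(F))$; this should follow from the fact that $F$ is commensurated, so any conjugate of $F$ intersects $F$ in a finite-index subgroup, and in particular contains a fixed non-trivial element (e.g. one can pin down a finite subset of $F\setminus\{1\}$ meeting every conjugate, using Lemma \ref{L: accumulation}). Given this, the closure of the conjugacy class of $F$ consists of amenable subgroups (a Chabauty limit of conjugates of an amenable group is amenable) and does not contain $\{1\}$, so it contains a non-trivial amenable URS, and Theorem \ref{thm-cstar-chab} finishes the argument. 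I would state all three equivalences by noting (iii)$\Rightarrow$(i) and (ii)$\Rightarrow$(i) via these obstructions, and (i)$\Rightarrow$(ii),(iii) via Theorem \ref{thm-F-overgroups}.
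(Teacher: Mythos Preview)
Your (i)$\Rightarrow$(ii),(iii) and (ii)$\Rightarrow$(i) match the paper's proof exactly. Your (iii)$\Rightarrow$(i), however, follows a different route and contains a gap.

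The gap: you claim that because $F$ is commensurated in $\mathrm{Comm}(F)$, every conjugate of $F$ ``in particular contains a fixed non-trivial element.'' Commensuration alone does not give this. For instance, $\Z$ is commensurated in $\mathbb{Q}\rtimes\mathbb{Q}^\times$, yet the conjugates $n\Z$ converge to $\{0\}$ in the Chabauty topology as $n\to\infty$. What saves the argument for $F$ is the additional fact that every finite-index subgroup of $F$ contains $[F,F]$ (since $[F,F]$ is infinite and simple, while $F/[F,F]\cong\Z^2$, so the normal core of any finite-index subgroup must contain $[F,F]$). Hence every conjugate $gFg^{-1}$ contains $gFg^{-1}\cap F\supseteq [F,F]$, and any single non-trivial element of $[F,F]$ serves as the finite set $P$ in Lemma~\ref{L: accumulation}. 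With this observation inserted, your argument goes through.

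For comparison, the paper proves (iii)$\Rightarrow$(i) by a different mechanism: it cites \cite[Theorem 1]{Bur-Clea-Rov-2} that $[F,F]$ is \emph{subnormal} in $\mathrm{Comm}(F)$, and then uses that $C^\ast$-simplicity passes to normal subgroups \cite[Theorem 1.4]{BKKO}; thus $C^\ast$-simplicity of $\mathrm{Comm}(F)$ would force $[F,F]$, and hence $F$, to be non-amenable. Your URS-based approach via Theorem~\ref{thm-cstar-chab} avoids both of these external inputs, at the cost of needing the observation about finite-index subgroups of $F$ above.
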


\begin{proof}
The fact that (i) implies (ii) and (iii) is a consequence of Theorem \ref{thm-F-overgroups} thanks to the identifications of $\mathrm{Aut}(F)$ and $\mathrm{Comm}(F)$ (given respectively in \cite{Brin-cham} and \cite{Bur-Clea-Rov-1}) with overgroups of $F$ inside $\mathrm{PL}_2(\mathbb{R})$. Being centerless, $F$ embeds as a normal subgroup in $\mathrm{Aut}(F)$, so it clear that $F$ cannot be amenable if $\mathrm{Aut}(F)$ is $C^*$-simple. So (ii) implies (i). Finally it remains to see that (iii) also implies (i). According to \cite[Theorem 1]{Bur-Clea-Rov-2}, the group $[F,F]$ appears as a subnormal subgroup of $\mathrm{Comm}(F)$. Since being $C^*$-simple is inherited by normal subgroups \cite[Theorem 1.4]{BKKO}, it follows that if $\mathrm{Comm}(F)$ was $C^*$-simple then the same would be true for $[F,F]$. In particular $[F,F]$ would not be amenable, so $F$ would not be amenable either.
\end{proof}

\medskip

While the question of $C^*$-simplicity of the groups $F$ and $T$ remains open, the arguments developed in this paper allow to obtain the $C^*$-simplicity of the group $V$. Indeed, since the rigid stabilizer of a cylinder for the action of $V$ on the binary Cantor set is easily seen to be isomorphic to $V$, it follows that any rigid stabilizer $V_U$, for $U$ a non-empty open subset of $\Ccal$, is non-amenable (because the group $V$ contains non-abelian free subgroups). Therefore Theorem \ref{thm-non-amenab-rigid-stab} applies and shows that $V$ is $C^*$-simple. The exact same argument applies to the higher-dimensional groups $nV$, constructed by Brin in \cite{Brin-nV}, for the action of $nV$ on the Cantor $n$-cube. 

Theorem \ref{thm-non-amenab-rigid-stab} further implies that any countable subgroup of $\homeo(\Ccal)$ containing $V$ is $C^*$-simple. Interesting examples of such groups are the groups $V_G$, associated to a self-similar group $G$, considered by Nekrashevych in \cite{Nek-cuntz, Nek-fp}. 

The following result summarizes the above discussion.

\begin{thm} \label{thm-V-and-relatives}
The following groups are $C^*$-simple:
\begin{enumerate}[label=(\roman*)]
\item Thompson's group $V$;
\item the higher-dimensional groups $nV$, $n \geq 2$;
\item all the groups $V_G$, where $G$ is a countable self-similar group.
\end{enumerate}
\end{thm}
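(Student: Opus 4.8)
The plan is to derive Theorem \ref{thm-V-and-relatives} entirely from Theorem \ref{thm-non-amenab-rigid-stab}, by verifying in each of the three cases (i)--(iii) that for every non-empty open subset $U$ of the relevant Cantor space, the rigid stabilizer $G_U$ is non-amenable. The general mechanism is always the same: each of these groups acts on a Cantor set $X$ which has a basis of clopen sets (cylinders), and the rigid stabilizer of a single cylinder $C_w$ is isomorphic to the full group itself (acting on $C_w \cong X$ via the canonical identification). Since any non-empty open $U$ contains a cylinder $C_w$, one has $G_{C_w} \leq G_U$, so it suffices to observe that the full group contains a non-abelian free subgroup, hence is non-amenable, and therefore so is $G_U$; then Theorem \ref{thm-non-amenab-rigid-stab} immediately gives $C^\ast$-simplicity, and in fact also yields that every countable overgroup in $\homeo(X)$ is $C^\ast$-simple, which handles the groups $V_G$ as overgroups of $V$ in $\homeo(\Ccal)$.

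More concretely, for (i) I would recall that for a cylinder $C_w \subset \Ccal$ determined by a finite word $w$, the subgroup of $V$ consisting of elements supported in $C_w$ is precisely the copy of $V$ acting on $C_w$ through the homeomorphism $C_w \to \Ccal$, $wx \mapsto x$ (this is a standard and elementary feature of $V$); since $V$ contains $\mathbb{F}_2$ (for instance because it contains copies of Thompson's group $T$, or directly by exhibiting two elements with disjoint "ping-pong" supports), each $V_{C_w}$ is non-amenable, hence each $V_U$ is non-amenable, and Theorem \ref{thm-non-amenab-rigid-stab} applies. For (ii), the identical argument works for the Brin groups $nV$ acting on the Cantor $n$-cube $\{0,1\}^{\N} \times \cdots \times \{0,1\}^{\N}$: rigid stabilizers of "multi-cylinders" are isomorphic to $nV$, which contains $V$ and hence $\mathbb{F}_2$, so the hypothesis of Theorem \ref{thm-non-amenab-rigid-stab} holds. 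For (iii), I would invoke the "moreover" part of Theorem \ref{thm-non-amenab-rigid-stab}: the groups $V_G$ of Nekrashevych are by construction countable subgroups of $\homeo(\Ccal)$ containing $V$, so the $C^\ast$-simplicity of the overgroup $V$ with non-amenable rigid stabilizers forces $V_G$ to be $C^\ast$-simple as well (alternatively, one checks directly that the rigid stabilizer of a cylinder in $V_G$ contains the corresponding copy of $V$).

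The only point that requires a little care — and the part I would expect to be the main obstacle, though it is more bookkeeping than genuine difficulty — is the precise identification of the rigid stabilizer of a cylinder with the full group in each model. For $V$ this is transparent from the defining cylinder-partition description: an element supported in $C_w$ must fix every cylinder outside $C_w$ pointwise, so it is described by a pair of cylinder partitions that refine $\{C_w, \Ccal \setminus C_w\}$ trivially on the complement and arbitrarily inside $C_w$, which is exactly the data of an element of $V$ on $C_w$. For $nV$ the same reasoning applies with $n$-dimensional cylinders. For $V_G$ one should note that the self-similarity of $G$ is compatible with restriction to cylinders, so that $V_G$ again "sees itself" inside the rigid stabilizer of any cylinder; but as noted, one can sidestep this verification altogether by using the overgroup statement. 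In all cases the essential input is simply that the relevant full group is non-amenable, which holds because it contains a non-abelian free group, and then Theorem \ref{thm-non-amenab-rigid-stab} does the rest.
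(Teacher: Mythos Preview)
Your proposal is correct and matches the paper's argument essentially verbatim: the paper also reduces each case to Theorem~\ref{thm-non-amenab-rigid-stab} by observing that the rigid stabilizer of a cylinder in $V$ (resp.\ $nV$) is isomorphic to $V$ (resp.\ $nV$), hence non-amenable since $V$ contains non-abelian free subgroups, and handles the $V_G$ via the overgroup clause of Theorem~\ref{thm-non-amenab-rigid-stab}. Your remark that one could alternatively check the rigid stabilizers of $V_G$ directly is a harmless aside; the paper, like you, opts for the overgroup shortcut.
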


\subsubsection{Classification of uniformly recurrent subgroups}

In this paragraph we completely classify the URS's of the groups $F$, $T$ and $V$.

\begin{thm}[Classification of the URS's of Thompson's groups] \label{T: URS Thompson}
$ $
\begin{enumerate}[label=(\roman*)]
\item The only URS's of Thompson's group $F$ are the normal subgroups. The derived subgroup $[F,F]$ has no URS other than $1$ and $[F,F]$.
\item The URS's of Thompson's group $T$ are $1, T$ and the stabilizer URS arising from its action on the circle.
\item The URS's of Thompson's group $V$ are $1, V$ and the stabilizer URS arising from its action on the binary Cantor set.
\end{enumerate}
\end{thm}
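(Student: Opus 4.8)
Below is the proof strategy I would follow.

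The plan is to treat the three groups in turn, using the results of Section~\ref{sec-main-thms}. For $T$ and $V$ I would apply Corollary~\ref{cor-unique-urs} to the actions $T\acts\Sbb^1$ and $V\acts\Ccal$. Both actions are minimal and extremely proximal --- any proper closed subset of $\Sbb^1$ lies in a proper closed arc, any proper closed subset of $\Ccal$ lies in the complement of a cylinder, and such sets can be compressed into any prescribed non-empty open set by elements of $T$, resp. $V$ --- so hypothesis~(i) holds. For hypothesis~(ii) one uses that the arcs $U=(a,b)$ with dyadic endpoints and $\overline U\neq\Sbb^1$ form a basis, with $T_U\cong F$, and that the cylinders $C_w$ form a basis of $\Ccal$, with $V_{C_w}\cong V$. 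For $V$ the condition is immediate since $V$ is simple. For $T$, a finite index subgroup $\Gamma\leq T_U\cong F$ contains its normal core, a finite index normal subgroup of $F$, hence contains $[F,F]$, so $[\Gamma,\Gamma]\supseteq[F,F]=[T_U,T_U]$ because $[F,F]$ is perfect; and for a smaller arc $V\Subset U$ every element of $T_V$ is trivial near $\partial U$, hence has trivial image in $T_U/[T_U,T_U]\cong\Z^2$, so $T_V\leq[\Gamma,\Gamma]$. Finally one needs a maximal subgroup inside $\S_G(X)$: every irrational $x_0\in\Sbb^1$ (resp. every $\xi_0\in\Ccal$ whose coordinate sequence is not eventually periodic) is a continuity point of the stabilizer map, so that $T_{x_0}\in\S_T(\Sbb^1)$ (resp. $V_{\xi_0}\in\S_V(\Ccal)$), and I would prove separately that such a stabilizer is a maximal subgroup: a proper overgroup $K$ would act on $\Sbb^1$ (resp. $\Ccal$) with a dense orbit, hence would contain conjugates of arbitrarily small rigid stabilizers, and therefore all of $G$, since these rigid stabilizers generate the simple group $G$. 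Corollary~\ref{cor-unique-urs} then gives that the only URS's of $T$ (resp. $V$) are $1$, $G$ and $\S_G(X)$ --- which is the stabilizer URS of the circle action (resp. Cantor action) and is non-trivial by Proposition~\ref{prop-trivial-URS-top-free}, a micro-supported action on a perfect compact space not being topologically free.

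For $F$, the action $F\acts(0,1)$ is micro-supported but $(0,1)$ is not compact, so I would pass to its one-point compactification, the circle $\Sbb^1$ on which $F$ acts fixing the point $\bar 0$ obtained by gluing the two ends. This action is not minimal, but it \emph{is} extremely proximal: a proper closed $C\subseteq\Sbb^1$ either misses a neighbourhood of $\bar 0$, hence is a compact subset of $(0,1)$ and can be compressed towards an interior point, or contains $\bar 0$, in which case it can be compressed towards $\bar 0$ (elements of $F$ fix arbitrarily small neighbourhoods of $\bar 0$ while being free to act on the complementary interval). The hypotheses of Corollary~\ref{cor-EP-acc-Gx} then hold, with (ii) checked exactly as for $T$. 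Now let $\H\in\urs(F)$ be non-trivial and $H\in\H$; then $1\notin\overline{\conj(H)}$, for otherwise $\H=1$, so Corollary~\ref{cor-EP-acc-Gx} provides $x\in\Sbb^1$ and $L\in\overline{\conj(H)}\subseteq\H$ with $F_x^0\leq L$. If $x=\bar 0$ then $F_{\bar 0}^0=[F,F]$, so $L\supseteq[F,F]$. If $x\in(0,1)$, choose $g_i\in F$ with $g_ix\to\bar 0$ (possible since $F$-orbits are dense in $(0,1)$); each element of $[F,F]$ fixes a neighbourhood of $\bar 0$, hence lies in $F_{g_ix}^0\leq g_iLg_i^{-1}$ for $i$ large, so any accumulation point $L'\in\H$ of $(g_iLg_i^{-1})$ satisfies $L'\supseteq[F,F]$. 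Either way $\H$ contains a normal subgroup $N\supseteq[F,F]$, which is fixed under conjugation, so $\H=\{N\}$. Since conversely every normal subgroup of $F$ is a URS, $\urs(F)$ consists precisely of the normal subgroups of $F$.

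For $[F,F]$ the same strategy applies, the only caveat being that $[F,F]\acts\Sbb^1$ is \emph{not} extremely proximal, since an element of $[F,F]$ fixes a neighbourhood of $\bar 0$ pointwise and so cannot compress a set containing such a neighbourhood. However, in the proofs of Theorem~\ref{thm-EP} and Corollary~\ref{cor-EP-acc-Gx} extreme proximality is used only to compress the complement of the open set produced by Proposition~\ref{prop-technique}, and that open set may be taken inside $(0,1)$ --- it is a small neighbourhood of a point moved by an element of the finite set $P\subset[F,F]\setminus\{1\}$, which lies in $(0,1)$; its complement in $\Sbb^1$ then contains $\bar 0$ and is compressible towards $\bar 0$ by $[F,F]$ as above. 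Running the arguments of Theorem~\ref{thm-EP} and Corollary~\ref{cor-EP-acc-Gx} in this setting shows that any non-trivial $H\leq[F,F]$ with $1\notin\overline{\conj(H)}$ has the property that $\overline{\conj(H)}$ contains an overgroup of $([F,F])_{\bar 0}^0$; but every element of $[F,F]$ fixes a neighbourhood of $\bar 0$, so $([F,F])_{\bar 0}^0=[F,F]$, whence $[F,F]\in\overline{\conj(H)}$. Therefore any non-trivial $\H\in\urs([F,F])$ contains $[F,F]$ and so equals $\{[F,F]\}$, giving $\urs([F,F])=\{1,[F,F]\}$.

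The delicate point is the treatment of $F$ and $[F,F]$: unlike $T$ and $V$, these groups have no obvious extreme boundary action, so the classification machinery of Section~\ref{sec-main-thms} does not apply verbatim. The device above --- compactifying $(0,1)$ by a single fixed ``end'' $\bar 0$, checking that enough extreme proximality survives for the closed sets that actually occur, and exploiting that $\bar 0$ is automatically a continuity point of the stabilizer map with germ stabilizer as large as $[F,F]$ --- is what makes the rigid-stabilizer results applicable, and verifying that the two applications of Proposition~\ref{prop-technique} in the proof of Theorem~\ref{thm-EP} interact correctly in this non-minimal situation is where most of the work lies. A secondary ingredient, requiring its own combinatorial argument, is the maximality of the relevant point stabilizers of $T$ and $V$.
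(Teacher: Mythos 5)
Your proposal follows the paper's proof in all essentials. Parts (ii) and (iii) are obtained, as in the paper, by applying Corollary~\ref{cor-unique-urs} to the standard actions on $\Sbb^1$ and $\Ccal$; for maximality of point stabilizers the paper cites $2$-transitivity of $T$ (resp.\ $V$) on its orbits after \cite{Sav-F}, while your direct argument (a proper overgroup of $T_{x_0}$ contains $T^0_{x_0}$ and a conjugate $T^0_{y_0}$ with $y_0\neq x_0$, and these generate $T$) is in substance the paper's Lemma~\ref{L: T0}. Part (i) is likewise handled by passing to the circle obtained by identifying the endpoints of $[0,1]$, compressing towards the glued point $\bar 0$, and using that rigid stabilizers of dyadic arcs are copies of $F$ whose conjugates accumulate at $[F,F]$; the paper runs this through Theorem~\ref{thm-EP} plus two short lemmas rather than through Corollary~\ref{cor-EP-acc-Gx}, an immaterial difference.

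One correction: your claim that $[F,F]\acts\Sbb^1$ is \emph{not} extremely proximal is wrong, and the reason you give (an element of $[F,F]$ fixes a neighbourhood of $\bar 0$ pointwise, hence ``cannot compress a set containing such a neighbourhood'') contradicts the compressibility you assert two sentences later for the complement of an interval in $(0,1)$, which \emph{does} contain a neighbourhood of $\bar 0$. The paper's Lemma~\ref{lem-F-ep} shows the action is extremely proximal: to compress a proper closed set $C$ into a prescribed neighbourhood $]\beta,\alpha[$ of $\bar 0$, one picks $g\in[F,F]$ whose pointwise-fixed neighbourhood of $\bar 0$ is already contained in $]\beta,\alpha[$ and which contracts the remaining two arcs of $C$ into $]\beta,\alpha[$; the image $g(C)$ then contains that small fixed neighbourhood, which is harmless. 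So Theorem~\ref{thm-EP} applies verbatim and your detour through a weakened hypothesis is unnecessary. Since the compressibility facts your workaround actually relies on are true, the argument nevertheless goes through.
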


\begin{proof}
Let us first prove (i). We consider the action of the group $F$ on the circle $\mathbb{S}^1 = [0,1] / \! \! \! \sim$, where $\sim$ identifies the points $0$ and $1$. For simplicity we will still denote by $0$ the image of $0$ in $\mathbb{S}^1$. Recall that the derived subgroup of $F$ consists exactly of those elements of $F$ that act trivially on a neighbourhood of $0$ in $\mathbb{S}^1$ \cite{C-F-P}. We need some preliminary lemmas.
\medskip

\begin{lemma} \label{lem-F-ep}
The action of $[F,F]$ on $\mathbb{S}^1$ is extremely proximal. 
\end{lemma}

\begin{proof}
We show that every proper closed $C \subset \mathbb{S}^1$ is compressible. Without loss of generality, we may assume that $C$ is contained in the complement of an open interval $]a,b[$, where $a,b$ are dyadic numbers and $0 < a < b < 1$. We let $]\beta, \alpha[$ be an open interval of $\mathbb{S}^1$ containing the point $0$, and we show that there is a $[F,F]$-translate of $C$ inside $]\beta, \alpha[$. Upon reducing $]\beta,\alpha[$ if necessary, we may assume that $\alpha, \beta$ are dyadic numbers and that $\alpha' = \alpha /2 < a$ and $\beta' = (1+ \beta) /2 > b$. We easily see that for $n$ large enough, any homeomorphism of $\mathbb{S}^1$ acting trivially on $[0,\alpha']$ and $[\beta',1]$, acting like $x \mapsto 2^{-n}x + \alpha' (1-2^{-n})$ on $[\alpha',a]$ and like $x \mapsto 2^{-n}x + \beta' (1-2^{-n})$ on $[b,\beta']$; will send $C$ into $]\alpha,\beta[$. Since such homeomorphisms can be found in the group $[F,F]$, the statement follows.
\end{proof}

\begin{lemma} \label{lem-rigid-stab-F}
For every dyadic numbers $0 < \alpha < \beta < 1$ with $\beta - \alpha \in 2^\mathbb{Z}$ and every finite index subgroup $\Gamma \leq F_{[\alpha, \beta]}$, there exist dyadic numbers $0 < a < b < 1$ such that $F_{[a,b]}$ is contained in the derived subgroup of $\Gamma$.
\end{lemma}

\begin{proof}
The subgroup $F_{[\alpha, \beta]}$ is easily seen to be isomorphic to $F$ \cite[Lemma 4.4]{C-F-P}. In particular $F_{[\alpha, \beta]}$ has a simple derived subgroup $N$, and $N$ is contained in any finite index subgroup. Therefore the derived subgroup of $\Gamma$ must contain $N$, and since $N$ is exactly the set of elements which are trivial on neighbourhoods of $\alpha$ and $\beta$, the conclusion actually holds for every dyadic numbers $a,b$ such that $\alpha < a < b < \beta$.
\end{proof}

\begin{lemma} \label{prop-lem-acc-F'}
Let $a,b$ be dyadic numbers such that $0 < a < b < 1$, and let $H$ be the rigid stabilizer of $[a,b]$ in $F$ (which is a subgroup of $[F,F]$). Then $[F,F]$ is an accumulation point of $\conj(H)$ in $\sub([F,F])$.
\end{lemma}

\begin{proof}
We choose $(g_n) \in [F,F]$ such that the sequence $(g_n([a,b]))$ is increasing and ascends to $\mathbb{S}^1 \setminus \left\{0\right\}$, and we denote $H_n = g_n H g_n^{-1}$. It is immediate to check that $(H_n)$ converges to the subgroup of $F$ consisting of elements which are trivial in a neighbourhood of $0$ in $\mathbb{S}^1$. Since the latter subgroup is exactly $[F,F]$, the proof is complete.
\end{proof}

We are now ready to prove that the only URS's of $[F,F]$ are $1$ and $[F,F]$. Write $G=[F,F]$, and let $\H \in \urs(G)$ be a non-trivial URS and $H \in \H$. Since the action of $G$ on $\mathbb{S}^1$ is extremely proximal by Lemma \ref{lem-F-ep}, Theorem \ref{thm-EP} implies that we may find dyadic numbers $0 < \alpha < \beta < 1$ such that $H$ contains the derived subgroup of a finite index subgroup of the rigid stabilizer $G_{[\alpha, \beta]}$. Now the group $G_{[\alpha, \beta]}$ is equal $F_{[\alpha, \beta]}$, so by Lemma \ref{lem-rigid-stab-F} we may find dyadic numbers $0 < a < b < 1$ such that $H$ contains the rigid stabilizer $F_{[a,b]}$. According to Lemma \ref{prop-lem-acc-F'}, the conjugacy class of $F_{[a,b]}$ accumulates at $G$ in $\sub(G)$. This shows that $G \in \H$, and by minimality $\H =G$.

We now prove that the only URS of Thompson's group $F$ are the normal subgroups of $F$. Starting with a non-trivial $\H \in \urs(F)$ and repeating the exact same argument as above, we obtain the existence of $N \in \H$ such that $[F,F] \leq N$. Such a subgroup has to be normal in $F$, so by minimality we deduce that $\H =N$. This concludes the proof of (i).

\medskip

To prove (ii), observe that the action of $T$ on $\mathbb{S}^1$ is clearly minimal, and is also extremely proximal since it is already the case for the subgroup $[F,F]$ (Lemma \ref{lem-F-ep}). Now for every dyadic numbers $0 < \alpha < \beta < 1$, the rigid stabilizer $T_{[\alpha, \beta]}$ coincides with $F_{[\alpha, \beta]}$, and using Lemma \ref{lem-rigid-stab-F} we see that we are in position to apply Corollary \ref{cor-unique-urs}. Moreover point stabilizers $T_x$, $x \in \mathbb{S}^1$, are maximal subgroups of $T$ since $T$ acts 2-transitively on each of its orbits in $\mathbb{S}^1$ (see \cite[Proposition 1.4]{Sav-F}, where this is proved for the group $F$, but the same proof applies to $T$). Thus (ii) is proved.

The proof of (iii) is very similar to (ii), and actually easier. The action of $V$ on $\Ccal$ is clearly minimal and extremely proximal. Moreover for every cylinder $U$ in $\Ccal$, the rigid stabilizer $V_U$ is isomorphic to $V$, and hence is simple \cite{C-F-P}. Finally for every $x \in \Ccal$, the stabilizer $V_x$ is a maximal subgroup of $V$, because $V$ acts $2$-transitively (indeed $n$-transitively for all $n$) on the orbit of $x$. Therefore we may apply Corollary \ref{cor-unique-urs}, which implies the statement, and concludes the proof of the theorem. \qedhere
\end{proof}

We now give an explicit description of the URS's of the groups $T$ and $V$ arising from their action respectively on the circle and the Cantor set. 

For $x\in \Sbb^1$, we will denote by $T^{0+}_x$ and $T^{0-}_x$ the subgroups of $T$ consisting of those elements that fix pointwise a right (respectively left) neighbourhood of $x$. Note that if $x$ is not a breakpoint of the slope of some element of $T$ (i.e.\ $x$ is not dyadic), then $T^{0+}_x=T^{0-}_x=T^0_x$.

\begin{prop}\label{P: URS T}
The stabilizer URS of $T\acts \Sbb^1$ is given by
\[\S_T(\Sbb^1)=\{T^{0+}_x\, \mid \, x\in D\}\cup \{T^{0-}_x\, \mid \, x\in D\}\cup \{T^{0}_x\, \mid \, x\in\Sbb^1\setminus D\},\]
where $D\subset \Sbb^1$ is the set of dyadic points.
\end{prop}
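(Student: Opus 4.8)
The strategy is to identify $\S_T(\Sbb^1)$ as the closure in $\sub(T)$ of the point stabilizers at continuity points of the stabilizer map, using the general construction of Proposition \ref{prop-min-action-urs}. First I would locate the domain of continuity $X_0$ of $\mathrm{Stab}\colon \Sbb^1 \to \sub(T)$ via Lemma \ref{lem-cont-stab}: a point $x$ lies in $X_0$ iff $T_x = T_x^0$, i.e.\ every element of $T$ fixing $x$ fixes a whole neighbourhood of $x$. For a non-dyadic $x$ this is automatic (any breakpoint of a piecewise-linear element is dyadic, so an element of $T$ fixing a non-dyadic point is linear, hence affine with slope a power of $2$ there; if it fixes $x$ and is not locally the identity it would have to move points arbitrarily close to $x$ — one checks this forces $x$ to be a breakpoint of the \emph{inverse}, again dyadic, contradiction). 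Thus $\Sbb^1\setminus D \subseteq X_0$, and at such $x$ one has $T_x = T_x^0 = T_x^{0+} = T_x^{0-}$. By contrast, at a dyadic point $x$ there exist elements of $T$ with a genuine breakpoint at $x$ that fix $x$ but no neighbourhood of it, so $D \cap X_0 = \varnothing$. Hence $X_0 = \Sbb^1 \setminus D$ and $\S_T(\Sbb^1)$ is the closure of $\{T_x^0 : x \in \Sbb^1\setminus D\}$.

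Next I would compute this closure by taking limits of $T_{x_n}^0$ as $x_n \to x$. If $x_n \to x$ with $x \notin D$, then since $X_0 = \Sbb^1\setminus D$ is exactly the continuity locus, the limit is $T_x^0$ — these are already in the set. The interesting case is $x_n \to x$ with $x \in D$ dyadic. Approaching $x$ strictly from the right by non-dyadic points $x_n \downarrow x$, I claim $T_{x_n}^0 \to T_x^{0+}$: any $g \in T_x^{0+}$ fixes a right-neighbourhood of $x$, hence fixes a neighbourhood of $x_n$ for $n$ large, so $g$ is eventually in $T_{x_n}^0$; conversely any limit of elements eventually in $T_{x_n}^0$ fixes each $x_n$ and a neighbourhood of it, and since the $x_n$ accumulate at $x$ from the right, by an argument on germs (an element of $T$ locally affine on $(x, x+\epsilon)$ and fixing infinitely many $x_n$ there must be the identity on a right-neighbourhood of $x$) the limit lies in $T_x^{0+}$. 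Symmetrically, approaching from the left gives $T_x^{0-}$. So $\S_T(\Sbb^1)$ contains all of $T_x^{0+}, T_x^{0-}$ for $x \in D$ and $T_x^0$ for $x \notin D$.

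It remains to check that nothing else appears in the closure, i.e.\ that this set is already closed, which reduces to analysing sequences $x_n \to x \in D$ where the approach is mixed or where one approaches $x$ by other dyadic points. A sequence of dyadic points $x_n \to x$ with, say, $x_n > x$: here $T_{x_n}^0$ is not even in our set, but $\{T^0_y: y\notin D\}$ is dense in $\{T^0_{x_n}\}$-type data and one takes instead limits through $X_0$; the point is that $\S_T(\Sbb^1)$ as \emph{defined} is the closure of the $X_0$-stabilizers, so I only need: every limit of a net $(T_{y_i}^0)$ with $y_i \in \Sbb^1\setminus D$ lies in the proposed set. By compactness of $\Sbb^1$ I may assume $y_i \to y$; if $y \notin D$ the limit is $T_y^0$; if $y \in D$, passing to a subnet I may assume all $y_i > y$ or all $y_i < y$, giving $T_y^{0+}$ or $T_y^{0-}$ by the germ argument above. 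This exhausts the possibilities, so equality holds. The main obstacle is the germ argument establishing that a limit of stabilizers $T_{y_i}^0$ with $y_i \downarrow x$ cannot contain elements with a breakpoint at $x$ on the right — this uses that piecewise-linear germs at a point are determined by finitely much data (slope and, for dyadic $x$, which of the two one-sided germs) together with the fact that the $y_i$ accumulate at $x$ from a fixed side, forcing any element of the limit to agree with the identity on a full one-sided neighbourhood.
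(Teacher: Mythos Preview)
Your overall strategy is the same as the paper's: identify $\S_T(\Sbb^1)$ as the closure of $\{T_x^0 : x \in X_0\}$ using Lemma~\ref{lem-cont-stab} and Proposition~\ref{prop-min-action-urs}, then compute limits of $T_{x_n}^0$ along sequences $x_n \to x$, splitting according to whether $x$ is dyadic and, if so, from which side the approach is made. The limit computations you describe are correct and essentially identical to the paper's.

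There is, however, a genuine error in your identification of $X_0$. You claim $X_0 = \Sbb^1 \setminus D$, arguing that an element of $T$ fixing a non-dyadic $x$ with nontrivial slope would force $x$ to be a breakpoint of the inverse. This is false. For instance, take $x = 1/3$: since $(2^2 - 1)\cdot \tfrac13 = 1$ is dyadic, the local affine map $t \mapsto 4t - 1$ fixes $1/3$ with slope $4$, and it extends to an element of $T$ (map the dyadic interval $[1/4, 3/8]$ onto $[0, 1/2]$ with slope $4$ and extend in the standard way). Hence $T_{1/3} \neq T_{1/3}^0$ and $1/3 \notin X_0$. In fact $X_0$ consists exactly of the irrational points: for any rational $x$ with odd part $q'>1$ in the denominator, taking $k$ equal to the multiplicative order of $2$ modulo $q'$ makes $(2^k-1)x$ dyadic, yielding a nontrivial germ at $x$.

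The paper avoids this trap by never claiming to identify $X_0$ exactly: it only uses $X_0 \cap D = \varnothing$ (which you do argue correctly) together with the density of $X_0$ from Proposition~\ref{prop-dense-Gdelta}. That is all you need. Your limit arguments go through verbatim with $x_n \in X_0$ rather than $x_n \in \Sbb^1 \setminus D$: density lets you approach any $x$ from either side by points of $X_0$, and your germ argument showing $T_{x_n}^0 \to T_x^{0\pm}$ (for $x$ dyadic) or $T_{x_n}^0 \to T_x^0$ (for $x$ non-dyadic) only uses that no element of $T$ has a breakpoint at a non-dyadic point, not that $x_n$ is non-dyadic. So the fix is simply to drop the incorrect claim about $X_0$ and invoke density instead.
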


\begin{proof}
Let $X_0\subset \Sbb^1$ be the domain of continuity of the stabilizer map. Recall that $X_0$ coincides with the set of points $x\in \Sbb^1$ such that $T_x^0=T_x$ (Lemma \ref{lem-cont-stab}), so in particular $X_0$ intersects $D$ trivially. We shall identify the closure in $\sub(T)$ of $\{T^0_x\,\mid\, x\in X_0\}$.

Let $(x_n)$ be a sequence of points of $X_0$ such that $(T^0_{x_n})$ converges to some $H$ in $\sub(T)$. We want to prove that $H$ is either $T^{0+}_x$ or $T^{0+}_x$ for some $x \in D$, or of the form $T^{0}_x$ for some $x \notin D$. Up to passing to a subsequence, we may assume that $(x_n)$ converges to a point $x\in \Sbb^1$, and also that $(x_n)$ converges to $x$ from one side, say from the left. If $(x_n)$ is eventually equal to $x$, then we must have $x \notin D$, and $H = T^0_{x}$. If $(x_n)$ is not eventually constant, then $(T^{0}_{x_n})$ converges to $T^{0-}_x$. To see this, observe that every element of $T^{0-}_x$ belongs to $T^0_{x_n}$ for $n$ large enough, and conversely every element that belongs to infinitely many $T^0_{x_n}$ must belong to $T_x^{0-}$. Thus we have $H=T^{0-}_x$. Since $T^{0-}_x = T^0_x$ when $x$ is not dyadic, we have proved the left-to-right inclusion in the statement. 

Conversely, we shall prove that every $T_x^{0\pm}$, $x\in D$, belongs to the closure of $\{T^0_x\,\mid\, x\in X_0\}$. But this is clear, since if $(x_n)\in X_0$ converges to $x$ from the left (respectively right), then again $(T_{x_n}^0)$ converges to $T_x^{0-}$ (respectively $T_x^{0+}$). Therefore the converse inclusion also holds, and the equality is proved.
\end{proof}

The combination of Theorem \ref{T: URS Thompson} and the above description of the URS associated to $T \acts \Sbb^1$ allows us to deduce the following result.

\begin{thm} \label{thm-T-set-points-stab}
Let $D$ be the set of dyadic points of $\Sbb^1$. The point stabilizers for the action of Thompson's group $T$ on its Furstenberg boundary are either:
\begin{enumerate}[label=(\roman*)]
\item $\{T^{0+}_x\, \mid \, x\in D\}\cup \{T^{0-}_x\, \mid \, x\in D\}\cup \{T^{0}_x\, \mid \, x\in\Sbb^1\setminus D\}$; in which case Thompson's group $F$ is amenable;
\item or trivial; in which case Thompson's group $F$ is non-amenable.
\end{enumerate}
\end{thm}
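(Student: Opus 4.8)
The plan is to combine the classification of the URS's of $T$ (Theorem \ref{T: URS Thompson}) with the identification $\mathcal A_T = \S_T(\partial_F T)$ provided by Proposition \ref{prop-A_G-other-action}, together with the dynamical characterisation of $C^\ast$-simplicity from Theorem \ref{thm-cstar-chab} and the description of $\S_T(\Sbb^1)$ obtained in Proposition \ref{P: URS T}. First, recall that $\mathcal A_T$, the Furstenberg URS of $T$, is by Proposition \ref{prop-A_G-other-action}(ii) precisely the collection of point stabilizers for the action of $T$ on its Furstenberg boundary $\partial_F T$; so the theorem is really a statement about $\mathcal A_T$. By Theorem \ref{T: URS Thompson}(ii), the only URS's of $T$ are $1$, $T$, and $\S_T(\Sbb^1)$. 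Since $\mathcal A_T$ is amenable and $T$ itself is non-amenable (it contains free subgroups, e.g.\ inside $F_{[a,b]}$-type copies, or simply because it contains $\mathbb F_2$), we have $\mathcal A_T \neq T$. Hence $\mathcal A_T$ is either the trivial URS $1$ or the URS $\S_T(\Sbb^1)$, whose explicit form is given by Proposition \ref{P: URS T}. This dichotomy is exactly the alternative (i)/(ii) in the statement, once we match each case with the (non-)amenability of $F$.

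Next I would match the two cases with the status of $F$. In case $\mathcal A_T = \S_T(\Sbb^1)$: the point stabilizer $T_0$ for the action on $\Sbb^1$ is Thompson's group $F$, and $T^{0+}_0 \le F$ is the subgroup of elements fixing a right neighbourhood of $0$, which is exactly $[F,F]$ (as recalled in the excerpt, from \cite{C-F-P}); more to the point, $\S_T(\Sbb^1)$ contains subgroups of the form $T^{0\pm}_x$, and each such group is contained in a point stabilizer of $T\acts \Sbb^1$, i.e.\ in a conjugate of $F$. If $\S_T(\Sbb^1)$ is an amenable URS, then every $H \in \S_T(\Sbb^1)$ is amenable; but $T^{0+}_0 = [F,F]$ being amenable forces $F$ to be amenable (amenability passes from a finite-index-up-to-center / derived subgroup situation, and here $[F,F]$ has index given by $F/[F,F]\cong \mathbb Z^2$ which is amenable, so $F$ is amenable iff $[F,F]$ is). Conversely, if $F$ is amenable then every point stabilizer $T_x$ of the boundary action $T\acts\Sbb^1$ is amenable (point stabilizers in $\Sbb^1$ are conjugates of $F$ when $x$ is a breakpoint-type point, and subgroups thereof otherwise), so by Proposition \ref{prop-A_G-other-action}(i) — applied to the boundary action $T\acts\Sbb^1$, which is minimal and strongly proximal since it is extremely proximal by Lemma \ref{lem-F-ep} together with minimality of $T$ — we get $\mathcal A_T = \S_T(\Sbb^1)$, and then Proposition \ref{P: URS T} gives the explicit description in (i).

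For case (ii): if $\mathcal A_T = 1$, then $T$ is $C^\ast$-simple by Theorem \ref{thm-cstar-chab}, and then $F$ must be non-amenable — otherwise, as just argued, $\mathcal A_T$ would equal $\S_T(\Sbb^1) \neq 1$ (it is non-trivial since it contains non-trivial subgroups like $T^{0+}_x$), a contradiction. Conversely, if $F$ is non-amenable, then by Corollary \ref{cor-F-F-T} the group $T$ is $C^\ast$-simple, hence $\mathcal A_T = 1$ by Theorem \ref{thm-cstar-chab}, and the point stabilizers on $\partial_F T$ are all trivial. This closes the loop: the two cases of the theorem correspond exactly to $\mathcal A_T = \S_T(\Sbb^1)$ ($\Leftrightarrow F$ amenable) and $\mathcal A_T = 1$ ($\Leftrightarrow F$ non-amenable), and in the first case $\S_T(\Sbb^1)$ has the stated explicit form by Proposition \ref{P: URS T}.

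The main obstacle, I expect, is not any single hard step but rather keeping the logical bookkeeping clean: one must be careful that the equivalence "$\S_T(\Sbb^1)$ amenable $\Leftrightarrow$ $F$ amenable" is handled correctly (using that $[F,F]$ has abelian, hence amenable, quotient $\mathbb Z^2$ in $F$, so amenability of one forces the other), and that Proposition \ref{prop-A_G-other-action}(i) genuinely applies — which needs $T\acts\Sbb^1$ to be a boundary action, i.e.\ strongly proximal; this follows because extremely proximal actions are strongly proximal (cited in the excerpt as \cite[Theorem 2.3]{Glas-top-dyn-group}), and $T\acts\Sbb^1$ is extremely proximal by Lemma \ref{lem-F-ep}. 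Everything else is a direct assembly of results already established.
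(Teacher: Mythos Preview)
Your approach is essentially the paper's: use Theorem \ref{T: URS Thompson} and the non-amenability of $T$ to force $\mathcal A_T \in \{1,\S_T(\Sbb^1)\}$, invoke Proposition \ref{P: URS T} for the explicit description, and match each alternative with the (non-)amenability of $F$ via Proposition \ref{prop-A_G-other-action}. A few incidental claims are inaccurate but harmless: $T^{0+}_0$ strictly contains $[F,F]$ (it is the kernel of only one of the two slope homomorphisms, with $F/T^{0+}_0\cong\Z$); copies of $F$ such as $F_{[a,b]}$ contain no free subgroups (Brin--Squier), though $T$ certainly does; and for non-dyadic $x$ the stabilizer $T_x$ need not sit inside a conjugate of $F$ --- but Proposition \ref{prop-A_G-other-action}(i) only needs \emph{one} amenable stabilizer, and the paper's shortcut for case (i) is simply that every $H\in\S_T(\Sbb^1)$ contains an isomorphic copy of $F$.
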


\begin{proof}
Recall (Proposition \ref{prop-A_G-other-action}) that for every countable group $G$, the set of point stabilizers for the action $G \acts \partial_F G$ is precisely the Furstenberg URS of $G$. Since Thompson's group $T$ is non-amenable, the URS $\mathcal{A}_T$ has to be either trivial or equal to $\S_T(\Sbb^1)$ according to Theorem \ref{T: URS Thompson}. If $\mathcal{A}_T = \S_T(\Sbb^1)$ then statement (i) holds thanks to Proposition \ref{P: URS T}, and in this case the group $F$ must be amenable since all the elements of $\S_T(\Sbb^1)$ contain a copy of $F$. On the other hand if $\mathcal{A}_T$ is trivial then $\S_T(\Sbb^1)$ is not amenable, and therefore the group $F$ is not amenable either because the conjugacy class of $F$ inside $T$ accumulates on $\S_T(\Sbb^1)$.
\end{proof}

For Thompson's group $V$ we have:

\begin{prop} \label{P: URS V}
The stabilizer URS of $V\acts \Ccal$ is given by $\S_V(\Ccal)=\{V^0_x\, \mid\, x\in \Ccal\}$.
\end{prop}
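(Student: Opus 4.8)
The plan is to compute the stabilizer URS $\S_V(\Ccal)$ directly by identifying the domain of continuity $X_0$ of the stabilizer map $\mathrm{Stab}\colon \Ccal \to \sub(V)$ and then taking the closure of $\{V_x : x\in X_0\}$, exactly as in the proof of Proposition~\ref{P: URS T}. First I would check that the action $V\acts \Ccal$ has Hausdorff germs in the sense of Definition~\ref{D: Hausdorff germs}: given $g\in V_x$, the element $g$ acts on some cylinder $C_w \ni x$ by a prefix-replacement, and either this prefix replacement is the identity near $x$ (so $\fix(g)$ contains a neighbourhood of $x$), or it is nowhere the identity on a small enough cylinder around $x$ (so the interior of $\fix(g)$ does not accumulate on $x$). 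The point here, in contrast with the case of $T$ on $\Sbb^1$, is that $\Ccal$ is totally disconnected, so there is no left/right asymmetry at any point; this is why the answer is uniformly $V_x^0$ with no ``one-sided'' variants.

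Once Hausdorff germs is established, Proposition~\ref{prop-cont-stab0} applies directly: since $\Ccal$ is compact and $V\acts \Ccal$ is minimal, it gives $\S_V(\Ccal) = \{V_x^0 : x\in \Ccal\}$, which is precisely the claimed formula. So the entire statement reduces to verifying the Hausdorff germs property. I expect this to be the only real step, and it is short.

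Alternatively, and perhaps more self-containedly, one can argue by hand as in Proposition~\ref{P: URS T}: take a sequence $(x_n)$ in $X_0$ with $V_{x_n}^0 \to H$; pass to a subsequence so $x_n \to x$; observe that because $\Ccal$ is totally disconnected, for $n$ large the points $x_n$ all lie in a common small cylinder $C_w$ around $x$, and in fact one can choose the subsequence so that $x_n$ and $x$ share arbitrarily long common prefixes. Then every element of $V_x^0$ (being trivial on some cylinder $C_v \ni x$) lies in $V_{x_n}^0$ for $n$ large, and conversely any element lying in infinitely many $V_{x_n}^0$ must be trivial on a neighbourhood of $x$, hence lies in $V_x^0$; so $H = V_x^0$. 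For the reverse inclusion, $V_x^0$ is itself a limit of $V_{x_n}^0$ for a suitable sequence $x_n \to x$ in $X_0$ (pick $x_n$ in a cylinder shrinking to $x$, chosen non-dyadic-like so that $V_{x_n}=V_{x_n}^0$), which also shows $X_0$ is dense — consistent with Proposition~\ref{prop-dense-Gdelta}. Either route works; the total disconnectedness of $\Ccal$ is what makes the argument strictly easier than the $T\acts\Sbb^1$ case, as the authors note. I do not anticipate a genuine obstacle here — the content is entirely in recognizing that germs of elements of $V$ at a point are ``all or nothing'' on small cylinders.
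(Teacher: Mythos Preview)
Your proposal is correct and follows essentially the same route as the paper: verify that $V\acts\Ccal$ has Hausdorff germs and then invoke Proposition~\ref{prop-cont-stab0}. The paper's verification of the germ dichotomy is phrased slightly differently (if $g\in V_x$ does not fix a cylinder around $x$, then on a small cylinder $g^{\pm 1}$ contracts toward $x$, so $x$ is isolated in $\fix(g)$), but this is the same ``all or nothing'' observation you describe; your phrase ``nowhere the identity on a small enough cylinder'' should be read as ``$x$ is the unique fixed point in a small enough cylinder,'' since $g(x)=x$.
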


\begin{proof} 
If $x$ is a point of $\Ccal$ and $g$ an element of $V_x$, then either $\fix(g)$ contains a cylinder containing $x$, or there exists a cylinder $U$ around $x$ and $\varepsilon = \pm 1$ such that $g^{\varepsilon n}y \rightarrow x$, $n \rightarrow \infty$, for every $y \in U$. In particular either $\fix(g)$ contains a neighbourhood of $x$, or $x$ is isolated in $\fix(g)$. This implies that $V\acts \Ccal$ has Hausdorff germs (Definition \ref{D: Hausdorff germs}), so the statement follows from Proposition \ref{prop-cont-stab0}. \qedhere 
\end{proof}

\subsubsection{Rigidity of non-free minimal actions}

In this subsection we explain how the classification of the URS's of the Thompson groups, obtained in the previous subsection, imposes strong restrictions on the minimal actions of these groups on compact topological spaces. 

\medskip 

 Recall that given a continuous group action $G\acts X$ on a topological space, for every $x\in X$ we denote $G^0_x$ the subgroup of $G$ consisting of elements that fix pointwise a neigbhourhood of $x$. We will need the following lemma.

\begin{lemma}\label{L: T0} Consider Thompson's group $T$ acting on the circle $\Sbb^1$. For any two distinct points $z_1, z_2\in \Sbb^1$, the subgroups $T^0_{z_1}$ and $T^0_{z_2}$ generate $T$.

The same statement holds for Thompson's group $V$ acting on the binary Cantor set.
\end{lemma}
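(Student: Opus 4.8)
The plan is to show that for $T \acts \Sbb^1$, the subgroup $L = \langle T^0_{z_1}, T^0_{z_2}\rangle$ is all of $T$, and similarly for $V \acts \Ccal$. First I would reduce to the case where $z_1, z_2$ are dyadic points: by conjugating by an element of $T$ (which acts transitively on $\Sbb^1$, indeed 2-transitively, so we can move any pair of distinct points to any other pair of distinct points within the dyadic orbit), it is enough to treat one convenient pair, say $z_1 = 0$ and $z_2 = 1/2$ (and $z_1 = 0^\infty$, $z_2 = 1 0^\infty$ in the Cantor case). Recall that $T^0_0$ is exactly the set of elements of $T$ fixing pointwise a neighbourhood of $0$, which in particular contains the rigid stabilizer $T_{[\alpha,\beta]} = F_{[\alpha,\beta]}$ for all dyadic $0 < \alpha < \beta < 1$; similarly $T^0_{1/2}$ contains $T_{[\alpha,\beta]}$ for all dyadic $\alpha, \beta$ with $1/2 \notin [\alpha,\beta]$, i.e.\ for intervals contained in $]1/2, 1[ \cup \{0\}$ going around through $0$.

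The key step is then: the subgroups $T^0_{z_1}$ and $T^0_{z_2}$ together contain rigid stabilizers $T_U$ for open sets $U$ whose union covers $\Sbb^1$, and moreover the group generated by all these rigid stabilizers is $T$. Concretely, $T^0_0$ contains $F_{[\alpha,\beta]}$ for $[\alpha,\beta] \subset ]0,1[$, and $T^0_{1/2}$ contains $F_{[\alpha,\beta]}$ for $[\alpha,\beta]$ an arc avoiding $1/2$; choosing two such arcs that cover $\Sbb^1$ (e.g.\ one slightly bigger than $]0, 2/3[$ inside $T^0_{1/2}$'s reach and one slightly bigger than $]1/3, 1[$, arranged so their rigid stabilizers lie in $T^0_0 \cup T^0_{1/2}$ — here one should be a little careful and instead use three or four arcs to be safe), we get that $L$ contains $F_{U}$ for a covering family of open arcs. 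It is classical (and follows e.g.\ from the fact that $[F,F]$, hence $[T,T] = T$, is simple and generated by any such family, or directly by a commutator/fragmentation argument) that the subgroup of $T$ generated by the rigid stabilizers of a finite open cover of $\Sbb^1$ is all of $T$: given $g \in T$, write $g$ as a product of elements each supported in one of the arcs by a standard fragmentation lemma for piecewise-linear homeomorphisms. Since those factors lie in $L$, we conclude $g \in L$, so $L = T$.

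For $V \acts \Ccal$ the argument is the same but simpler: $V^0_{z_1}$ contains $V_U$ for every clopen $U$ not containing $z_1$ (since elements supported on such $U$ fix a neighbourhood of $z_1$), and likewise for $z_2$. Pick a clopen partition $\Ccal = U_1 \sqcup U_2$ with $z_1 \in U_1$, $z_2 \in U_2$; then $V_{U_2} \le V^0_{z_1}$ and $V_{U_1} \le V^0_{z_2}$, so $L \supseteq \langle V_{U_1}, V_{U_2}\rangle$. The rigid stabilizers of a clopen partition generate $V$ — indeed any $g \in V$ can be written, after refining the partition defining $g$ so that it refines $\{U_1, U_2\}$, as a product of an element permuting the pieces inside $U_1$, one inside $U_2$, and one "transposition-type" element swapping two cylinders across $U_1$ and $U_2$; the first two are in $V_{U_1}, V_{U_2}$, and the transposition element is itself a product of two such elements after an auxiliary move. (Alternatively, invoke simplicity of $V$ together with the fact that $\langle V_{U_1}, V_{U_2}\rangle$ is normalized by enough elements, or cite \cite{C-F-P} / \cite{Nek-fp} for the fragmentation.) Hence $L = V$.

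The main obstacle is the bookkeeping in the fragmentation step: making sure that the open (resp.\ clopen) sets whose rigid stabilizers we capture genuinely cover the space \emph{and} that their rigid stabilizers actually sit inside $T^0_{z_1} \cup T^0_{z_2}$ (resp.\ $V^0_{z_1} \cup V^0_{z_2}$), since $T^0_{z_i}$ only contains rigid stabilizers of arcs that \emph{avoid a neighbourhood} of $z_i$, not merely avoid $z_i$ itself. Using three or four overlapping arcs rather than two handles this cleanly. Everything else — transitivity of $T$ and $V$ on dyadic points, the identification of $T^0_x$ with a union of rigid stabilizers, and the standard fact that rigid stabilizers of a cover generate the whole Thompson group — is routine or already recorded in \cite{C-F-P}.
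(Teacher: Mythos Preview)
Your reduction step is incorrect: Thompson's group $T$ does \emph{not} act transitively on $\Sbb^1$. It has uncountably many orbits (the dyadic rationals form one orbit, but there are many others), so there is in general no $g\in T$ carrying an arbitrary pair $(z_1,z_2)$ to $(0,1/2)$. Thus the lemma cannot be reduced to a single convenient pair by conjugation, and your argument as written only treats the case where both $z_1,z_2$ are dyadic. The fragmentation idea could still be run directly for arbitrary $z_1\neq z_2$ (choose dyadic arcs covering $\Sbb^1$, each avoiding one of the $z_i$), but then you must actually justify that rigid stabilizers of a finite cover by arcs generate $T$; this is true but is not in \cite{C-F-P} and is not a one-liner, since the group they generate is not obviously normal and rotations are supported on all of $\Sbb^1$.

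There is a second genuine gap in the $V$ case. For a clopen partition $\Ccal=U_1\sqcup U_2$, the subgroups $V_{U_1}$ and $V_{U_2}$ commute and $\langle V_{U_1},V_{U_2}\rangle=V_{U_1}\times V_{U_2}$, which preserves each $U_i$ and is therefore a \emph{proper} subgroup of $V$. Your handling of the \enquote{transposition-type} elements is too vague to repair this: such a transposition is not a product of elements supported in $U_1$ and in $U_2$, and $V_{U_1}\times V_{U_2}$ is not normal in $V$, so the simplicity shortcut does not apply either. One would have to use more of $V^0_{z_1}$ than just $V_{U_2}$.

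The paper's proof avoids both difficulties by a different, shorter route. It never reduces to a special pair. Instead it uses that $T^0_{z_1}$ acts transitively on the $T$-orbit of $z_2$ (or on that orbit minus $\{z_1\}$ if $z_1,z_2$ lie in the same orbit). Since $gT^0_{z_2}g^{-1}=T^0_{gz_2}$, this transitivity immediately forces $\langle T^0_{z_1},T^0_{z_2}\rangle$ to contain \emph{every} conjugate of $T^0_{z_2}$, hence a non-trivial normal subgroup of $T$, which equals $T$ by simplicity. The same argument works verbatim for $V$. This bypasses fragmentation entirely and handles all pairs $(z_1,z_2)$ uniformly.
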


\begin{proof}
Consider first the case of $T$. Let us first assume that $z_1$ and $z_2$ do not belong to the same $T$-orbit. Then $T_{z_1}^0$ acts transitively on the $T$-orbit of $z_2$ (see the proof of \cite[Proposition 1.4]{Sav-F}). Since for every $g\in T$ we have $gT^0_{z_2}g^{-1}=T^0_{g(z_2)}$, it follows that the subgroup of $T$ generated by $T^0_{z_1}$ and $T^0_{z_2}$ contains the subgroup generated by all conjugates of $T^0_{z_2}$. The latter is a non-trivial normal subgroup of $T$, hence is equal to $T$ since $T$ is simple.

If $z_1$ and $z_2$ belong to the same $T$-orbit, then the group $T_{z_1}^0$ still acts transitively on this orbit after removing the point $z_1$. Hence the same argument shows that the subgroup of $T$ generated by $T^0_{z_1}$ and $T^0_{z_2}$ still contains all conjugates of $T^0_{z_2}$ (by conjugating $T^0_{z_2}$ by elements of $T_{z_1}^0$ we obtain all conjugates but $T^0_{z_1}$ itself, which is already contained in it by assumption). Hence the same argument applies. 

The proof for $V$ is similar (and actually easier). We leave the details to the reader. \qedhere
\end{proof}

Recall that given two group actions $G\acts X$ and $G\acts Y$ by homeomorphisms, we say that $G\acts X$ \textbf{factors onto} $G\acts Y$ if there exists a continuous surjective $G$-equivariant map $X\to Y$. We are now ready to state the following corollary of Theorem \ref{T: URS Thompson}.

\begin{cor} \label{C: rigidity thompson} Let $F$, $T$ and $V$ be the Thompson groups.
\begin{enumerate}[label=(\roman*)]
\item Every faithful, minimal action of $F$ on a compact space is topologically free.
\item Every non-trivial minimal action $T\acts X$ on a compact space which is not topologically free factors onto the standard action on the circle.  
\item Every non-trivial minimal action $V\acts X$ on a compact space which is not topologically free factors onto the standard action on the Cantor set.
\end{enumerate}
Moreover in (ii) and (iii) the factor map $\varphi: X \rightarrow \Sbb^1$ (respectively $\varphi: X \rightarrow \Ccal$) is unique, and is characterized by the condition that $\varphi(x)$ is the unique point of $\Sbb^1$ (respectively $\Ccal$) such that $T^0_{\varphi(x)}$ (respectively $V^0_{\varphi(x)}$) fixes $x$.
\end{cor}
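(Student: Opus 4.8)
The plan is to deduce everything from the classification of uniformly recurrent subgroups (Theorem~\ref{T: URS Thompson}) together with the stabilizer-URS formalism of Section~\ref{sec-preliminaries}; the key leverage is that a minimal action $G\acts X$ on a compact space produces the URS $\S_G(X)$, and that by Lemma~\ref{L: upper semicontinuous}(i) every point stabilizer $G_x$ contains some member of $\S_G(X)$. For part (i), suppose $F\acts X$ is a faithful minimal action on a compact space that is not topologically free. Then $\S_F(X)$ is a non-trivial URS by Proposition~\ref{prop-trivial-URS-top-free}, hence by Theorem~\ref{T: URS Thompson} a singleton $\{N\}$ with $N$ a non-trivial normal subgroup of $F$ (the case $\S_F(X)=\{F\}$ is excluded: it would make every point of $X$ fixed, forcing $X$ to be a single point, incompatible with faithfulness). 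Lemma~\ref{L: upper semicontinuous}(i) then gives $N\le G_x$ for \emph{every} $x\in X$, so $N$ lies in the kernel of the action, contradicting faithfulness. Hence every such action is topologically free.

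For parts (ii) and (iii) I would treat $T$ and $V$ uniformly, writing $Y$ for $\Sbb^1$, respectively $\Ccal$. Since $T$ and $V$ are simple, a non-trivial action is faithful; if it is moreover not topologically free, then $\S_G(X)$ is a non-trivial URS, and it cannot equal $\{G\}$ (again this would collapse $X$ to a point), so Theorem~\ref{T: URS Thompson} forces $\S_G(X)=\S_T(\Sbb^1)$, respectively $\S_V(\Ccal)$. By Propositions~\ref{P: URS T} and~\ref{P: URS V} the members of $\S_G(X)$ are the (one-sided) germ stabilizers $G^{0\pm}_y$ and $G^0_y$. Now for $x\in X$, Lemma~\ref{L: upper semicontinuous}(i) supplies some $H\in\S_G(X)$ with $H\le G_x$; since $G^0_y\le H$ for the corresponding $y\in Y$, there is at least one $y$ with $G^0_y\le G_x$. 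Such a $y$ is unique: if $G^0_{y_1}\le G_x$ and $G^0_{y_2}\le G_x$ with $y_1\ne y_2$, then $G=\langle G^0_{y_1},G^0_{y_2}\rangle\le G_x$ by Lemma~\ref{L: T0}, so $x$ would be a global fixed point, contradicting minimality. Define $\varphi\colon X\to Y$ by letting $\varphi(x)$ be this unique point.

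Next I would verify that $\varphi$ is an equivariant continuous surjection. Equivariance is immediate from $g\,G^0_y\,g^{-1}=G^0_{gy}$ and $g\,G_x\,g^{-1}=G_{gx}$. For continuity, take a net $x_i\to x$ in $X$; passing to a subnet we may assume $\varphi(x_i)\to y\in Y$, and for any $g\in G^0_y$, $g$ fixes pointwise a neighbourhood $U$ of $y$, so eventually $\varphi(x_i)\in U$, whence $g\in G^0_{\varphi(x_i)}\le G_{x_i}$ and $g x_i=x_i$; letting $i\to\infty$ gives $g x=x$, so $G^0_y\le G_x$ and therefore $y=\varphi(x)$ by uniqueness, and the usual compactness argument upgrades this to $\varphi(x_i)\to\varphi(x)$. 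Since $X$ is compact, $\varphi(X)$ is a non-empty closed $G$-invariant subset of the minimal $G$-space $Y$, hence equals $Y$; this gives the factoring. Finally, for uniqueness and the stated characterization: if $\psi\colon X\to Y$ is any factor map and $x\in X$, then each $g\in G^0_{\varphi(x)}\le G_x$ fixes $x$, hence fixes $\psi(x)=\psi(gx)$; but if $\psi(x)\ne\varphi(x)$, pick a small dyadic interval (resp.\ a cylinder) whose interior contains $\psi(x)$ but which misses $\varphi(x)$: its rigid stabilizer lies in $G^0_{\varphi(x)}$, yet acts on that interval (resp.\ cylinder) like the ambient Thompson group with no global fixed point, so it moves $\psi(x)$ --- a contradiction. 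Hence $\psi=\varphi$, which is precisely the characterization claimed.

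The step I expect to be the main obstacle is the continuity of $\varphi$, together with the bookkeeping required to translate the explicit description of $\S_G(X)$ in Propositions~\ref{P: URS T} and~\ref{P: URS V} into statements about genuine point stabilizers of the abstract space $X$; by contrast, the hard mathematical input is already absorbed into Theorem~\ref{T: URS Thompson} and Lemma~\ref{L: T0}, so the rest of the argument is essentially formal.
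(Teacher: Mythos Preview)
Your proof is correct and follows essentially the same approach as the paper: both arguments deduce (i) from the fact that a non-trivial stabilizer URS of $F$ would be a normal subgroup contained in every point stabilizer (hence in the kernel), and handle (ii)--(iii) by forcing $\S_G(X)=\S_G(Y)$ via Theorem~\ref{T: URS Thompson}, then building $\varphi$ through Lemma~\ref{L: upper semicontinuous}(i) and Lemma~\ref{L: T0}, with continuity proved by a net argument and uniqueness from equivariance. Your continuity argument is slightly more direct than the paper's (which routes through a subnet limit $H$ of the stabilizers $G_{x_i}$), and your uniqueness argument spells out explicitly why $G^0_{\varphi(x)}$ cannot fix a different point of $Y$, but these are cosmetic differences.
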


\begin{proof}
Note that (i) follows directly from the first statement of Theorem \ref{T: URS Thompson}, since any minimal action on a compact space giving rise to a trivial URS stabilizer must be topologically free by Proposition \ref{prop-trivial-URS-top-free}.

The proof of (ii) requires additional arguments. Let $T\acts X$ be a non-trivial, minimal action on a compact space, which is not topologically free. The following lemma shows that the map $\varphi$ appearing in the statement is well-defined, and provides a factor map to $T\acts \Sbb^1$. 

\begin{lemma}\label{L: def phi}
For every $x\in X$ there exists a unique $z \in \Sbb^1$ such that $T_{z}^{0} \le T_x$. The map $\varphi\colon X\to \Sbb^1$, $x \mapsto z = \varphi(x)$, is continuous, surjective, and $T$-equivariant.
\end{lemma}

\begin{proof}
Since $T\acts X$ is neither trivial nor topologically free, by Theorem \ref{T: URS Thompson} we must have $\S_T(X)=\S_T(\Sbb^1)$. Hence Lemma \ref{L: upper semicontinuous} ensures the existence of a point $z \in \Sbb^1$ as in the statement. The uniqueness of $z$ follows from the fact that for any two $z_1\neq z_2\in \Sbb^1$, the subgroups $T^{0}_{z_1}$ and $T^{0}_{z_2}$ generate $T$ (Lemma \ref{L: T0}). Hence if there were two possible choices for $z$, the point $x$ would be globally fixed by $T$, contradicting our assumption on $T\acts X$. In the sequel we write $z = \varphi(x)$.


The fact that the map $\varphi$ is equivariant is clear. Let us check that $\varphi$ is continuous. Let $(x_i)_{i\in I}\subset X$ be a net converging to $x$. Let $y\in \Sbb^1$ be a cluster point of $(\varphi(x_i))$ and let us prove that $y=\varphi(x)$.  We may assume, upon taking a subnet, that $\varphi(x_i)\to y$ and that $(T_{x_i})$ converges to a limit $H$ in $\sub(T)$. Since $\varphi(x_i)\to y$, every element of $T^0_y$ eventually belong to $T^0_{\varphi(x_i)}$, and since  $T_{\varphi(x_i)}^0\le T_{x_i}$ we deduce that $T^0_y\le H$. Moreover since $T_{x_i}\to H$ and $x_i\to x$, we deduce that $H\le T_x$. It follows that $T^0_y\le T_x$, and since $\varphi(x)$ is the only point of $\Sbb^1$ with this property, $y=\varphi(x)$.
 
The fact that $\varphi$ is onto readily follows, since the action of $T$ on $\Sbb^1$ is minimal. \qedhere
\end{proof}
Finally note that the map $\varphi$ constructed in the lemma is the unique factor map from $T\acts X$ to $T\acts \Sbb^1$.  Namely if $\psi: X\to \Sbb^1$ is another factor map, by equivariance $T^0_{\varphi(x)}$ must must fix $\psi(x)$, which implies that $\psi(x)=\varphi(x)$.

The proof of (iii) is exactly the same, using the second statement of Lemma \ref{L: T0}. \qedhere
\end{proof}

\subsubsection{Actions of  Thompson's group $T$ on the circle} \label{S: T circle}

In this paragraph we further investigate situation (ii) of Corollary \ref{C: rigidity thompson} for actions of Thompson's group $T$ on the circle.

Recall that a representation $\tau\colon G\to \homeo(\Sbb^1)$ is said to be \textbf{semi-conjugate} to $\sigma\colon G\to\homeo_+( \Sbb^1)$ if $\tau$ factors onto $\sigma$ through a map which is monotone (non-increasing or non-decreasing) with respect to the circular order on $\Sbb^1$, and with degree $\pm 1$.


Ghys and Sergiescu have proved in \cite[Th\'eor\`eme K]{Ghy-Se} that every non-trivial action of Thompson's group $T$ on the circle by $C^2$-diffeomorphisms is semi-conjugate to the standard action. The main purpose of this paragraph is to show that this result actually holds for every action of $T$ on the circle by homeomorphisms.

 
\begin{thm}\label{T: circle}
Every non-trivial, continuous action of Thompson's group $T$ on the circle is semi-conjugate to the standard action.
\end{thm}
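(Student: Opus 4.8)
The plan is to reduce the statement to the rigidity of minimal actions, Corollary~\ref{C: rigidity thompson}(ii), by the standard passage to the minimal model.

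First I would dispose of the non-minimal case. Since $T$ is simple, any non-trivial action $\tau\colon T\to\homeo(\Sbb^1)$ is faithful, and since $T$ is perfect it takes values in $\homeo_+(\Sbb^1)$. Let $M\subseteq\Sbb^1$ be the unique minimal closed $T$-invariant subset. It cannot be finite: a finite orbit would yield a homomorphism from the simple infinite group $T$ to a finite symmetric group, hence a global fixed point, and then $T$ would act on an open interval; but an order-two element of $T$ (for instance the rotation by $1/2$ in the standard action) would act on that interval as an orientation-preserving involution, hence trivially, contradicting faithfulness. So $M$ is either $\Sbb^1$ or a Cantor set, and collapsing the connected components of $\Sbb^1\setminus M$ to points produces a circle with an induced \emph{minimal} $T$-action $\bar\tau$ together with a monotone degree-one semiconjugacy from $\tau$ onto $\bar\tau$. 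As semiconjugacy is a transitive relation, it suffices to prove that $\bar\tau$ is semiconjugate to the standard action $\sigma$.

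Now I would apply Corollary~\ref{C: rigidity thompson}(ii) to the non-trivial minimal action $\bar\tau$. If $\bar\tau$ is not topologically free it factors onto $\sigma$ through a continuous $T$-equivariant surjection $\varphi\colon\Sbb^1\to\Sbb^1$. Using the monotone--light factorization write $\varphi=\psi\circ m$, where $m$ collapses the connected components of the fibres of $\varphi$ and $\psi$ is light; then $\bar\tau$ descends through $m$ to a minimal $T$-action on a circle $Z$, and the branch locus of $\psi$, being closed, nowhere dense and $T$-invariant, is empty by minimality, so $\psi$ is a finite-sheeted covering of $\Sbb^1$. But the standard action $\sigma$ does not lift to any non-trivial finite cyclic cover of the circle: its Euler class generates $H^2(T;\Z)\cong\Z$ (Ghys--Sergiescu) and $T$ is perfect. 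Hence $\psi$ is a homeomorphism, $\varphi$ is monotone of degree $\pm 1$, and so $\bar\tau$, and therefore $\tau$, is semiconjugate to $\sigma$.

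The main obstacle is to exclude the remaining case, that $\bar\tau$ is topologically free; equivalently, one must show that $T$ has no topologically free minimal action on $\Sbb^1$. Here I would invoke the structure theory of minimal circle actions: since $T$ is perfect, $\bar\tau$ is not conjugate to an action by rotations, so $\bar\tau$ is a finite cyclic cover of a minimal \emph{strongly proximal} action $\bar\tau_0$ of $T$ on $\Sbb^1$, which is then a faithful extreme boundary action. If $\bar\tau_0$ is not topologically free then, by the previous paragraph, it is conjugate to $\sigma$; but $\sigma$ does not lift to any non-trivial finite cyclic cover of the circle, so the cover is trivial, $\bar\tau=\bar\tau_0$ is the standard action, which is not topologically free, a contradiction. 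The genuinely delicate sub-case is when $\bar\tau_0$ is itself a topologically free extreme boundary action of $T$ on $\Sbb^1$, and this is where the core difficulty lies: by Theorem~\ref{thm-AG-extreme} such an action would force $\mathcal{A}_T$ to be trivial, hence $T$ to be $C^*$-simple and, by Corollary~\ref{cor-F-F-T}, $F$ to be non-amenable, so this sub-case can only occur under that hypothesis; ruling it out unconditionally requires an extra dynamical input, for instance the fact that in any faithful action of $T$ on $\Sbb^1$ every involution is conjugate to the rotation by $1/2$, and more generally that for two conjugate involutions $a,a'\in T$ with $aa'\ne 1$ the rotation number of $\bar\tau_0(aa')$ lies in $\{0,\tfrac12\}$, combined with the minimality and strong proximality of $\bar\tau_0$. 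Carrying out this last step is the crux of the argument.
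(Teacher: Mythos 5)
Your reduction to the minimal case and your appeal to Corollary~\ref{C: rigidity thompson}(ii) match the paper's strategy, but the proof has a genuine gap exactly where you locate "the crux": you never actually rule out that the minimal model $\bar\tau$ is topologically free. Everything in your last paragraph is conditional or programmatic --- you observe that a topologically free extreme boundary action would force $\mathcal{A}_T=1$ and hence the non-amenability of $F$, but since the amenability of $F$ is open this does not exclude the case, and the rotation-number argument with involutions that you propose as the "extra dynamical input" is not carried out. The paper closes this gap with a short but essential argument you are missing: by Brin--Squier, $[F,F]$ has no non-abelian free subgroups, so by Margulis' theorem any continuous action of $[F,F]$ on $\Sbb^1$ preserves a probability measure; since $[F,F]$ is simple (hence perfect and with infinite orbits if it acts without fixed points), this measure would semi-conjugate the action to one by rotations, which is impossible, so $[F,F]$ has a global fixed point $x$ in \emph{any} non-trivial action on the circle. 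The stabilizer of $x$ then contains $\tau([F,F])$, whose conjugacy class in $\sub(T)$ does not accumulate at the trivial subgroup, so by Propositions~\ref{prop-min-action-urs} and~\ref{prop-trivial-URS-top-free} the action is not topologically free. Without this (or a complete substitute), your proof does not go through.

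A secondary issue: Corollary~\ref{C: rigidity thompson}(ii) only produces a continuous equivariant surjection $\varphi$, and you are right that one must upgrade this to a monotone degree-$\pm1$ map. Your route via monotone--light factorization is shaky as written: a light continuous surjection of circles need not be a finite-sheeted covering (it can fail to be locally injective), and the "branch locus" you invoke is not a well-defined object for merely continuous maps; the Euler-class input is also external to the paper. The paper avoids all of this by proving directly that $\varphi$ is \emph{injective}: if $\varphi(x_1)=\varphi(x_2)=z$ with $x_1\neq x_2$, then $\tau(T_z^0)$ preserves an interval $I$ between them on which $\varphi$ is non-constant, and using the decomposition $T=T_z^0T_{z'}^0$ for a suitable $z'=\varphi(y)$ with $y\in I$ one finds a $\tau(T)$-orbit trapped in $I$, contradicting minimality. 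So the minimal model is in fact \emph{conjugate} to the standard action, which is both cleaner and stronger than what your covering argument would yield.
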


First note that a non-trivial continuous action of $T$ on the circle cannot have a finite orbit. Indeed, the case of a finite orbit of cardinality greater than $1$ is ruled out by the simplicity of the group $T$; and the case of a global fixed point cannot happen either because the stabilizer of a point in the group of orientation preserving homeomorphisms of the circle is torsion-free, whereas $T$ contains torsion elements.

It is a well-known general fact that any continuous action of a countable group on the circle without finite orbits is semi-conjugate to a minimal action (the semi-conjugation is obtained by collapsing to a point every connected component of the complement of the unique minimal subset) \cite[Proposition 5.8]{Ghys-circle}. In particular Theorem \ref{T: circle} has the following equivalent formulation.


\begin{thm}\label{T: circle minimal}
Every minimal continuous action of the group $T$ on the circle is conjugate to the standard action.
\end{thm}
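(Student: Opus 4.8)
\textbf{Proof proposal for Theorem \ref{T: circle minimal}.}

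The plan is to apply the rigidity statement in Corollary \ref{C: rigidity thompson}(ii) to the given minimal action $T \acts \Sbb^1$. First I would observe that a minimal action of $T$ on the circle cannot be topologically free: indeed $T$ contains torsion elements (e.g.\ elements of finite order coming from the rotation-like generators), and a non-trivial torsion element of $\homeo_+(\Sbb^1)$ has empty fixed-point set, so its fixed-point set certainly does not have empty interior in the sense needed for topological freeness — or rather, one argues that if the action were topologically free then a torsion element would act freely, which is impossible for a periodic homeomorphism of the circle unless it is trivial, contradicting faithfulness (which minimality plus non-triviality forces, after passing through simplicity of $T$). Having ruled out topological freeness, Corollary \ref{C: rigidity thompson}(ii) produces a continuous surjective $T$-equivariant map $\varphi\colon \Sbb^1 \to \Sbb^1$ from the given action onto the standard one.

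The remaining task is to upgrade this factor map to a conjugacy, i.e.\ to show $\varphi$ is a homeomorphism. Here I would use minimality of the source action crucially: the preimages $\varphi^{-1}(z)$, $z \in \Sbb^1$, form a partition of $\Sbb^1$ into closed connected sets (arcs or points), permuted by $T$ in a way compatible with the standard action. If some fibre $\varphi^{-1}(z)$ has non-empty interior, then — because $T$ acts on $\Sbb^1$ (standard) with all orbits dense and, in fact, transitively on the relevant orbit of $z$ — the $T$-orbit of this fibre would be a $T$-invariant open set that is a countable union of disjoint non-degenerate arcs; I would then need to see that such a configuration contradicts minimality of the source action, because the complement of this open set is a non-empty proper closed invariant subset (a Cantor-like set), unless the open set is all of $\Sbb^1$ — but it cannot be all of $\Sbb^1$ since $\varphi$ is surjective onto an uncountable circle while only countably many fibres can be non-degenerate. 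Hence every fibre is a single point, $\varphi$ is a continuous bijection of the compact circle, thus a homeomorphism, and it conjugates the source action to the standard one. Finally, since any orientation of the conjugating circle homeomorphism is allowed, the statement about semi-conjugacy (with degree $\pm 1$) in Theorem \ref{T: circle} follows from Theorem \ref{T: circle minimal} via the standard collapsing construction \cite[Proposition 5.8]{Ghys-circle} recalled in the excerpt.

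The main obstacle I anticipate is the injectivity step: ruling out non-degenerate fibres for $\varphi$. The delicate point is that a general minimal action of a countable group on $\Sbb^1$ need not be conjugate to a standard one, so one genuinely must use specific features of $T$ — concretely, that $T^0_{\varphi(x)}$ determines $\varphi(x)$ (the characterization of $\varphi$ in Corollary \ref{C: rigidity thompson}), and that for any $z_1 \neq z_2$ the groups $T^0_{z_1}, T^0_{z_2}$ generate $T$ (Lemma \ref{L: T0}). From these, if $x, x'$ lie in the same non-degenerate fibre over $z$, one would try to derive that $T^0_z$ fixes an entire arc of the source circle, then exploit that the endpoints of that arc are moved around by $T$ to contradict minimality (a $T$-invariant proper closed set would appear as the union of endpoints of the countably many wandering arcs, or its complement). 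Making this last contradiction airtight — that a countable group cannot have a minimal action on $\Sbb^1$ admitting a non-trivial monotone factor onto another minimal action unless the factor is injective — is the crux; it is a known fact about minimal circle actions (two minimal actions with a common monotone equivariant factor of degree one must agree), and I would cite or reprove it via the uniqueness of the minimal set together with surjectivity of $\varphi$.
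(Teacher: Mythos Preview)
Your argument that the minimal action cannot be topologically free is wrong. You claim that a non-trivial torsion element of $\homeo_+(\Sbb^1)$ cannot act freely, but this is false: the rotation by $1/n$ is a non-trivial order-$n$ homeomorphism with empty fixed-point set. An empty fixed-point set certainly has empty interior, so the presence of torsion in $T$ is entirely compatible with topological freeness. The paper's argument here is substantially deeper: it uses Margulis' alternative (any subgroup of $\homeo(\Sbb^1)$ without free subgroups preserves a probability measure) to show that $[F,F]$ must have a global fixed point under any circle action, because $[F,F]$ has no free subgroups (Brin--Squier) and is simple and non-abelian (so it cannot act by rotations). Once $[F,F]$ fixes a point $x$, the stabilizer of $x$ contains $[F,F]$, whose conjugacy class in $\sub(T)$ does not accumulate on the trivial subgroup; hence the stabilizer URS is non-trivial and the action is not topologically free. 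You need an argument of this kind --- torsion alone cannot do the job.

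Your injectivity argument also has a gap that you partially acknowledge but do not close. You assert that the fibres $\varphi^{-1}(z)$ are arcs, but the factor map $\varphi$ produced by Corollary \ref{C: rigidity thompson} is only known to be continuous and equivariant; nothing yet says it is monotone or of degree $\pm 1$, so fibres need not be connected (think of a degree-two map). Your ``countably many non-degenerate fibres'' argument only rules out fibres with non-empty interior; it does not rule out, say, a two-to-one cover or a map that collapses a Cantor set. The paper's actual argument bypasses monotonicity entirely and is quite specific to $T$: assuming $\varphi(x_1)=\varphi(x_2)=z$ with $x_1\neq x_2$, one picks an interval $I$ between $x_1$ and $x_2$ on which $\varphi$ is non-constant, observes that $\tau(T^0_z)$ preserves $I$, chooses $y\in I$ with $z'=\varphi(y)$ generic (so $T_{z'}=T^0_{z'}$ and $z'$ lies in a different $T$-orbit from $z$), and then uses the decomposition $T=T^0_z\, T^0_{z'}$ (which follows from transitivity of $T^0_z$ on the orbit of $z'$) to conclude that the entire $\tau(T)$-orbit of $y$ stays inside $I$, contradicting minimality. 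The key algebraic input $T=T^0_z\, T^0_{z'}$ is what your sketch is missing.
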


\begin{proof}
In the sequel we denote by $\tau\colon T\to\homeo( \Sbb^1)$ an arbitrary representation of $T$ on the circle such that the induced action $\tau(T)\acts \Sbb^1$ is minimal, and by a mere inclusion $ T\hookrightarrow \homeo(\Sbb^1)$ the standard representation. If $x\in \Sbb^1$ and $g\in T$, the notation $gx$ will refer to the standard action, and we will always write $\tau(g)x$ when making reference to the action induced by $\tau$. The notations $T_x, T^0_x$ will always be intended with respect to the standard action. 

We wish to apply Corollary \ref{C: rigidity thompson} to the action $\tau(T)\acts \Sbb^1$. For this, we need to rule out the possibility that this action is topologically free. This will follow from the following well-known lemma.

\begin{lemma} \label{lem-F'-fixed-point-Margulis}
Every non-trivial continuous action of $[F,F]$ on the circle has a fixed point.
\end{lemma}

We derive this lemma from a result of Margulis (conjectured by Ghys), stating that every group of homeomorphisms of the circle that does not have free subgroups must preserve a probability measure \cite{Margulis}. However \'E.\ Ghys informed us that the lemma was known before Margulis' result, and can also be proved directly.

\begin{proof}
Since $[F,F]$ does not have free subgroups by a result of Brin and Squier \cite{Brin-Squier}, Margulis' alternative implies that every continuous action on $\Sbb^1$ preserves a probability measure $\mu$. Assume by contradiction that $[F,F]$ acts on $\Sbb^1$ with no global fixed point. By simplicity of $[F,F]$ all orbits must be infinite, and it follows that the measure $\mu$ is atomless. By reparametrizing the circle using $\mu$, we can semi-conjugate the action of $[F,F]$ to an action that preserves the Lebesgue measure, i.e.\ an action by rotations. This is clearly a contradiction because $[F,F]$, being simple, does not have non-trivial abelian quotients. \qedhere
\end{proof}

\begin{lemma}
A minimal action of $T$ on the circle cannot be topologically free. 
\end{lemma}

\begin{proof}
Let $\tau: T\to \homeo(\Sbb^1)$ be, as above, a representation inducing a minimal action. According to Lemma \ref{lem-F'-fixed-point-Margulis}, there exists $x \in \Sbb^1$ that is fixed by $\tau([F,F])$. Since the conjugacy class of $[F,F]$ in $\sub(T)$ does not accumulate on the trivial subgroup, a fortiori the same is true for the stabilizer of $x$ for the action induced by $\tau$. According to the last sentence in Proposition \ref{prop-min-action-urs}, the stabilizer URS of this action must be non-trivial. By Proposition \ref{prop-trivial-URS-top-free}, this exactly means that the action is not topologically free. \qedhere
\end{proof}


We now complete the proof of Theorem \ref{T: circle minimal}. Since the action $\tau(T)\acts \Sbb^1$ is minimal and not topologically free, Corollary \ref{C: rigidity thompson}(ii) provides us with a continuous surjective map $\varphi: \Sbb^1\to \Sbb^1$ that factors the action $\tau(T)\acts \Sbb^1$ onto the standard action $T\acts \Sbb^1$. Recall the definition of the map $\varphi$: for $x\in \Sbb^1$, $\varphi(x)\in \Sbb^1$ is the unique point $z\in \Sbb^1$ such that $\tau(T^0_{z})$ fixes $x$. Let us show that, in this situation, $\varphi$ must be a homeomorphism. It is enough to check that $\varphi$ is injective. To this end, assume by contradiction that there exist $x_1\neq x_2$ such that $\varphi(x_1)=\varphi(x_2)=z$. Since $\varphi$ is surjective, in particular $\varphi$ is not constant on at least one of the two intervals with endpoints $x_1$ and $x_2$, say $I$. Since $\tau(T_z^0)$ fixes $x_1$ and $x_2$, $\tau(T_z^0)$ must preserve $I$ (note that $\tau(T)$ necessarily acts by orientation-preserving homeomorphisms because $T$ has no subgroup of index two). Choose and fix a point $y\in I$ such that $z'=\varphi(y)$ verifies $T_{z'}^0=T_{z'}$, and $z'$ is not contained in the $T$-orbit of $z$. Note that such a point $y$ exists because $\varphi(I)\subset \Sbb^1$ is a proper interval, and the set of $z'$ verifying these two conditions is dense in $\Sbb^1$ (in fact, both conditions are verified by all but countably many points). The fact that $z$ and $z'$ lie in different $T$-orbits implies that $T^0_z$ acts transitively on the orbit of $z'$, as it was already observed in the proof of Lemma \ref{L: T0}. From this we deduce that every element of $T$ can be written as a product of an element in $T^0_z$ and an element in $T_{z'}=T^0_{z'}$, i.e.\ we have the decomposition $T=T_z^0T^0_{z'}$. Since $\tau(T^0_{z'})$ fixes $y$ by definition of $z'=\varphi(y)$, it follows that the $\tau(T)$-orbit of $y$ is equal to the $\tau(T^0_z)$-orbit of $y$, and thus it is contained in $I$ since $\tau(T^0_z)$ preserves $I$. This contradicts the minimality of $\tau(T)\acts \Sbb^1$, and shows that the map $\varphi$ must be injective. This concludes the proof of Theorem \ref{T: circle minimal}, and thus the proof of Theorem \ref{T: circle}. \qedhere
\end{proof}



The following remark was pointed out to us by \'E.\ Ghys.

\begin{remark}
In contrast to what is shown in Theorem \ref{T: circle} for Thompson's group $T$, the group $F$ does admit non-trivial continuous actions on the interval that are not semi-conjugate to the standard action. Such an action can be obtained by choosing a bi-invariant ordering on $F$ (bi-invariant orderings on $F$ exist, and these were completely classified in \cite{Na-Ri}), embedding $F$ in the interval $[0,1]$ by respecting the bi-invariant ordering, and extending the $F$-action on itself to an action on the interval in the natural way. The resulting action of $F$ on $[0,1]$ has the property that for every $g\in F$, either $gx\leq x$ or $gx\geq x$ for all $x\in [0,1]$. Note that the standard action of $F$ on $[0,1]$ is far from verifying this property.
\end{remark}

\subsection{Groups of piecewise projective homeomorphisms} \label{subsec-projective}

In this paragraph we prove the $C^\ast$-simplicity of the groups of piecewise projective homeomorphisms of the real line considered by Monod in \cite{Monod-Pw} to provide new examples of non-amenable groups without free subgroups. We also prove the $C^\ast$-simplicity of the finitely presented group $G_0$ introduced by Lodha and Moore in \cite{Lod-Moo}.

\medskip

We take the original notation from \cite{Monod-Pw}. Consider the action of $\PSL_2(\R)$ by homographies on the projective line $\mathbb{P}^1(\R)=\R\cup\{\infty\}$. Given a subring $A \leq \R$, we let $G(A)$ be the group of homeomorphimsms of $\mathbb{P}^1(\R)$ that coincide with elements of $\PSL_2(A)$ in restriction to finitely many intervals, and such that the endpoints of the intervals belong to the set of fixed points of hyperbolic elements of $\PSL_2(A)$. Let also $H(A) \leq G(A)$ be the stabilizer of the point $\infty$. Thus, $H(A)$ acts on the real line $\R$ by homeomorphisms.


The following easy lemma provides a sufficient condition for all rigid stabilizers to be non-amenable.

\begin{lemma} \label{lem-orbit-base}
Let $X$ be a topological space, and $G$ a group of homeomorphisms of $X$. Assume that there exists an open subset $U_0 \subset X$ whose $G$-orbit forms a basis for the topology, and such that $G_{U_0}$ is non-amenable. Then all rigid stabilizers $G_U$, for $U \subset X$ open and non-empty, are non-amenable.
\end{lemma}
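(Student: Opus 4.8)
The plan is to exploit the hypothesis that the $G$-orbit of $U_0$ is a basis in order to reduce every rigid stabilizer $G_U$ to a conjugate of $G_{U_0}$. Concretely, given a non-empty open set $U\subseteq X$, since $\{g(U_0)\,:\,g\in G\}$ is a basis for the topology, there exists $g\in G$ with $g(U_0)\subseteq U$. The first step is then the elementary observation that conjugation transports rigid stabilizers: for any $h$ fixing $X\setminus U_0$ pointwise, the element $ghg^{-1}$ fixes $g(X\setminus U_0)=X\setminus g(U_0)$ pointwise, and conversely, so $gG_{U_0}g^{-1}=G_{g(U_0)}$.

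The second step is the monotonicity of rigid stabilizers under inclusion: if $V\subseteq U$, then $X\setminus U\subseteq X\setminus V$, so any element fixing $X\setminus V$ pointwise also fixes $X\setminus U$ pointwise, whence $G_V\leq G_U$. Applying this with $V=g(U_0)$ gives $gG_{U_0}g^{-1}=G_{g(U_0)}\leq G_U$.

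Finally, I would combine these two inclusions with the assumption that $G_{U_0}$ is non-amenable. Since $gG_{U_0}g^{-1}$ is isomorphic to $G_{U_0}$, it is non-amenable as well; and a group admitting a non-amenable subgroup is itself non-amenable (amenability passes to subgroups). Hence $G_U$ is non-amenable, as $U$ was an arbitrary non-empty open set. There is no real obstacle here: the statement is a purely formal consequence of the conjugation and monotonicity properties of rigid stabilizers together with the basis hypothesis, so the only thing to be careful about is the direction of the inclusions $X\setminus U\subseteq X\setminus g(U_0)$ and the corresponding $G_{g(U_0)}\leq G_U$.
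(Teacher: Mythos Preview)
Your proof is correct and follows exactly the same approach as the paper's: pick $g\in G$ with $g(U_0)\subset U$, observe that $gG_{U_0}g^{-1}\le G_U$, and conclude since a group containing a non-amenable subgroup is non-amenable. You have simply spelled out in more detail the conjugation and monotonicity steps that the paper leaves implicit.
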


\begin{proof}
Let $U$ be any non-empty open subset of $X$. If $g \in G$ is such that $g(U_0) \subset U$ (such an element exists by assumption), then $gG_{U_0}g^{-1}$ is contained in $G_U$. Since $G_{U_0}$ is non-amenable by assumption, $G_U$ is also non-amenable.
\end{proof}

\begin{thm}\label{T: projective}
Let $G \leq H(\R)$ be a non-amenable countable subgroup of $H(\R)$. Assume that for every non-empty intervals $I_1 = ]a,b[$ and $I_2 = ]c,d[$, there is $g \in G$ such that $g(I_1) \subset I_2$. Then $G$ is $C^\ast$-simple.
\end{thm}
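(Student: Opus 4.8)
The plan is to invoke the $C^\ast$-simplicity criterion of Theorem~\ref{thm-non-amenab-rigid-stab}: it suffices to show that the rigid stabilizer $G_U$ is non-amenable for every non-empty open $U\subseteq\R$. By Lemma~\ref{lem-orbit-base}, and more precisely by the argument used in its proof, for this it is enough to exhibit a single bounded open interval $I_0$ with $G_{I_0}$ non-amenable: the transitivity hypothesis guarantees that every non-empty open $U\subseteq\R$ contains a bounded open interval $I$, and then an element $g\in G$ with $g(I_0)\subseteq I\subseteq U$, so that $gG_{I_0}g^{-1}\le G_{g(I_0)}\le G_U$ and $G_U$ is non-amenable. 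So the whole problem reduces to producing one bounded interval whose rigid stabilizer is non-amenable.

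This is where I would use the hypothesis $G\le H(\R)$. First I would record the local structure at infinity: every $g\in H(\R)$ is an orientation-preserving homeomorphism of $\R$ fixing $\infty$, and since it has only finitely many breakpoints, there is $R>0$ such that on $]R,+\infty[$ (resp.\ on $]-\infty,-R[$) the map $g$ coincides with a single element of $\PSL_2(\R)$ fixing $\infty$, i.e.\ with an affine map of positive slope. Passing to germs yields a homomorphism $\psi\colon H(\R)\to\mathrm{Aff}_+(\R)\times\mathrm{Aff}_+(\R)$, $g\mapsto(\mathrm{germ}_{+\infty}(g),\mathrm{germ}_{-\infty}(g))$ (well-definedness of $\psi$ as a homomorphism follows because $h$ maps a neighbourhood of $+\infty$ into a neighbourhood of $+\infty$, so far out $gh$ agrees with the composition of the two affine germs). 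The kernel of $\psi$ is exactly the set of elements of $H(\R)$ with bounded support, hence, setting $K=\ker(\psi|_G)$, the group $G/K$ embeds in the metabelian — thus amenable — group $\mathrm{Aff}_+(\R)\times\mathrm{Aff}_+(\R)$.

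Next I would observe that $K$ equals $\bigcup_I G_I$, the union running over all bounded open intervals $I\subseteq\R$: an element has bounded support precisely when it fixes pointwise the complement of some bounded open interval. This family is directed, since any two bounded open intervals are contained in a common one, and consequently every finitely generated subgroup of $K$ lies in some $G_I$. Therefore, if every $G_I$ were amenable, then $K$ would be amenable (amenability being detected on finitely generated subgroups), and then $G$, being an extension of the amenable group $G/K$ by the amenable group $K$, would itself be amenable, contradicting the hypothesis. Hence $G_{I_0}$ is non-amenable for at least one bounded open interval $I_0$, which is exactly what was needed in the first paragraph; combining this with Theorem~\ref{thm-non-amenab-rigid-stab} gives that $G$ is $C^\ast$-simple.

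The only genuinely non-formal ingredient, and thus the step to be careful with, is the analysis of the germs at $\pm\infty$ of elements of $H(\R)$: one must check that these germs are affine of positive slope (using that an element of $\PSL_2(\R)$ fixing $\infty$ is upper triangular and that $H(\R)$ acts by orientation-preserving maps), and that $\psi$ is a homomorphism whose kernel is precisely the bounded-support subgroup. Everything else — the reduction via Theorem~\ref{thm-non-amenab-rigid-stab} and Lemma~\ref{lem-orbit-base}, the directed-union argument for $K$, and the permanence of amenability under extensions and directed unions — is routine.
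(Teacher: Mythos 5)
Your proof is correct and follows essentially the same route as the paper: the paper also passes to the homomorphism recording the two affine germs at $\infty$ (landing in $(\R\rtimes\R_+^\times)^2$), notes its kernel is non-amenable and is the increasing union of rigid stabilizers of bounded intervals, extracts one non-amenable $G_{]-n,n[}$, and concludes via Lemma~\ref{lem-orbit-base} and Theorem~\ref{thm-non-amenab-rigid-stab}. The only cosmetic difference is that you phrase the non-amenability of the kernel as a contradiction at the end rather than up front.
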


\begin{proof}
The two germs around the point $\infty$ give rise to a morphism $G \rightarrow (\mathbb{R} \rtimes \mathbb{R}_+^\times)^2$. Since $G$ is non-amenable, the kernel $N$ of the above morphism is also non-amenable.

Since every element of $N$ acts trivially on a neighbourhood of $\infty$, we can write $N$ as the increasing union of the rigid stabilizers $N_{]-n,n[}$, $n \geq 1$. Since a direct limit of amenable groups remains amenable, there must exist $n$ such that $N_{]-n,n[}$ is non-amenable. By assumption the $G$-orbit of $]-n,n[$ generates the topology on $\mathbb{R}$, so by Lemma \ref{lem-orbit-base} all rigid stabilizers $G_U$, for $U \subset \R$ open and non-empty, are non-amenable. Therefore $G$ satisfies the assumptions of Theorem \ref{thm-non-amenab-rigid-stab}, so it follows that $G$ is $C^\ast$-simple.
\end{proof}

It is proved in \cite{Monod-Pw} that if $A \leq \R$ is a dense subring, then the group $H(A)$ is non-amenable. Note that non-amenability of $G(A)$ is clear since $G(A)$ contains non-abelian free subgroups. Therefore Theorem \ref{T: projective} implies:

\begin{cor} \label{cor-H(A)-cstar}
Let $A$ be a countable dense subring of $\mathbb{R}$. Then both $H(A)$ and $G(A)$ are $C^\ast$-simple.
\end{cor}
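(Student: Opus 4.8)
The plan is to deduce both $C^\ast$-simplicity statements from the results of Section~\ref{sec-main-thms}. For $H(A)$ I would apply Theorem~\ref{T: projective} directly: $H(A)$ is countable (since $A$ is) and non-amenable by Monod's theorem \cite{Monod-Pw}, so one is left to verify the transitivity hypothesis of Theorem~\ref{T: projective}, namely that for any two non-empty bounded open intervals $I_1,I_2\subset\R$ there exists $g\in H(A)$ with $g(I_1)\subset I_2$. Granting this, Theorem~\ref{T: projective} yields that $H(A)$ is $C^\ast$-simple; moreover, the proof of that theorem (via Lemma~\ref{lem-orbit-base}) records the intermediate fact that \emph{every} rigid stabilizer $H(A)_U$, $U\subset\R$ open and non-empty, is non-amenable. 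I will reuse this for $G(A)$.

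For $G(A)$ I would apply Theorem~\ref{thm-non-amenab-rigid-stab} to the action $G(A)\acts\mathbb{P}^1(\R)$. The group $G(A)$ is countable and contains non-abelian free subgroups (already $\PSL_2(\Z)\le\PSL_2(A)\le G(A)$ does), hence is non-amenable, so it suffices to check that every rigid stabilizer $G(A)_U$, $U\subset\mathbb{P}^1(\R)$ open and non-empty, is non-amenable. By Lemma~\ref{lem-orbit-base} I only need one open set $U_0$ with $G(A)_{U_0}$ non-amenable and whose $G(A)$-orbit forms a basis for the topology of $\mathbb{P}^1(\R)$. I would take $U_0$ to be a bounded open interval of $\R\subset\mathbb{P}^1(\R)$. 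Then any element of $G(A)$ supported inside a bounded interval fixes the point $\infty$, hence lies in $H(A)$, so $G(A)_{U_0}=H(A)_{U_0}$, which is non-amenable by the first paragraph. That the $G(A)$-orbit of $U_0$ forms a basis is a routine verification using the piecewise-$\PSL_2(A)$ structure together with the density of $A$ in $\R$ (for instance, high powers of a hyperbolic element of $\PSL_2(\Z)\le G(A)$ with repelling fixed point off $\overline{U_0}$ compress $U_0$ into arbitrarily small intervals, and $\PSL_2(A)$ has dense orbits on $\mathbb{P}^1(\R)$). Lemma~\ref{lem-orbit-base} then gives that all $G(A)_U$ are non-amenable, and Theorem~\ref{thm-non-amenab-rigid-stab} yields that $G(A)$ is $C^\ast$-simple.

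The step I expect to be the main obstacle is the transitivity property for $H(A)$: producing elements of $H(A)$ --- which are \emph{required to fix} $\infty$ --- sending a prescribed interval inside an arbitrarily small target interval. This is where the density of $A$ is really used (it also underlies Monod's non-amenability result). The point is that the points admissible as breakpoints, namely the fixed points of hyperbolic elements of $\PSL_2(A)$, are dense in $\mathbb{P}^1(\R)$ (already those of $\PSL_2(\Z)\le\PSL_2(A)$ are); so one can place a breakpoint and the attracting fixed point of a suitable hyperbolic element of $\PSL_2(A)$ inside the target interval $I_2$, glue a high power of that element to the identity on a small interval around $\infty$, and use further admissible breakpoints to interpolate this into a genuine homeomorphism fixing $\infty$. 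This construction is essentially contained in Monod's work, and I would supply a short verification of it; everything else is a direct application of Theorem~\ref{T: projective}, Lemma~\ref{lem-orbit-base}, and Theorem~\ref{thm-non-amenab-rigid-stab}.
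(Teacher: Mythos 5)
Your proposal is correct and follows essentially the same route as the paper, which simply invokes Theorem~\ref{T: projective} after recording the non-amenability of $H(A)$ (Monod) and of $G(A)$ (free subgroups). You are in fact more careful than the paper on two points it leaves implicit: the verification of the transitivity hypothesis of Theorem~\ref{T: projective} for $H(A)$, and the passage from $H(A)$ to $G(A)$ --- which, since $G(A)\not\le H(\R)$, is not literally covered by the statement of Theorem~\ref{T: projective} but follows exactly as you argue, via non-amenability of all rigid stabilizers of $G(A)\acts\mathbb{P}^1(\R)$ and Theorem~\ref{thm-non-amenab-rigid-stab}.
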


Lodha and Moore have exhibited a finitely presented subgroup $G_0 \le H(\mathbb{Z}[1/\sqrt{2}])$ \cite{Lod-Moo}. The group $G_0$ is generated by the translation $a = t \mapsto t+1$, and the two following transformations $b$ and $c$:
\[b(t)=\left\{\begin{array}{lr} t & \text{ if }t\leq 0\\
 \frac{t}{1-t} & \text{ if } 0\leq t\leq \frac{1}{2}\\
3-\frac{1}{t} & \text{ if }\frac{1}{2}\leq t \leq 1\\
t+1 & \text{ if }1\leq t\end{array}\right.
\quad
c(t)=\left\{\begin{array}{lr} \frac{2t}{1+t} & \text{ if }0 \leq t\leq 1\\
 t & \text{ otherwise.} \end{array}\right.
\]

It readily follows from the definition that for all $n \geq 1$, the element $b^n$ sends the interval $[0,1]$ to $[0,n]$. Therefore the $\langle a,b\rangle$-orbit of any non-empty open interval generates the topology on $\R$. Combined with the non-amenability of $G_0$ \cite{Lod-Moo}, Theorem \ref{T: projective} thus implies the following result.

\begin{cor}
The group $G_0$ is $C^\ast$-simple.
\end{cor}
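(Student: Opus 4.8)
The plan is to verify that the group $G_0$ satisfies the hypotheses of Theorem \ref{T: projective}, which then yields $C^\ast$-simplicity immediately. There are exactly two things to check: that $G_0$ is non-amenable, and that for every pair of non-empty open intervals $I_1, I_2 \subset \R$ there is some $g \in G_0$ with $g(I_1) \subset I_2$. The first point is not something we prove here; non-amenability of $G_0$ is established by Lodha and Moore in \cite{Lod-Moo}, and we simply cite it. So the entire content of the proof is the transitivity-type statement on intervals, for which it suffices (by the argument already used in the proof of Theorem \ref{T: projective} via Lemma \ref{lem-orbit-base}) to exhibit a single open interval whose $G_0$-orbit forms a basis for the topology of $\R$.

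The key observation is the explicit behaviour of the generators $a$ and $b$. First I would note from the defining formulas that $a(t) = t+1$ is the unit translation, and that for each $n \geq 1$ the element $b^n$ maps the interval $[0,1]$ onto $[0,n]$: indeed $b$ fixes $(-\infty,0]$ pointwise, maps $[0,1]$ onto $[0,2]$, and acts as $t \mapsto t+1$ on $[1,\infty)$, so iterating $b$ stretches $[0,1]$ to $[0,n]$ while translating the part beyond $1$. Conversely, $b^{-n}$ maps $[0,n]$ back onto $[0,1]$, so by composing with suitable powers of the translation $a$, one sees that $[0,1]$ can be sent onto any interval of the form $[k, k + 1/2^j]$ (using that $b$ also contracts: on $[0,1/2]$ we have $b(t) = t/(1-t)$, so restricting to smaller dyadic-endpoint subintervals and then translating produces arbitrarily short intervals placed anywhere with dyadic endpoints). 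Hence the $\langle a, b \rangle$-orbit — a fortiori the $G_0$-orbit — of $[0,1]$ contains intervals of arbitrarily small length around every point of $\R$, so it generates the topology.

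With that in hand, the deduction is purely formal: given arbitrary non-empty open intervals $I_1 = \,]a,b[\,$ and $I_2 = \,]c,d[\,$, pick $g_1 \in G_0$ sending $[0,1]$ into (a compact subinterval of) $I_1$ and $g_2 \in G_0$ sending $[0,1]$ into $I_2$; then some element built from $g_1^{-1}$, $g_2$ and the stretching/contracting powers of $b$ sends $I_1$ inside $I_2$. Alternatively, and more cleanly, one invokes Lemma \ref{lem-orbit-base} with $U_0 = \,]0,1[\,$: the $G_0$-orbit of $]0,1[$ is a basis for the topology, and the rigid stabilizer argument inside the proof of Theorem \ref{T: projective} shows that non-amenability of $N_{]-n,n[}$ for large $n$ (which follows since $N = \ker(G_0 \to (\R \rtimes \R_+^\times)^2)$ is non-amenable and is the increasing union of these rigid stabilizers) propagates to all rigid stabilizers $G_0{}_U$, whence Theorem \ref{thm-non-amenab-rigid-stab} applies.

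I expect no serious obstacle here: the hard analytic input (non-amenability of $G_0$) is imported wholesale from \cite{Lod-Moo}, and the geometric input is a one-line inspection of the piecewise formula for $b$. The only mild subtlety is making sure the contraction side works — that powers and conjugates of $b$ really do produce arbitrarily short intervals and not just arbitrarily long ones — but the branch $b(t) = t/(1-t)$ on $[0,1/2]$ handles this, since it and its inverse rescale dyadic subintervals of $[0,1]$, and translation by $a$ then moves them anywhere. So the proof reduces to citing \cite{Lod-Moo} for non-amenability, observing the stretching property $b^n([0,1]) = [0,n]$, concluding that the $\langle a,b\rangle$-orbit of $]0,1[$ generates the topology, and invoking Theorem \ref{T: projective} (equivalently Lemma \ref{lem-orbit-base} together with Theorem \ref{thm-non-amenab-rigid-stab}).
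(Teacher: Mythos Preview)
Your proposal is correct and follows exactly the same route as the paper: cite non-amenability of $G_0$ from \cite{Lod-Moo}, observe from the explicit formula that $b^n$ stretches $[0,1]$ over $[0,n]$ so that the $\langle a,b\rangle$-orbit of any open interval generates the topology on $\R$, and apply Theorem~\ref{T: projective}. The paper's proof is a single sentence to this effect; your version is more discursive and contains a minor imprecision (the contracted intervals $b^{-j}([0,1])$ have length $1/(j+1)$ rather than $1/2^j$, since $b^{-1}$ is $s\mapsto s/(1+s)$ on $[0,1]$), but this is immaterial to the argument.
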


As far as we know, this provides the first example of a finitely presented $C^\ast$-simple group with no free subgroups.


\subsection{Groups acting on trees} \label{subsec-trees}

In this paragraph $T$ will be a simplicial tree, whose set of ends will be denoted $\partial T$. Any edge of $T$ separates $T$ into two subtrees, called \textbf{half-trees}.

Recall that $g \in \aut(T)$ is \textbf{elliptic} if $g$ stabilizes a vertex or an edge, and \textbf{hyperbolic} if $g$ translate along a bi-infinite geodesic line, called the axis of $g$. In this last situation $g$ admits exactly two fixed ends, called the \textbf{endpoints} of $g$. We say that the action of a group $G$ on $T$ is \textbf{of general type} if $G$ has hyperbolic elements without common endpoints, and is \textbf{minimal} if $T$ has no proper $G$-invariant subtree. This last terminology is conflicting with the one introduced in \textsection\ref{subsec-notation}, but there will be no ambiguity since the meaning of the word \textit{minimal} will be clear from the context.

\medskip

\textit{In the sequel we assume that $G$ is a subgroup of $\mathrm{Aut}(T)$ whose action on $T$ is minimal and of general type}. We will repeatedly use the classical fact that, under these assumptions, $G$ contains a hyperbolic element with axis contained in any given half-tree of $T$.

\begin{lemma} \label{lem-put-half-tree-inside}
For every half-trees $T_1,T_2$ in $T$, there is $g \in G$ such that $g(T_1) \subset T_2$.
\end{lemma}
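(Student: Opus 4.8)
The plan is to produce $g$ as a product of two high powers of suitably chosen hyperbolic elements, exploiting the North--South dynamics of a hyperbolic tree automorphism on the space of ends. The basic tool is the following elementary observation, which I would record first: \emph{if $h\in\aut(T)$ is hyperbolic with attracting endpoint $\xi^+$ and repelling endpoint $\xi^-$, and $K$ is any half-tree with $\xi^-\notin\partial K$, then for every half-tree $H$ with $\xi^+\in\partial H$ one has $h^m(K)\subseteq H$ for all sufficiently large $m$.} This is proved by projecting the boundary edge of $K$ onto the axis $L$ of $h$: the hypothesis $\xi^-\notin\partial K$ forces the projections of the boundary edges of the $h^m(K)$ to march along $L$ towards $\xi^+$ while $h^m(K)$ stays ``on the far side'', so that $h^m(K)$ is eventually contained in any half-tree whose boundary edge projects to a point of $L$ that $h^m$ has already passed. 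I would also record the obvious fact that if the axis of a hyperbolic element is contained in a half-tree $Y$, then both of its endpoints lie in $\partial Y$.

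With this in hand, let $T_1,T_2$ be half-trees. By the classical fact recalled before the lemma, choose a hyperbolic $h\in G$ whose axis is contained in $T_2$; then its endpoints $\xi^+,\xi^-$ both lie in $\partial T_2$. Apply the same fact to the half-tree $T_1^\sharp$ opposite to $T_1$, obtaining a hyperbolic $h'\in G$ whose axis lies in $T_1^\sharp$; its two endpoints then lie in $\partial T_1^\sharp$, hence not in $\partial T_1$. Since $h'$ has two distinct endpoints, one of them, say $\eta$, differs from $\xi^-$; after replacing $h'$ by $h'^{-1}$ if necessary we may assume $\eta$ is the attracting endpoint of $h'$. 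Pick a half-tree $H$ with $\eta\in\partial H$ and $\xi^-\notin\partial H$. Applying the contraction fact to $h'$ and $K=T_1$ (legitimate, since the repelling endpoint of $h'$ is not in $\partial T_1$), we obtain $h'^{k}(T_1)\subseteq H$ for $k$ large; fix such a $k$ and set $T_1':=h'^{k}(T_1)$, so that $\xi^-\notin\partial T_1'$.

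Finally, apply the contraction fact a second time, to $h$ and $K=T_1'$: since $\xi^-\notin\partial T_1'$ and $\xi^+\in\partial T_2$, we get $h^m(T_1')\subseteq T_2$ for $m$ large, and then $g:=h^m h'^{k}\in G$ satisfies $g(T_1)\subseteq T_2$. The one point that genuinely requires care is the preliminary step producing $T_1'$: if $T_2$ happens to lie inside $T_1$, then every hyperbolic axis contained in $T_2$ has both endpoints in $\partial T_1$, so the powers $h^m(T_1)$ need not contract into $T_2$ at all (they may exhaust $T$); this is why a single hyperbolic element does not suffice and one must first translate $T_1$ off the repelling endpoint of $h$ using a second hyperbolic element with axis in $T_1^\sharp$. (Alternatively, this preliminary step could be carried out using the fact that the $G$-action on $\partial T$ is minimal, but the two-element argument only invokes the tool already cited.)
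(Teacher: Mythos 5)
Your proof is correct. The paper takes a different, shorter route: it first observes that a hyperbolic element whose axis lies in a half-tree $Y$ maps the complementary half-tree into $Y$ in a \emph{single} application (no powers, no North--South dynamics), and then runs a case analysis on the relative position of $T_1$ and $T_2$: if they are disjoint, one hyperbolic element with axis in $T_2$ suffices; if $T_2\not\subseteq T_1$, one replaces $T_2$ by a smaller half-tree disjoint from $T_1$; if $T_2\subseteq T_1$, one first moves $T_1$ off itself using a hyperbolic element with axis in the complement of $T_1$ and then reduces to the disjoint case. Structurally the two arguments are parallel --- in both, a hyperbolic element with axis in $T_1^\sharp$ first pushes $T_1$ out of the way when necessary, and a hyperbolic element with axis in $T_2$ then pulls it inside --- but you replace the paper's exact one-shot folding observation by the softer contraction-of-half-trees-under-powers lemma, which you must then prove (your projection argument for it is sound: $\xi^-\notin\partial K$ forces all of $K$ to project into a sub-ray of the axis pointing toward $\xi^+$, and these sub-rays are translated into any half-tree containing $\xi^+$ in its boundary). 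What your uniform two-step argument buys is the absence of any case analysis; what it costs is the auxiliary half-tree $H$ separating $\eta$ from $\xi^-$ and the passage to high powers. Your closing remark about why a single hyperbolic element cannot work when $T_2\subseteq T_1$ is precisely the point handled by the paper's third case.
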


\begin{proof}
First assume that the half-trees $T_1$ and $T_2$ are disjoint. Then any hyperbolic $g \in G$ such that the axis of $G$ lies in $T_2$ will send $T_1$ inside $T_2$. If $T_2$ is not contained in $T_1$, then $T_2$ must contain another half-tree $T_2'$ that is disjoint from $T_1$, so that we may apply the previous argument and find $g \in G$ such that $g(T_1) \subset T_2' \subset T_2$. In the case when $T_1$ contains $T_2$, any hyperbolic element whose axis is contained in the complement of $T_1$ will send $T_1$ onto a subtree disjoint from $T_1$, so in particular disjoint from $T_2$. Again we are reduced to the first case, and the statement is proved.  
\end{proof}

We consider the set $T \sqcup \partial T$, endowed with the coarsest topology for which every half-tree is open. By a half-tree in $T \sqcup \partial T$ we mean the union of a half-tree $T_1$ in $T$ and the set of ends defined by $T_1$. The set $T_f$ of vertices of $T$ having finite degree is clearly preserved by $G$, so that $G$ also acts on $X_T = (T \setminus T_f) \sqcup \partial T$. One easily check that $X_T$ is actually the only minimal closed $G$-invariant subset of $T \sqcup \partial T$. 

\begin{prop} \label{prop-tree-ep}
The action of $G$ on $X_T$ is extremely proximal. 
\end{prop}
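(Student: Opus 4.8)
The goal is to show that the action of $G$ on $X_T = (T \setminus T_f) \sqcup \partial T$ is extremely proximal, i.e.\ that every closed subset $C \subsetneq X_T$ is compressible. The plan is to use the topology on $X_T$: a basis is given by (intersections with $X_T$ of) half-trees in $T \sqcup \partial T$, so a neighbourhood basis of a point $x \in X_T$ consists of half-trees $\widehat{T}_x$ in $T \sqcup \partial T$ "pointing towards" $x$. Since $C$ is a proper closed subset, its complement is a non-empty open set, hence contains some basic open set, i.e.\ there is a half-tree $\widehat{T}_1$ in $T \sqcup \partial T$ with $\widehat{T}_1 \cap X_T \neq \varnothing$ and $\widehat{T}_1 \cap C = \varnothing$; equivalently $C$ is contained in the (closed) complementary half-tree, call it $\widehat{T}_2$, associated to the half-tree $T_2$ in $T$. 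So it suffices to compress the fixed half-tree $\widehat{T}_2$ into an arbitrarily small neighbourhood of a suitable point.

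The key step is to produce the point $x \in X_T$ which absorbs $\widehat{T}_2$. I would pick $x$ to be an end of $T$ lying "deep inside" the complement of $T_2$ — concretely, fix any half-tree $T_0$ disjoint from $T_2$ (this exists since $T$ is infinite and has no vertices of finite... well, after removing $T_f$; in any case one chooses an edge far enough and takes the half-tree on the far side), let $x$ be any end in $\widehat{T}_0 \cap \partial T \subset X_T$, and let $\mathcal{W}$ be the neighbourhood basis of $x$ given by half-trees $\widehat{W}$ in $T \sqcup \partial T$ containing $x$. For each such $\widehat{W}$ I need $g \in G$ with $g(\widehat{T}_2) \subset \widehat{W}$. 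Now $\widehat{W}$ restricts to a half-tree $W$ in $T$, and $\widehat{T}_2$ restricts to the half-tree $T_2$ in $T$; by Lemma~\ref{lem-put-half-tree-inside} there is $g \in G$ with $g(T_2) \subset W$. Since $g$ is a tree automorphism, it maps the set of ends defined by $T_2$ into the set of ends defined by $W$, hence $g(\widehat{T}_2) \subset \widehat{W}$, and intersecting with the $G$-invariant set $X_T$ gives $g(\widehat{T}_2 \cap X_T) \subset \widehat{W} \cap X_T$. Since $C \subset \widehat{T}_2 \cap X_T$, we get $g(C) \subset \widehat{W} \cap X_T$, and as $\widehat{W}$ ranges over $\mathcal{W}$ these sets form a neighbourhood basis of $x$, establishing compressibility of $C$.

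The only mild subtlety — and the step I expect to require the most care — is the bookkeeping between half-trees in $T$, half-trees in $T \sqcup \partial T$, and their traces on $X_T$: one must check (a) that the complement of a basic open set in $X_T$ is contained in a single closed half-tree (this uses that the defining open set is a half-tree, and that half-trees come in complementary pairs sharing one edge), (b) that a choice of $T_0$ disjoint from $T_2$ with $\widehat{T}_0 \cap X_T \neq \varnothing$ is possible (true since $X_T$ is a non-empty minimal closed invariant set, so it meets every half-tree — indeed $G$ has hyperbolic elements with axis in any half-tree, whose endpoints lie in $\partial T \cap X_T$), and (c) that the point $x \in \partial T$ chosen actually lies in $X_T$, which holds since $\partial T \subset X_T$ by definition. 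All of these are routine once the topology on $T \sqcup \partial T$ and the structure of $X_T$ are unwound, and the substantive input is exactly Lemma~\ref{lem-put-half-tree-inside}. I would write the argument in two short paragraphs: first reduce compressibility of an arbitrary proper closed $C$ to compressibility of a fixed half-tree $\widehat{T}_2$, then carry out the compression using Lemma~\ref{lem-put-half-tree-inside} and the fact that tree automorphisms respect the end-compactification.
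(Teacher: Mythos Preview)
Your proposal is correct and follows essentially the same approach as the paper: reduce to showing a half-tree is compressible, then invoke Lemma~\ref{lem-put-half-tree-inside} to push it into any half-tree neighbourhood of a chosen end. The paper's proof is slightly leaner in that it picks the target point $\eta$ to be an \emph{arbitrary} end of $T$ rather than one lying in a half-tree disjoint from $T_2$ (your step (b) is unnecessary since Lemma~\ref{lem-put-half-tree-inside} applies to any pair of half-trees), but otherwise the arguments coincide.
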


\begin{proof}
We want to show that every proper closed $C \subset X_T$ is compressible. The subset $\partial T$ being dense in $X_T$, the complement of $C$ must contain some $\xi \in \partial T$. Since any neighbourhood of $\xi$ contains a half-tree, upon enlarging $C$ we may assume that $C$ is itself a half-tree (here by half-tree in $X_T$ we mean the intersection of a half-tree of $T \sqcup \partial T$ with $X_T$). Now given any point $\eta \in \partial T$, Lemma \ref{lem-put-half-tree-inside} shows that any neighbourhood of $\eta$ contains a $G$-translate of $C$, so $C$ is compressible.
\end{proof}

\subsubsection{Almost prescribed local action} \label{subsec-G(F,F')}

In this paragraph $\Omega$ will be a (possibly finite) countable set of cardinality greater than three, and $T$ will be a regular tree of branching degree the cardinality of $\Omega$. Note that the set of vertices of $T$ must be countable. As in the previous paragraph, we denote $X_T = (T \setminus T_f) \sqcup \partial T$. Since the tree $T$ is now regular, we have $X_T = \partial T$ when $\Omega$ is finite, and $X_T = T \sqcup \partial T$ when $\Omega$ is infinite.


Let $c: E(T) \rightarrow \Omega$ be a colouring of the set of edges of $T$ such that for every vertex $v$, the map $c$ induces a bijection $c_v$ between the set of edges around $v$ and $\Omega$. For $g \in \aut(T)$ and for a vertex $v$, the action of $g$ around $v$ gives rise a permutation $\sigma(g,v) \in \mathrm{Sym}(\Omega)$, defined by $\sigma(g,v) = c_{gv} \circ g_v \circ c_v^{-1}$, that we will call the \textbf{local permutation} of $g$ at the vertex $v$.

Given two permutation groups $F \leq F' \leq \mathrm{Sym}(\Omega)$, we denote by $G(F,F')$ the set of $g \in \aut(T)$ having all their local permutations in $F'$, and all but finitely many in $F$: $\sigma(g,v) \in F'$ for all $v$ and $\sigma(g,v) \in F$ for all but finitely many $v$. The index two subgroup of $G(F,F')$ that preserves the types of vertices of $T$ will be denoted $G(F,F')^\ast$. \textit{We will always assume that the permutation group $F$ acts freely on $\Omega$}. Under this assumption, it is not hard to see that $G(F,F')$ is a countable group as soon as the permutation group $F'$ is countable. When $\Omega$ is finite, $G(F,F')$ is actually a finitely generated group \cite{LB-ae}.

\medskip

Recall that, under the assumption that point stabilizers in $F'$ are amenable, the groups $G(F,F')$ are not $C^\ast$-simple and yet do not have non-trivial amenable normal subgroups \cite{LB-c-etoile}. The goal of this paragraph is twofold: we first give an explicit description of the Furstenberg URS of these groups (for $\Omega$ finite) and interpret this result at the level of the Furstenberg boundary; and also give, under appropriate assumptions on the permutation groups, a complete classification of all the URS's of the groups $G(F,F')$.

\subsubsection{Description of the Furstenberg URS}

We will need the following lemma.

\begin{lemma} \label{lem-ell-fix-vois}
Let $\xi \in \partial T$, and denote by $G(F,F')_\xi^0$ the set of elements of $G(F,F')$ fixing a neighbourhood of $\xi$ in $\partial T$. Then $G(F,F')_\xi^0$ is precisely the set of elliptic elements of $G(F,F')_\xi$.
\end{lemma}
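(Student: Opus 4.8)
The plan is to prove the two inclusions separately. For the easy one, suppose $g\in G(F,F')_\xi^0$, so $g$ fixes pointwise some neighbourhood of $\xi$ in $\partial T$; by definition of the topology this neighbourhood contains the set $\partial T_1$ of ends defined by some half-tree $T_1$ with $\xi\in\partial T_1$. Let $w$ be the vertex of $T_1$ incident to its defining edge; since $|\Omega|>3$, $w$ has at least three neighbours inside $T_1$, and running a ray into $T_1$ through three of them produces three distinct ends of $\partial T_1$, all fixed by $g$, whose median vertex is $w$. Hence $g$ fixes $w$, so $g$ is elliptic, and it lies in $G(F,F')_\xi$ by assumption; this gives the inclusion from $G(F,F')_\xi^0$ into the set of elliptic elements of $G(F,F')_\xi$.

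For the converse I would start with an elliptic $g\in G(F,F')_\xi$ and first observe that $g$ must fix a vertex: an elliptic automorphism either fixes a vertex or inverts an edge, and an edge inversion fixes no end (it fixes the midpoint of the inverted edge but reverses the direction of every geodesic ray issuing from that midpoint). Fixing such a vertex $v_0$ and using $g\xi=\xi$, the geodesic ray $R=(v_0,v_1,v_2,\dots)$ from $v_0$ to $\xi$ is preserved, hence fixed pointwise, by $g$. The key point I would then exploit is that for each $i\ge 1$ the local permutation $\sigma(g,v_i)$ fixes the colour of the edge $\{v_i,v_{i+1}\}$; since $F$ acts freely on $\Omega$ by the standing assumption, any element of $F$ with a fixed colour is trivial, and since $g\in G(F,F')$ all but finitely many of the $\sigma(g,v_i)$ lie in $F$. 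Combining these, $\sigma(g,v_i)=\mathrm{id}$ for all $i\ge N$, some $N$.

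Next I would set $S=\{v:\sigma(g,v)\notin F\}$, a finite set, and choose $M\ge N$ large enough that the half-tree $T_1$ hanging off $\{v_{M-1},v_M\}$ on the side of $v_M$ is disjoint from $S$; this is possible since these half-trees decrease to $\xi$. Note $\xi\in\partial T_1$ still. The claim to establish is that $g$ fixes $T_1$ pointwise; once this is done, $g$ fixes $\partial T_1$ pointwise (every end of $T_1$ is represented by a ray contained in $T_1$), so $g\in G(F,F')_\xi^0$ and the lemma follows. Since $g$ fixes both endpoints of $\{v_{M-1},v_M\}$ it preserves $T_1$, and I would prove by induction on $d(v_M,v)$ that every vertex $v$ of $T_1$ is fixed by $g$ and satisfies $\sigma(g,v)=\mathrm{id}$: the base case $v=v_M$ is the case $M\ge N$; in the inductive step, the neighbour $u$ of $v$ on the geodesic $[v_M,v]$ has $gu=u$ and $\sigma(g,u)=\mathrm{id}$ by induction, so $g$ fixes every neighbour of $u$, in particular $v$; and then $\sigma(g,v)\in F$ (because $v\notin S$) fixes the colour of $\{v,u\}$, hence equals $\mathrm{id}$ by freeness of $F$.

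The main obstacle to keep in mind is that an elliptic element of $G(F,F')$ can perfectly well have infinitely many non-trivial local permutations — the condition is only that all but finitely many of them lie in $F$, and $F$ is typically non-trivial — so a naive ``finitely many singularities'' argument does not suffice. What makes the proof go through is that fixing an end forces, via freeness of $F$, eventual triviality of the local permutations along the ray $R$, and then the inductive step propagates triviality to the entire half-tree $T_1$, again by freeness; the finiteness of $S$ enters only to let the base of this induction sit inside a half-tree on which $\sigma(g,\cdot)$ takes values in $F$ everywhere.
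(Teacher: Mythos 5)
Your proposal is correct and follows essentially the same route as the paper's proof: fix a vertex on the ray to $\xi$, use the fact that all but finitely many local permutations lie in $F$ together with freeness of $F$ to kill the local permutation at a far enough vertex of the ray, and then propagate triviality inductively over a half-tree containing $\xi$. You simply spell out the two points the paper leaves implicit (ruling out edge inversions, and the easy inclusion via the median of three fixed ends), both of which are handled correctly.
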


\begin{proof}
The fact that $G(F,F')_\xi^0$ contains only elliptic elements is clear. To prove the converse, let $g$ be an elliptic element of $G(F,F')_\xi$, and denote by $(v_n)$ a sequence of adjacent vertices representing the point $\xi$. Since $g$ is elliptic and $g$ fixes $\xi$, $g$ must fix $v_n$ for $n$ large enough. Now since $g$ has only finitely many local permutations outside $F$, there is $n_0 \geq 1$ such that $\sigma(g,v) \in F$ for every vertex $v$ outside the ball of radius $n_0$ around $v_0$. Thanks to the previous observation, we may also assume that g fixes all the $v_n$ for $n \geq n_0$. Now the permutation $\sigma(g,v_{n_0+1})$ belongs to $F$ and has a fixed point since $g$ fixes the edge $e_{n_0}$ between $v_{n_0}$ and $v_{n_0+1}$. Since $F$ acts freely on $\Omega$ by assumption, this shows that $\sigma(g,v_{n_0+1})$ is trivial, i.e.\ $g$ fixes the star around $v_{n_0+1}$. By repeating the argument we immediately see that $g$ must fix pointwise the half-tree defined by $e_{n_0}$ and containing $v_{n_
0+1}$. The latter being an open neighbourhood of $\xi$ in $\partial T$, the statement is proved.
\end{proof}

The following result gives a precise description of the stabilizer URS associated to the action $G(F,F') \acts \partial T$, and shows that it coincides with the maximal amenable URS of $G(F,F')$. We point out that the assumption that $\Omega$ is finite (equivalently, $X_T = \partial T$) is important here, as the description of the URS associated to the action of $G(F,F')$ on $X_T$ turns out to be more complicated when $\Omega$ is infinite.

\begin{prop} \label{prop--urs-tree-G(F,F')}
Assume that $\Omega$ is finite. Let $F \leq F' \leq \mathrm{Sym}(\Omega)$, and write $G = G(F,F')$. Then the Furstenberg URS of $G$ is $\mathcal{A}_G = \mathcal{S}_G(\partial T)$, and is exactly the collection of subgroups $G_\xi^0$, for $\xi \in \partial T$.
\end{prop}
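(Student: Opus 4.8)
The plan is to realise $G=G(F,F')$ as a countable group admitting a faithful extreme boundary action with amenable point stabilisers, and then read off both assertions from the general theory of the previous sections. Since $\Omega$ is finite, $T$ is a regular tree with no finite-degree vertices, so $X_T=\partial T$ is a Cantor set, and $G\acts T$ is minimal and of general type (it contains hyperbolic elements with axis in any prescribed half-tree, cf. \cite{LB-ae}); hence by Proposition~\ref{prop-tree-ep} the action $G\acts\partial T$ is extremely proximal, so it is an extreme boundary action, and it is faithful because a tree automorphism fixing every end is trivial.

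First I would identify $\mathcal{S}_G(\partial T)$ by showing that $G\acts\partial T$ has Hausdorff germs in the sense of Definition~\ref{D: Hausdorff germs}. Fix $\xi\in\partial T$ and $g\in G_\xi$. Every element of $G$ is elliptic or hyperbolic on $T$, and an element fixing an end cannot invert an edge. If $g$ is elliptic, then by Lemma~\ref{lem-ell-fix-vois} it fixes pointwise a neighbourhood of $\xi$ in $\partial T$, so alternative (i) of Definition~\ref{D: Hausdorff germs} holds. If $g$ is hyperbolic, then $\xi$ is one of its two endpoints and $\fix(g)$ meets $\partial T$ in exactly those two ends, a finite set with empty interior in the perfect space $\partial T$; thus the interior of $\fix(g)$ is empty and alternative (ii) holds vacuously. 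By Proposition~\ref{prop-cont-stab0} it follows that $\mathcal{S}_G(\partial T)=\{G_\xi^0:\xi\in\partial T\}$, which is the second statement of the proposition.

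Next I would check that the point stabilisers $G_\xi$ are amenable. By Lemma~\ref{lem-ell-fix-vois}, $G_\xi^0$ is the set of elliptic elements of $G_\xi$, and it is the increasing union, over the edges $e$ on the geodesic ray to $\xi$, of the rigid stabilisers $G_H$ of the half-trees $H$ pointing away from $\xi$. Each such $G_H$ is \emph{locally finite}: an element $g\in G_H$ fixes the root $w$ of $H$, and for $v\neq w$ it carries the edge of $[v,w]$ at $v$ to the corresponding edge at $g(v)$, so — since $F$ acts freely transitively on $\Omega$ — the local permutation $\sigma(g,v)$, whenever it lies in $F$, is forced to be the unique element of $F$ realising the relevant correspondence of colours; hence $g$ is determined by its finitely many local permutations lying outside $F$. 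One checks that products of such elements do not create new exceptional vertices far from $w$, so a finitely generated subgroup of $G_H$ has all its exceptional vertices in a fixed finite ball and is therefore finite. Consequently $G_\xi^0$ is locally finite, and since $G_\xi/G_\xi^0$ embeds into $\Z$ (signed translation length towards $\xi$), $G_\xi$ is amenable. (Alternatively, amenability of the stabilisers of $G(F,F')\acts\partial T$ is already proved in \cite{LB-c-etoile}.) In particular $\mathcal{S}_G(\partial T)=\{G_\xi^0\}$ is an amenable URS, so $G\acts\partial T$ is a boundary action with amenable stabilisers and Proposition~\ref{prop-A_G-other-action}(i) (equivalently Theorem~\ref{thm-AG-extreme}(i)) gives $\mathcal{A}_G=\mathcal{S}_G(\partial T)$, finishing the proof. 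The only genuinely delicate points are the verification of the Hausdorff germs property and the rigidity observation that finitely many exceptional local permutations determine an element of $G(F,F')$ — the latter being exactly what forces the rigid stabilisers of half-trees to be locally finite.
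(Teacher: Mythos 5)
Your argument is correct and follows essentially the same route as the paper: you verify the Hausdorff-germs property (elliptic case via Lemma \ref{lem-ell-fix-vois}, hyperbolic case because $\fix(g)$ has empty interior) and apply Proposition \ref{prop-cont-stab0} to get $\S_G(\partial T)=\{G_\xi^0\}$, then deduce $\A_G=\S_G(\partial T)$ from amenability of the end stabilizers via Proposition \ref{prop-A_G-other-action}. The only difference is that you re-derive the (locally finite)-by-cyclic structure of $G_\xi$ from the freeness of $F$ (note: only freeness, not free transitivity, is the standing assumption, and only uniqueness of the local permutation is actually used in your argument), whereas the paper simply cites this fact from \cite{LB-ae}.
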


\begin{proof}
The action $G \acts \partial T$ is a boundary action, and stabilizers are (locally finite)-by-cyclic \cite{LB-ae}. In particular they are amenable, so the equality $\mathcal{A}_G = \mathcal{S}_G(\partial T)$ follows from Proposition \ref{prop-A_G-other-action}. To show the second statement, take $\xi \in \partial T$ and $g \in G_\xi$. If $g$ is hyperbolic then $\xi$ is clearly isolated in $\fix(g)$, and if $g$ is elliptic then $\fix(g)$ contains a neighbourhood of $\xi$ according to Lemma \ref{lem-ell-fix-vois}. Therefore the action of $G$ on $X_T = \partial T$ has Hausdorff germs, and the conclusion follows from Proposition \ref{prop-cont-stab0}.
\end{proof}


\begin{cor} \label{cor-fix-G(F,F')-furst}
Assume that $\Omega$ is finite, and let $F \leq F' \leq \mathrm{Sym}(\Omega)$.
\begin{enumerate}[label=(\roman*)]
\item The collection of point stabilizers for the action of $G(F,F')$ on its Furstenberg boundary is exactly the collection of $G(F,F')_\xi^0$, $\xi \in \partial T$.
\item An element $g \in G(F,F')$ fixes a point in the Furstenberg boundary of $G(F,F')$ if and only if $g$ fixes a half-tree of $T$, or equivalently $g$ is elliptic and $g$ fixes a point in $\partial T$.
\end{enumerate}
\end{cor}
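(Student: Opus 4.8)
The plan is to obtain both statements as a direct assembly of results already established in the excerpt, with essentially no new argument needed.

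For part (i), I would simply combine two facts. By Proposition \ref{prop-A_G-other-action}(ii), the collection of point stabilizers for the action of $G(F,F')$ on its Furstenberg boundary $\partial_F G(F,F')$ is precisely the Furstenberg URS $\mathcal{A}_{G(F,F')}$. By Proposition \ref{prop--urs-tree-G(F,F')}, this Furstenberg URS equals $\mathcal{S}_{G(F,F')}(\partial T)$, which is exactly the collection of subgroups $G(F,F')_\xi^0$, $\xi \in \partial T$. Chaining these two identifications gives (i) at once.

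For part (ii), I would first note that an element $g \in G(F,F')$ fixes some point of $\partial_F G(F,F')$ if and only if $g$ lies in one of the point stabilizers of $G(F,F') \acts \partial_F G(F,F')$; by (i) this is the same as saying $g \in G(F,F')_\xi^0$ for some $\xi \in \partial T$. Then I invoke Lemma \ref{lem-ell-fix-vois}, which identifies $G(F,F')_\xi^0$ with the set of elliptic elements of $G(F,F')_\xi$; hence the condition becomes \enquote{$g$ is elliptic and $g$ fixes some end $\xi \in \partial T$}. Finally I relate this to fixing a half-tree: if $g$ fixes a half-tree $T_1$ of $T$ pointwise then $g$ is clearly elliptic and fixes every end determined by $T_1$; conversely, the \emph{proof} of Lemma \ref{lem-ell-fix-vois} shows that an elliptic element fixing $\xi$ actually fixes pointwise the half-tree determined by the edge $e_{n_0}$ appearing there, which is exactly the conclusion wanted.

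The only step that requires any care — and the closest thing to an obstacle here — is the last equivalence: the literal content of $g \in G(F,F')_\xi^0$ is that $g$ fixes a \emph{neighbourhood of $\xi$ in $\partial T$}, and one must make sure this genuinely upgrades to fixing a half-tree of $T$ pointwise (on vertices), not merely fixing the corresponding set of ends. This is why I would cite the conclusion reached inside the proof of Lemma \ref{lem-ell-fix-vois} rather than just its statement. Everything else is a formal consequence of Propositions \ref{prop-A_G-other-action} and \ref{prop--urs-tree-G(F,F')} and the definition of a point stabilizer.
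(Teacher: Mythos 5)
Your proposal is correct and follows essentially the same route as the paper: part (i) is the chain Proposition \ref{prop-A_G-other-action}(ii) $\Rightarrow$ Proposition \ref{prop--urs-tree-G(F,F')}, and part (ii) is read off from Lemma \ref{lem-ell-fix-vois}. Your extra remark that the half-tree statement requires the pointwise-fixing conclusion reached inside the proof of Lemma \ref{lem-ell-fix-vois}, rather than just its statement about neighbourhoods in $\partial T$, is a legitimate and accurate clarification of a point the paper passes over silently.
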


\begin{proof}
The elements of $\mathcal{A}_{G(F,F')}$ are exactly the points stabilizers for the action of $G(F,F')$ on its Furstenberg boundary (Proposition \ref{prop-A_G-other-action}), so the first statement is a consequence of Proposition \ref{prop--urs-tree-G(F,F')}. The second statement follows immediately thanks to Lemma \ref{lem-ell-fix-vois}.
\end{proof}

The following remark shows that the stabilizer map $\A_G \rightarrow \sub(G)$, which associates to $H \in \A_G$ its normalizer, is not continuous in general.

\begin{remark}[see also Remark \ref{rem-AG}(3)] \label{rmq-normalizer-not-cont}
Assume that $F \lneq F'$, and write $G = G(F,F')$. Then the map $\sub(G) \rightarrow \sub(G)$, which sends a subgroup $H$ to its normalizer $N_H$ in $G$, is not continuous on $\mathcal{A}_{G}$. Indeed, let $\xi \in \partial T$ be the endpoint of some hyperbolic element of $G$, and choose a sequence $(\xi_n)$ converging to $\xi$ such that $\xi_n$ is not the endpoint of some hyperbolic element of $G$. Then $G_{\xi_n}^0$ converges to $G_\xi^0$ in $\mathcal{A}_{G}$, but $N_{G_{\xi_n}^0} = G_{\xi_n}^0$ does not converge to $N_{G_{\xi}^0}$ because the latter contains a hyperbolic element by our assumption on $\xi$.
\end{remark}

\subsubsection{Classification of all URS's}

Sufficient conditions on the permutation groups $F \leq F'$ ensuring the simplicity of the group $G(F,F')^\ast$ were obtained in \cite[Corollary 4.14]{LB-ae}. In this paragraph we strengthen this result by giving sufficient conditions under which we are able to completely describe the set of URS's of $G(F,F')^\ast$ (see Theorem \ref{thm-all-urs-g(f,f')}). Note that these conditions are nevertheless strictly stronger than the ones from \cite[Corollary 4.14]{LB-ae}. 


\medskip

We fix a vertex $o \in T$. To every $\xi \in \partial T$ is associated a Busemann (or height) function $b_\xi : T \rightarrow \mathbb{Z}$, defined by $b_\xi(v) = d(v,\hat{ov}) - d(o,\hat{ov})$, where $\hat{ov}$ is the projection of the vertex $v$ on the geodesic ray $[o,\xi[$. For every $k \in \mathbb{Z}$, we denote by $\mathcal{L}_{\xi,k}$ the level set $b_\xi^{-1}(k)$. These level sets $\mathcal{L}_{\xi,k}$ partition the set of vertices of $T$, and every vertex $v \in \mathcal{L}_{\xi,k}$ admits exactly one neighbour in $\mathcal{L}_{\xi,k-1}$, that will be denoted $v_-$. 

\begin{lemma} \label{lem-trans-horospheres}
Assume that $F'$ acts $2$-transitively on $\Omega$, and fix $\xi \in \partial T$. Then the group $G(F,F')_\xi$ acts transitively on $\mathcal{L}_{\xi,k}$ for every $k \in \mathbb{Z}$. 
\end{lemma}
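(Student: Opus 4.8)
The plan is to prove transitivity of $G(F,F')_\xi$ on each horosphere $\mathcal{L}_{\xi,k}$ by combining a ``rotation'' construction at a single vertex with an induction on the depth of the confluent vertex of the two points one wishes to identify. Throughout, for a vertex $u$ I write $u_-$ for its unique neighbour with $b_\xi(u_-)=b_\xi(u)-1$, and $\bar T_u$ for the half-tree determined by the edge $\{u,u_-\}$ that contains $u_-$ (equivalently, that contains the ray $[u_-,\xi[$, hence the end $\xi$).

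First I would establish a rotation statement: for every vertex $u$ and every two neighbours $v',w'$ of $u$ distinct from $u_-$, there is $g\in G(F,F')$ fixing $\bar T_u$ pointwise (hence fixing $u$ and $\xi$, and preserving $b_\xi$) with $g(v')=w'$. To build $g$, set $\alpha=c_u(\{u,u_-\})$, $\beta=c_u(\{u,v'\})$, $\gamma=c_u(\{u,w'\})$ (three distinct colours); by $2$-transitivity of $F'$ choose $\sigma\in F'$ with $\sigma(\alpha)=\alpha$, $\sigma(\beta)=\gamma$, and declare $\sigma(g,u)=\sigma$. This forces $g$ to carry the component of $T\setminus\{u\}$ rooted at a neighbour $z\ne u_-$ onto the component rooted at the neighbour $z^\sigma$ with $c_u(\{u,z^\sigma\})=\sigma(c_u(\{u,z\}))$; I would fill in $g$ on each such component using the auxiliary fact that, since $F$ acts freely transitively on $\Omega$, for any two directed edges of $T$ there is an automorphism carrying one onto the other with \emph{all} local permutations in $F$ (a routine breadth-first construction, existence from transitivity and uniqueness from freeness). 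Letting $g$ be the identity on $\bar T_u$ and gluing these pieces yields an automorphism of $T$ whose only local permutation possibly outside $F$ is $\sigma(g,u)=\sigma\in F'$; hence $g\in G(F,F')$, and $g$ does what is required.

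Next I would argue by induction on $n\ge 0$: for every $k$ and every $v,w\in\mathcal{L}_{\xi,k}$ whose confluent vertex $u=\mathrm{conf}(v,w)$ satisfies $b_\xi(u)\ge k-n$, there is $g\in G(F,F')_\xi$ with $g(v)=w$. The case $n=0$ forces $v=w$. For the step, write $b_\xi(u)=k-m$ with $1\le m\le n$, and let $v',w'$ be the neighbours of $u$ on $[u,v]$ and $[u,w]$; they are distinct (by maximality of $b_\xi(u)$) and $\ne u_-$, so the rotation statement gives $h\in G(F,F')_\xi$ with $h(u)=u$ and $h(v')=w'$. Then $h$ preserves $b_\xi$, so $h(v)\in\mathcal{L}_{\xi,k}$, and $h$ maps the component of $T\setminus\{u\}$ containing $v'$ onto the one containing $w'$; thus $h(v)$ and $w$ both lie in that component, so their confluent vertex has level at least $b_\xi(w')=k-m+1\ge k-(n-1)$. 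By the induction hypothesis some $g'\in G(F,F')_\xi$ sends $h(v)$ to $w$, and $g=g'h$ works. Finally, for arbitrary $v,w\in\mathcal{L}_{\xi,k}$ one applies this with $n=k-b_\xi(\mathrm{conf}(v,w))$.

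The only genuinely delicate point is the rotation construction — specifically, realizing the extension over the subtrees hanging off $u$ by an element of $G(F,F')$, i.e.\ with only finitely many local permutations outside $F$. This is exactly where the standing hypothesis that $F$ acts freely transitively on $\Omega$ enters: it makes the ``copy a subtree with all local permutations in $F$'' step both possible and canonical, confining the $F'$-permutation to the single vertex $u$. Everything else is bookkeeping about half-trees, level sets, and confluent vertices in a tree.
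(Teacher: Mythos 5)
Your proof is correct and follows essentially the same route as the paper: an induction on the distance of the two vertices to their confluence point along $[v,w]$ (equivalently on $d(v,w)$), reduced at each step by a ``rotation'' fixing the half-tree containing $\xi$ and permuting the other branches at the confluent vertex via $2$-transitivity of $F'$. The only difference is cosmetic: the paper cites \cite[Lemma 3.4]{LB-ae} for the existence of that rotation element, whereas you construct it directly from the free transitivity of $F$.
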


\begin{proof}
First note that two vertices in $\mathcal{L}_{\xi,k}$ must be at even distance from each other. We let $v,w \in \mathcal{L}_{\xi,k}$, and we prove that $v$ and $w$ are in the same $G(F,F')_\xi$-orbit. We argue by induction on $d(v,w) = 2n$. 

The case $n = 0$ is trivial. Assume that $n \geq 1$. Let $m$ be the midpoint of the geodesic $[v,w]$, which is also the unique vertex of $[v,w]$ that belongs to $\mathcal{L}_{\xi,k-n}$. Denote by $a \in \Omega$ the color of the edge $(m,m_-)$, and by $x$ (resp.\ $y$) the neighbour of $m$ that belongs to $[m,v]$ (resp.\ $[m,w]$). Since $F_a'$ (the stabilizer of $a$ in $F'$) acts transitively on $\Omega \setminus \left\{a\right\}$, we may find $g \in G(F,F')$ such that $g$ fixes pointwise the half-tree defined by the edge $(m,m_-)$ and containing $m_-$, and $g(x) = y$ (see for instance Lemma 3.4 in \cite{LB-ae}). Clearly such an element $g$ belongs to $G(F,F')_\xi$. Now by construction the vertices $g(v)$ and $w$ are at distance at most $2n-2$ from each other, and the conclusion follows by induction. 
\end{proof}

\begin{prop} \label{prop-stab-end-max}
Assume that $F'$ acts $2$-transitively on $\Omega$, and fix $\xi \in \partial T$. Then every subgroup $H \leq G(F,F')$ strictly containing $G(F,F')_{\xi}$ is either $G(F,F')^\ast$ or $G(F,F')$.
\end{prop}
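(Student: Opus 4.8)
\emph{Proof plan.} The plan is to show that whenever a subgroup $H\le G:=G(F,F')$ strictly contains $G_{\xi}$, it already contains the type‑preserving subgroup $G^{\ast}:=G(F,F')^{\ast}$; since $[G:G^{\ast}]=2$, this forces $H=G^{\ast}$ or $H=G$. First I would fix $h\in H\setminus G_{\xi}$, so that $\eta:=h(\xi)\neq\xi$, and note that $hG_{\xi}h^{-1}=G_{\eta}\le H$. It therefore suffices to prove that $\langle G_{\xi},G_{\eta}\rangle\supseteq G^{\ast}$ for any two distinct ends $\xi\neq\eta$ of $T$.

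Let $L$ be the geodesic line from $\xi$ to $\eta$, and for a vertex $v$ on $L$ write $v_{-}$ (resp.\ $v_{+}$) for its neighbour on $L$ towards $\xi$ (resp.\ towards $\eta$). Let $A_{v}$ be the half‑tree at the edge $(v,v_{-})$ containing $v$, and $B_{v}$ the half‑tree at $(v,v_{+})$ containing $v$; for a half‑tree $C$ let $G_{C}\le G$ be its rigid stabilizer, i.e.\ the pointwise stabilizer of $T\setminus C$. Every element of $G_{A_{v}}$ fixes the ray $v_{-},v_{--},\dots$, hence fixes $\xi$, so $G_{A_{v}}\le G_{\xi}$; symmetrically $G_{B_{v}}\le G_{\eta}$. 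Moreover each of the $|\Omega|-1$ star half‑trees at $v$ other than the one at $(v,v_{-})$ is contained in $A_{v}$, so its rigid stabilizer lies in $G_{A_{v}}$, while the star half‑tree at $(v,v_{-})$ is contained in $B_{v}$, so its rigid stabilizer lies in $G_{B_{v}}$. Thus $\langle G_{A_{v}},G_{B_{v}}\rangle\le\langle G_{\xi},G_{\eta}\rangle$ contains the rigid stabilizers of all $|\Omega|$ star half‑trees at $v$. Next, for $\tau\in F'_{a}$ with $a$ the colour of $(v,v_{-})$, the automorphism with local permutation $\tau$ at $v$, trivial local permutation at every other vertex of $A_{v}$, and the identity on $T\setminus A_{v}$, lies in $G_{A_{v}}$ and has $\sigma(\cdot,v)=\tau$; likewise $G_{B_{v}}$ realises every element of $F'_{b}$ at $v$, $b$ being the colour of $(v,v_{+})$. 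Since $F'$ is $2$‑transitive, $F'_{a}$ is a maximal subgroup and $F'_{b}\not\le F'_{a}$, so $\langle F'_{a},F'_{b}\rangle=F'$; hence for every $\pi\in F'$ there is $\rho\in\langle G_{A_{v}},G_{B_{v}}\rangle$ with $\sigma(\rho,v)=\pi$. Given $g$ in the vertex stabilizer $G_{v}$ (which lies in $G^{\ast}$ automatically), pick such $\rho$ with $\sigma(\rho,v)=\sigma(g,v)$: then $\rho^{-1}g$ fixes $v$ with trivial local permutation there, so it fixes all neighbours of $v$ and decomposes as a product of elements supported in the star half‑trees at $v$, all of which lie in $\langle G_{A_{v}},G_{B_{v}}\rangle$. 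Therefore $G_{v}\le\langle G_{\xi},G_{\eta}\rangle\le H$ for every vertex $v$ on $L$.

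Finally I would fix two adjacent vertices $v_{0},v_{1}$ on $L$ and show $\langle G_{v_{0}},G_{v_{1}}\rangle=G^{\ast}$, which finishes the proof. For every neighbour $w$ of $v_{0}$, transitivity of $F'$ on the link of $v_{0}$ provides $\rho\in G_{v_{0}}$ with $\rho(v_{1})=w$, whence $G_{w}=\rho G_{v_{1}}\rho^{-1}\le\langle G_{v_{0}},G_{v_{1}}\rangle$; propagating across the tree, $\langle G_{v_{0}},G_{v_{1}}\rangle$ contains $G_{w}$ for every vertex $w$, i.e.\ it contains the subgroup of $G^{\ast}$ generated by all vertex stabilizers. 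Since $F$ acts transitively on $\Omega$, the group $G(F,F)\le G(F,F')$, hence $G(F,F')$ itself, is vertex‑transitive and edge‑transitive; consequently $G^{\ast}$ has exactly two vertex orbits (the two types) and one edge orbit, the quotient $G^{\ast}\backslash T$ is a single edge, and by Bass--Serre theory $G^{\ast}$ is the amalgam of two adjacent vertex stabilizers over their intersection — in particular it is generated by vertex stabilizers. Hence $G^{\ast}=\langle G_{v_{0}},G_{v_{1}}\rangle\le H$, and $H\in\{G^{\ast},G\}$.

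The main obstacle I anticipate is organizational rather than conceptual: one must set up the combinatorics along $L$ so that the $|\Omega|$ star‑half‑tree rigid stabilizers at a vertex $v$ genuinely split between $G_{A_{v}}\le G_{\xi}$ and $G_{B_{v}}\le G_{\eta}$, and one must check that the elements realising prescribed local permutations can be chosen inside $G(F,F')$ — taking the trivial local permutation away from $v$ settles this, since then only one local permutation may fail to lie in $F$. The purely group‑theoretic inputs ($\langle F'_{a},F'_{b}\rangle=F'$ from $2$‑transitivity, and generation of $G^{\ast}$ by vertex stabilizers via edge‑transitivity and Bass--Serre) are standard, and I would cite \cite{LB-ae} for the transitivity properties of $G(F,F')$ and $G(F,F')^{\ast}$.
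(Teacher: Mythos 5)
Your proof is essentially correct but follows a genuinely different route from the paper's. You reduce the statement to the clean structural claim that $\langle G_\xi,G_\eta\rangle\supseteq G(F,F')^\ast$ for any two distinct ends, and establish it by splitting the star half-tree rigid stabilizers at a vertex $v$ of the line $(\xi,\eta)$ between $G_\xi$ and $G_\eta$, realizing $\langle F'_a,F'_b\rangle=F'$ at $v$ (using maximality of point stabilizers, a consequence of $2$-transitivity), and then generating $G(F,F')^\ast$ from vertex stabilizers via Bass--Serre. The paper instead works directly with the subgroup $H$: it uses the transitivity of $G(F,F')_\xi$ on horospheres (Lemma \ref{lem-trans-horospheres}) together with an element $h\in H$ satisfying $h(v)=w$ but $h(v_-)\neq w_-$ to show that $H$ acts transitively on the star of every vertex, deduces that $H$ contains all fixators of half-trees, and concludes via the edge-independence property and \cite[Proposition 4.7]{LB-ae}, which identify $G(F,F')^\ast$ as the subgroup generated by edge fixators. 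Your version is more self-contained (it does not invoke the edge-independence property) and isolates a reusable fact about pairs of end stabilizers; the paper's version is shorter because it outsources the final generation step to the reference.

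Two points need repair. First, you invoke ``$F$ acts transitively on $\Omega$'' to get vertex- and edge-transitivity, but the proposition only assumes that $F'$ is $2$-transitive (plus the standing assumption that $F$ acts freely); transitivity of $F$ is a hypothesis of Theorem \ref{thm-all-urs-g(f,f')}, not of this proposition. This is harmless: $U(1)\leq G(F,F')$ already gives vertex-transitivity, and for each vertex $v$ and each $\tau\in F'$ there is an element of $G(F,F')$ fixing $v$ with local permutation $\tau$ at $v$ and trivial local permutations elsewhere, so transitivity of $F'$ suffices for edge-transitivity. Second, when you decompose $\rho^{-1}g$ as a product of elements supported in the star half-trees at $v$, this is a priori an infinite product when $\Omega$ is infinite (a case the proposition must cover, since Theorem \ref{thm-all-urs-g(f,f')} allows $\Omega$ countable). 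You must observe that since $\rho^{-1}g$ fixes the star of $v$ with trivial local permutation at $v$ and has all but finitely many local permutations in the freely acting group $F$, the propagation argument of Lemma \ref{lem-ell-fix-vois} forces it to act trivially on all but finitely many of the star half-trees, so the product is in fact finite. Both issues are local and do not affect the architecture of your argument.
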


\begin{proof}
Since $H$ contains $G(F,F')_{\xi}$ as a proper subgroup, we may find $h \in H$ and vertices $v,w$ such that $h(v) = w$ and $h(v_-) \neq w_-$. We claim that this implies that $H_v$, the stabilizer of $v$ in $H$, acts transitively on the star around $v$. To see this, first remark that all the neighbours of $v$ different from $v_-$ are in the same $H_v$-orbit by Lemma \ref{lem-trans-horospheres}. So proving that $H_v$ does not fix $v_-$ is enough to prove the claim. But this is true, because if $g \in G(F,F')_\xi$ fixes $w$ and satisfies $g(h(v_-)) \neq h(v_-)$ (such an element exists because $h(v_-) \neq w_-$), then $h^{-1}gh$ belongs to $H$ and does not fix $v_-$. So we have proved that the stabilizer of $v$ in $H$ acts transitively on the star around $v$.

Now combining this with the fact that $H$ acts transitively on each $\mathcal{L}_{\xi,k}$ thanks to Lemma \ref{lem-trans-horospheres}, we easily deduce that the stabilizer of $v$ in $H$ acts transitively on the star around $v$ for every vertex $v$. This implies in particular that the action of $H$ on $T$ has two orbits of vertices and is of general type. Since moreover $H$ contains the pointwise fixator of any half-tree containing $\xi$ (because $G(F,F')_\xi \leq H)$, we deduce from Lemma \ref{lem-put-half-tree-inside} that $H$ contains all fixators of half-trees. Now the subgroup generated by the pointwise fixators of half-trees in the same as the subgroup generated by pointwise fixators of edges, because the group $G(F,F')$ has the edge-independence property (see \cite{LB-ae}). Finally since $F'$ is $2$-transitive (which implies that $F'$ is generated by its point stabilizers), this subgroup is equal to $G(F,F')^\ast$ according to \cite[Proposition 4.7]{LB-ae}. Therefore $H$ contains $G(F,F')^\ast$, and 
the statement is proved.
\end{proof}


Before stating the main result of this paragraph, let us mention that the $2$-transitivity assumption on the permutation group $F'$ is quite natural. As illustrated by the work of Burger and Mozes \cite{BM-IHES}, the properties of the local action of groups acting on trees is inherent to the structure of these groups, and the $2$-transitivity condition on the local action naturally appears in this setting \cite{BM-IHES}.

Recall that $\Omega$ is a (possibly finite) countable set, and that $X_T$ is either $\partial T$ when $\Omega$ is finite, or $T \sqcup \partial T$ when $\Omega$ is infinite. Examples of finite permutation groups satisfying the assumptions of the following theorem are $F = \left\langle (1,\ldots,d)\right\rangle$ and $F' = \mathrm{Alt}(d)$ for $d \geq 7$ odd. More examples may be found in \cite[Example 3.3.1]{BM-IHES}.

\begin{thm} \label{thm-all-urs-g(f,f')}
Let $F \leq F' \leq \mathrm{Sym}(\Omega)$ such that $F$ acts freely transitively on $\Omega$, $F'$ acts $2$-transitively on $\Omega$, and point stabilizers in $F'$ are perfect. Then the group $G(F,F')^\ast$ admits exactly three URS's, namely $1$, $\mathcal{S}_{G(F,F')^\ast}(X_T)$ and $G(F,F')^\ast$. 
\end{thm}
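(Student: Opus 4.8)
The plan is to deduce the statement from Corollary~\ref{cor-unique-urs} applied to the action of $G := G(F,F')^\ast$ on $X_T$. Three things must be verified: that $G \acts X_T$ is an extreme boundary action, that hypothesis~(ii) of Corollary~\ref{cor-unique-urs} holds, and that $\mathcal{S}_G(X_T)$ contains a subgroup of $G$ which is maximal. The first point is immediate from \textsection\ref{subsec-trees}: the transitivity of $F$ and $2$-transitivity of $F'$ force the action of $G$ on $T$ to be minimal and of general type (see \textsection\ref{subsec-G(F,F')}), the space $X_T$ is compact (it is the Cantor set $\partial T$ when $\Omega$ is finite, and the end compactification $T \sqcup \partial T$ when $\Omega$ is infinite), $G \acts X_T$ is minimal, and by Proposition~\ref{prop-tree-ep} it is extremely proximal.

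To verify hypothesis~(ii), first note that every non-empty open subset of $X_T$ contains a half-tree of $X_T$, and $G_{U'}\le G_U$ whenever $U'\subseteq U$; so it suffices to treat the case in which $U$ is a half-tree of $X_T$, which is a clopen (hence Hausdorff) subspace on which $G_U$ acts faithfully by homeomorphisms. The key input — and the only place where the hypothesis that point stabilizers in $F'$ are perfect is used — is the fact $(\mathrm P)$: \emph{for every half-tree $W$, the rigid stabilizer $G_W$ is a perfect group}. Indeed $G_W$ has the structure of an iterated permutational wreath product of copies of $F'$, topped by a single copy of a point stabilizer $F'_a$ (the condition that all but finitely many local permutations lie in $F$ only makes the products restricted); since $F'$ is $2$-transitive all the permutation actions occurring are transitive, so the standard computation of the abelianization of a wreath product, combined with $F'_a=[F'_a,F'_a]$, gives $G_W^{\mathrm{ab}}=1$. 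Granting $(\mathrm P)$, let $\Gamma\le G_U$ have finite index and let $\Gamma_0\unlhd G_U$ be its normal core, which has finite index and is therefore nontrivial since $G_U$ is infinite. Applying Lemma~\ref{double-comm} to the action of $G_U$ on the clopen subspace $U$ and to the normal subgroup $\Gamma_0$, and then shrinking the resulting open set to a half-tree, produces a half-tree $V\subseteq U$ with $[G_V,G_V]\le\Gamma_0\le\Gamma$. By $(\mathrm P)$, $G_V=[G_V,G_V]\le\Gamma$, and hence $G_V=[G_V,G_V]\le[\Gamma,\Gamma]$, which is exactly hypothesis~(ii).

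For the last point, I would proceed as follows. Since $G(F,F')$ is countable it has only countably many hyperbolic elements, so there is an end $\xi\in\partial T$ that is not an endpoint of any hyperbolic element of $G(F,F')$. Then every element of $G(F,F')_\xi$ is elliptic, hence fixes a vertex (an inversion cannot fix an end) and therefore preserves types; thus $G(F,F')_\xi=G_\xi$, and by Lemma~\ref{lem-ell-fix-vois} every element of $G_\xi$ fixes a half-tree neighbourhood of $\xi$, i.e.\ $G_\xi=G_\xi^0$. By Lemma~\ref{lem-cont-stab} this means $\xi\in X_0$, so $G_\xi\in\mathcal{S}_G(X_T)$ by Proposition~\ref{prop-min-action-urs}. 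On the other hand, by Proposition~\ref{prop-stab-end-max} any subgroup of $G(F,F')$ strictly containing $G(F,F')_\xi$ is $G(F,F')^\ast$ or $G(F,F')$, so the only subgroup of $G=G(F,F')^\ast$ properly containing $G_\xi$ is $G$ itself, i.e.\ $G_\xi$ is maximal in $G$. Corollary~\ref{cor-unique-urs} then gives that the only URS's of $G$ are $1$, $\mathcal{S}_G(X_T)$ and $G$, and these are pairwise distinct because the action $G\acts X_T$ is not topologically free and $G_\xi$ is a proper subgroup of $G$.

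The hard part will be $(\mathrm P)$: one must make the iterated-wreath-product description of $G_W$ rigorous — especially when $\Omega$ is infinite, so that the restricted-support condition coming from $F$ has to be handled carefully — and check that perfectness of $F'_a$ together with $2$-transitivity of $F'$ genuinely forces $G_W$ to be perfect; in practice I expect this to be extracted from the analysis in \cite{LB-ae} underlying the simplicity criterion \cite[Corollary~4.14]{LB-ae}, whose hypotheses are weaker than ours. Everything else is a routine assembly of Corollary~\ref{cor-unique-urs}, Proposition~\ref{prop-tree-ep}, Lemma~\ref{double-comm}, Lemma~\ref{lem-ell-fix-vois} and Proposition~\ref{prop-stab-end-max}.
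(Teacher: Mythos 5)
Your proposal is correct and follows essentially the same route as the paper: apply Corollary~\ref{cor-unique-urs} to $G(F,F')^\ast\acts X_T$, using Proposition~\ref{prop-tree-ep} for extreme proximality, perfectness of half-tree fixators together with Lemma~\ref{double-comm} for hypothesis~(ii), and Proposition~\ref{prop-stab-end-max} for maximality. The only differences are cosmetic: the paper disposes of your fact $(\mathrm P)$ in one line by observing that half-tree fixators are generated by (perfect) copies of point stabilizers of $F'$ rather than via a wreath-product abelianization (your layers should really be point stabilizers $F'_b$, not full copies of $F'$), and your choice of an end $\xi$ that is not an endpoint of a hyperbolic element --- ensuring both that $G_\xi=G_\xi^0$ lies in $\mathcal{S}_G(X_T)$ and that Proposition~\ref{prop-stab-end-max} applies inside $G(F,F')^\ast$ --- is a careful refinement of a step the paper leaves implicit.
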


\begin{proof}
We claim that Corollary \ref{cor-unique-urs} applies to the action of $G(F,F')^\ast$ on $X_T$. This action is minimal and extremely proximal by Proposition \ref{prop-tree-ep}. Since point stabilizers in $F'$ are perfect, fixators of half-trees in $G(F,F')^\ast$ are generated by perfect subgroups, and hence are perfect. By combining this observation with Lemma \ref{double-comm}, we see that every finite index subgroup in the fixator of a half-tree must contain the fixator of another half-tree, so that the second assumption of Corollary \ref{cor-unique-urs} is also satisfied. Finally since stabilizers of ends are maximal subgroups of $G(F,F')^\ast$ according to Proposition \ref{prop-stab-end-max}, the conclusion follows from Corollary \ref{cor-unique-urs}.
\end{proof}

\subsubsection{Piecewise prescribed tree automorphisms}

In this paragraph we consider the \enquote{piecewise-ation} process for groups acting on trees introduced in \cite{LB-c-etoile}. Recall that the construction takes as input a group $G \leq \autT$, and produces a larger group $\PwG$ consisting of automorphisms acting on $T$ piecewise like $G$. See below for a precise definition. While this construction was used in \cite{LB-c-etoile} to produce examples of non $C^\ast$-simple groups with trivial amenable radical, here in contrast we apply the results of Section \ref{sec-main-thms} to show that, under different assumptions on the group $G$ we start with, we obtain a group $\PwG$ that \textit{is} $C^\ast$-simple.

\medskip

Let us first recall the definition of this construction. If $A$ is a finite subtree of $T$ and $v_1, \ldots, v_n$ are the vertices of $A$ having a neighbour outside $T$, we denote by $T_i$ the subtree of $T$ whose projection on $A$ is $v_i$, and by $T \setminus A$ the disjoint union of the subtrees $T_i$. If $G$ is a subgroup of $\autT$, we denote by $\PwG \leq \autT$ the group of automorphisms of $T$ acting piecewise like $G$: an element $\gamma \in \autT$ belongs to $\PwG$ if and only if there exists a finite subtree $A$ such that, if we denote $T \setminus A = \sqcup_{i=1}^n T_i$, then for every $i$ there is $g_i \in G$ such that $\gamma$ and $g_i$ coincide on the subtree $T_i$. 

\medskip

Recall that it was proved in \cite{LB-c-etoile} that, under the assumption that vertex stabilizers in $G$ are amenable (and non-trivial), the group $\PwG$ is \textit{not} $C^\ast$-simple. The following result essentially proves the converse, thereby providing a fairly complete picture of $C^\ast$-simplicity for this class of groups.
 
\begin{thm} \label{thm-free-PwG}
Let $G$ be a countable subgroup of $\autT$ whose action on $T$ is minimal and of general type, and such that fixators of edges in $G$ are non-amenable. Then the group $\PwG$ is $C^\ast$-simple.
\end{thm}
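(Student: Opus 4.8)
The plan is to apply Theorem~\ref{thm-non-amenab-rigid-stab} to the action of $\PwG$ on the space $X_T = (T \setminus T_f) \sqcup \partial T$ introduced in \textsection\ref{subsec-trees}; for a half-tree $S$ of $T$ I write $\widehat S = S \cup \partial S$ for the associated half-tree of $T \sqcup \partial T$. First I would collect the preliminaries. Since $\PwG \leq \aut(T)$ preserves vertex degrees, it preserves $T_f$ and hence $X_T$, and acts on the Hausdorff space $X_T$ by homeomorphisms; this action is faithful, because $\partial T \subseteq X_T$ is dense in $T \sqcup \partial T$ (every non-empty open set contains a half-tree, hence ends) and a tree automorphism fixing $\partial T$ pointwise therefore fixes $T \sqcup \partial T$ pointwise, so is trivial. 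Moreover $\PwG$ is countable, an element being determined by a finite subtree of $T$ together with finitely many elements of the countable group $G$. By Theorem~\ref{thm-non-amenab-rigid-stab} it thus suffices to show that the rigid stabilizer $\PwG_V$ is non-amenable for every non-empty open $V \subseteq X_T$.

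The second step is a reduction to a single half-tree. Every non-empty open subset of $X_T$ contains a set of the form $\widehat{T}_2 \cap X_T$ for some half-tree $T_2$ of $T$ (this is the observation already used in the proof of Proposition~\ref{prop-tree-ep}). Fix once and for all a half-tree $T_1$ of $T$. Applying Lemma~\ref{lem-put-half-tree-inside} to the group $G$ produces $g \in G \leq \PwG$ with $g(T_1) \subseteq T_2$, and conjugation by $g$ then gives $g\, \PwG_{\widehat{T}_1 \cap X_T}\, g^{-1} = \PwG_{g(\widehat{T}_1) \cap X_T} \subseteq \PwG_{\widehat{T}_2 \cap X_T} \subseteq \PwG_V$. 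Hence it is enough to exhibit a single half-tree $T_1$ of $T$ for which $\PwG_{\widehat{T}_1 \cap X_T}$ is non-amenable.

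The heart of the argument is then the following construction. Fix an arbitrary edge $e$ of $T$ with endpoints $v$ and $w$, and let $T_v \ni v$, $T_w \ni w$ be the two half-trees of $T$ determined by $e$. Every element of the pointwise fixator $G_e$ of $e$ in $G$ fixes $v$ and $w$ and preserves $T_v$ and $T_w$, so the restriction maps define an injective homomorphism $G_e \hookrightarrow \aut(T_v,v) \times \aut(T_w,w)$; as $G_e$ is non-amenable by hypothesis, at least one of the two restriction homomorphisms, say $G_e \to \aut(T_v,v)$, has non-amenable image $\Gamma$ (otherwise $G_e$ would embed into a product of two amenable groups and hence be amenable). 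Given $g \in G_e$, let $\hat g \in \aut(T)$ be the automorphism that agrees with $g$ on $T_v$ and is the identity on $T_w$; this is well defined since $g$ fixes both $v$ and $w$. The element $\hat g$ lies in $\PwG$: taking for $A$ the finite subtree consisting of $e$ and its endpoints, $\hat g$ coincides with $g \in G$ on each component of $T \setminus A$ contained in $T_v$ and with the identity on each component contained in $T_w$. Since $\hat g$ fixes $T_w$ pointwise it also fixes every end defined by $T_w$, hence fixes $\widehat{T}_w \cap X_T$ pointwise; as $\widehat{T}_v \cap X_T$ and $\widehat{T}_w \cap X_T$ partition $X_T$, this means $\hat g \in \PwG_{\widehat{T}_v \cap X_T}$. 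The map $g \mapsto \hat g$ is a homomorphism $G_e \to \PwG_{\widehat{T}_v \cap X_T}$ whose kernel is the subgroup of $G_e$ acting trivially on $T_v$, so its image is isomorphic to $\Gamma$ and $\PwG_{\widehat{T}_v \cap X_T}$ is non-amenable. Taking $T_1 = T_v$ in the second step and invoking Theorem~\ref{thm-non-amenab-rigid-stab} yields the $C^\ast$-simplicity of $\PwG$. The only points needing care are the faithfulness of the action on $X_T$ and the verification that the truncations $\hat g$ belong to $\PwG$; the rest is formal, the conceptual core being the combination of the edge fixator hypothesis with the truncation trick to manufacture non-amenable rigid stabilizers.
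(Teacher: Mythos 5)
Your proof is correct and follows essentially the same route as the paper: reduce via Theorem \ref{thm-non-amenab-rigid-stab} and Lemma \ref{lem-put-half-tree-inside} to the non-amenability of a single half-tree fixator, then use the embedding $G_e \hookrightarrow \aut(T_1)\times\aut(T_2)$ and the truncation $g\mapsto\hat g$ to produce a non-amenable subgroup supported on one half-tree. The only cosmetic difference is that you run the argument on $X_T$ and spell out faithfulness and the reduction from arbitrary open sets to half-trees, whereas the paper works directly with $\partial T$ and leaves those routine checks implicit.
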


\begin{proof}
According to Theorem \ref{thm-non-amenab-rigid-stab} applied to the action of $\PwG$ on $\partial T$ (where $\partial T$ is endowed with the topology inherited from $X_T$), in order to obtain the conclusion it is enough to show that fixators of half-trees in $\PwG$ are non-amenable. We remark that by Lemma \ref{lem-put-half-tree-inside}, it is actually enough to show that \textit{some} fixator of half-tree in $\PwG$ is non-amenable.

Let $e$ be an edge of $T$, and denote by $T_1$ and $T_2$ the two half-trees separated by $e$. The action of the fixator of $e$ in $G$ on each half-tree induces an embedding $i: G_e \rightarrow \mathrm{Aut}(T_1) \times \mathrm{Aut}(T_2)$. By assumption $G_e$ is non-amenable, so there must exist one of the two half-trees, say $T_1$, such that the image of $p_1 \circ i$, where $p_1$ is the projection of $\mathrm{Aut}(T_1) \times \mathrm{Aut}(T_2)$ on $\mathrm{Aut}(T_1)$, has non-amenable image. 

For every $g \in G_e$, consider the automorphism $\gamma_g$ of $T$ fixing $e$, acting like $g$ on $T_1$ and being the identity on $T_2$. By construction $\gamma_g \in \PwG$, and the map $G_e \rightarrow \PwG$, defined by $g \mapsto \gamma_g$, is a group homomorphism with non-amenable image. This shows that the fixator of $T_2$ in $\PwG$ is non-amenable, and concludes the proof.
\end{proof}

The following example may be compared with Corollary \ref{cor-H(A)-cstar}.

\begin{cor}
The group of automorphisms of the tree $T_{p+1}$ acting piecewise like $\mathrm{PSL}(2,\mathbb{Z}[1/p])$ is $C^\ast$-simple.
\end{cor}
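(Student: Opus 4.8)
The plan is to realize the group $\mathrm{PSL}(2,\mathbb{Z}[1/p])$ as a subgroup $G$ of $\mathrm{Aut}(T_{p+1})$ whose action on the $(p+1)$-regular tree $T = T_{p+1}$ is minimal and of general type, with non-amenable edge fixators, and then invoke Theorem \ref{thm-free-PwG}. Recall that $\mathrm{PSL}(2,\mathbb{Z}[1/p])$ acts on the Bruhat--Tits tree $T_{p+1}$ of $\mathrm{PSL}(2,\mathbb{Q}_p)$; this is the standard Serre construction, and $G = \mathrm{PSL}(2,\mathbb{Z}[1/p])$ embeds in $\mathrm{Aut}(T_{p+1})$ as a dense subgroup of $\mathrm{PSL}(2,\mathbb{Q}_p)$ (with respect to the permutation topology), since $\mathbb{Z}[1/p]$ is dense in $\mathbb{Q}_p$. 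First I would check that the action is minimal: the action of $\mathrm{PSL}(2,\mathbb{Q}_p)$ on $T_{p+1}$ is already minimal (there is no proper invariant subtree, as $\mathrm{SL}(2,\mathbb{Q}_p)$ acts transitively on vertices of each type and contains hyperbolic elements with axes in every half-tree), and minimality is inherited by any dense subgroup, hence by $G$. Second, the action is of general type: $\mathrm{PSL}(2,\mathbb{Z}[1/p])$ contains plenty of hyperbolic elements (e.g.\ diagonal matrices $\mathrm{diag}(p^k, p^{-k})$ up to sign, and their conjugates) without common endpoints, since the boundary $\partial T_{p+1}$ is naturally $\mathbb{P}^1(\mathbb{Q}_p)$ and $\mathrm{PSL}(2,\mathbb{Z}[1/p])$ acts on it with no global finite orbit.

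The main point to verify is that fixators of edges in $G$ are non-amenable. An edge $e$ of $T_{p+1}$ corresponds to a pair of neighbouring vertices, i.e.\ to a chain of lattices $L \supset L'$ in $\mathbb{Q}_p^2$ with $[L:L'] = p$; the stabilizer of $e$ in $\mathrm{PSL}(2,\mathbb{Q}_p)$ is an Iwahori-type subgroup, and its intersection with $G = \mathrm{PSL}(2,\mathbb{Z}[1/p])$ is an arithmetic group. The cleanest way to see non-amenability is to identify $G_e$ with a congruence-type subgroup of $\mathrm{PSL}(2,\mathbb{Z})$ (after a suitable change of basis the edge stabilizer in $\mathrm{SL}(2,\mathbb{Z}[1/p])$ is conjugate to a group commensurable with $\mathrm{SL}(2,\mathbb{Z})$ — more precisely to the preimage under $\mathrm{SL}(2,\mathbb{Z}[1/p]) \to \mathrm{SL}(2,\mathbb{Q}_p)$ of an Iwahori subgroup, which contains $\Gamma_0(p) \le \mathrm{SL}(2,\mathbb{Z})$ as a finite-index subgroup since $\mathrm{SL}(2,\mathbb{Z}) \cap \mathrm{SL}(2,\mathbb{Z}_p)$ has finite index in the vertex stabilizer). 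Since $\mathrm{SL}(2,\mathbb{Z})$ is virtually free non-abelian, $G_e$ contains a non-abelian free subgroup, hence is non-amenable. Alternatively, and perhaps more transparently, one can use the tree structure directly: the stabilizer of $e$ acts on each of the two half-trees $T_1, T_2$ bounded by $e$, and because $G$ is dense in $\mathrm{PSL}(2,\mathbb{Q}_p)$, $G_e$ surjects (in the permutation topology sense, i.e.\ has dense image) onto the local action of the full Iwahori on each finite ball; in particular $G_e$ contains hyperbolic elements with axes deep inside $T_1$ (any hyperbolic element of $G$ whose axis lies in $T_1$ automatically fixes $e$), and two such with distinct endpoints generate a free group by the ping-pong lemma. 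Either argument shows $G_e$ is non-amenable.

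With minimality, general type, and non-amenable edge fixators established, Theorem \ref{thm-free-PwG} applies directly to $G = \mathrm{PSL}(2,\mathbb{Z}[1/p]) \le \mathrm{Aut}(T_{p+1})$ and yields that $\mathrm{P}_{\mathrm{W}}(G)$ — the group of automorphisms of $T_{p+1}$ acting piecewise like $\mathrm{PSL}(2,\mathbb{Z}[1/p])$ — is $C^\ast$-simple, which is the assertion.

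The step I expect to be the main obstacle is pinning down precisely the isomorphism type (or at least the non-amenability) of the edge fixator $G_e$ inside the arithmetic group $\mathrm{PSL}(2,\mathbb{Z}[1/p])$: one must be careful that the stabilizer of an \emph{edge} (rather than a vertex) is what appears in Theorem \ref{thm-free-PwG}, and that passing from $\mathrm{PSL}(2,\mathbb{Q}_p)$-stabilizers to $G$-stabilizers via density indeed produces a genuinely non-amenable (not merely infinite) group. The ping-pong argument using hyperbolic elements with axes inside a fixed half-tree is the most robust route and avoids delicate congruence-subgroup bookkeeping; it only uses that $G$ acts minimally and of general type together with the classical fact (already invoked in \textsection\ref{subsec-trees}) that such a $G$ contains a hyperbolic element with axis in any prescribed half-tree, applied twice to two disjoint half-trees inside $T_1$.
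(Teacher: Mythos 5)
Your main line of argument is the paper's: minimality and general type of $\mathrm{PSL}(2,\mathbb{Z}[1/p])\acts T_{p+1}$, plus non-amenability of edge fixators via the identification of the stabilizer of the standard edge with $\Gamma_0(p)\le\mathrm{SL}(2,\mathbb{Z})$ (a finite-index subgroup of a virtually free non-abelian group), followed by an appeal to Theorem \ref{thm-free-PwG}. That argument is correct and is exactly how the paper proceeds (the paper says, in one line, that edge stabilizers contain free subgroups ``as it is already the case for $\mathrm{PSL}(2,\mathbb{Z})$'').

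However, the ``more transparent'' ping-pong argument that you then declare to be ``the most robust route'' is wrong, and you should not lean on it. First, the parenthetical claim that a hyperbolic element of $G$ whose axis lies in $T_1$ automatically fixes $e$ is false: such an element translates the projection of $e$ onto its axis and hence moves $e$; more fundamentally, any automorphism stabilizing an edge is elliptic, so $G_e$ contains \emph{no} hyperbolic elements at all, and there is nothing to play ping-pong with. Second, there is no way to repair this by using pointwise fixators of half-trees (the objects that genuinely sit inside $G_e$): for $G=\mathrm{PSL}(2,\mathbb{Z}[1/p])$ the boundary action on $\partial T_{p+1}=\mathbb{P}^1(\mathbb{Q}_p)$ is by M\"obius transformations, and an element fixing a half-tree pointwise fixes a non-empty open subset of $\mathbb{P}^1(\mathbb{Q}_p)$, hence is trivial. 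So this group does \emph{not} have non-trivial half-tree fixators, and the non-amenability of $G_e$ cannot be extracted from the tree dynamics alone --- the arithmetic input ($G_e\cong\Gamma_0(p)$, which contains a non-abelian free group) is genuinely needed. Keep the congruence-subgroup argument and delete the alternative.
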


\begin{proof}
Indeed, the action of $\mathrm{PSL}(2,\mathbb{Z}[1/p])$ has two orbits of vertices and does not fix any end of $T_{p+1}$ (the Bruhat-Tits tree of $\mathrm{PSL}(2,\mathbb{Q}_p)$). Moreover edge stabilizers contain free subgroups (as it is already the case for $\mathrm{PSL}(2,\mathbb{Z})$), so $C^\ast$-simplicity follows from Theorem \ref{thm-free-PwG}.
\end{proof}
\subsection{Branch groups} \label{subsec-branch}

We briefly recall basic definitions on groups acting on rooted trees and branch groups. We refer the reader to \cite{B-G-S-branch} for a comprehensive survey on branch groups. 

A rooted tree is said to be \textbf{spherically homogeneous} if vertices at the same distance from the root have the same degree. We denote by $r=(r_1, r_2, \ldots)$ a sequence of integers $r_i \geq 2$, and by $T_r$ a spherically homogeneous rooted tree with degree sequence $r$. The set of vertices at distance $n$ from the root is called the \textbf{$n$-th level} of the tree. The distance between a vertex $v\in T_r$ and the root will be denoted $|v|$. The \textbf{subtree below} $v$ is the tree spanned by all vertices $w$ such that the unique geodesic from $w$ to the root passes through $v$.

We denote $\aut(T_r)$ the group of automorphisms of $T_r$ that fix the root. Note that $\aut(T_r)$ preserves the levels of the tree. A subgroup $G \leq \aut(T_r)$ is \textbf{level-transitive} if $G$ acts transitively on all levels of the tree. All the subgroups of $\aut(T_r)$ that we will consider will be level-transitive. The stabilizer of a vertex $v\in T_r$ in $G$ is denoted $\st_G(v)$. The pointwise stabilizer of the $n$-th level is denoted $\st_G(n)$. The subgroup of $\st_G(v)$ consisting of those elements that act trivially outside the subtree below $v$ is called the \textbf{rigid stabilizer} of $v$ in $G$, and is denoted $\rist_G(v)$. Note that this terminology is consistent with the use of \enquote{rigid stabilizer} elsewhere in the paper, in the sense that if we view $G$ as a group of homeomorphisms of the boundary $\partial T_r$, then $\rist_G(v)$ is precisely the rigid stabilizer of the set of ends defined by the subtree below $v$. The terminology rigid stabilizer for general group actions on topological spaces is actually inspired by the well-established use of this terminology in the world of branch groups. The subgroup of $\st_G(n)$ generated by all rigid stabilizers of vertices at the $n$-th level is called the \textbf{$n$-th level rigid stabilizer}, and is denoted $\rist_G(n)$. It follows from the definition that $\rist_G(n)$ is naturally isomorphic to the direct product $\prod_{|v|=n}\rist_G(v)$. 

A subgroup $G<\aut(T_r)$ is a \textbf{branch group} if $G$ is level-transitive, and the $n$-th level rigid stabilizer $\rist_G(n)$ has finite index in $\st_G(n)$ for every $n \geq 1$.

Many well-studied branch groups are amenable (such as Grigorchuk groups \cite{Grigorchuk:grigorchukgroups} and Gupta-Sidki groups \cite{Gupta-Sidki}). However the branch property does not imply amenability. Sidki and Wilson have constructed examples of branch groups containing free subgroups \cite{S-W-branch}. 

The following theorem shows that amenability is the only obstruction to $C^\ast$-simplicity in the class of branch groups.

\begin{thm} \label{thm-dichotomie-branch}
A countable branch group is either amenable or $C^\ast$-simple.
\end{thm}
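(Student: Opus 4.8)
The plan is to realize a countable branch group $G < \aut(T_r)$ as a group of homeomorphisms of the Cantor set $\partial T_r$ and to invoke the $C^\ast$-simplicity criterion of Theorem \ref{thm-non-amenab-rigid-stab}: it suffices to prove that if $G$ is non-amenable, then the rigid stabilizer $G_U$ of every non-empty open subset $U \subseteq \partial T_r$ is non-amenable. First one records the elementary facts that the action of $\aut(T_r)$, and hence of $G$, on $\partial T_r$ is faithful (a non-trivial automorphism permutes non-trivially the subtrees hanging below the first level on which it acts non-trivially, and these subtrees define disjoint clopen sets of ends), that $\partial T_r$ is Hausdorff without isolated points, and that the sets $\partial T_v$ of ends passing through a vertex $v$ form a basis for the topology, with the homeomorphism-theoretic rigid stabilizer of $\partial T_v$ equal to $\rist_G(v)$ in the paper's notation. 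Consequently every non-empty open $U$ contains some $\partial T_v$, so that $\rist_G(v) \le G_U$, and it is enough to show that $\rist_G(v)$ is non-amenable for every vertex $v$.

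The core of the argument is the following. For every $n$, the pointwise stabilizer $\st_G(n)$ of the $n$-th level is the kernel of the action of $G$ on the finite set of level-$n$ vertices, hence has finite index in $G$; and by the branch assumption $\rist_G(n)$ has finite index in $\st_G(n)$. Therefore $\rist_G(n)$ has finite index in $G$, so if $G$ is non-amenable then so is $\rist_G(n)$. Since $\rist_G(n)$ is isomorphic to the finite direct product $\prod_{|v|=n}\rist_G(v)$ and a finite direct product of amenable groups is amenable, at least one factor $\rist_G(v)$ with $|v|=n$ is non-amenable. But $G$ is level-transitive, so all the $\rist_G(v)$ with $|v|=n$ are conjugate in $G$, hence mutually isomorphic, and therefore all of them are non-amenable. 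As $n$ is arbitrary, $\rist_G(v)$ is non-amenable for every vertex $v$.

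Putting the two paragraphs together: if $G$ is non-amenable then $G_U$ is non-amenable for every non-empty open $U \subseteq \partial T_r$, and Theorem \ref{thm-non-amenab-rigid-stab} gives that $G$ is $C^\ast$-simple; this establishes the dichotomy. The only mildly delicate point is the passage from non-amenability of $G$ to non-amenability of each individual $\rist_G(v)$, which is exactly where the finite-index property of the level rigid stabilizers and the level-transitivity of $G$ enter; the rest is a direct translation of branch-group vocabulary into the language of micro-supported actions and an appeal to the already-established criterion.
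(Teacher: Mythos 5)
Your proposal is correct and follows essentially the same route as the paper: the paper also realizes $G$ on $\partial T_r$, uses the finite index of $\rist_G(n)$ in $G$ together with the direct product decomposition $\rist_G(n)\cong\prod_{|v|=n}\rist_G(v)$ and level-transitivity to conclude that every vertex rigid stabilizer (hence every $G_U$, since cylinders form a basis) is non-amenable, and then invokes the rigid-stabilizer criterion for $C^\ast$-simplicity. The only cosmetic difference is that the paper packages this as part (i) of a more general Proposition \ref{prop-properties-branch} on properties inherited by all non-trivial URS's, and cites Corollary \ref{cor-mostgeneral-Q} plus Theorem \ref{thm-cstar-chab} rather than their combination Theorem \ref{thm-non-amenab-rigid-stab} directly.
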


According to Theorem \ref{thm-cstar-chab}, part (i) of the following result will imply Theorem \ref{thm-dichotomie-branch}.

\begin{prop} \label{prop-properties-branch}
Let $G$ be a countable branch group, and let $\H \in \urs(G)$ be a non-trivial uniformly recurrent subgroup of $G$. Then:
\begin{enumerate}[label=(\roman*)]
\item If $G$ is not amenable, then neither is $\H$.
\item If $G$ admits non-abelian free subgroups, then so does $\H$.
\item If $G$ is finitely generated, then $\H$ is not elementary amenable.
\end{enumerate}
\end{prop}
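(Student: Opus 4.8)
The plan is to deduce all three parts from Corollary~\ref{cor-mostgeneral-Q}, applied to the faithful action of $G$ by homeomorphisms on the boundary $\partial T_r$ of the rooted tree. Recall that $\partial T_r$ is a compact Hausdorff space without isolated points, that the sets $\partial T_v$ of ends passing below a vertex $v$ form a basis for its topology, and that, in the notation of \textsection\ref{subsec-notation}, the rigid stabilizer of $\partial T_v$ is exactly $\rist_G(v)$. Since every non-empty open subset of $\partial T_r$ contains some $\partial T_v$, and since $G_{U'}\le G_U$ whenever $U'\subseteq U$, it suffices, for each of the three properties under consideration, to check that it is satisfied by all the vertex rigid stabilizers $\rist_G(v)$: the property then passes to all $G_U$ — for ``non-amenable'' and ``contains a non-abelian free subgroup'' this is immediate, and for ``not elementary amenable'' it holds because elementary amenability is inherited by subgroups, so a group containing a non-elementary-amenable subgroup is not elementary amenable — and Corollary~\ref{cor-mostgeneral-Q} gives the conclusion for $\H$.

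To handle the vertex rigid stabilizers I would use two structural facts about the branch group $G$. First, the level stabilizer $\st_G(n)$ is the kernel of the action of $G$ on the finite rooted tree of depth $n$, hence has finite index in $G$; combined with the branch property this shows that $\rist_G(n)$ is a finite-index subgroup of $G$, and by construction $\rist_G(n)\cong\prod_{|v|=n}\rist_G(v)$, a \emph{finite} direct product. Second, level-transitivity implies that the factors $\rist_G(v)$, $|v|=n$, are pairwise isomorphic: conjugation by any $g\in G$ with $gv=w$ carries $\rist_G(v)$ onto $\rist_G(w)$. In particular, for a fixed $n$, either all $\rist_G(v)$ with $|v|=n$ enjoy a given isomorphism-invariant group property, or none does.

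With this in hand the three parts are short. For (i), if $G$ is non-amenable then so is its finite-index subgroup $\rist_G(n)$; a finite direct product of amenable groups is amenable, so at least one factor $\rist_G(v)$ is non-amenable, hence all of them are. For (ii), if $L\le G$ is a non-abelian free subgroup, then $L\cap\rist_G(n)$ has finite index in $L$, hence is itself non-abelian free (Nielsen--Schreier and the rank formula); a non-abelian free subgroup of a finite direct product projects injectively onto one of the factors — otherwise the free group would contain two non-trivial normal subgroups meeting trivially, hence two commuting non-trivial subgroups, which is impossible — so one, and therefore every, $\rist_G(v)$ contains a non-abelian free subgroup. For (iii), suppose $G$ is finitely generated and, for contradiction, that some $\rist_G(v)$ is elementary amenable; then all the (isomorphic) factors of $\rist_G(n)$ are elementary amenable, so $\rist_G(n)$ is elementary amenable (finite direct product), and hence so is $G$ (finite extension of $\rist_G(n)$) — contradicting the fact that an infinite finitely generated branch group is never elementary amenable.

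The only input that is not a soft formal manipulation is this last fact — that a finitely generated branch group is not elementary amenable — which I would take from the branch-group literature (see \cite{B-G-S-branch}); this is the step I expect to require the most care to state and reference precisely, everything else being a routine combination of the branch and level-transitivity properties with Corollary~\ref{cor-mostgeneral-Q} (and, to then recover the dichotomy of Theorem~\ref{thm-dichotomie-branch}, with Kennedy's criterion, Theorem~\ref{thm-cstar-chab}).
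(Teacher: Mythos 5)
Your proof is correct and follows essentially the same route as the paper: reduce to the vertex rigid stabilizers via Corollary~\ref{cor-mostgeneral-Q}, use that $\rist_G(n)\cong\prod_{|v|=n}\rist_G(v)$ has finite index in $G$ together with level-transitivity, and invoke the non-elementary-amenability of finitely generated branch groups for (iii). The only cosmetic difference is that where the paper cites a lemma of Nekrashevych for the fact that a finite direct product containing a non-abelian free subgroup has a factor with one, you supply a short direct argument via commuting normal subgroups of a free group — which is fine.
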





\begin{proof}
We view $G$ as a group of homeomorphisms of the boundary of the tree $\partial T_r$, and we apply Corollary \ref{cor-mostgeneral-Q}. Since cylinder subsets form a basis for the topology, we deduce that, in each of the three cases, it is enough to show that all rigid stabilizers $\operatorname{RiSt}(w)$ have the desired property.

In order to prove (ii), assume that $G$ admits non-abelian free subgroups, and fix $n \geq 1$. Since $\rist_G(n)$ has finite index in $G$, $\rist_G(n)$ also admits non-abelian free subgroups. It is not hard to see (see for instance \cite[Lemma 3.2]{Nek10}) that this implies that there must exist a vertex $w$ of level $n$ such that $\operatorname{RiSt}(w)$ contains a free subgroup. Now since rigid stabilizers corresponding to vertices of the same level are conjugated in $G$ (since $G$ is level-transitive), it follows that $\operatorname{RiSt}(v)$ contains a free subgroup for every vertex $v$ of level $n$. According to the previous paragraph, this proves (ii). The proof of (i) is similar. 



Assume now that $G$ is finitely generated. By the main result of \cite{Ju-EG}, $G$ cannot be elementary amenable. Arguing as above, we see that all the subgroups $\rist_G(w)$ are not elementary amenable. Again, according to the first paragraph, this implies that $\H$ is not elementary amenable . \qedhere
\end{proof}

\subsection{Topological full groups}

Let $\Gamma\acts X$ be a group acting by homeomorphisms on a topological space $X$. Recall that the \textbf{topological full group} of the action is the group of all homeomorphisms of $X$ that locally coincide with elements of $\Gamma$.
\begin{thm}\label{P: top full}
Let $\Gamma$ be a countable non-amenable group and $\Gamma\acts X$ be a free, minimal action of $\Gamma$ on the Cantor set. Then the topological full group of $\Gamma \acts X$ is $C^*$-simple.
\end{thm}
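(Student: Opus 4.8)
The plan is to apply Theorem \ref{thm-non-amenab-rigid-stab} to the action of $[[\Gamma]]$ on the Cantor set $X$. Since this action is micro-supported, the clopen subsets of $X$ form a basis, and $[[\Gamma]]_U\supseteq [[\Gamma]]_V$ whenever $V\subseteq U$, it suffices to prove that the rigid stabilizer $[[\Gamma]]_U$ is non-amenable for every non-empty clopen $U\subseteq X$. (Here $[[\Gamma]]_U$ is the group of homeomorphisms of $X$ supported in $U$ and locally coinciding with elements of $\Gamma$; restricting to $U$ identifies it with the topological full group of the restricted transformation groupoid $(\Gamma\ltimes X)|_U$.)

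To show that $[[\Gamma]]_U$ is non-amenable, I would fix a point $x_0\in U$ and consider the countable set $Y:=\Gamma x_0\cap U$. Since the action is free, $\gamma\mapsto\gamma x_0$ identifies the orbit $\Gamma x_0$ with $\Gamma$ as a $\Gamma$-set, and every element of $[[\Gamma]]_U$ preserves $\Gamma$-orbits, so $[[\Gamma]]_U$ acts on $Y$; I claim this action admits no invariant mean. Granting the claim, $[[\Gamma]]_U$ must be non-amenable, because an amenable group admits an invariant mean on every set on which it acts. To establish the claim, I would first use minimality: compactness of $X$ together with $X=\bigcup_{\gamma\in\Gamma}\gamma U$ produces a finite set $F\subseteq\Gamma$ with $X=\bigcup_{f\in F}fU$, and hence, under the identification $\Gamma x_0\cong\Gamma$, the subset $Y$ is \emph{co-bounded} (syndetic) in $\Gamma$, i.e.\ $\Gamma=FY$. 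Since $\Gamma$ is non-amenable, by Tarski's theorem --- in the ``bounded displacement'' form obtained from Hall's marriage theorem --- it admits a paradoxical decomposition by translations of bounded length. Combining this with the co-boundedness of $Y$, one produces a paradoxical decomposition of $Y$ for the $[[\Gamma]]_U$-action: a partition $Y=A_1\sqcup\cdots\sqcup A_p\sqcup B_1\sqcup\cdots\sqcup B_q$ together with elements $g_1,\dots,g_p,h_1,\dots,h_q\in[[\Gamma]]_U$ such that $g_1A_1,\dots,g_pA_p$ partition $Y$ and, separately, $h_1B_1,\dots,h_qB_q$ partition $Y$. This rules out an invariant mean on $Y$, proving the claim.

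The heart of the argument --- and the step I expect to be the main obstacle --- is the last one: converting the purely combinatorial paradoxical decomposition of the group $\Gamma$ into one realized by genuine elements of the topological full group $[[\Gamma]]_U$, that is, by honest homeomorphisms of $X$ supported in $U$ and locally given by $\Gamma$ (rather than by mere partial maps living in the ambient pseudogroup). This is where both freeness of the action --- used to transport partial translations from $\Gamma$ to the orbit of $x_0$ and to extend them to clopen neighbourhoods --- and minimality --- used to guarantee the co-boundedness of $Y$, and, via the density of $\Gamma x_0$ in the clopen set $U$, to fit the combinatorial pieces into a clopen structure on $U$ so that the required bijections extend to total homeomorphisms --- enter in an essential way. (An alternative route, leading to the same technical point, is to aim directly at embedding a non-amenable subgroup of $\Gamma$, or even a copy of $\Gamma$ itself, into $[[\Gamma]]_U$.) Once all rigid stabilizers are known to be non-amenable, Theorem \ref{thm-non-amenab-rigid-stab} yields the $C^\ast$-simplicity of $[[\Gamma]]$ immediately.
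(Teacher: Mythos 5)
Your reduction and overall strategy coincide with the paper's: both invoke Theorem \ref{thm-non-amenab-rigid-stab} to reduce to the non-amenability of each rigid stabilizer $G_U$ of the topological full group, fix $x_0\in U$, and use minimality (Lemma \ref{L: minimality}) to see that $Y=\Gamma x_0\cap U$ is syndetic in the orbit, identified with a Cayley graph of (a non-amenable subgroup of) $\Gamma$. Where you diverge is in how non-amenability of $G_U$ is extracted. The paper uses no Tarski/Hall machinery: it fixes a finite symmetric $S$ containing the syndeticity set and generating a non-amenable $\Gamma_0\le\Gamma$, shows that $\Delta_x=\{\gamma\in\Gamma_0 : \gamma x\in U\}$ with the ``distance $\le 3$'' graph structure is quasi-isometric to $\cay(\Gamma_0,S)$, and then builds, for each $t\in S^3\setminus\{1\}$, finitely many involutions $\gamma_{t,V}\in G_U$ (equal to $t$ on a clopen $V\subseteq U_t:=U\cap t^{-1}(U)$ with $t(V)\cap V=\varnothing$, to $t^{-1}$ on $t(V)$, and to the identity elsewhere; such $V$ exist by freeness). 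The finitely generated subgroup they generate has $\Delta_x$ as an orbital Schreier graph, and a finitely generated group with a non-amenable Schreier graph is non-amenable. This is more economical than your route, which additionally requires the doubling/Hall characterization of non-amenability and the transfer of paradoxicality to a syndetic subset.

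The step you single out as the main obstacle is indeed the crux, and one natural reading of your sketch would fail: you cannot ``fit the combinatorial pieces into a clopen structure'' in the sense of replacing the Hall pieces $A_i\subseteq Y$ by clopen subsets of $U$. The $A_i$ are arbitrary subsets of the countable set $Y$, which is dense in $U$ by minimality, so an $A_i$ and its complement in $Y$ are typically both dense and $A_i$ is not the trace of any clopen set; moreover a genuinely clopen paradoxical decomposition of $U$ need not exist at all (for instance when $\Gamma\acts X$ preserves a probability measure, which is compatible with $\Gamma$ being non-amenable). The viable move is the opposite one: keep the pieces $A_i$ as they are, observe that a partial translation $t|_{A_i}$ with $t(A_i)\subseteq Y$ forces $A_i\subseteq U_t$, refine $A_i$ along a finite clopen partition of $U_t$ into sets $V$ with $t(V)\cap V=\varnothing$, and use the involutions $\gamma_{t,V}$ above: $\gamma_{t,V}$ agrees with $t$ on $A_i\cap V$ (here freeness guarantees that any element of the full group sending a point of the orbit to its $t$-translate does so by the germ of $t$), and the images $\gamma_{t,V}(A_i\cap V)=t(A_i\cap V)$ still tile $Y$ as prescribed. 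With this refinement your paradoxical decomposition is realized by honest elements of $G_U$ and the argument closes; as written, the proof is incomplete at exactly the point you identified.
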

We first first recall the following classical consequence of minimality, see \cite{Got}.

\begin{lemma} \label{L: minimality}
Let $\Gamma$ be a countable group and $\Gamma \acts X$ be a minimal action of $\Gamma$ on a compact space. Let $U\subset X$ be an open set. Then there exists  a finite subset $T\subset \Gamma$ such that for every $x\in X$ and every $g\in \Gamma$, there exists $h\in Tg$ such that $hx\in U$.
\end{lemma}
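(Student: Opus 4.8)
The statement is a standard compactness consequence of minimality, so the plan is short. First I would reduce to a cleaner assertion: it suffices to produce a finite set $T\subset\Gamma$ with
\[
X=\bigcup_{t\in T}t^{-1}U .
\]
Indeed, granting this, let $x\in X$ and $g\in\Gamma$ be arbitrary and apply the covering property to the point $y=gx$: there is $t\in T$ with $y\in t^{-1}U$, i.e.\ $t(gx)\in U$, so $h=tg\in Tg$ satisfies $hx\in U$, which is exactly what is wanted. (Here I am implicitly using that $U$ is non-empty; this is of course necessary for the statement to hold.)

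Next I would establish the covering. Since each $g\in\Gamma$ acts by a homeomorphism of $X$, every set $g^{-1}U$ is open. By minimality of $\Gamma\acts X$, the orbit $\Gamma y$ is dense in $X$ for each $y\in X$, hence it meets the non-empty open set $U$; thus there exists $g\in\Gamma$ with $gy\in U$, that is, $y\in g^{-1}U$. Since $y$ was arbitrary, the family $\{g^{-1}U : g\in\Gamma\}$ is an open cover of $X$.

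Finally, since $X$ is compact, this cover admits a finite subcover: there is a finite subset $T\subset\Gamma$ with $X=\bigcup_{t\in T}t^{-1}U$, which is precisely the reduced assertion above. This completes the argument. There is no genuine obstacle here; the only point to keep in mind is the (tacit) hypothesis that $U\neq\varnothing$, without which the covering step fails.
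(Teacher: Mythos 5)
Your proof is correct and is exactly the standard compactness-plus-minimality argument that the paper itself omits (it only cites a reference for this classical fact): translates $t^{-1}U$ cover $X$ by minimality, compactness extracts the finite $T$, and applying the cover to $y=gx$ gives $h=tg\in Tg$. Your remark that $U$ must be non-empty is a fair observation about the statement as written; in the application $U$ is a non-empty clopen set, so nothing is lost.
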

\begin{proof}[Proof of Theorem \ref{P: top full}]
Denote $G$ the topological full group.  By Theorem \ref{thm-non-amenab-rigid-stab}, it is sufficient to show that the rigid stabilizers $G_U$ are non-amenable for every non-empty clopen $U\subset X$. To do this, we use a \enquote{graphing} argument: we construct a finitely generated subgroup of $G_U$ that acts on $U$ with non-amenable orbital Schreier graphs.

Let $S\subset \Gamma$ be a finite subset given by  Lemma \ref{L: minimality} applied to $U$. Up to enlarging $S$ if necessary, we may assume that it is symmetric, contains $1$, and that it generates a non-amenable subgroup of $\Gamma$. From now on, let $\Gamma_0=\langle S\rangle$.

Consider the corresponding Cayley graph $\cay(\Gamma_0, S)$. Choose and fix $x\in U$. Consider

\[\Delta_x=\{ \gamma \in \Gamma_0\, \mid\, \gamma x\in U\}\subset \cay(\Gamma_0, S).\]

Since $x$ has trivial stabilizer in $\Gamma_0$, the set $\Delta_x$ is naturally in bijection with the intersection of the $\Gamma_0$ orbit of $x$ and $U$.

By Lemma \ref{L: minimality} and the choices made, $\Delta_x$ is a $1$-dense subset of $\cay(\Gamma_0, S)$ (meaning that every point of $\cay(\Gamma, S)$ is either in $\Delta_x$ or has a neighbour in $\Delta_x$). Endow $\Delta_x$ with a graph structure by connecting two points if and only if they are at distance at most $3$ in $\cay(\Gamma_0, S)$. 

\begin{lemma}
Endowed with this graph structure and the corresponding metric, $\Delta_x$ is quasi-isometric to $\cay(\Gamma_0, S)$. 
\end{lemma}

\begin{proof}
Let $\delta$ be the distance on $\Delta_x$ and $d$ be the distance on $\Gamma_0$. We already know that $\Delta_x$ is $1$-dense. Clearly if $y, z\in \Delta_x$ we have $d(y,z)\leq3\delta(y,z)$. Let us check that $\delta(y,z)\leq d(y, z)$. Indeed let $y_0=y, y_1,\ldots y_n=z$ be a geodesic in $\Gamma_0$ between $y, z$, with $n=d(y, z)$. For every $i$ there exists $w_i\in \Delta_x$ such that $d(w_i, y_i)\leq 1$, with $w_0=y, w_n=z$. By the triangle inequality $d(w_i,w_{i+1})\leq d(w_i, y_i)+d(y_i, y_{i+1})+d(y_{i+1}, w_{i+1})\leq 3 $ and hence $w_i$ and $w_{i+1}$ are neighbours in $\Delta_x$. Hence $\delta(y, z)\leq n=d(x,y)$. \qedhere

\end{proof}

Set $T=S^3\setminus\{1\}$. For $t\in T$ we denote $U_t=U\cap t^{-1}(U)$. 

\begin{lemma}
For every $t\in T$ there exists a finite partition $\P_t$ of $U_t$ into clopen sets such that for every $V\in \P_t$, we have $t(V)\cap V=\varnothing$ and $t(V)\subset U$.
\end{lemma}

\begin{proof}
Since the action is free, we have $t(x)\neq x$ for every $x\in U_t$, and moreover $t(x)\in U$ by definition of $U_t$. Hence we may cover $U_t$ with finitely many clopen sets verifying the conclusion. After refining this cover we may assume that it is a partition.\qedhere
\end{proof}

For every $t\in T$ and $V\in \P_t$ we define the following element $\gamma_{t, V}$ of the topological full group.
\[\gamma_{t, V}(x)=\left\{\begin{array}{lr} t(x) &  \text{ if }x\in V\\
t^{-1}(x) & \text{ if }x\in t(V)\\
x & \text{otherwise.}
\end{array}\right.\]
Clearly $\gamma_{t, V}\in G_U$ for every $t\in T$. Consider the subgroup $H \leq G_U$ generated by elements $\gamma_{t, V}$ when $t$ ranges in $T$ and $V$ ranges in $\P_t$. 
Observe that by construction the $H$-orbits of every $x\in U$ coincides with the $\Gamma_0$-orbit of $x$ intersected with $U$, hence it is identified with the vertex set of $\Delta_x$. Moreover the orbital Schreier graph of $H$ acting on the orbit of $X$ with respect to the generating set $\{\gamma_{t, V}\, \mid \, t\in T, V\in \P_t\}$ coincides with the previously defined graph structure on $\Delta_x$ (possibly after adding loops and multiple edges). Since this graph is quasi-isometric to $\cay(\Gamma_0, S)$, it is non-amenable. It follows that $H$ has a non-amenable Schreier graph, and thus $H$ is non-amenable. A fortiori the same is true for $G_U$. By Theorem \ref{thm-non-amenab-rigid-stab}, this concludes the proof of the theorem. \qedhere
\end{proof}

\begin{remark}
The assumption that the action $\Gamma\acts X$ is minimal cannot be removed, as the following example shows. Start with any minimal, free action $\Gamma \acts X$ of a non-amenable group on the Cantor set. Let $\hat{\Gamma}=\Gamma\cup\{\infty\}$ be the one-point compactification of $\Gamma$, on which $\Gamma$ acts by fixing the point at infinity. Let $Y=\hat{\Gamma}\times X$, which is still homeomorphic to the Cantor set. Consider the diagonal action $\Gamma\acts Y$, and let $G$ be its topological full group. Observe that the action $\Gamma \acts Y$ is free, but not minimal: in fact, the subset $\{\infty\}\times X$ is a closed minimal invariant subset (and is the unique such subset). Let $N\unlhd G$ be the subgroup consisting of elements that act trivially on $\{\infty\}\times X$. It is clearly non-trivial and normal, as it is the kernel of the restriction of the action of $G$ on $\{\infty\}\times X$. Moreover one can prove that $N$ is locally finite. It follows that $G$ has a non-trivial amenable normal 
subgroup, so it follows that $G$ is not $C^\ast$-simple.
\end{remark}

We do not know, however, if the assumption that the action is \emph{free} can be relaxed to \emph{faithful}.

\bibliographystyle{amsalpha}
\bibliography{subgroupdynamics}

\end{document}